\documentclass[draft]{amsart}
\usepackage{amsmath,amscd,amssymb}
\usepackage[arrow, matrix, curve]{xy}
\usepackage{tikz}
\usetikzlibrary{patterns}

\DeclareMathAlphabet{\mathbf}{OML}{cmm}{b}{it}

\newcommand{\FF}{\mathbb F}
\newcommand{\NN}{\mathbb N}
\newcommand{\QQ}{\mathbb Q}
\newcommand{\ZZ}{\mathbb Z}

\newcommand{\cA}{\mathcal A}
\newcommand{\cC}{\mathcal C}
\newcommand{\cL}{\mathcal L}

\newcommand{\cU}{\mathcal U}

\newcommand{\fq}{\mathfrak q}

\newcommand{\bA}{\mathbf A}
\newcommand{\bB}{\mathbf B}
\newcommand{\bC}{\mathbf C}
\newcommand{\bD}{\mathbf D}
\newcommand{\bM}{\mathbf M}
\newcommand{\bN}{\mathbf N}

\newcommand{\ann}{\operatorname{ann}}
\newcommand{\id}{\operatorname{id}}
\newcommand{\Aut}{\operatorname{Aut}}
\newcommand{\cha}{\operatorname{char}}

\newcommand{\Ult}{\operatorname{Ult}}
\newcommand{\Frac}{\operatorname{Frac}}

\newcommand{\Spec}{\operatorname{Spec}}
\newcommand{\Max}{\operatorname{Max}}
\newcommand{\Min}{\operatorname{Min}}
\newcommand{\trdeg}{\operatorname{trdeg}}

\def\Jac{\operatorname{Jac}}
\def\Nil{\operatorname{Nil}}

\newcommand{\abs}[1]{\lvert#1\rvert}

\def\<{\langle}
\def\>{\rangle}

\renewcommand\bar{\overline}
\renewcommand\tilde{\widetilde}

\newtheorem{theorem}{Theorem}[section]
\newtheorem*{theorem*}{Theorem}
\newtheorem{lemma}[theorem]{Lemma}
\newtheorem{corollary}[theorem]{Corollary}
\newtheorem{cor}[theorem]{Corollary}
\newtheorem*{cor*}{Corollary}

\newtheorem{prop}[theorem]{Proposition}

\theoremstyle{definition}

\theoremstyle{remark}

\newtheorem*{remark*}{Remark}
\newtheorem*{remarks*}{Remarks}

\newtheorem*{question}{Question}
\newtheorem{example}[theorem]{Example}
\newtheorem{examples}[theorem]{Examples}

\begin{document}
 
\date{October 2016}

\title{The logical complexity of finitely generated commutative rings}
\author[M.~Aschenbrenner]{Matthias Aschenbrenner} \author[A.~Khelif]{Anatole Kh\'{e}lif} \author[E.~Naziazeno]{Eudes Naziazeno} \author[T.~Scanlon]{Thomas Scanlon}

\thanks{Scanlon's work is partially supported by NSF grant DMS-1363372.}


\address{University of California, Los Angeles \\ Department of Mathematics \\
Box 951555 \\
Los Angeles, CA 90095-1555 \\ USA}
\email{matthias@math.ucla.edu}

\address{\'Equipe de Logique Math\'ematique \\
Universit\'e Paris Diderot  \\
UFR de math\'ematiques case 7012, site Chevaleret \\
75205 Paris Cedex 13\\ France}

\email{khelif@math.univ-paris-diderot.fr}

\address{Departamento de Matem\'atica\\
Universidade Federal de Pernambuco\\
Av. Jornalista Anibal Fernandes, s/n, Cidade Universit\'aria, 50740-560, Recife-PE\\ Brasil}
\email{eudes@dmat.ufpe.br}

\address{University of California, Berkeley \\ Department of Mathematics \\
Evans Hall \\
Berkeley, CA 94720-3840 \\ USA}
\email{scanlon@math.berkeley.edu}

\begin{abstract}
We characterize those finitely generated commutative rings which are (parametrically) bi-interpretable with arithmetic:  a finitely generated commutative ring $A$   is bi-interpretable with $(\NN,{+},{\times})$ if and only if  
the space of non-maximal prime ideals of $A$ is
nonempty and connected in the Zariski topology and
 the nilradical of $A$ has a nontrivial annihilator in $\ZZ$.   Notably, by constructing a nontrivial derivation on a nonstandard model of arithmetic we show that the ring of dual numbers over $\ZZ$ is \emph{not} bi-interpretable with $\NN$.
\end{abstract}

\maketitle

\section*{Introduction}

\noindent
We know since G\"odel that the class of arithmetical sets, that is,   
sets definable  in the semiring $(\NN,{+},{\times})$, is very rich; in particular, the first-order theory of this structure is undecidable.
One expects  other mathematical structures which are connected to arithmetic to share this feature.
For instance, since the subset~$\NN$ of $\ZZ$ is definable in the ring $\ZZ$ of integers (Lagrange's Four Square Theorem), every 
subset of~$\NN^m$ which is definable in arithmetic is definable in $\ZZ$.
The usual presentation of integers as differences of natural numbers
(implemented in any number of ways) shows conversely that $\ZZ$ is {\it interpretable}\/ in $\NN$;
therefore every $\ZZ$-definable subset of~$\ZZ^n$ also corresponds to an $\NN$-definable set.
Thus the semiring $\NN$ is interpretable (in fact, definable) in the ring $\ZZ$, and conversely,
$\ZZ$ is interpretable in $\NN$; that is,~$\NN$ and $\ZZ$ are {\it mutually interpretable.}\/ However, something much stronger holds: 
the structures $\NN$ and $\ZZ$ are {\it bi-interpretable.}\/ 

Bi-interpretability is an equivalence relation on the class of first-order structures 
which
captures what it means for two structures (in possibly different languages) to have  essentially have the same categories
of definable sets and maps. (See \cite{AZ} or \cite[Section~5.4]{Ho}.)  Thus in this sense, the definable sets in structures which are bi-interpretable with arithmetic are just as complex 
as those in $(\NN,{+},{\times})$.
We recall the definition of bi-interpretability and its basic properties in Section~\ref{sec:prelims interpretations} below. For example, we show there that  a structure $\bA$ with underlying set $A$ is bi-interpretable with arithmetic
if and only if there are binary operations 
 $\oplus$ and $\otimes$ on $A$ such that
$(\ZZ,{+},{\times})\cong (A,{\oplus},{\otimes})$, and the structures 
$(A,{\oplus},{\otimes})$ and $\bA=(A,\dots)$ have the same definable sets.
The reader who is not yet familiar
with this notion may simply take this equivalent statement   as the definition of ``$\bA$ is bi-interpretable with~$\NN$.''
Bi-interpretability between general structures is a bit subtle and sensitive, for example, to  whether parameters are allowed. Bi-interpretability with~$\NN$ is   more robust,
but we should note here that even for natural algebraic examples, mutual interpretability with~$\NN$ does not automatically entail bi-interpretability with~$\NN$:
for instance, the Heisenberg group $\operatorname{UT}_3(\ZZ)$ of unitriangular $3\times 3$-matrices with entries in $\ZZ$, although it interprets arithmetic \cite{Malcev}, is not bi-interpretable with it; see \cite[Th\'eor\`eme~6]{Kh} or \cite[Theorem~7.16]{Nies}. See \cite{Las02} for interesting examples of  finitely generated simple groups which {\it are}\/ bi-interpretable with $\NN$.

Returning to the commutative world, the consideration of $\NN$ and $\ZZ$ above leads to a natural question: {\it are \emph{all} infinite finitely generated commutative rings
bi-inter\-pretable with $\NN$?}\/
Indeed, each finitely generated commutative ring is interpretable in~$\NN$ (see Corollary~\ref{cor:fg ring int in ZZ} below), and it is known that conversely each infinite finitely generated commutative ring interprets arithmetic~\cite{No}.
However,  it is fairly easy to see as
a consequence of the Feferman-Vaught Theorem that~$\ZZ\times\ZZ$ is not bi-interpretable
with $\NN$. Perhaps more surprisingly, there are nontrivial derivations on nonstandard models
of arithmetic and it follows, for instance, that the ring~$\ZZ[\epsilon]/(\epsilon^2)$ of dual numbers over $\ZZ$ is not bi-interpretable with~$\NN$. (See Section~\ref{sec:derivations}.)

The main result of this paper is a characterization of the
 finitely generated commutative rings which are
bi-interpretable with $\NN$. To formulate it, we need some notation.
Let $A$ be a commutative ring (with unit). 
As usual, we write~$\Spec(A)$ for the spectrum of $A$, i.e., the set of prime ideals of $A$ equipped with the Zariski topology, and $\Max(A)$ for the subset of $\Spec(A)$ consisting of the maximal ideals of~$A$. We put
$\Spec^\circ(A) := \Spec(A) \setminus\Max(A)$, equipped with the subspace topology. (In the context of a local ring $(A,\mathfrak m)$, the topological space $\Spec^\circ(A)={\Spec(A)\setminus\{\mathfrak m\}}$ is known as the ``punctured spectrum'' of $A$.)

\begin{theorem*}
Suppose the ring $A$ is finitely generated, and let $N$ be the nilradical of~$A$. Then $A$ is bi-interpretable with $\NN$
if and only if $A$ is infinite, $\Spec^\circ(A)$ is connected, and there is some integer $d\geq 1$ with $dN=0$.
\end{theorem*}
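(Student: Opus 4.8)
The plan is to prove the two implications separately, using throughout the reformulation recalled above: $A$ is bi-interpretable with $\NN$ precisely when $A$ interprets a copy $\nu$ of $(\NN,{+},{\times})$ and, conversely, the standard interpretation of a finitely generated ring in $\NN$ (Corollary~\ref{cor:fg ring int in ZZ}), carried out inside $\nu$, produces a ring that is $A$-definably isomorphic to $A$. Since every finitely generated ring is interpretable in $\NN$ and every infinite one interprets $\NN$~\cite{No}, the whole burden of the ``if'' direction is this reconstruction isomorphism. I claim that for it, it suffices to exhibit in $A$, with parameters, a copy of $(\NN,{+},{\times})$ together with a definable pairing function $A^2\to A$: a definable pairing lets one code finite sequences of elements of $A$, hence define the action $\nu\times A\to A$, $(n,a)\mapsto na$, and then the evaluation map sending the code of a polynomial $f$ to $f(\bar t)$ for a fixed generating tuple $\bar t$ of $A$; restricting to the (computable, hence $\nu$-recognizable) normal forms of representatives turns this surjection onto $A$ into the required definable bijection. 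So the ``if'' direction reduces to producing a definable pairing on $A$.

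I would do this by reducing to successively more special $A$, the base cases being the classical fact that $\ZZ$ and $\FF_p[t]$ are bi-interpretable with $\NN$. \emph{Domain case:} for an infinite finitely generated domain $A$, a suitable form of Noether normalization gives a polynomial subring $R=\ZZ[\bar y]$ or $R=\FF_p[\bar y]$ (with $\bar y$ nonempty) over which $A$ is module-finite, and $\Frac(A)=\Frac(R)[\theta]$ for a primitive element $\theta$ (the inseparable case needing a minor modification). Two facts do the work: (i) the prime ring $\ZZ$, respectively the prime field $\FF_p$ (defined by $x^p=x$), is parameter-definable in $A$ — by a definition of the kind going back to Julia Robinson — so that $R$ is parameter-definable in $A$, and, being an iterated polynomial ring over $\ZZ$ or $\FF_p$, carries a definable pairing (extract coefficients, code by the interpreted arithmetic); and (ii) every $a\in A\subseteq\Frac(R)[\theta]$ is recovered definably from its coordinate tuple over $\Frac(R)$ in a fixed basis $1,\theta,\dots,\theta^{r-1}$ of $\Frac(A)$, the entries lying in the interpretable field $\Frac(R)$, so the pairing on $R$ transports to one on $A$. \emph{Reduced case:} a reduced finitely generated $A$ embeds in $\prod_iA/\fp_i$ over its minimal primes, with each $A/\fp_i$ a finitely generated domain (finite ones being trivial); here connectedness of $\Spec^\circ(A)$ enters, producing a single definable infinite ``scale'' in $A$ — coming from non-maximal primes through which the positive-dimensional parts of the $A/\fp_i$ are linked, whose infinite residue rings provide a common coordinate system — relative to which all the $A/\fp_i$, and hence the image of $A$ in their product, can be coded definably. \emph{General case:} since $dN=0$, the nilradical $N$ (a definable ideal, as $A$ is Noetherian) is a finitely generated module over the finitely generated $(\ZZ/d\ZZ)$-algebra $A/dA$, and is coded by the same technique; a definable well-ordering of the coded copy of $N$ yields a definable section of $A\twoheadrightarrow A/N$, and combining the codings of $A/N$ and of $N$ produces a definable pairing on $A$.

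For the ``only if'' direction, assume $A$ is bi-interpretable with $\NN$. That $A$ is infinite is clear, since an interpretation of $\NN$ in a finite structure has finite image. If $\Spec^\circ(A)$ were disconnected, then, after a suitable localization or passage to an elementary extension, $A$ would exhibit a product decomposition of the kind controlled by the Feferman--Vaught theorem, so that — as in the case of $\ZZ\times\ZZ$ recalled in the introduction — no $A$-definable linear order on $A$ could have order type $\omega$; yet bi-interpretability with $\NN$ transports the standard well-ordering of $\NN$ to such a definable order, a contradiction. Finally, suppose $dN\neq0$ for every $d\geq1$; since $pN=0$ whenever $A$ has characteristic $p$, this forces characteristic $0$, and then $A$ has a subquotient in which a nilpotent of infinite additive order survives — essentially a copy of $\ZZ[\epsilon]/(\epsilon^2)$. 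Given any hypothetical bi-interpretation of $A$ with $\NN$, with finite parameter tuple $\bar c$, the construction in Section~\ref{sec:derivations} of a nontrivial derivation $D$ on a nonstandard model of arithmetic — arranged to vanish on the finitely many parameters — yields a nontrivial automorphism $a+b\epsilon\mapsto a+(b+D(a))\epsilon$ of a model $A^*$ of $\operatorname{Th}(A)$ fixing $\bar c$ pointwise, contradicting the rigidity over $\bar c$ forced by the bi-interpretation.

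The main obstacle is the reconstruction in the ``if'' direction, above all the passage from domains to reduced rings: one must turn the purely topological hypothesis that $\Spec^\circ(A)$ is connected into a genuine first-order, parameter-definable global coordinate system coding all minimal-prime quotients of $A$ at once, and it is precisely here that connectedness of $\Spec^\circ(A)$ — rather than, say, the mere absence of idempotents — is seen to be the right hypothesis. The Feferman--Vaught argument behind the necessity of connectedness and the nonstandard-derivation construction behind the necessity of $dN=0$ are the other two substantial ingredients; the remaining reductions (defining $\ZZ$ or $\FF_p$, transporting pairings, handling the nilradical) are comparatively routine.
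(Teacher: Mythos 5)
Your overall architecture (Nies's criterion, Noether normalization for domains, connectedness for reduced rings, a separate treatment of the nilradical, Feferman--Vaught and nonstandard derivations for necessity) matches the paper's, and your ``only if'' direction is essentially correct -- modulo the point that a disconnected $\Spec^\circ(A)$ yields only a fiber product over a \emph{finite} ring, not a genuine direct product, so one still needs the lemma that such a fiber product is bi-interpretable with the full product before Feferman--Vaught applies. But the ``if'' direction has three genuine gaps, each at exactly the point where the paper has to work hardest. First, in the domain case you assert that the Noether normalization subring $R=\ZZ[\bar y]$ (or $\FF_p[\bar y]$) is parameter-definable in $A$ and admits definable coefficient extraction. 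No such result is known in dimension $>1$, and the paper carefully avoids needing it: it only defines the \emph{one-dimensional} subring $D$ of elements algebraic over the prime ring, and this already requires Poonen's uniform first-order characterization of algebraic independence in f.g.\ fields plus nontrivial commutative algebra (Onoda, Artin--Tate) to show $D$ is finitely generated; the reconstruction of $A$ is then done not by coordinates over $\Frac(R)$ but by a homotopy argument identifying regular functions on the variety $V$ through their values at points over finite extensions $D'$ of $D_c$.

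Second, in the reduced case the phrase ``a single definable infinite scale \ldots relative to which all the $A/\fp_i$ can be coded'' is not an argument. The actual mechanism is a fiber-product theorem: if $A$ and $B$ are bi-interpretable with $\ZZ$ and $C$ is f.g.\ and infinite, so is $A\times_C B$; its proof requires merging two definable bijections onto power sets $a^\NN$ and $b^\NN$ into a single definable \emph{injection} of the fiber product into $(a,b)^\NN$, which the paper achieves via the maps $m\mapsto 2^{2m}$, $n\mapsto 3^{2n}$ and Scott's theorem on $p^{2m}-q^{2n}=c$. You give no substitute for this step, nor for the graph-theoretic induction (splitting off non-cut vertices of the graph of minimal non-maximal primes) that reduces connectedness to iterated fiber products. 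Third, and most seriously, your treatment of the nilradical is circular: a ``definable well-ordering of the coded copy of $N$'' does not yield a definable section of $A\twoheadrightarrow A/N$, because the cosets of $N$ are torsors and choosing base points definably is precisely the problem. The hypothesis $dN=0$ is used in the paper exactly here, via Witt vectors: when $N^2=0$ and $dN=0$ the Witt polynomial $w_d$ descends to a ring morphism $W_d(A/N)\to A$ whose image makes $A$ a finite module over $W_d(A/N)$, and this -- not a section -- is what produces the bi-interpretation of $A$ with $A_{\operatorname{red}}$. Your sketch never exhibits where $dN=0$ enters the sufficiency proof, which is a sign the key idea is missing.
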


\noindent
The proof of the theorem is contained in Sections~\ref{sec:integral domains}--\ref{sec:derivations}, preceded by
two preliminary sections, on algebraic background and on interpretations, respectively.
Let us indicate the strategy of the proof. Clearly if $A$ is bi-interpretable with $\NN$, then necessarily $A$ is
infinite.
Note that the theorem says in particular that if~$A$ is an infinite integral domain, then $A$ is bi-interpretable with~$\NN$. We prove this fact in Section~\ref{sec:integral domains} using techniques of~\cite{Sc} which are unaffected by
the error therein~\cite{ScErr}, as sufficiently many
valuations on the field of fractions of $A$ may be defined via ideal membership conditions in $A$. Combining this fact with Feferman-Vaught-style arguments, in Section~\ref{sec:fiber products} we
then establish the theorem in the case where $A$ is infinite and reduced (that is, $N=0$): $A$ is bi-interpretable with $\NN$ iff $\Spec^\circ(A)$ is connected. To treat the general case, we distinguish two cases according to
whether or not there exists an integer~${d\geq 1}$ with $dN=0$. 
In Section~\ref{sec:finite nilpotent ext}, assuming that there is such a~$d$, we use Witt vectors 
to construct a bi-interpretation between $A$ and its associated reduced ring $A_{\operatorname{red}}=A/N$.
Noting that $A$ is finite if and only if $A_{\operatorname{red}}$ is finite, and $\Spec^\circ(A)$ and  $\Spec^\circ(A_{\operatorname{red}})$ are homeomorphic, this allows us to appeal to the case of a reduced ring $A$. Finally, by constructing suitable automorphisms of an elementary extension of $A$
we prove that if  there is no such integer $d$, then $A$ cannot be bi-interpretable with $\NN$. (Section~\ref{sec:derivations}.)

Structures bi-interpretable with arithmetic are ``self-aware'': they know their own isomorphism type. More
precisely, 
if a finitely generated structure $\bA$ in a finite language $\cL$ is bi-interpretable with~$\NN$,
then $\bA$ is   \emph{quasi-finitely axiomatizable~\textup{(}QFA\textup{)},} that is, there is an $\cL$-sentence $\sigma$  satisfied by $\bA$ such that every finitely generated $\cL$-structure satisfying~$\sigma$ is isomorphic to $\bA$; see Proposition~\ref{prop:QFA} below. (This notion of quasi-finite axiomatizability does not agree with the one commonly used in Zilber's program, for example, in~\cite{AZ}.)
In~\cite{Nies03}, Nies  first considered 
the class of QFA groups, which has been studied extensively since then; see,
for example~\cite{Las01,Las02,Las03,LO,Nies07,Oger,OgerSabbagh}.

In 2004, Sabbagh~\cite[Theorem~7.11]{Nies} gave a direct argument for the quasi-finite axiomatizability of the ring of integers. 
Belegradek~\cite[\S{}7.6]{Nies} then raised the question which finitely generated commutative rings are QFA.
Building on our result that finitely generated integral domains are bi-interpretable with $\NN$, in the last section of this paper we prove:

\begin{cor*}
Each finitely generated commutative ring is QFA.
\end{cor*}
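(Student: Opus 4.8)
The plan is to deduce QFA from the main theorem where possible and to treat the exceptional rings by reduction arguments, all resting on the fact (Section~\ref{sec:integral domains}) that every finitely generated integral domain, being bi-interpretable with $\NN$, is QFA by Proposition~\ref{prop:QFA}. First, if $A$ is finite there is nothing to do: a finite structure in a finite language is pinned down up to isomorphism among \emph{all} structures by a single sentence specifying its cardinality together with its full addition and multiplication tables. So assume $A$ is infinite. If in addition $\Spec^{\circ}(A)$ is connected and $dN=0$ for some integer $d\geq 1$, the main theorem gives that $A$ is bi-interpretable with $\NN$, hence $A$ is QFA by Proposition~\ref{prop:QFA}. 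Thus it remains to handle infinite $A$ for which either $\Spec^{\circ}(A)$ is disconnected or $N$ has trivial annihilator in $\ZZ$.

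Second, I would reduce the general infinite case to the reduced case. Let $k$ be the nilpotency index of $N$, so that $N=\{x\in A:x^{k}=0\}$ and $A_{\operatorname{red}}=A/N$ is a finitely generated reduced ring; assume inductively that $A_{\operatorname{red}}$ is QFA, witnessed by a sentence $\chi$. For $A$ I would take the sentence $\sigma$ asserting, in a ring $B$: (i) $\mathfrak n:=\{x:x^{k}=0\}$ is an ideal with $\mathfrak n^{k}=0$; (ii) the interpreted quotient ring $B/\mathfrak n$ satisfies $\chi$; and (iii) a fixed finite list of generators, of $A_{\operatorname{red}}$-module relations cutting out $N$, and of multiplication data for the filtration $N\supseteq N^{2}\supseteq\cdots$, holds. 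For finitely generated $B\models\sigma$, (ii) plus the QFA-ness of $A_{\operatorname{red}}$ forces $B/\mathfrak n\cong A_{\operatorname{red}}$; since the latter is reduced this gives $\mathfrak n=N_{B}$, and then (iii) identifies the ring structure of $B$ with that of $A$. The conditions in (iii) are first-order precisely because $A_{\operatorname{red}}=B/\mathfrak n$ is interpreted in $B$, so one may quantify over its elements to test $A_{\operatorname{red}}$-linear relations; and, crucially, they include injectivity statements of the shape ``the map $\bar m\mapsto\bar m\cdot e$ from $B/\mathfrak n$ to $N$ is injective,'' which is first-order (it reads $em_{1}=em_{2}\to(m_{1}-m_{2})^{k}=0$) and rules out in one stroke the infinitely many finite deformations such as $\ZZ\oplus(\ZZ/r)\epsilon$ of the dual numbers $\ZZ[\epsilon]/(\epsilon^{2})$.

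Third, this leaves exactly: every finitely generated reduced ring is QFA. When $\Spec^{\circ}(A)$ is connected this is again the main theorem together with Proposition~\ref{prop:QFA}. When $\Spec^{\circ}(A)$ is disconnected I would induct on the number of minimal primes via a fiber-product decomposition. Disconnectedness of the punctured spectrum should yield a presentation $A\cong A_{1}\times_{C}A_{2}$ in which $C$ is a finite ring and $A_{1},A_{2}$ are finitely generated reduced rings whose punctured spectra have strictly fewer components: partition $\Min(A)$ according to the clopen decomposition of $\Spec^{\circ}(A)$, let $A_{i}$ be the corresponding quotients of $A$, and check that the gluing takes place over the finite ring $C$ supported on the ``crossing'' maximal ideals. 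One then assembles a sentence for $A$ from the QFA sentences of $A_{1}$, $A_{2}$ and from the finite datum $C$ by a Feferman–Vaught-style argument of the kind already developed in Section~\ref{sec:fiber products}, again inserting the injectivity of the relevant definable maps between the interpreted components to forbid unwanted finite deformations of the glue.

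I expect the main obstacle to be this reduced disconnected case: both establishing that disconnectedness of $\Spec^{\circ}(A)$ produces a fiber-product decomposition with strictly simpler pieces and finite glue, and verifying that the resulting sentence is genuinely categorical among finitely generated rings (not merely among rings of bounded ``size''). The dual-numbers phenomenon shows one cannot axiomatize such $A$ by naively positing generators and relations; the leverage comes from the fact that $A_{\operatorname{red}}$ and the components $A_{i}$ are interpreted in $A$ and are themselves QFA, so that module-theoretic conditions over them become first-order and the ``limit of finite deformations'' pathology is blocked.
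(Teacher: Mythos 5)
Your overall architecture---dispose of the finite case, reduce the general case to the reduced case by controlling the nilpotent ideal over the interpreted quotient $B/\mathfrak n$, then decompose the reduced case along its minimal primes---matches the paper's, and your guiding heuristic (module-theoretic conditions over an interpreted QFA quotient become first-order and block the ``limit of finite deformations'' pathology) is exactly the right one. But the paper's execution is more uniform than what you propose, and the two steps you yourself flag as delicate are precisely where your sketch is either under-specified or reaches for the wrong tool. The paper never case-splits on connectedness of $\Spec^\circ(A)$: QFA, unlike bi-interpretability with $\NN$, holds unconditionally, and the single engine is the lemma that if $I,J$ are ideals of $B$ with $IJ=0$ and both $B/I$ and $B/J$ admit QFA formulas, then so does $B$. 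Its proof combines a QFA formula for $B/I$, a QFA formula for the \emph{two-sorted} structure $(B/J,I)$ (here $I$ is a f.g.\ $B/J$-module because $IJ=0$; Lemma~\ref{lem:module QFA}), the formula of Lemma~\ref{lem:exist morphism} asserting the existence of a ring morphism $B\to B'$, and the bijectivity criterion Lemma~\ref{lem:criterion for bijection}. Applied with $I=N^{e-1}$, $J=N$ and induction on the nilpotency index $e$, it yields your Step 2; applied to $P_1\cdots P_e=0$ for the minimal primes of a reduced ring, it yields your Step 3, since each $B/P_k$ is a f.g.\ integral domain and hence QFA by Corollary~\ref{cor:QFA int domains} (trivially so if finite).

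The genuine gap is in your reduced disconnected case. ``Feferman--Vaught-style'' is the wrong mechanism: Feferman--Vaught concerns direct products, the fiber product $A_1\times_C A_2$ over finite $C$ has in general no nontrivial idempotents (e.g.\ $\{(a,b)\in\ZZ\times\ZZ : a\equiv b \bmod 2\}$), and in this paper Feferman--Vaught appears only in the \emph{negative} result Lemma~\ref{lem:nbiprod}; moreover Section~\ref{sec:fiber products} shows that a fiber product over a finite ring is bi-interpretable with the corresponding \emph{direct} product, which is not bi-interpretable with $\ZZ$, so no bi-interpretability route is available here and a direct categoricity argument is unavoidable. What actually makes the gluing categorical is the mechanism above: a morphism $A\to B'$ exists by Lemma~\ref{lem:exist morphism}, it is injective on $I$ because the two-sorted structure $(B/J,I)$ is QFA, and it is then bijective by Lemma~\ref{lem:criterion for bijection}; the finiteness of $C$ and the connectedness analysis play no role. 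A secondary soft spot is your one-shot axiomatization of the nilpotent extension: $N$ is not an $A_{\operatorname{red}}$-module once $N^2\neq 0$, and injectivity clauses of the cyclic form $\bar m\mapsto \bar m\cdot e$ do not suffice when $N$ or its graded pieces need several generators; the clean fix is again Lemma~\ref{lem:module QFA} (finitely generated syzygies over the noetherian interpreted quotient) fed into the induction on the nilpotency index, rather than an explicit list of ``multiplication data for the filtration.''
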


\noindent
This paper had a rather long genesis, which we briefly summarize. 
Around 2005, the second- and fourth-named authors independently realized that bi-interpretability with $\NN$ entails QFA. The fourth-named author was motivated by Pop's 2002 conjecture~\cite{Pop} that finitely generated fields are determined up to isomorphism by their elementary theory. In \cite{Sc} the fourth-named author attempted to establish this conjecture by showing that they are bi-interpretable with~$\NN$; however, later, Pop found a mistake in this argument, and his conjecture remains open~\cite{ScErr}. 
Influenced by  \cite{Sc} and realizing that not all finitely generated commutative rings are bi-interpretable with~$\NN$,
in 2006 the first-named author  became interested in algebraically characterizing those which are.
The corollary above was announced by the second-named author in \cite{Kh}, where a proof based on the main result of \cite{Sc} was suggested. In his Ph.~D.~thesis~\cite{Na}, the third-named author later gave a  proof of this corollary circumventing the flaws of~\cite{Sc}. 

\medskip
\noindent
We conclude this introduction with an open question suggested by our theorems above.
Recall that a group $G$ is said to be {\it metabelian}\/ if its commutator subgroup~$G'=[G,G]$ is abelian.
If $G$ is a  metabelian group, then the abelian group~$G/G'$ can be made into a
module $M$ over the group ring $A=\ZZ[G']$ in a natural way; if moreover $G$ is finitely generated, then 
the commutative ring~$A$ is finitely generated, and so is the $A$-module $M$, hence 
by the above, the two-sorted structure~$(A,M)$ is~QFA. (Lemma~\ref{lem:module QFA}.) However, no infinite abelian group is QFA~\cite[\S{}7.1]{Nies}, and we already mentioned that
the metabelian group $\operatorname{UT}_3(\ZZ)$ is not bi-interpretable with $\NN$, though it is~QFA~\cite[\S{}7.2]{Nies}. The second-named author has shown that every non-abelian free metabelian group is bi-interpretable with~$\NN$~\cite{LogBlog}.
Each non-abelian finitely generated metabelian  group interprets~$\NN$~\cite{No84}.

\begin{question}
Is every non-abelian finitely generated metabelian group QFA?
Which finitely generated metabelian groups are bi-interpretable with $\NN$?
\end{question}

\subsection*{Notations and conventions}
We let $m$, $n$ range over the set $\NN=\{0,1,2,\dots\}$ of natural numbers. In this paper, ``ring'' always means ``commutative ring with unit.'' Rings are always viewed as model-theoretic structures in the language $\{{+},{\times}\}$ of rings.
We occasionally abbreviate ``finitely generated'' by ``f.g.''
The adjective ``definable'' will always mean ``definable, possibly with parameters.''

\section{Preliminaries: Algebra}\label{sec:prelims algebra}

\noindent
In this section we gather some basic definitions and facts of a ring-theoretic nature which are used later.

\subsection{Radicals}
Let $A$ be a ring and $I$ be an ideal of $A$.  We denote by $\Nil(I)$ the {\it nilradical}\/ of $I$, that is, the ideal
$$\Nil(I) := \big\{ a\in A: \exists n\ a^n \in I \big\}$$
of $A$, and we write
$$\Jac(I) := \big\{ a\in A: \forall b\in A\ \exists c\in A\ (1-ab)c\in 1+I \big\}$$
for the {\it Jacobson radical}\/ of $I$. 
It is well-known that $\Nil(I)$ equals
the intersection of all prime ideals of $A$ containing $I$, and $\Jac(I)$ equals the intersection of all maximal ideals of $A$ which contain $I$.
Evidently, $I\subseteq\Nil(I)\subseteq\Jac(I)$. The ideal $I$ is said to be {\it radical}\/ if $\Nil(I)=I$. 
For our purposes it is important to note that although the nilradical is not uniformly definable for all rings, the Jacobson radical is; more precisely, we have: if $\varphi(x)$ is a formula  defining $I$ in  $A$, then the formula
$$\Jac(\varphi)(x) := \forall u \exists v \exists w \big((1-xu)v = 1+w \ \&\ \varphi(w)\big)$$
defines $\Jac(I)$ in $A$.
We denote by~$N(A)$ the nilradical   of the zero ideal of~$A$.
Thus $N(A)=\bigcap_{\mathfrak p\in\Spec A} \mathfrak p$.
One says that~$A$ is \emph{reduced} if $N(A)=0$. The ring $A_{\operatorname{red}}:=A/N(A)$ is reduced, and called
the associated reduced ring of $A$.
We say that $I$  is \emph{nilpotent} if there is some integer $e\geq 1$ such that $I^e=0$. 
The smallest such $e$ is the \emph{nilpotency index} of $I$ (not to be confused with the index $[A:I]$
of $I$ as an additive subgroup of $A$).  If $N(A)$ is f.g., then it is nilpotent.  
 
\begin{lemma}\label{lem:finite ring}
$A$ is finite if and only if it contains a f.g.~nilpotent ideal of finite in\-dex in $A$.
\textup{(}In particular, if $N(A)$ is f.g., then $A$ is finite iff $A_{\operatorname{red}}$ is finite.\textup{)}
\end{lemma}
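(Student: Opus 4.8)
The statement is: $A$ is finite if and only if it contains a finitely generated nilpotent ideal of finite index in $A$. The forward direction is immediate: if $A$ is finite, take $I = 0$, which is f.g., nilpotent ($I^1 = 0$), and of finite index (index $|A|$). So the content is the converse.

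For the converse, suppose $I$ is a f.g.\ nilpotent ideal with $[A:I] < \infty$. The plan is to argue by induction on the nilpotency index $e$ of $I$. If $e = 1$, then $I = 0$, so $A = A/I$ is finite. If $e > 1$, consider the chain $A \supseteq I \supseteq I^2 \supseteq \cdots \supseteq I^e = 0$. The quotient $A/I$ is finite by hypothesis. Each successive quotient $I^k/I^{k+1}$ is a finitely generated module over $A/I$: indeed $I$ is finitely generated as an ideal, say by $a_1,\dots,a_r$, so $I^k$ is generated as an ideal (equivalently, since we can reduce mod $I$, as an $A/I$-module) by the finitely many degree-$k$ monomials in the $a_i$'s, and these generators survive in $I^k/I^{k+1}$. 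Since $A/I$ is a finite ring and $I^k/I^{k+1}$ is a finitely generated $A/I$-module, $I^k/I^{k+1}$ is finite. As $e$ is finite, $A$ is a finite extension of the trivial group built from finitely many finite pieces $A/I, I/I^2, \dots, I^{e-1}/I^e$, hence finite. (One can phrase this purely additively: $|A| = |A/I|\cdot\prod_{k=1}^{e-1} |I^k/I^{k+1}|$, a finite product of finite numbers.)

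Then the parenthetical remark follows: if $N = N(A)$ is f.g., then $N$ is nilpotent (as noted in the text just before the lemma), so if $A_{\operatorname{red}} = A/N$ is finite, then $N$ is a f.g.\ nilpotent ideal of finite index, whence $A$ is finite by the main equivalence; conversely if $A$ is finite then trivially its quotient $A_{\operatorname{red}}$ is finite.

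I do not anticipate a serious obstacle here: the only point requiring a little care is the claim that $I^k/I^{k+1}$ is a \emph{finite} set, which rests on the standard fact that a finitely generated module over a finite ring is finite (it is a quotient of $(A/I)^m$ for some $m$). Everything else is bookkeeping with the filtration by powers of $I$.
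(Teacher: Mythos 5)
Your proof is correct and follows essentially the same route as the paper: both arguments filter $A$ by the powers of the ideal and use that each quotient $I^k/I^{k+1}$ is a finitely generated module over the finite ring $A/I$, hence finite. The paper phrases this as an induction showing each $A/N^i$ is finite, while you multiply cardinalities along the filtration, but this is only a cosmetic difference.
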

\begin{proof}
Let $N$ be a f.g.~ideal of $A$ such that $A/N$ is finite, and $e\geq 1$ such that $N^e=0$. We show, by induction on $i=1,\dots,e$, that $A/N^i$ is finite. The case $i=1$ holds by assumption. Suppose now that we have already shown that $A/N^i$ is finite, where $i\in\{1,\dots,e-1\}$. Then $N^i/N^{i+1}$ is an $A/N$-module in a natural way, and f.g.~as such, hence finite. Since $A/N^i\cong (A/N^{i+1})/(N^i/N^{i+1})$, this yields that $A/N^{i+1}$ is also finite.
\end{proof}

\subsection{Jacobson rings}
{\it In this subsection we let $A$ be a ring.}\/ One calls $A$ a {\it Jacobson ring}\/ (also sometimes a {\it Hilbert ring}\/) if every prime ideal of $A$ is an intersection of maximal ideals; that is, if $\Nil(I)=\Jac(I)$ for every ideal $I$ of $A$.
The class of Jacobson rings is  closed under taking homomorphic images: if $A\to B$ is a surjective ring morphism and $A$ is a Jacobson ring, then $B$ is a Jacobson ring. 
Examples for Jacobson rings include all fields and the ring $\ZZ$ of integers, or more generally, every principal ideal domain with infinitely many pairwise non-associated primes.
The main interest in Jacobson rings in commutative algebra and algebraic geometry is their relation with Hilbert's Nullstellensatz, an abstract version of which states that if $A$ is a Jacobson ring, then so is any f.g.~$A$-algebra $B$; in this case, the pullback of any maximal ideal $\frak n$ of $B$ is a maximal ideal $\frak m$ of~$A$, and $B/\frak n$ is a finite extension of the field $A/\frak m$. In particular, every f.g.~ring is a Jacobson ring.

\begin{lemma} \label{Field implies finite}
Suppose $A$ is a field which is f.g.~as a ring. Then $A$ is finite.
\end{lemma}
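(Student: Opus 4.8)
The plan is to split on the characteristic of $A$ and invoke in each case the abstract form of the Nullstellensatz recorded above for Jacobson rings. Write $R$ for the prime subring of $A$, so that $R\cong\ZZ$ or $R\cong\FF_p$ for some prime $p$; since $A$ is finitely generated as a ring, it is in particular a finitely generated $R$-algebra.

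First I would rule out characteristic zero. Suppose $\cha(A)=0$, so $R=\ZZ$, which is a Jacobson ring. As $A$ is a field, the zero ideal $(0)$ is a maximal ideal of $A$, so by the quoted Nullstellensatz its contraction to $R=\ZZ$ must be a maximal ideal of $\ZZ$. But that contraction is $(0)$, which is not maximal in $\ZZ$ — a contradiction. Hence $\cha(A)=p>0$.

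Now $R=\FF_p$ is a field, hence a Jacobson ring, and $A$ is a finitely generated $\FF_p$-algebra. Applying the same Nullstellensatz to the maximal ideal $(0)$ of the field $A$: its contraction to $\FF_p$ is the maximal ideal $(0)$ of $\FF_p$, and $A=A/(0)$ is a \emph{finite} extension of the field $\FF_p=\FF_p/(0)$. A finite-degree extension of a finite field is finite, so $A$ is finite.

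There is essentially no serious obstacle here: the whole content is already packaged in the statement of the abstract Nullstellensatz for Jacobson rings quoted just above, and the only point needing a moment's attention is that the characteristic-zero case must be excluded (via the non-maximality of $(0)$ in $\ZZ$) before the finite-extension argument becomes available.
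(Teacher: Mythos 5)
Your proof is correct and is essentially the paper's argument: both apply the abstract Nullstellensatz for Jacobson rings to the maximal ideal $(0)$ of $A$ and conclude that $A$ is a finite extension of a finite prime field. The paper merely compresses your two cases into one step by working over $\ZZ$ throughout — the contraction of $(0)$ must be a maximal ideal of $\ZZ$, hence $p\ZZ$ for some prime $p$, which already excludes characteristic zero.
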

\begin{proof}
The pullback $\frak m$ of the maximal ideal $\{0\}$ of $A$ is maximal ideal of $\ZZ$, that is, $\frak m=p\ZZ$ for some prime number $p$, and $A$ is a finite extension of the finite field $\ZZ/p\ZZ$, hence finite.
\end{proof}

\begin{cor} \label{cor:finite = 0-dim}
Suppose $A$ is f.g. Then
$A$ is finite if and only if $\Spec^\circ(A)=\emptyset$, that is, every prime ideal of $A$ is maximal.
\end{cor}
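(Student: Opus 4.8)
The plan is to prove both implications using the results already established. For the forward direction, suppose $A$ is finite. Then $A$ has only finitely many ideals, hence finitely many prime ideals, and each prime ideal $\mathfrak p$ of $A$ yields a finite integral domain $A/\mathfrak p$; but a finite integral domain is a field, so $\mathfrak p$ is maximal. Thus $\Spec^\circ(A) = \Spec(A) \setminus \Max(A) = \emptyset$.

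For the converse, suppose $A$ is f.g.\ and every prime ideal of $A$ is maximal; I want to conclude that $A$ is finite. The idea is to reduce to Lemma~\ref{lem:finite ring} via the nilradical $N = N(A)$. Since $A$ is f.g., $N$ is f.g.\ (as $A$ is Noetherian), hence nilpotent, say $N^e = 0$. By Lemma~\ref{lem:finite ring} it therefore suffices to show that $A/N = A_{\operatorname{red}}$ is finite. So we may assume $A$ is reduced. Now $A$ is f.g.\ and reduced, with $\Spec(A) = \Max(A)$. Being f.g., $A$ is Noetherian, so it has only finitely many minimal primes $\mathfrak p_1, \dots, \mathfrak p_r$; since $A$ is reduced, $\mathfrak p_1 \cap \dots \cap \mathfrak p_r = N(A) = 0$, giving an injection $A \hookrightarrow \prod_{i=1}^r A/\mathfrak p_i$. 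Each $A/\mathfrak p_i$ is a f.g.\ integral domain in which $(0)$ is maximal (since every prime of $A/\mathfrak p_i$ is maximal), i.e.\ $A/\mathfrak p_i$ is a field which is f.g.\ as a ring, hence finite by Lemma~\ref{Field implies finite}. Therefore $A$ embeds into a finite product of finite rings and is itself finite.

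The main obstacle — really the only subtle point — is the reduction step and the invocation of Noetherianity: one must remember that a f.g.\ ring is Noetherian (Hilbert basis theorem applied to $\ZZ[X_1,\dots,X_n] \twoheadrightarrow A$), so that $N(A)$ is f.g.\ (hence nilpotent) and the set of minimal primes is finite. Everything else is a routine assembly of Lemma~\ref{lem:finite ring}, Lemma~\ref{Field implies finite}, and the primary-decomposition-free fact that in a reduced Noetherian ring the zero ideal is the intersection of the finitely many minimal primes. An alternative, slightly slicker route avoids minimal primes entirely: a f.g.\ ring with $\Spec^\circ(A) = \emptyset$ has $\dim A = 0$; combined with Noetherian this forces $A$ to be Artinian, hence a finite product of Artinian local rings, each of which is a f.g.\ ring with finite residue field and nilpotent maximal ideal, hence finite by Lemma~\ref{lem:finite ring}. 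I would present the minimal-primes argument since it is the most self-contained given what has been set up in the excerpt.
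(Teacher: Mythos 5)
Your proof is correct and follows essentially the same route as the paper: both reduce to $A_{\operatorname{red}}$ via Lemma~\ref{lem:finite ring}, observe that the finitely many (minimal $=$ maximal) primes have finite residue fields by Lemma~\ref{Field implies finite}, and conclude that $A/N$ is finite as a (sub)ring of the product of these fields. The only cosmetic difference is that you use the injection into the product of the $A/\mathfrak p_i$ where the paper invokes the Chinese Remainder Theorem to get an isomorphism.
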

\begin{proof}
We may assume that $A$ is nontrivial. A nontrivial ring is called zero-dimensional 
if it has no non-maximal prime ideals.
Every nontrivial finite ring (in fact, every nontrivial artinian ring \cite[Example~2, \S{}5]{Mats}) is zero-dimensional. Conversely, assume that $A$ is zero-dimensional. 
Then $A$ has only finitely many pairwise distinct maximal ideals $\frak m_1,\dots,\frak m_k$, and 
setting $N:=N(A)$, we have $N = \frak m_1\cap\cdots\cap\frak m_k$.   Each of the fields $A/\frak m_i$ is f.g.~as a ring, hence finite, by Lemma~\ref{Field implies finite}. By the Chinese Remainder Theorem, $A/N\cong (A/\frak m_1)\times\cdots\times (A/\frak m_k)$, thus $A/N$ is finite. Hence by Lemma~\ref{lem:finite ring}, $A$ is finite.
\end{proof}

\noindent
Given an element $a$ of a ring, we say that $a$ has infinite multiplicative order if $a^m\neq a^n$ for all $m\neq n$.

\begin{cor}\label{cor:inf mult order}
Every infinite f.g.~ring contains an element of infinite multiplicative order.
\end{cor}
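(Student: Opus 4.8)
The plan is to reduce to the case of an integral domain and then exploit the fact that in a domain, having finite multiplicative order is the same as being zero or a root of unity, so a domain all of whose elements have finite multiplicative order is a field --- which cannot be both infinite and finitely generated.

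First I would use Corollary~\ref{cor:finite = 0-dim}: since $A$ is infinite and f.g., $\Spec^\circ(A)\neq\emptyset$, so there is a non-maximal prime ideal $\mathfrak p$ of $A$. The quotient $A/\mathfrak p$ is an integral domain, finitely generated as a ring, and it is infinite --- for a finite integral domain is a field, whereas $A/\mathfrak p$ is not a field because $\mathfrak p$ is not maximal. Any preimage in $A$ of an element of infinite multiplicative order in $A/\mathfrak p$ again has infinite multiplicative order, so it suffices to prove the statement when $A$ itself is an infinite f.g.~integral domain.

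Next I would argue by contradiction: suppose every $a\in A$ has finite multiplicative order, i.e., $a^m=a^n$ for some $m>n$. If $a\neq 0$, then $a^n$ is a nonzero, hence cancellable, element of the domain $A$, so $a^{m-n}=1$; in particular $a$ is a unit. Thus $A$ is a field, and by Lemma~\ref{Field implies finite} it is finite, contradicting that $A$ is infinite. Hence $A$ contains an element of infinite multiplicative order.

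I do not expect any genuine obstacle here; the only points needing a word of care are that $A/\mathfrak p$ is really infinite (handled by ``finite domain $=$ field'' together with non-maximality of $\mathfrak p$) and that in a domain ``$a^m=a^n$ for some $m\neq n$'' upgrades, for $a\neq 0$, to ``$a$ is a root of unity'' via cancellation. Everything else is immediate from the two corollaries already established.
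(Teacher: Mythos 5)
Your proof is correct and follows essentially the same route as the paper: both invoke Corollary~\ref{cor:finite = 0-dim} to produce a non-maximal prime $\mathfrak p$ and then exploit that the domain $A/\mathfrak p$ is not a field. The paper merely packages the endgame differently --- it exhibits a concrete witness $a\in A\setminus\mathfrak p$ with $1\notin(a,\mathfrak p)$, so that $a^m=a^n$ would force $1\in(a,\mathfrak p)$ via the same cancellation step in $A/\mathfrak p$ that you use --- whereas you argue by contradiction through Lemma~\ref{Field implies finite}; the two verifications are interchangeable.
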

\begin{proof}
Let $A$ be f.g.~and infinite, and let $\mathfrak p$ be a non-maximal prime ideal of $A$, according to the previous corollary. Take $a\in A\setminus\mathfrak p$ such that $1\notin (a,\mathfrak p)$. Then $a$ has infinite multiplicative order.
\end{proof}

\noindent
It is a classical fact that if $A$ is noetherian of (Krull) dimension at most $n$, then every radical ideal of $A$ is the nilradical of an ideal generated by~${n+1}$ elements.
(This is due to Kronecker~\cite{Kronecker} in the case where $A$ is a polynomial ring over a field, and to
van der Waerden in general; see \cite{EE}.) 
 
\begin{lemma}\label{lem:def prime ideals}
There exist formulas 
$$\pi_n(y_1,\dots,y_{n+1}),\ \mu_n(y_1,\dots,y_{n+1}),\ \Pi_n(x,y_1,\dots,y_{n+1})$$ with the following property: if $A$ is a noetherian Jacobson ring of dimension at most $n$, then
\begin{align*}
\Spec A 	&= \big\{ \Pi_n(A,a): a\in A^{n+1},\ A\models\pi_n(a) \big\} \\
\Max A 	&= \big\{ \Pi_n(A,a): a\in A^{n+1},\ A\models\mu_n(a) \big\}.
\end{align*}
\end{lemma}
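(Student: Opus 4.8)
The plan is to combine the classical fact stated just before the lemma --- in a noetherian ring of dimension at most $n$, every radical ideal is the nilradical of an ideal generated by $n+1$ elements --- with the uniform definability of the Jacobson radical recorded in \S\ref{sec:prelims algebra}. The one non-routine point is that although the nilradical operation $I\mapsto\Nil(I)$ is not uniformly definable over all rings, on a \emph{Jacobson} ring it agrees with the Jacobson radical $I\mapsto\Jac(I)$, and the latter \emph{is} uniformly definable. Accordingly, $\Pi_n$ will define the nilradical of the ideal generated by its last $n+1$ arguments, and $\pi_n$, $\mu_n$ will express, purely in terms of $\Pi_n$, that this nilradical is a prime, resp.\ a maximal, ideal. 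The Kronecker--van der Waerden fact then guarantees that $\Pi_n(A,a)$ runs exactly through $\Spec A$ (resp.\ $\Max A$) as $a$ runs through the tuples satisfying $\pi_n$ (resp.\ $\mu_n$).

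Concretely: write $\bar y$ for the tuple $(y_1,\dots,y_{n+1})$, and first let
$$\varphi_n(x,\bar y)\ :=\ \exists z_1\cdots\exists z_{n+1}\ \bigl(x = z_1y_1+\cdots+z_{n+1}y_{n+1}\bigr),$$
which in any ring defines the ideal $(y_1,\dots,y_{n+1})$. Set $\Pi_n(x,\bar y) := \Jac(\varphi_n)(x,\bar y)$, applying the syntactic operation $\varphi\mapsto\Jac(\varphi)$ of \S\ref{sec:prelims algebra} with the variables $\bar y$ carried along unchanged; by the property recorded there, $\Pi_n(x,\bar y)$ defines $\Jac\bigl((y_1,\dots,y_{n+1})\bigr)$ in every ring. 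Since $0$, $1$ and additive inverses are $\emptyset$-definable in the language $\{{+},{\times}\}$ of rings, I may use them freely as terms; in particular $\neg\Pi_n(1,\bar y)$ says that the ideal defined by $\Pi_n(x,\bar y)$ is proper. Now put
$$\pi_n(\bar y)\ :=\ \neg\Pi_n(1,\bar y)\ \wedge\ \forall s\,\forall t\,\bigl(\Pi_n(st,\bar y)\to\Pi_n(s,\bar y)\vee\Pi_n(t,\bar y)\bigr),$$
$$\mu_n(\bar y)\ :=\ \neg\Pi_n(1,\bar y)\ \wedge\ \forall s\,\bigl(\neg\Pi_n(s,\bar y)\to\exists t\,\Pi_n(1-st,\bar y)\bigr),$$
so that $\pi_n$ asserts that the ideal defined by $\Pi_n(x,\bar y)$ is prime, and $\mu_n$ that the associated quotient ring is a field, i.e.\ that this ideal is maximal.

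It remains to check the two displayed equalities for a noetherian Jacobson ring $A$ of dimension at most $n$. Fix $a=(a_1,\dots,a_{n+1})\in A^{n+1}$ and write $\mathfrak{r}_a := \Nil\bigl((a_1,\dots,a_{n+1})\bigr)$. Since $A$ is Jacobson, $\Jac(I)=\Nil(I)$ for every ideal $I$ of $A$, whence $\Pi_n(A,a)=\Jac\bigl((a_1,\dots,a_{n+1})\bigr)=\mathfrak{r}_a$. Unwinding definitions, $A\models\pi_n(a)$ iff $\mathfrak{r}_a\neq A$ and $st\in\mathfrak{r}_a$ implies $s\in\mathfrak{r}_a$ or $t\in\mathfrak{r}_a$ for all $s,t\in A$, i.e.\ iff $\mathfrak{r}_a$ is prime; and $A\models\mu_n(a)$ iff $\mathfrak{r}_a\neq A$ and every element of $A$ outside $\mathfrak{r}_a$ is invertible modulo $\mathfrak{r}_a$, i.e.\ iff $A/\mathfrak{r}_a$ is a field, i.e.\ iff $\mathfrak{r}_a$ is maximal. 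This gives ``$\supseteq$'' in both equalities. For ``$\subseteq$'', every prime (in particular every maximal) ideal $\mathfrak p$ of $A$ is radical, so by the Kronecker--van der Waerden fact there is $a\in A^{n+1}$ with $\mathfrak p=\Nil\bigl((a_1,\dots,a_{n+1})\bigr)=\mathfrak{r}_a=\Pi_n(A,a)$; as $\mathfrak{r}_a=\mathfrak p$ is then prime we get $A\models\pi_n(a)$, and if $\mathfrak p$ is moreover maximal, $A\models\mu_n(a)$. The only genuine subtlety is the equality $\Jac=\Nil$ --- this is exactly where the Jacobson hypothesis enters, together with the noetherian, finite-dimensional hypothesis that the Kronecker--van der Waerden representation requires --- everything else being routine first-order translation.
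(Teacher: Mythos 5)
Your proof is correct and follows essentially the same route as the paper: define $\Pi_n$ as the Jacobson radical of the ideal generated by the $n+1$ parameters via the uniform formula $\Jac(\varphi)$, let $\pi_n$ and $\mu_n$ express primality and maximality of that radical, and invoke the Kronecker--van der Waerden theorem together with $\Jac=\Nil$ in Jacobson rings for surjectivity. Your explicit properness conjunct $\neg\Pi_n(1,\bar y)$ is in fact a small improvement, since without it the improper ideal $A$ would vacuously satisfy both $\pi_n$ and $\mu_n$.
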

\begin{proof}
For every $n$ let
\begin{align*}
\gamma_n(x,y_1,\dots,y_n)	&:= \exists z_1 \cdots \exists z_n (x=y_1z_1+\cdots+y_nz_r), \\
\Jac_n(x,y_1,\dots,y_n)		&:= \Jac(\gamma_{n}).
\end{align*}
Then for every $n$-tuple $a=(a_1,\dots,a_n)$ of elements of  $A$, the formula $\gamma_n(x,a)$ defines the ideal of $A$ generated by $a_1,\dots,a_n$, and $\Jac_n(x,a)$ 
defines its Jacobson radical. Writing $y$ for $(y_1,\dots,y_{n+1})$, the formulas
\begin{align*}
\pi_n(y) 	&:=  \forall v \forall w \big( \Jac_{n+1}(v\cdot w,y) \rightarrow (\Jac_{n+1}(v,y) \vee \Jac_{n+1}(w,y) \big), \\
\mu_n(y)		&:= \forall v \exists w \big( \Jac_{n+1}(v,y) \vee \Jac_{n+1}(1-vw,y) \big), \\
\Pi_n(x,y) 	&:= \Jac_{n+1}(x,y)
\end{align*}
have the required property, by Kronecker's Theorem.
\end{proof}

\begin{remarks*} 
\ 
 
\begin{enumerate}
\item The previous lemma holds if the noetherianity hypothesis is dropped and  $\Spec A$ and $\Max A$ are replaced with the set of f.g.~prime ideals of $A$ and the set of f.g.~maximal ideals of $A$, respectively, by a non-noetherian analogue of Kronecker's Theorem due to Heitmann \cite[Corollary~2.4,~(ii) and Remark~(i) on p.~168]{Hei}.
\item Let $\pi_n$, $\mu_n$, $\Pi_n$ be as in Lemma~\ref{lem:def prime ideals}, and set
$\pi_n^\circ := \pi_n\wedge\neg\mu_n$. Then for every noetherian Jacobson ring $A$ of  dimension at most $n$ we have
$$\Spec^\circ A 	= \big\{ \Pi_n(A,a): a\in A^{n+1},\ A\models\pi_n^\circ(a) \big\}.$$
Hence for every such ring $A$,
we have $A\models \forall y_1\cdots\forall y_{n+1}\neg\pi_n^\circ$ iff $\dim A<1$.
(Using the inductive characterization of Krull dimension from \cite{CLR}, one can actually construct, for each $n$,
a sentence $\dim_{<n}$ such that for all Jacobson rings $A$, we have
$A\models \dim_{<n}$ iff $\dim A<n$.)
\end{enumerate}
\end{remarks*}

\subsection{Subrings of a localization}
In this subsection we let $R$ be a ring and $D$ be a subring of $R$.

\begin{prop}\label{prop:fg criterion}
Suppose that $D$ is a Dedekind domain and  
$D[c^{-1}]=R[c^{-1}]$ for some $c\in D\setminus\{0\}$. Then $R$ is a f.g.~$D$-algebra.
\end{prop}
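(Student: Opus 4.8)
The plan is to exploit the fact that $D$ is Dedekind—so a Noetherian normal domain of dimension $\leq 1$—together with the hypothesis $D[c^{-1}] = R[c^{-1}]$, which says that $R$ becomes a finitely generated $D$-algebra (in fact generated by nothing, i.e.\ equal to $D[c^{-1}]$) after inverting the single element $c$. The key point is that for each $r \in R$ we have $c^{n(r)} r \in D$ for some $n(r) \geq 0$, so $R$ sits between $D$ and $D[c^{-1}] = \bigcup_n c^{-n}D$, and the problem is to produce a \emph{uniform} bound on the powers of $c$ needed, which would immediately exhibit $R$ as a finitely generated $D$-module (hence a finitely generated $D$-algebra).

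First I would reduce to a localization question at the primes $\fp$ of $D$ dividing $c$. Away from the support of $c$ we have $D_\fp = R_\fp$ already (localizing $D[c^{-1}] = R[c^{-1}]$ at such $\fp$), so $R$ and $D$ differ only ``at $c$.'' Since $D$ is Dedekind, only finitely many primes $\fp_1,\dots,\fp_k$ contain $c$, and each $D_{\fp_i}$ is a discrete valuation ring with fraction field $K = \Frac(D)$. Next I would pass to the normalization: since $R \subseteq D[c^{-1}] \subseteq K$, the ring $R$ is a domain with fraction field contained in $K$, and in fact $K \subseteq R[c^{-1}] = D[c^{-1}]$, so $\Frac(R) = K$. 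Thus $R$ is an intermediate ring $D \subseteq R \subseteq K$. Now for each discrete valuation $v = v_{\fp_i}$, the local ring $R_{\fp_i}$ (localizing the inclusion $D \hookrightarrow R$ at the multiplicative set $D \setminus \fp_i$) is a ring between $D_{\fp_i}$ and $K$; such rings are exactly $D_{\fp_i}$ itself or $K$, and since $1/c \in R[c^{-1}]$ forces nothing problematic, the point is that $R_{\fp_i}$ is a valuation ring dominated by... — more carefully, any ring between a DVR and its fraction field is either the DVR or the whole field. If $R_{\fp_i} = K$ for some $i$, I would need to check this is still fine; in general set $N := \max_i\{\text{the order of vanishing that } c \text{ can correct}\}$.

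Here is the cleaner route, which I expect to be the real argument. Because $R \subseteq D[c^{-1}]$ and $D$ is Noetherian, consider $R$ as a $D$-submodule of the finitely generated $D$-module... no — $D[c^{-1}]$ is \emph{not} finitely generated over $D$. Instead: write $R' := D + cD[c^{-1}] \cap R$, and argue by the valuative criterion. For each $i$ and each $r \in R$, let $e_i := v_{\fp_i}(r) \geq -m$ where $c^m r \in D$; the finitely many primes $\fp_i$ over $c$ and the Dedekind (hence Krull) structure let me define $n := \max_i\{\, -\min_{r \in R} v_{\fp_i}(r)\,\}$, and the claim is that this max is \emph{finite}. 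This finiteness is the heart of the matter and the main obstacle: it amounts to showing that the ``denominators'' in $c$ appearing across all of $R$ are bounded. I would prove it by contradiction: if at some $\fp_i$ the valuations $v_{\fp_i}(r)$ for $r \in R$ were unbounded below, then one shows $1/c^m \in R$ for all $m$ at that prime, which propagates (using that $R$ is a ring and $R[c^{-1}] = D[c^{-1}]$) to give enough elements; alternatively, and more robustly, I would invoke that $R \otimes_D D_{\fp_i} = R_{\fp_i}$ is a ring strictly between the DVR $D_{\fp_i}$ and $K$ only if it equals $K$, in which case $1/c \in R_{\fp_i}$, and then show this forces $R$ to be finitely generated anyway by a direct generator count. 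Once $n$ is established, $c^n R \subseteq D$, so $R \subseteq c^{-n} D$; since $c^{-n}D$ is a finitely generated (free, rank one) $D$-module and $D$ is Noetherian, $R$ is a finitely generated $D$-module, hence a fortiori a finitely generated $D$-algebra, completing the proof.

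The step I expect to be the main obstacle is precisely the uniform bound: ruling out that $R$ acquires arbitrarily high powers of $c^{-1}$ in its ``denominators'' at some prime over $c$. The resolution should come from the observation that at a DVR $D_\fp$ there is \emph{no} ring strictly between $D_\fp$ and $K$, forcing $R_\fp \in \{D_\fp, K\}$; if $R_\fp = D_\fp$ for all $\fp \mid c$ then $R = D$ and we are trivially done, and if $R_\fp = K$ for some $\fp \mid c$ one handles these finitely many ``bad'' primes separately by noting $R$ then contains $D[c^{-1}]$ localized suitably and a short computation recovers finite generation. Everything else—passing between $R$, its localizations, and $D[c^{-1}]$—is routine commutative algebra using that $D$ is a one-dimensional Noetherian normal domain.
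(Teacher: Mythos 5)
Your central strategy---producing a uniform $n$ with $c^nR\subseteq D$, so that $R\subseteq c^{-n}D$ is a finitely generated $D$-module---cannot work: the proposition only asserts that $R$ is a f.g.\ $D$-\emph{algebra}, and already $D=\ZZ$, $c=2$, $R=\ZZ[1/2]$ satisfies the hypotheses while $c^nR\not\subseteq D$ for every $n$ (and $R$ is not a f.g.\ $\ZZ$-module). In your own terms this is the case $R_{\fp}=K$ for some $\fp\mid c$; and since ``$R_\fp=D_\fp$ for all $\fp$'' forces $R=\bigcap_\fp D_\fp=D$, the ``bad prime'' case is not an edge case to be dispatched by ``a short computation''---it is the entire content of the proposition, and your proposal gives no argument for it. A repair along your lines would have to show (i) that $R=\bigcap_{\fq\notin T}D_\fq$, where $T$ is the finite set of primes (all containing $c$) with $R_\fq=K$ --- the inclusion $\supseteq$ is the nontrivial one and uses that every local overring of the DVR $D_\fq$ other than $K$ equals $D_\fq$ --- and (ii) that this intersection is the $D$-algebra generated by the f.g.\ fractional ideal $\prod_{\fp\in T}\fp^{-1}$. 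Neither step is routine, and neither appears in the sketch. (Two smaller slips: $K\not\subseteq D[c^{-1}]$ in general, though $\Frac(R)=K$ does follow from $D\subseteq R\subseteq K$; and ``$1/c\in R_{\fp_i}$'' is a consequence of $R_{\fp_i}=K$, not an independent handle on the problem.)

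You did isolate the one algebraic fact the paper also relies on, namely that there is no ring properly between a DVR and its fraction field, but the paper wraps it in a different global argument that avoids identifying $R$ explicitly. It first proves (Onoda's lemma) that $I=\{r\in R: r=0$, or $r\neq 0$ and $R[r^{-1}]$ is a f.g.\ $D$-algebra$\}$ is an \emph{ideal} of $R$, and separately that whenever $R[S^{-1}]=D[S^{-1}]$ for a multiplicative $S\subseteq D$ there is a single $s\in S$ with $R[s^{-1}]=D[s^{-1}]$ (because only finitely many primes $Q$ of $D$ fail $D_Q\supseteq R$, all containing $c$). If $1\notin I$, one takes a prime $P\supseteq I$ of $R$, sets $Q=P\cap D$ and $S=D\setminus Q$, gets $R[S^{-1}]=R_P=D_Q=D[S^{-1}]$ from the DVR fact, and hence some $s\in S$ with $s\in I\setminus P$, a contradiction. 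You would need either to carry out (i)--(ii) above in full or to adopt an argument of this ideal-theoretic kind.
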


\noindent
This can be deduced from \cite[Theorem~2.20]{Onoda}, but we give a direct proof based on
a simple lemma from this paper:

\begin{lemma}[Onoda \cite{Onoda}]\label{lem:ideal}
Suppose $R$ is an integral domain. Then the set of $c\in R$ such that $c=0$ or
$c\neq 0$ and $R[c^{-1}]$ is a f.g.~$D$-algebra is an ideal of $R$.
\end{lemma}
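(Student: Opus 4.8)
The plan is to verify directly that the set $J := \{c \in R : c = 0 \text{ or } R[c^{-1}] \text{ is a f.g.\ } D\text{-algebra}\}$ is closed under addition and under multiplication by arbitrary elements of $R$, using that $R$ is a domain. The key structural fact I would exploit is the standard identity $R[(cc')^{-1}] = R[c^{-1}][(c')^{-1}] = \big(R[c^{-1}]\big)[(c'/1)^{-1}]$ inside the fraction field $\Frac(R)$, together with the observation that localizing a f.g.\ algebra at one element yields a f.g.\ algebra (adjoin the inverse as one more generator).

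First I would handle the easy closure under $R$-multiples: if $c \in J \setminus \{0\}$ and $r \in R$ with $rc \neq 0$, then $R[(rc)^{-1}] \supseteq R[c^{-1}]$, and moreover $R[(rc)^{-1}] = R[c^{-1}]\big[(r)^{-1}\big]$ is obtained from the f.g.\ $D$-algebra $R[c^{-1}]$ by inverting the single element $r$, hence is f.g.\ over $D$; so $rc \in J$. (The case $rc = 0$ is trivial.) The heart of the matter is additive closure: given nonzero $c, c' \in J$ with $c + c' \neq 0$, I must show $R[(c+c')^{-1}]$ is a f.g.\ $D$-algebra. Here the trick is to pass to the larger localization $S := R[(cc')^{-1}]$, which is f.g.\ over $D$ since $S = R[c^{-1}][(c')^{-1}]$ is a one-element localization of the f.g.\ algebra $R[c^{-1}]$. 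Now inside $S$ both $c$ and $c'$ are units, so $c + c'$ is a unit in $S$ precisely when $1 + c'/c$ — equivalently $1 + c^{-1}c'$ — is, which need not hold; but the point is rather that $R[(c+c')^{-1}]$ sits between $R$ and $\Frac(R)$, and I claim $S = R[(c+c')^{-1}]\big[(\,\cdot\,)^{-1}\big]$ for a suitable single element, so that once we know $R[(c+c')^{-1}]$ is an intermediate ring it is f.g.\ as soon as $S$ is --- this requires a Noether-type argument.

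The main obstacle, and the step I would spend the most care on, is exactly this last point: deducing that an intermediate ring $R \subseteq T \subseteq S$ is f.g.\ over $D$ from the fact that $S$ is. This is false in general, so the argument must use the specific shape of $T = R[(c+c')^{-1}]$. The right approach is: in $S$, write $d := c + c'$; then $c = d \cdot u$ where $u = c/(c+c') \in S$, and since $c$ is a unit in $S$ we get $d^{-1} = u c^{-1} \in S$, so indeed $T \subseteq S$, and in fact $S = T[c^{-1}]$ because $c = d u^{-1}\cdot(\text{unit})$... — more cleanly, $S = R[(cc')^{-1}] = R[c^{-1}, c'^{-1}]$ and $c' = d - c$, so $S = R[c^{-1}, (d-c)^{-1}] = T[c^{-1}]$ once $d^{-1} \in T$ is available, which it is by definition of $T$. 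Thus $S$ is a one-element localization $T[c^{-1}]$ of $T$. But this still only gives that $S$ is f.g.\ \emph{over $T$}, not the converse. The genuine tool needed is the Artin–Tate lemma (or the Eakin–Nagata theorem): $D$ is noetherian, $S$ is a f.g.\ $D$-algebra hence noetherian, $T$ is an intermediate ring, and $S$ is \emph{module-finite} over $T$? — no, it is not. So instead I would invoke the precise statement behind \cite[Theorem 2.20]{Onoda}, or reprove it: if $R \subseteq T \subseteq S$ with $S$ a f.g.\ $R$-algebra and $S = T[c^{-1}]$ for $c \in R$, then $T$ is a f.g.\ $R$-algebra. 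This is the one nonformal ingredient; granting it (it is the cited lemma from Onoda's paper in a mild disguise), additive closure follows, and combined with the $R$-multiple closure and the trivial fact $0 \in J$, we conclude $J$ is an ideal of $R$. $\qed$
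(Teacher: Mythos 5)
Your closure under multiplication by elements of $R$ is fine, but the additive step has a genuine gap: the containments on which you build the argument are false. For a domain $R$ and nonzero $c,c'$ with $d:=c+c'\neq 0$, it is \emph{not} true that $T:=R[d^{-1}]$ is contained in $S:=R[(cc')^{-1}]$, nor that $S=T[c^{-1}]$. Take $R=\ZZ$, $c=2$, $c'=3$: then $S=\ZZ[1/6]$ while $T=\ZZ[1/5]$, and neither ring contains the other; likewise $T[c^{-1}]=\ZZ[1/10]\neq\ZZ[1/6]=S$. You notice midway that $c+c'$ need not be a unit in $S$, but the subsequent claim that $d^{-1}=uc^{-1}\in S$ is circular: it presupposes $u=c/(c+c')\in S$, which (since $c$ is a unit of $S$) is exactly equivalent to $d^{-1}\in S$. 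Because $T\not\subseteq S$, no intermediate-ring or Artin--Tate-type argument starting from $S$ can recover $T$; and the ``one nonformal ingredient'' you propose to grant at the end is, in this setting, essentially the lemma being proved, so the proof does not close.

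The paper's argument avoids iterated localizations entirely and works inside $R$. Since $R[a_i^{-1}]$ is a f.g.\ $D$-algebra for $i=1,2$, one can choose a single f.g.\ $D$-subalgebra $B\subseteq R$ with $a_1,a_2\in B$ and $B[a_i^{-1}]\supseteq R[a_i^{-1}]$ (adjoin to $D$ the elements $a_1,a_2$ together with the numerators of finitely many generators of each $R[a_i^{-1}]$). Then for any $x\in R$ there is $n\geq 1$ with $a_1^nx\in B$ and $a_2^nx\in B$, and in the binomial expansion of $(a_1+a_2)^{2n-1}x$ every term involves $a_1^k$ with $k\geq n$ or $a_2^{2n-1-k}$ with $2n-1-k\geq n$, hence lies in $B$; so $(a_1+a_2)^{2n-1}x\in B$ and $x\in B[(a_1+a_2)^{-1}]$. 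Therefore $R[(a_1+a_2)^{-1}]=B[(a_1+a_2)^{-1}]$ is a f.g.\ $D$-algebra. This binomial trick is the idea your proposal is missing.
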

\begin{proof}
Since this set clearly is closed under multiplication by elements of $R$, we only need
to check that it is closed under addition. Let $a_1,a_2\in R\setminus\{0\}$ be such that $a_1+a_2\neq 0$ and the $D$-algebra
$R[a_i^{-1}]$ is f.g., for $i=1,2$. So we can take a f.g.~$D$-algebra $B\subseteq R$ such that $a_i\in B$ and $B[a_i^{-1}]\supseteq R[a_i^{-1}]$, for $i=1,2$. Given $x\in R$, take $n\geq 1$ such that $a_i^n x\in B$;
then $(a_1+a_2)^{2n-1}x\in B$, so $x\in B[(a_1+a_2)^{-1}]$. Thus $R[(a_1+a_2)^{-1}]=B[(a_1+a_2)^{-1}]$ is a f.g.~$D$-algebra.
\end{proof}

\begin{proof}[Proof of Proposition~\ref{prop:fg criterion}]
Let $c$ be as in the statement of the proposition, and first let $S$ be a multiplicative subset of $D$ with $R[S^{-1}]=D[S^{-1}]$;
we claim that then there is some $s\in S$ such that $R[s^{-1}]=D[s^{-1}]$. To see this note that
for each $Q\in\Spec(D)$ with $D_Q\not\supseteq R$ we have $c\in Q$, since otherwise
$D_Q \supseteq D[c^{-1}]=R[c^{-1}]\supseteq R$, and for a similar reason we have $Q\cap S\neq\emptyset$.
Let $Q_1,\dots,Q_m$ be the prime ideals $Q$ of $D$  with $D_Q\not\supseteq R$. For $i=1,\dots,m$
pick some $s_i\in Q_i\cap S$ and set $s:=s_1\cdots s_m\in S\cap Q_1\cap\cdots\cap Q_m$. Then we have $D_Q\supseteq R$ for each $Q\in\Spec(D)$ with $s\notin Q$, and hence $D[s^{-1}]=\bigcap_{s\notin Q} D_Q\supseteq R$. Therefore $D[s^{-1}]=R[s^{-1}]$.

Let now $I$ be the ideal of $R$ defined in Lemma~\ref{lem:ideal}; we need to show that $1\in I$. 
Towards a contradiction assume that we have some prime ideal $P$ of $R$ which contains $I$.
Put $Q:=D\cap P\in\Spec(D)$ and $S:=D\setminus Q$. Then $D_Q=R_P$ (there is no proper intermediate ring between a DVR and its fraction field). In fact, we have $D[S^{-1}]=D_Q=R[S^{-1}]$, so by the above 
there is some $s\in S$ with $R[s^{-1}]=D[s^{-1}]$. Hence $s\in I\setminus P$, a contradiction.
\end{proof}

\begin{remark*}
We don't know whether the conclusion of Proposition~\ref{prop:fg criterion} can be strengthened
to: $R=D[r^{-1}]$ for some $r\in R\setminus\{0\}$.
\end{remark*}

\noindent
For a proof of the next lemma see, e.g., \cite[Proposition~7.8]{AM}.

\begin{lemma}[Artin-Tate \cite{AT}] \label{lem:Artin-Tate}
Suppose $D$ is noetherian  and $R$ is contained in a f.g.~$D$-algebra which is integral over $R$.
Then the $D$-algebra $R$ is also f.g.
\end{lemma}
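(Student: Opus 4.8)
The plan is to run the classical Artin--Tate argument. Write $C$ for the f.g.\ $D$-algebra with $R\subseteq C$ and $C$ integral over $R$, say $C=D[x_1,\dots,x_n]$. Since $D\subseteq R$, the ring $C$ is also finitely generated as an $R$-algebra, and being integral over $R$ it is therefore a finitely generated $R$-module; choose module generators $y_1,\dots,y_m\in C$, which we may take to include $1$. First I would express the algebra generators in terms of the module generators: there are $r_{ij}\in R$ with $x_i=\sum_j r_{ij}y_j$, and the multiplication of $C$ as an $R$-module yields elements $r_{ijk}\in R$ with $y_iy_j=\sum_k r_{ijk}y_k$.

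Next I would introduce the subring $R_0:=D[\,r_{ij},\,r_{ijk}\,]$ of $R$ generated over $D$ by these finitely many elements. By the Hilbert basis theorem $R_0$ is noetherian. The key claim is that $C$ is already finitely generated as an $R_0$-module, namely $C=\sum_j R_0 y_j$: the right-hand side is an $R_0$-submodule of $C$ that contains $D$ (since $1$ is among the $y_j$) and contains the $x_i$, and it is closed under multiplication because of the relations $y_iy_j=\sum_k r_{ijk}y_k$ with $r_{ijk}\in R_0$; hence it is a subring of $C$ containing all the algebra generators of $C$ over $D$, so it equals $C$.

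Finally, $C$ is a finitely generated module over the noetherian ring $R_0$, hence a noetherian $R_0$-module, so its $R_0$-submodule $R$ is a finitely generated $R_0$-module, say $R=\sum_l R_0 z_l$. Recalling $R_0=D[\,r_{ij},r_{ijk}\,]$, we conclude that $R=D[\,r_{ij},\,r_{ijk},\,z_l\,]$ is a finitely generated $D$-algebra, as desired.

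The only points requiring care are the passage from ``f.g.\ $R$-algebra and integral over $R$'' to ``f.g.\ $R$-module'' and the verification that $\sum_j R_0 y_j$ is multiplicatively closed; neither presents a genuine obstacle, and the argument is precisely that of \cite[Proposition~7.8]{AM}.
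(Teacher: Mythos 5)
Your argument is correct and is exactly the classical Artin--Tate proof that the paper invokes by citing \cite[Proposition~7.8]{AM}: pass to the finitely generated intermediate ring $R_0=D[r_{ij},r_{ijk}]$, observe $C$ is a finite module over the noetherian ring $R_0$, and extract module generators of the submodule $R$. All the steps, including the verification that $\sum_j R_0y_j$ is a subring containing the algebra generators, are carried out correctly.
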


\noindent
The following fact is used in Section~\ref{sec:integral domains}.

\begin{cor}\label{cor:fg criterion}
Suppose $D$ is a one-dimensional noetherian integral domain whose integral closure $\tilde{D}$ in the fraction field $K$ of $D$ is a f.g.~$D$-module. If $D[c^{-1}]=R[c^{-1}]$ for some $c\in D\setminus\{0\}$, 
then $R$ is a f.g.~$D$-algebra.
\end{cor}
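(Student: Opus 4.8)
The plan is to reduce the statement to Proposition~\ref{prop:fg criterion} by passing to the integral closure $\tilde D$ of $D$ in $K$, which is a Dedekind domain (being the integral closure of a one-dimensional noetherian domain in its fraction field, hence one-dimensional, noetherian by the hypothesis that $\tilde D$ is a f.g.\ $D$-module, and integrally closed by construction). First I would observe that $c \in D \setminus\{0\} \subseteq \tilde D \setminus \{0\}$, and from $D[c^{-1}] = R[c^{-1}]$ together with $D \subseteq \tilde D$ we get $\tilde D[c^{-1}] \subseteq R[c^{-1}][c^{-1}] = R[c^{-1}]$; but we need the reverse containment too, so let me instead work inside a common overring. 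Since $D$ is a domain, so is $R$ (it sits between $D$ and $D[c^{-1}] \subseteq K$), and all rings in sight embed in $K$. Then $D[c^{-1}] = R[c^{-1}]$ inside $K$, and replacing $D$ by $\tilde D$ we have $\tilde D[c^{-1}] = \tilde D \cdot D[c^{-1}] = \tilde D \cdot R[c^{-1}] = (\tilde D[R])[c^{-1}]$.

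Next, set $R' := \tilde D[R] = \tilde D \cdot R$, the subring of $K$ generated by $\tilde D$ and $R$. Then $\tilde D$ is a Dedekind domain, $\tilde D \subseteq R'$, and $\tilde D[c^{-1}] = R'[c^{-1}]$ with $c \in \tilde D \setminus \{0\}$, so Proposition~\ref{prop:fg criterion} applies and gives that $R'$ is a f.g.\ $\tilde D$-algebra. Since $\tilde D$ is a f.g.\ $D$-module, $\tilde D$ is in particular a f.g.\ $D$-algebra, and hence $R'$ is a f.g.\ $D$-algebra. It remains to descend from $R'$ to $R$. Here I would use the Artin--Tate lemma (Lemma~\ref{lem:Artin-Tate}): $D$ is noetherian, and we have the containment $R \subseteq R'$ with $R'$ a f.g.\ $D$-algebra; what we must check is that $R'$ is \emph{integral} over $R$. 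This holds because $R' = R[\tilde D]$ is generated over $R$ by finitely many elements of $\tilde D$ (as $\tilde D$ is a f.g.\ $D$-module, a fortiori finitely generated as a ring over $D \subseteq R$), and each such generator, lying in $\tilde D$, is integral over $D$ and hence over $R$; a ring extension generated by finitely many integral elements is integral. Therefore $R \subseteq R'$ with $R'$ integral over $R$ and $R'$ a f.g.\ $D$-algebra, so by Lemma~\ref{lem:Artin-Tate} the $D$-algebra $R$ is f.g., as desired.

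The main obstacle I anticipate is the bookkeeping around the equality $\tilde D[c^{-1}] = R'[c^{-1}]$ and making sure all localizations are carried out inside the single ambient field $K$, so that identities like $\tilde D \cdot R[c^{-1}] = (\tilde D \cdot R)[c^{-1}]$ are unambiguous; once everything is viewed inside $K$ this is routine. A secondary point to be careful about is verifying that $\tilde D$ is genuinely a Dedekind domain --- one needs it to be noetherian (given), integrally closed (clear), and of dimension one (which follows since $\tilde D$ is integral over the one-dimensional domain $D$, so going-up/lying-over gives $\dim \tilde D = \dim D = 1$), and not a field (it has the same fraction field $K$ as $D$, which is not a field since $D$ is one-dimensional). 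With those checks in place the argument is a clean two-step reduction: Proposition~\ref{prop:fg criterion} to handle the normal case, then Artin--Tate to come back down.
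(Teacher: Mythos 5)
Your proof is correct and takes essentially the same route as the paper's: reduce to the Dedekind domain $\tilde{D}$ so that Proposition~\ref{prop:fg criterion} applies, then descend to $R$ via the Artin--Tate Lemma~\ref{lem:Artin-Tate}; the only difference is your choice of intermediate ring, the compositum $R'=\tilde{D}\cdot R$, where the paper uses the integral closure $\tilde{R}$ of $R$ in $K$ (for which $\tilde{D}[c^{-1}]=\tilde{R}[c^{-1}]$ because integral closure commutes with localization), and both choices are integral over $R$ and work equally well. Your abandoned opening attempt to show $\tilde{D}[c^{-1}]\subseteq R[c^{-1}]$ directly is indeed false in general, but your final argument via $R'$ does not rely on it.
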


\begin{proof} 
Let $\tilde R$ be the integral closure of  $R$ in $K$. 
Suppose $c\in D\setminus\{0\}$ satisfies $D[c^{-1}]=R[c^{-1}]$. Then $\tilde D$ is a Dedekind domain and $\tilde{D}[c^{-1}]=\tilde{R}[c^{-1}]$. By Proposition~\ref{prop:fg criterion}, $\tilde{R}$ is a f.g.~$\tilde{D}$-algebra, and hence  also a f.g.~$D$-algebra. Lemma~\ref{lem:Artin-Tate} implies that $R$ is a f.g.~$D$-algebra.
\end{proof}


\subsection{Annihilators}
Let $A$ be a ring.
Given an $A$-module $M$ we denote by
$$\ann_A(M) := \{ a\in A:  aM=0 \}$$
the annihilator of $M$ (an ideal of $A$), and if $x$ is an element of $M$ we also write $\ann_A(x)$ for the annihilator of the submodule $Ax$ of $M$, called the annihilator of~$x$.
The annihilator $\ann_\ZZ(A)$ of $A$ viewed as a $\ZZ$-module is either the zero ideal, in which case we say that the characteristic of $A$ is $0$, or contains a smallest positive integer,  called the {\it characteristic}\/ of $A$. (Notation: $\operatorname{char}(A)$.)

\medskip
\noindent
In the following we
let $N:=N(A)$. We also set $A_\QQ:=A\otimes_\ZZ\QQ$, with natural morphism 
$$a\mapsto \iota(a):=a\otimes 1\colon A\to A_\QQ.$$ 
Its kernel is the torsion subgroup
$$A_{\operatorname{tor}} := \big\{ a\in A: \ann_\ZZ(a)\neq 0 \big\}$$
of the additive group of $A$.  
Suppose that the ideal~$A_{\operatorname{tor}}$ of $A$ is
finitely generated. Then 
there is some integer $e\geq 1$  such that $eA_{\operatorname{tor}}=0$;
the smallest such $e$ is called the  {\it exponent}\/ of $A_{\operatorname{tor}}$.
One checks easily that  then
\begin{align*}
\iota^{-1} N(A_\QQ) &= (N:e) := \{ a\in A: ea\in N\}, \\
\iota^{-1} \ann_{A_\QQ} \big(\iota(a)\big) & = \ann_A(ea)\quad\text{for each $a\in A$.}
\end{align*}
The following lemma on the existence of nilpotent elements with prime annihilators is used in Section~\ref{sec:derivations}. (Note that if $\epsilon$ is as in the conclusion of the lemma, then 
$\epsilon^2=0$ and
$A/\ann_A(\epsilon)$ is an integral domain of characteristic zero.)

\begin{lemma}
\label{annprime}
Suppose that $A$ is noetherian and $\ann_\ZZ(N) = 0$. Then there is some $\epsilon \in N$ with $\ann_A(\epsilon)$ prime and $\ann_\ZZ(\epsilon) = 0$.
\end{lemma}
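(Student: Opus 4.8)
The plan is to produce $\epsilon$ as an element of $N$ whose annihilator is an associated prime of the $A$-module $N$, choosing this associated prime so that it meets $\ZZ$ only in $0$. The hypothesis $\ann_\ZZ(N)=0$ is exactly what is needed to guarantee that such an associated prime exists.

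First, $N\neq 0$, since otherwise $\ann_\ZZ(N)=\ZZ\neq 0$. As $A$ is noetherian and $N$ is a nonzero finitely generated $A$-module, the set $\Ass_A(N)$ of associated primes of $N$ is finite and nonempty, say $\Ass_A(N)=\{\fp_1,\dots,\fp_k\}$; by definition each $\fp_i$ equals $\ann_A(x_i)$ for a suitable $x_i\in N$. Hence it suffices to find an index $j$ with $\fp_j\cap\ZZ=0$: taking $\epsilon:=x_j$ then makes $\ann_A(\epsilon)=\fp_j$ prime, and $\ann_\ZZ(\epsilon)$ is the preimage of $\fp_j$ under the structure map $\ZZ\to A$, which is a prime ideal of $\ZZ$, hence equal to $0$ precisely because $\fp_j\cap\ZZ=0$.

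Suppose for contradiction that $\fp_i\cap\ZZ\neq 0$ for all $i$; then each $\fp_i$ contains a prime number $p_i$. Fix a primary decomposition $0=Q_1\cap\dots\cap Q_k$ of the zero submodule of $N$ into $\fp_i$-primary submodules $Q_i$ of $N$. Since $N$ is noetherian, $N/Q_i$ is finitely generated and $\fp_i$ is the radical of $\ann_A(N/Q_i)$, so $\fp_i^{\,m_i}N\subseteq Q_i$ for some $m_i\geq 1$. Then $p_i^{\,m_i}N\subseteq\fp_i^{\,m_i}N\subseteq Q_i$ for every $i$, and therefore, with $d:=p_1^{\,m_1}\cdots p_k^{\,m_k}$, we obtain $dN\subseteq Q_1\cap\dots\cap Q_k=0$, contradicting $\ann_\ZZ(N)=0$. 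So some $\fp_j$ satisfies $\fp_j\cap\ZZ=0$, and $\epsilon:=x_j$ has the desired properties.

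The crux is the passage to the module $N$ and its associated primes, combined with the observation — made via primary decomposition — that if every associated prime of $N$ absorbed a rational prime then a single positive integer would annihilate $N$, contradicting the hypothesis. Everything else (finiteness and nonemptiness of $\Ass_A(N)$, existence of a primary decomposition of $0$ in $N$, and the fact that some power of $\fp_i$ lands in $\ann_A(N/Q_i)$) is standard for finitely generated modules over noetherian rings, so I do not expect a genuine obstacle here; the only thing to watch is to keep $N$ in the role of a module rather than of an ideal of $A$, so that this machinery is available.
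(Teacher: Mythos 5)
Your proof is correct, and it takes a noticeably different route from the paper's, though both ultimately produce $\epsilon$ as an element of $N$ whose annihilator is an associated prime of $N$ viewed as an $A$-module. The paper first passes to $A_\QQ=A\otimes_\ZZ\QQ$ via $\iota\colon a\mapsto a\otimes 1$: the hypothesis $\ann_\ZZ(N)=0$ guarantees $N(A_\QQ)\neq 0$, a maximal element $P$ of the set of annihilators of nonzero elements of $N(A_\QQ)$ is prime by the usual argument, and $\epsilon$ is obtained by pulling $P$ back along $\iota$ (after clearing denominators by the exponent $e$ of $A_{\operatorname{tor}}$); since every nonzero integer is a unit in $A_\QQ$, the condition $\ann_\ZZ(\epsilon)=0$ then comes for free. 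You instead stay inside $A$, list all of $\Ass_A(N)$, and exclude the bad case --- every associated prime meeting $\ZZ$ nontrivially --- by showing via primary decomposition that it would force a single nonzero integer to annihilate $N$. Your version avoids tensor products and the bookkeeping with $e$, at the cost of invoking primary decomposition (one could lighten this slightly by using instead that the radical of $\ann_A(N)$ is the intersection of the associated primes of $N$, so that $p_1\cdots p_k$ would lie in that radical). Two small points to make explicit: the primary decomposition should be taken irredundant so that its primes are exactly $\Ass_A(N)$, and your identification of $\ann_\ZZ(\epsilon)$ with $\fp_j\cap\ZZ$ tacitly uses that $\ZZ\to A$ is injective, which does follow from $\ann_\ZZ(N)=0$ since a positive characteristic would annihilate $N$.
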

\begin{proof}
Note that  the hypothesis $\operatorname{ann}_\ZZ(N) = 0$ implies not only that~$N$ is nonzero, but also that some nonzero element of $N$ remains nonzero under~$\iota$; in particular, $N(A_\QQ)\neq 0$.
Let~$\cA$ be the set of annihilators of nonzero elements of $N(A_\QQ)$.   Then~$\cA \neq \varnothing$, and as $A_\QQ$ is noetherian, we may find a maximal element $P \in \cA$.  Scaling if need be, we may assume that~$P= \ann_{A_\QQ}(\iota(a))$ where $a \in \iota^{-1} N(A_\QQ)=(N:e)$, $e=\text{exponent of $A_{\operatorname{tor}}$}$.  The ideal~$P$ is prime, as if $xy \iota(a) = 0$ while neither $x \iota(a) = 0$ nor $y \iota(a) = 0$, then $P\subseteq  \ann_{A_\QQ}(y\iota(a))$ with
$x \in \ann_{A_\QQ}(y\iota(a)) \setminus P$, contradicting maximality.  Thus, $\ann_A(ea) = \iota^{-1} P$ is prime and $\ann_\ZZ(ea) = 0$, so $\epsilon:=ea$ does the job.
\end{proof}

\subsection{A bijectivity criterion} 

In the proof of Proposition~\ref{prop:QFA fg ring} we apply the following criterion:

\begin{lemma}\label{lem:criterion for bijection}
Let $\phi\colon A\to B$ be a morphism of additively written abelian groups. Let $N$ be a subgroup of $A$.
Suppose that the restriction of $\phi$ to $N$ is injective, and the morphism $\overline{\phi}\colon A/N\to B/\phi(N)$ induced by $\phi$ is bijective.
Then $\phi$ is bijective.
\end{lemma}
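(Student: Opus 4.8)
The plan is to prove injectivity and surjectivity of $\phi$ separately, in each case using a small diagram chase through the short exact sequences $0\to N\to A\to A/N\to 0$ and $0\to\phi(N)\to B\to B/\phi(N)\to 0$, which $\phi$, $\phi|_N$, and $\overline\phi$ fit into as a commuting ladder.

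First I would prove injectivity. Suppose $a\in A$ with $\phi(a)=0$. Then the image of $a$ in $A/N$ maps under $\overline\phi$ to the image of $\phi(a)=0$ in $B/\phi(N)$, which is $0$; since $\overline\phi$ is injective, $a\in N$. But then $a$ lies in the kernel of $\phi|_N$, which is trivial by hypothesis, so $a=0$.

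Next I would prove surjectivity. Let $b\in B$. The image $\bar b$ of $b$ in $B/\phi(N)$ is hit by $\overline\phi$, say $\overline\phi(\bar a)=\bar b$ for some $a\in A$ with image $\bar a$ in $A/N$. Then $\phi(a)$ and $b$ have the same image in $B/\phi(N)$, so $b-\phi(a)\in\phi(N)$; write $b-\phi(a)=\phi(n)$ with $n\in N$. Then $b=\phi(a+n)$, so $b$ is in the image of $\phi$.

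This is entirely routine and there is no real obstacle; the only point requiring the slightest care is keeping the two quotient maps straight in the surjectivity argument (one uses surjectivity of $\overline\phi$ to get a preimage modulo $N$, then corrects by an element of $N$ using that $\phi(N)$ is exactly the image of $N$). Injectivity of $\phi|_N$ is used once, in the injectivity step; surjectivity of $\overline\phi$ is used once, in the surjectivity step; and injectivity of $\overline\phi$ is used once, also in the injectivity step — so all three hypotheses are genuinely needed.
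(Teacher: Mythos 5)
Your proof is correct and follows essentially the same route as the paper: the injectivity step is just the contrapositive of the paper's case split on whether $a\in N$, and the surjectivity step (lift $\bar b$ via $\overline\phi$, then correct by an element of $\phi(N)$) is identical. Nothing to add.
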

\begin{proof}
Let $a\in A$, $a\neq 0$. If $a\in N$, then $\phi(a)\neq 0$, since the restriction of $\phi$ to $N$ is injective. Suppose $a\notin N$.   Then $\phi(a)\notin \phi(N)$ since $\overline{\phi}$ is injective; in particular, $\phi(a)\neq 0$. Hence $\phi$ is injective.
To prove that $\phi$ is surjective, let $b\in B$.
Since $\overline{\phi}$ is onto, there is some $a\in A$ such that $b-\phi(a)\in\phi(N)$, so $b\in\phi(A)$ as required.
\end{proof}

\section{Preliminaries: Interpretations}\label{sec:prelims interpretations}

\noindent
In this section we recall the notion of interpretation, and record a few consequences (some of which may be well-known) of bi-interpretability with $\NN$. We begin by discussing definability in quotients of definable equivalence relations.
Throughout  this section, we let  $\bA=(A,\dots)$ be a structure in some language $\cL=\cL_\bA$ and $\bB=(B,\dots)$ be a structure in some language $\cL_\bB$.

\subsection{Definability in quotients}
Let $E$ be a definable equivalence relation on a definable set $S\subseteq A^m$, with
 natural surjection $\pi_E\colon S\to S/E$. Note that for $X\subseteq S$ we have $X=\pi_E^{-1}(\pi_E(X))$  
 iff $X$ is {\it $E$-invariant,}\/ that is, for all $(a,b)\in E$
 we have~${a\in X}$ iff $b\in X$.
 A subset of $S/E$ is said to be {\it definable} in $\bA$ if its preimage under $\pi_E$ is definable  in $\bA$; equivalently, if it is the image of some definable subset of $S$ under~$\pi_E$. A map $S/E\to S'/E'$, where $E'$ is a definable equivalence relation on some definable set $S'$ in $\bA$, is said to be definable in $\bA$  if
its graph, construed as a subset of 
$(S/E)\times (S'/E')$, is definable.
Here and below, given an equivalence relation $E$ on a set $S$ and an equivalence relation~$E'$ on~$S'$, we identify~${(S/E)\times (S/E')}$ in the natural way with $(S\times S')/(E\times E')$,
where~$E\times E'$ is the equivalence relation on~$S\times S'$ given by 
$$(a,a')\ (E\times E')\ (b,b') \quad:\Longleftrightarrow\quad \text{$aEb$  and $a'E'b'$} \qquad (a,b\in S, a',b'\in S).$$

\subsection{Interpretations}\label{sec:interpretations}
A surjective map $f\colon M\to B$, where $M\subseteq A^m$ (for some~$m$) is an
{\it interpretation of $\bB$ in $\bA$}\/ (notation: $f\colon \bA\leadsto\bB$) if for every set $S\subseteq B^n$ which is definable in $\bB$,
the preimage $f^{-1}(S)$ of $S$ under the map $$(a_1,\dots,a_n)\mapsto \big(f(a_1),\dots,f(a_n)\big)\colon M^n\to B^n,$$ which we also denote by $f$, is a definable subset of $M^n\subseteq (A^m)^n=A^{mn}$.
It is easy to verify that a surjective map $f\colon M\to B$ ($M\subseteq A^m$) is an interpretation of~$\bB$ in~$\bA$ iff
the kernel
$$\ker f:=\big\{(a,b)\in M\times M: f(a)=f(b)\big\}$$ 
of $f$, as well as the preimages of 
the interpretations (in $\bB$) of each relation symbol and
the graphs of the interpretations of each function symbol  from~$\cL_\bB$, are
definable in $\bA$. 
If the parameters in the formula defining $\ker f$ and in
the formulas defining the preimages of the interpretations of the symbols of $\cL_\bB$ in $\bA$ can be
chosen to come from some set $X\subseteq A$, we say that $f$ is an $X$-interpretation of $\bB$ in $\bA$, or an interpretation of $\bB$ in $\bA$ over $X$. An interpretation $\bA\leadsto\bA$ is called a {\it self-interpretation}\/ of $\bA$. (A trivial example is the identity interpretation $\id_\bA\colon A\to A$.)

\medskip
\noindent
We say that {\it $\bB$ is interpretable in $\bA$}\/ if there exists an interpretation of $\bB$ in $\bA$. Given such an interpretation $f\colon M\to B$ of $\bB$ in $\bA$, we write $\bar{M}:=M/\ker f$ for  the set of equivalence classes of the equivalence relation $\ker f$, and
$\bar{f}$ for the bijective map $\bar{M}\to B$ induced by $f$.  
Then $\bar{M}$ is the universe of a unique $\cL_\bB$-structure  $f^*(\bB)$  such that $\bar{f}$
becomes an isomorphism  $f^*(\bB)\to \bB$. We call the $\cL_\bB$-structure $f^*(\bB)$ the {\it copy of $\bB$ interpreted in $\bA$ via the interpretation $f$.}\/

\medskip
\noindent
The composition of two interpretations $f\colon\bA\leadsto\bB$ and $g\colon\bB\leadsto\bC$ is the interpretation $g\circ f\colon\bA\leadsto\bC$ defined in the natural way: if $f\colon M\to B$   
and $g\colon N\to C$, then
$g\circ f\colon f^{-1}(N)\to C$ is an interpretation of $\bC$ in $\bA$.  In this case, the restriction of~$f$ to a map $f^{-1}(N)\to N$ induces an isomorphism $(g\circ f)^*(\bC)\to g^*(\bC)$ between the copy  $(g\circ f)^*(\bC)=f^{-1}(N)/\ker(g\circ f)$ of $\bC$ interpreted in $\bA$ via $g\circ f$ and the copy $g^*(\bC)=N/\ker g$ of $\bC$ interpreted in $\bB$ via $g$
which makes the diagram
$$\xymatrix{(g\circ f)^*(\bC)\ar[rr] \ar[dr]_{\overline{g\circ f}} & & g^*(\bC)  \ar[dl]^{\overline{g}} \\
& \bC &
}$$
commute.
One verifies easily that the composition of interpretations makes the class of all first-order structures into the objects of a category whose morphisms are the interpretations. 


\medskip
\noindent
Suppose $\bB$ is interpretable in $\bA$  via an $\emptyset$-interpretation $f\colon M\to B$. Then every automorphism $\sigma$ of $\bA$  induces a permutation of $M$ and of $\ker f$, and there is a unique permutation $\overline{\sigma}$ of $B$ such that $\overline{\sigma}\circ f=f\circ \sigma$; this permutation $\overline{\sigma}$ is an automorphism of~$\bB$. The resulting map $\sigma\mapsto\overline{\sigma}\colon \Aut(\bA)\to\Aut(\bB)$ is a continuous group morphism \cite[Theorem~5.3.5]{Ho}, denoted by $\Aut(f)$. We therefore have a covariant functor $\Aut$ from the category of structures and $\emptyset$-interpretations to the category of topological groups and continuous morphisms between them.
(Here the topology on automorphism groups is that described in \cite[Section~4.1]{Ho}.)

\medskip
\noindent
If $\bB$ and $\bB'$ are structures which are interpretable in~$\bA$, then their direct product~$\bB\times\bB'$ is also interpretable in $\bA$; in fact, if $f\colon M\to B$ ($M\subseteq A^m$) is an interpretation $\bA\leadsto\bB$, and $f'\colon M'\to B'$ ($M'\subseteq A^{m'}$) is an interpretation $\bA\leadsto\bB'$, then $f\times f'\colon M\times M'\to B\times B'$ is an interpretation $\bA\leadsto\bB\times \bB'$.

\medskip
\noindent
The concept of interpretation allows for an obvious uniform variant: Let 
$\mathfrak A$ be a class of $\mathcal L$-structures and
 $\mathfrak B$ be a class of structures in a language $\mathcal L'$,  for simplicity of exposition assumed to be relational. A {\it uniform interpretation}\/ of 
$\mathfrak B$ in $\mathfrak A$ is given by the following data: 
\begin{enumerate}
\item $\mathcal L$-formulas $\sigma(z)$, $\mu(x;z)$, and $\varepsilon(x,x';z)$; and
\item for each $n$-ary relation symbol $R$ of $\mathcal L'$ an $\mathcal L$-formula $\rho_R(y_R;z)$.
\end{enumerate}
Here $x$, $x'$ are $m$-tuples of  variables (for some $m$), $y_R$ as in (2) is an $mn$-tuple of variables, and
$z$ is a $p$-tuple of variables (for some $p$). All variables in these tuples are assumed to be distinct.
For $\bA\in\mathfrak A$ set $S^{\bA}:=\big\{s\in A^p:\bA\models\sigma(s)\big\}$.
We require that
\begin{itemize}
\item[(U1)]
 for each $\bA\in\mathfrak A$ and $s\in S^{\bA}$, 
the set $M_s:=\big\{a\in A^m:\bA\models\mu(a;s)\big\}$ is nonempty,
$\varepsilon(x,x';s)$ defines
an equivalence relation $E_s$ on $M_s$, and for each $R\in\mathcal L'$, the set $R_s$ defined by 
$\rho(y_R;s)$ in $\bA$ is $E_s$-invariant.
\end{itemize} 
Letting $\pi_s\colon M_s\to M_s/E_s$ be the natural surjection,
the quotient $M_s/E_s$ then becomes the underlying set of an $\mathcal L'$-structure $\bB_s$ interpreted in $\bA$ by
$\pi_s$. We also require  that 
\begin{itemize}
\item[(U2)]
$\mathbf B_s\in\mathfrak B$ for each 
$\bA\in\mathfrak A$, $s\in S^{\bA}$, and
for each $B\in\mathfrak B$ there are some $\bA\in\mathfrak A$, $s\in S^{\bA}$ such that $\bB\cong\bB_s$.
\end{itemize}
We say that $\mathfrak B$ is {\it uniformly interpretable}\/ in~$\mathfrak A$ if there exists a uniform interpretation of $\mathfrak B$ in $\mathfrak A$. Clearly the relation of uniform interpretability is transitive. 
If $\mathfrak B=\{\bB\}$ is a singleton, we also say that $\bB$ is uniformly interpretable in $\mathfrak A$; similarly if $\mathfrak A$ is a singleton.

\subsection{Homotopy and bi-interpretations}
Following \cite{AZ}, we say that interpretations $f\colon M\to B$ and $f'\colon M'\to B$ of $\bB$ in~$\bA$ are {\it homotopic}\/ (in symbols: $f\simeq f'$) if the pullback
$$[{f=f'}]:=\big\{(x,x')\in M\times M' : f(x)=f'(x')\big\}$$
of $f$ and $f'$
is definable in $\bA$; equivalently, if there exists
an isomorphism $$\alpha\colon f^*(\bB)\to (f')^*(\bB)$$ which is definable in $\bA$ such that $\overline{f'}\circ\alpha=\overline{f}$. So for example
if $f$ is a self-interpretation of $\bA$, then $f\simeq\id_A$ if and only if the isomorphism $\overline{f}\colon f^*(\bA)\to\bA$  is definable in $\bA$.
Homotopy is an equivalence relation on the collection of interpretations of $\bB$ in $\bA$.
Given $X\subseteq A$, we say that interpretations $f\colon\bA\leadsto\bB$ and $f'\colon\bA\leadsto\bB$ are $X$-homotopic if $[{f=f'}]$ is $X$-definable.
It is easy to verify that if the $\emptyset$-interpretations
$f,f'\colon\bA\leadsto\bB$ are $\emptyset$-homotopic, then $\Aut(f)=\Aut(f')$.

\begin{lemma} \label{lem:properties of homotopy}
Let  $f,f'\colon\bA\leadsto\bB$ and  $g,g'\colon\bB\leadsto\bC$. Then
\begin{enumerate}
\item $g\simeq g'\Rightarrow g\circ f\simeq g'\circ f$; and
\item if $g$ is injective, then $f\simeq f'\Rightarrow g\circ f\simeq g\circ f'$.
\end{enumerate}
\end{lemma}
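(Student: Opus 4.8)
The strategy is to unwind the definition of homotopy in terms of definable isomorphisms between interpreted copies, and then to compose or pre/post-compose these isomorphisms, checking at each stage that definability is preserved. Recall that $g \simeq g'$ means there is a definable (in $\bB$) isomorphism $\beta \colon g^*(\bC) \to (g')^*(\bC)$ with $\overline{g'} \circ \beta = \overline{g}$, and similarly for $f \simeq f'$. The key point throughout is that if $h \colon \bA \leadsto \bB$ is an interpretation and $Y$ is definable in $\bB$, then $h^{-1}(Y)$ is definable in $\bA$; hence any relation on the $\bB$-side that is definable ``lives'' as a definable relation on the $\bA$-side once pulled back through an interpretation of $\bB$ in $\bA$. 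One should also keep in mind the compatibility isomorphism from Section~\ref{sec:interpretations}: for $f \colon M \to B$ and $g \colon N \to C$, the restriction of $f$ to $f^{-1}(N) \to N$ induces a canonical isomorphism $(g\circ f)^*(\bC) \to g^*(\bC)$ over $\bC$.

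For part~(1), suppose $\beta \colon g^*(\bC) \to (g')^*(\bC)$ is an isomorphism definable in $\bB$ with $\overline{g'} \circ \beta = \overline{g}$. First I would transport $\beta$ along the interpretation $f$: since $\beta$ is a definable subset (its graph) of a Cartesian power of the $\bB$-side, the preimage $f^{-1}(\mathrm{graph}(\beta))$ is definable in $\bA$, and using the canonical identifications $(g\circ f)^*(\bC) \cong g^*(\bC)$ and $(g'\circ f)^*(\bC) \cong (g')^*(\bC)$ (both definable in $\bA$), this yields a definable-in-$\bA$ isomorphism $(g\circ f)^*(\bC) \to (g'\circ f)^*(\bC)$. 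Finally I would check it sits over $\bC$, i.e.\ that composing with $\overline{g'\circ f}$ recovers $\overline{g\circ f}$; this follows by chasing the commuting triangle relating $\overline{g\circ f}$, $\overline{g}$, and the compatibility isomorphism, together with $\overline{g'}\circ\beta = \overline{g}$. Hence $g\circ f \simeq g'\circ f$.

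For part~(2), suppose $\alpha \colon f^*(\bB) \to (f')^*(\bB)$ is an isomorphism definable in $\bA$ with $\overline{f'}\circ\alpha = \overline{f}$, and that $g$ is injective. Here the natural move is to apply $g$ ``functorially'': the interpretation $g$ induces, from the $\bA$-definable isomorphism $\alpha$ of interpreted copies of $\bB$, an $\bA$-definable isomorphism between the corresponding interpreted copies of $\bC$, namely $(g\circ f)^*(\bC) \to (g\circ f')^*(\bC)$. The point is that $g\circ f$ and $g\circ f'$ use the \emph{same} defining formulas for $\bC$ applied to the two different (but $\alpha$-related) copies of $\bB$ inside $\bA$, so $\alpha$ induces a map on these copies of $\bC$ whose graph is obtained from that of $\alpha$ by an $\bA$-definable operation; and it is a $\bC$-isomorphism because $\alpha$ is a $\bB$-isomorphism and $g$ respects the interpreted structure. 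Injectivity of $g$ is what guarantees that this induced map is itself a bijection (an interpretation $g \colon N \to C$ need not be injective on $N$, but when it is, a bijection between the domains descends to a well-defined bijection between the copies of $\bC$). Then I would verify the resulting isomorphism lies over $\bC$ using $\overline{f'}\circ\alpha=\overline{f}$ and the definition of the maps $\overline{g\circ f}$, $\overline{g\circ f'}$.

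I expect the main obstacle to be part~(2): making precise the ``functoriality'' of $g$ on the homotopy relation, in particular the bookkeeping that shows the induced map between interpreted copies of $\bC$ is genuinely $\bA$-definable (one must express, via the formulas of the uniform data for $g$, how a point of the second copy of $\bC$ is the image under the induced map of a point of the first), and pinpointing exactly where injectivity of $g$ enters — it is needed precisely so that $\alpha$, a bijection between copies of $\bB$, lifts to a bijection rather than merely a surjection or a relation between copies of $\bC$. By contrast, part~(1) is essentially formal once the canonical compatibility isomorphisms from Section~\ref{sec:interpretations} are in hand.
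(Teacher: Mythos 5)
Your argument is correct, and in substance it is the paper's argument, but you have taken the longer of the two equivalent formulations of homotopy and thereby made the proof look harder than it is. The paper works directly with the pullback characterization: $f\simeq f'$ means $[f=f']=\{(x,x'):f(x)=f'(x')\}$ is definable. With that, part~(1) is the single identity $[g\circ f=g'\circ f]=f^{-1}\big([g=g']\big)$ (definable because $f$ is an interpretation), and part~(2) is the single identity $[g\circ f=g\circ f']=[f=f']$ (up to intersecting with the domains), which holds because $g(b)=g(b')\Leftrightarrow b=b'$ when $g$ is injective. Your part~(1) is exactly the first identity in disguise --- the graph of $\beta$ and the set $[g=g']$ carry the same information --- so that half is fine. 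Your part~(2), by contrast, routes through a ``functoriality of $g$'' construction that requires the bookkeeping you yourself flag as the main obstacle, and your diagnosis of where injectivity enters is not quite right: the compatibility $\overline{f'}\circ\alpha=\overline{f}$ already forces $\alpha$ to descend to a well-defined bijection between the quotients $(g\circ f)^*(\bC)$ and $(g\circ f')^*(\bC)$ even when $g$ is not injective. What injectivity of $g$ actually buys is that $\ker(g\circ f)$ coincides with $\ker f$ on $f^{-1}(N)$, so that the pullback of the composites is literally the pullback of $f$ and $f'$ --- equivalently, so that the induced map is visibly definable with no further work. I would encourage you to restate your proof in terms of the pullbacks; both parts then collapse to one line each.
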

\begin{proof}
For (1), note that if $[g=g']$ is definable in $\bB$,
then $[{g\circ f=g'\circ f}]=f^{-1}\big([{g=g'}]\big)$ is definable in $\bA$, and for (2) note that if $g$ is injective
then we have $[{g\circ f = g\circ f'}] = [{f=f'}]$.
\end{proof}

\noindent
Let $f\colon\bA\leadsto\bB$ and $g\colon\bB\leadsto\bA$. 
One says that the pair $(f,g)$ is a {\it bi-interpretation}\/ between $\bA$ and $\bB$ if $g\circ f\simeq \id_\bA$ and $f\circ g\simeq \id_\bB$;
that is, if the isomorphism $\overline{g\circ f}\colon (g\circ f)^*(\bA)\to\bA$ is definable in $\bA$, and
the isomorphism $\overline{f\circ g}\colon (f\circ g)^*(\bB)\to\bB$ is definable in $\bB$. (See Figure~\ref{fig:bi-int}.)
The relation of bi-interpretability is easily seen to be an equivalence relation on the class of first-order structures.
A bi-interpretation~$(f,g)$ between $\bA$ and $\bB$ is an {\it $\emptyset$-bi-interpretation}\/ if $f$,~$g$ are $\emptyset$-interpretations and $g\circ f$ and $f\circ g$ are $\emptyset$-homotopic to the respective identity interpretations.
If $(f,g)$ is such an $\emptyset$-bi-interpretation between $\bA$ and $\bB$, then~$\Aut(f)$ is
a continuous isomorphism $\Aut(\bA)\to\Aut(\bB)$ with inverse $\Aut(g)$.

\begin{figure}
\begin{tikzpicture}[scale=0.75]

\draw (0.25,0) rectangle (4.25,3);
\draw (2.5,1.5) ellipse (1.45 and 1.2);
\draw[pattern=north west lines] (1.5,1) rectangle (3,17/8);
\draw (4.15,1.5) node[fill=white] {\footnotesize $(g\circ f)^*(\bA)$};

\draw (7.5,1.5) ellipse (1.45*1.45 and 1.45*1.2);
\draw[pattern=north west lines] (6,0.75) rectangle (8.5,2.5);

\draw[pattern=north west lines] (11,0) rectangle (15,3);

\draw[densely dotted] (11,0) -- (6,0.75);
\draw[densely dotted] (15,0) -- (8.5,0.75);
\draw[densely dotted] (11,3) -- (6,2.5);
\draw[densely dotted] (15,3) -- (8.5,2.5);

\draw[dashed] (7.5,1.5+1.45*1.2) -- (2.5,1.5+1.2);
\draw[dashed] (7.5,1.5-1.45*1.2) -- (2.5,1.5-1.2);

\draw (13,-1) node {$\bA$};
\draw (7.5,-1) node {$\bB$};
\draw (9.3,1.6) node[fill=white] {$g^*(\bA)$};

\draw (2.25,-1) node {$\bA$};

\end{tikzpicture}
\caption{Composition $g\circ f$ of  $f\colon\bA\leadsto\bB$ and $g\colon \bB\leadsto\bA$.}\label{fig:bi-int}
\end{figure}

\begin{lemma}\label{lem:equiv of categories}
Let $(f,g)$ be a bi-interpretation between $\bA$ and $\bB$. Then for every subset $S$ of  $B^k$ \textup{(}$k\geq 1$\textup{)} we have
$$\text{$S$ is definable in $\bB$} \quad\Longleftrightarrow\quad \text{$f^{-1}(S)$ is definable in $\bA$.}$$
\end{lemma}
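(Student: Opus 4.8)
\emph{Plan.} The plan is to prove the two directions separately; for the forward direction we use only that $f$ is an interpretation, and for the converse only that $g$ is an interpretation and that $f\circ g\simeq\id_\bB$ (in particular the hypothesis $g\circ f\simeq\id_\bA$ is not needed). The forward direction is immediate: if $S\subseteq B^k$ is definable in $\bB$, then, $f\colon\bA\leadsto\bB$ being an interpretation, its preimage $f^{-1}(S)$ under the coordinatewise map $M^k\to B^k$ is by definition a definable subset of $M^k\subseteq A^{mk}$, hence definable in $\bA$.

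For the converse, suppose $f^{-1}(S)$ is definable in $\bA$. First I would push this set forward along $g$: since $g\colon\bB\leadsto\bA$ is an interpretation, the preimage under the coordinatewise map $N^{mk}\to A^{mk}$ induced by $g$ of any definable subset of $A^{mk}$ is definable in $\bB$, and on unwinding the identifications of Section~\ref{sec:interpretations} this preimage of $f^{-1}(S)$ is precisely $(f\circ g)^{-1}(S)$, where $f\circ g\colon g^{-1}(M)\to B$ is the composite self-interpretation of $\bB$. Thus $(f\circ g)^{-1}(S)$ is definable in $\bB$. Next I would invoke the hypothesis $f\circ g\simeq\id_\bB$: by definition it says that $[f\circ g=\id_\bB]$ is definable in $\bB$, and this set is exactly the graph of $f\circ g$, so (taking a $k$-fold conjunction) the graph of the coordinatewise map $(g^{-1}(M))^k\to B^k$ is definable in $\bB$ as well. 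Since $f$ and $g$ are surjective, so is $f\circ g$, and hence for $y\in B^k$ one has
$$y\in S\quad\Longleftrightarrow\quad \exists x\,\big(x\in(f\circ g)^{-1}(S)\ \&\ (f\circ g)(x)=y\big).$$
Indeed, if such an $x$ exists then $(f\circ g)(x)=y$ lies in $S$; conversely, given $y\in S$, surjectivity provides an $x$ with $(f\circ g)(x)=y$, which then necessarily lies in $(f\circ g)^{-1}(S)$. The right-hand side of the displayed equivalence is a first-order condition on $y$ over $\bB$, so $S$ is definable in $\bB$, as desired.

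The argument is short, and the only thing that needs care is the bookkeeping: one must keep straight that it is the composite $f\circ g$ --- and not $g\circ f$ --- that is the relevant self-interpretation of $\bB$, track the various coordinatewise maps through the domains $M\subseteq A^m$ and $N\subseteq B^n$, and check that the set $[f\circ g=\id_\bB]$ furnished by the homotopy hypothesis really is the graph of $f\circ g$, so that the clause ``$(f\circ g)(x)=y$'' is $\bB$-definable. Once these identifications are pinned down, the displayed equivalence is essentially the whole proof.
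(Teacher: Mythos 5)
Your proof is correct and is essentially the paper's own argument: the forward direction from the definition of interpretation, and for the converse the observation that $(f\circ g)^{-1}(S)=g^{-1}(f^{-1}(S))$ is definable in $\bB$ together with the definability of $[f\circ g=\id_\bB]$ and surjectivity of $f\circ g$. Your side remark that only the homotopy $f\circ g\simeq\id_\bB$ is needed is accurate and consistent with the paper's proof.
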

\begin{proof}
The forward direction follows from the definition of ``$f$ is an interpretation of $\bB$ in~$\bA$.'' For the converse,
suppose $f^{-1}(S)$ is definable in $\bA$; then the set $S':=(f\circ g)^{-1}(S)=g^{-1}\big(f^{-1}(S)\big)$ is definable in $\bB$ (since $g$ is an interpretation of~$\bA$ in $\bB$). For $y\in B^k$ we have $y\in S$ iff $(f\circ g)(x)=y$ for some $x\in S'$.
Therefore, since $[f\circ g=\id_B]$ and $S'$ are definable in $\bB$, so is $S$.
\end{proof}

\noindent
The previous lemma may be refined to show that a bi-interpretation between~$\bA$ and~$\bB$ in a natural way gives rise to an equivalence of categories between the category of definable sets and maps in $\bA$ and the category of definable sets and maps in $\bB$. (See \cite{MR}.)

\begin{cor}\label{cor:equiv of categories}
Let $(f,g)$ be as in Lemma~\ref{lem:equiv of categories}, and $f',f''\colon\bA\leadsto\bB$.
If $f'\circ g\simeq f''\circ g$, then $f'\simeq f''$.
\end{cor}
\begin{proof}
Note that $g^{-1}\big([{f'=f''}]\big)=[{f'\circ g=f''\circ g}]$ and use Lemma~\ref{lem:equiv of categories}.
\end{proof}

\subsection{Weak homotopy and weak bi-interpretations}
The notion of bi-inter\-pretability allows for a number of subtle variations, one of which (close to the notion of bi-interpretability used in \cite[Chapter~5]{Ho}) we introduce in this subsection.
Given two interpretations $f\colon\bA\leadsto\bB$ and $f'\colon\bA\leadsto\bB'$ of (possibly different) $\cL_\bB$-structures in $\bA$, we say that $f$ and $f'$ are {\it weakly homotopic}\/ if there is an isomorphism $f^*(\bB)\to(f')^*(\bB')$ which is definable in $\bA$; notation: $f\sim f'$. 
Clearly~$\sim$ is an equivalence relation on the class of interpretations of $\cL_\bB$-structures in $\bA$, and ``homotopic'' implies ``weakly homotopic.'' (Note that $f\simeq f'$ only makes sense if $\bB=\bB'$, whereas  $f\sim f'$ merely implies $\bB\cong\bB'$.) The following is easy to verify, and is a partial generalization of the fact that $f\simeq f'$ implies $\Aut(f)=\Aut(f')$:

\begin{lemma}
Let $f\colon\bA\leadsto\bB$ and $f'\colon\bA\leadsto\bB'$, and let $\beta\colon f^*(\bB)\to (f')^*(\bB')$ be an isomorphism, definable in $\bA$.
Put $$\gamma:=\bar{f'}\circ\beta\circ \bar{f}^{-1}\colon \bB\overset{\cong}{\longrightarrow}\bB'.$$ 
Then $\Aut(f)=\gamma\,\Aut(f')\,\gamma^{-1}$.
\end{lemma}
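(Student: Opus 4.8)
The plan is to unravel the definition of the functor $\Aut$ and then perform a short diagram chase, the only substantive input being that an $\emptyset$-definable set or map in $\bA$ is respected by every automorphism of $\bA$. Throughout I take $f$ and $f'$ to be $\emptyset$-interpretations and $\beta$ to be $\emptyset$-definable in $\bA$, as is implicit in the statement (since $\Aut(f)$ and $\Aut(f')$ are only defined for $\emptyset$-interpretations). Unravelling the group-theoretic notation, the claimed identity $\Aut(f)=\gamma\,\Aut(f')\,\gamma^{-1}$ asserts that for every $\sigma\in\Aut(\bA)$ one has $\Aut(f)(\sigma)=\gamma^{-1}\circ\Aut(f')(\sigma)\circ\gamma$, equivalently that $\gamma$ intertwines the two induced automorphisms: $\gamma\circ\Aut(f)(\sigma)=\Aut(f')(\sigma)\circ\gamma$. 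This is the identity I would prove.

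First I would record the concrete description of $\Aut(f)(\sigma)$ in terms of the interpreted copy. Write $f\colon M\to B$ with $M\subseteq A^m$, and let $\pi\colon M\to\bar M:=M/\ker f$ be the quotient map, so $f=\bar f\circ\pi$. An automorphism $\sigma$ of $\bA$ acts coordinatewise on $M$, preserves $\ker f$, and hence induces a permutation $\sigma^{*}$ of $\bar M$ with $\sigma^{*}\circ\pi=\pi\circ\sigma$; since $\bar f$ is an isomorphism $f^{*}(\bB)\to\bB$, this $\sigma^{*}$ is in fact an automorphism of the structure $f^{*}(\bB)$. From the defining property $\Aut(f)(\sigma)\circ f=f\circ\sigma$ together with $f=\bar f\circ\pi$ and the surjectivity of $\pi$, one gets $\Aut(f)(\sigma)\circ\bar f=\bar f\circ\sigma^{*}$, i.e.\ $\Aut(f)(\sigma)=\bar f\circ\sigma^{*}\circ\bar f^{-1}$. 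The same discussion applied to $f'$ yields an automorphism $\sigma'^{*}$ of $(f')^{*}(\bB')$ with $\Aut(f')(\sigma)=\bar{f'}\circ\sigma'^{*}\circ(\bar{f'})^{-1}$.

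The key step is that $\beta$ is equivariant for these induced actions: $\beta\circ\sigma^{*}=\sigma'^{*}\circ\beta$ for every $\sigma\in\Aut(\bA)$. Indeed, ``$\beta$ is definable in $\bA$'' means that the preimage of the graph of $\beta$ under $\pi\times\pi'$ is an $\emptyset$-definable subset of $M\times M'\subseteq A^{m}\times A^{m'}$, hence invariant under the coordinatewise action of every $\sigma\in\Aut(\bA)$; pushing this forward to $\bar M\times\bar{M'}$ shows the graph of $\beta$ is invariant under $\sigma^{*}\times\sigma'^{*}$, which is exactly the asserted equivariance. Substituting the two formulas for $\Aut(f)(\sigma)$ and $\Aut(f')(\sigma)$ together with $\gamma=\bar{f'}\circ\beta\circ\bar f^{-1}$, and using $\bar f^{-1}\circ\bar f=\id$, then gives
\[
\gamma\circ\Aut(f)(\sigma)=\bar{f'}\circ\beta\circ\sigma^{*}\circ\bar f^{-1}=\bar{f'}\circ\sigma'^{*}\circ\beta\circ\bar f^{-1}=\Aut(f')(\sigma)\circ\gamma ,
\]
which is the desired identity. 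I do not expect a genuine obstacle here: once the pieces are assembled the argument is a formal diagram chase, and the only point requiring care is the bookkeeping with quotients, namely making precise how $\emptyset$-definability of $\beta$ as a map between quotient sorts translates into $\emptyset$-definability of a subset of $A^{m}\times A^{m'}$, and hence into $\Aut(\bA)$-invariance that descends to $\beta\circ\sigma^{*}=\sigma'^{*}\circ\beta$.
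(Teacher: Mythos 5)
Your proof is correct. The paper offers no argument for this lemma (it is only asserted to be ``easy to verify''), and your verification is the natural one: the identity $\Aut(f)(\sigma)=\bar f\circ\sigma^*\circ\bar f^{-1}$, the equivariance $\beta\circ\sigma^*=\sigma'^*\circ\beta$ obtained from $\Aut(\bA)$-invariance of the $\emptyset$-definable preimage of the graph of $\beta$, and a two-line substitution. One point deserves emphasis: your decision to read ``definable'' as ``$\emptyset$-definable'' for $\beta$ (and not only to take $f,f'$ as $\emptyset$-interpretations) is not merely a convenient reading but is genuinely necessary, since under the paper's blanket convention that definability allows parameters the statement is false --- take $\bA$ an infinite set in the empty language, $f=f'=\id_\bA$, and $\beta$ the transposition of two named points $a,b$; then $\gamma=\beta$ and the conclusion would assert that this transposition is central in $\Aut(\bA)=\operatorname{Sym}(A)$. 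So the only caveat is that the hypothesis on $\beta$ must be ``over $\emptyset$,'' which you correctly impose and then correctly exploit in the key invariance step.
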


\noindent
We say that a pair $(f,g)$, where $f\colon\bA\leadsto\bB$ and $g\colon\bB\leadsto\bA$, is a \emph{weak bi-in\-ter\-pre\-ta\-tion} between $\bA$ and $\bB$ if $g\circ f\sim\id_\bA$ and $f\circ g\sim\id_\bB$.  The equivalence relation on the class of first-order structures given by  bi-interpretability  is finer than that of weak bi-interpretability, and in general, might be strictly finer. In Section~\ref{sec:bi-int with ZZ} below we see, however, that as far as bi-interpretability with~$\NN$ is concerned, there is no difference between the two notions.

\subsection{Injective interpretations}
An {\it injective interpretation}\/ of $\bB$ in $\bA$ is an interpretation $f\colon\bA\leadsto\bB$ where $f\colon M\to B$ ($M\subseteq A^m$) is injective (and hence bijective). (See \cite[Section~5.4~(a)]{Ho}.)
We also say that the structure $\bB$ is {\it injectively interpretable in $\bA$} if $\bB$ admits an injective interpretation in $\bA$. 

\medskip
\noindent
An important special case of injective interpretations is furnished by relativized reducts. Recall (cf.~\cite[Section~5.1]{Ho}) that $\bB$ is said to be a {\it relativized reduct}\/ of~$\bA$ if the universe $B$ of $\bB$ is a subset of $A^m$, for some $m$,  definable in $\bA$, and the interpretations of the function and relation symbols of $\cL_\bB$ in $\bB$ are definable in $\bA$. In this case, $\bB$ is injectively interpretable in $\bA$, with the interpretation given by the identity map on $B$.  

\begin{example}
The semiring $(\NN,{+},{\times})$ is a relativized reduct of the ring $(\ZZ,{+},{\times})$. (By Lagrange's Four Squares Theorem.)
\end{example}

\noindent
If $\bA$ has uniform elimination of imaginaries, then every interpretation of $\bB$ in $\bA$ is homotopic to an injective interpretation of $\bB$ in $\bA$ \cite[Theorem~5.4.1]{Ho}. 
This applies to $\bA=\ZZ$, and in combination with the fact that every infinite definable subset of~$\ZZ^m$ is in definable bijection with~$\ZZ$, this yields:

\begin{lemma} \label{lem:bijective interpretation}
Every interpretation of an infinite structure $\bA$ in the ring $\ZZ$ of integers is homotopic to an injective interpretation of $\bA$ in $\ZZ$ whose domain is $\ZZ$. 
\end{lemma}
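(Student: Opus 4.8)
The plan is to combine two ingredients, both cited in the paragraph preceding the statement: Hodges' theorem that a structure with uniform elimination of imaginaries makes every interpretation homotopic to an injective one (\cite[Theorem~5.4.1]{Ho}), and the fact that $\ZZ$ has uniform elimination of imaginaries — this latter fact is standard and can be taken as known here (it follows, e.g., from the definability of $\NN$ in $\ZZ$ together with pairing functions on $\NN$, giving definable codes for finite sets and hence for quotients by definable equivalence relations). So the first step is: given an interpretation $f\colon\ZZ\leadsto\bA$, apply uniform elimination of imaginaries in $\ZZ$ to replace $f$ by a homotopic \emph{injective} interpretation $f_1\colon M_1\to A$ with $M_1\subseteq\ZZ^m$ for some $m$.

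The second step is to collapse the domain $M_1\subseteq\ZZ^m$ down to $\ZZ$ itself. Since $\bA$ is infinite and $f_1$ is a bijection $M_1\to A$, the definable set $M_1$ is infinite. The key auxiliary fact — that every infinite definable subset of $\ZZ^m$ is in definable bijection with $\ZZ$ — is exactly what is invoked in the sentence just before the lemma; I would apply it to $M_1$ to obtain a definable bijection $h\colon\ZZ\to M_1$. Then set $g:=f_1\circ h\colon\ZZ\to A$. This $g$ is a composition of an injective interpretation with a definable bijection, hence is again an injective interpretation of $\bA$ in $\ZZ$, and now its domain is all of $\ZZ$.

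Finally I need $g\simeq f$. We already have $f_1\simeq f$ from Step~1, so by transitivity of homotopy it suffices to check $g\simeq f_1$, i.e.\ that $[g=f_1]=\{(x,y)\in\ZZ\times M_1: f_1(h(x))=f_1(y)\}$ is definable in $\ZZ$; since $f_1$ is injective this set is just the graph $\{(x,y): h(x)=y\}$ of $h$, which is definable by choice of $h$. Hence $g\simeq f_1\simeq f$, and $g$ is the desired injective interpretation with domain $\ZZ$.

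I do not expect a serious obstacle: both nontrivial inputs (uniform EI for $\ZZ$, and "infinite definable subsets of $\ZZ^m$ are in definable bijection with $\ZZ$") are treated as background in the surrounding text, so the proof is essentially an assembly argument. The only point requiring a little care is verifying that the two composition/collapse operations preserve injectivity and preserve the homotopy class, which is immediate from the definitions of injective interpretation and of homotopy (Lemma~\ref{lem:properties of homotopy} handles the behavior of homotopy under composition, though here the elementary direct check via graphs suffices).
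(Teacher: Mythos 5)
Your proposal is correct and follows exactly the route the paper intends: the lemma is stated as an immediate consequence of uniform elimination of imaginaries in $\ZZ$ (giving an injective interpretation homotopic to the given one) together with the fact that infinite definable subsets of $\ZZ^m$ are in definable bijection with $\ZZ$ (collapsing the domain), and your verification that composing with the definable bijection preserves both injectivity and the homotopy class fills in precisely the details the paper leaves to the reader.
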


\noindent
So for example, if an infinite semiring $S$ is interpretable in $\ZZ$, then there are definable binary operations $\oplus$ and $\otimes$ on $\ZZ$ such that $(\ZZ,{\oplus},{\otimes})$ is isomorphic to $S$.

\begin{lemma} \label{lem:self-interpretations of Z}
Every self-interpretation of $\ZZ$ is homotopic to the identity interpretation.
\end{lemma}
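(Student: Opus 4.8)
The plan is to use Lemma~\ref{lem:bijective interpretation} to reduce to the case of an injective self-interpretation $f\colon\ZZ\to\ZZ$ whose domain is all of $\ZZ$: since homotopy is an equivalence relation and composition behaves well with respect to it (Lemma~\ref{lem:properties of homotopy}), it suffices to show that such an $f$ is homotopic to $\id_\ZZ$. Concretely, $f$ being an injective self-interpretation with domain $\ZZ$ means we are given definable binary operations $\oplus,\otimes$ on $\ZZ$ such that $f\colon(\ZZ,\oplus,\otimes)\to(\ZZ,{+},{\times})$ is a ring isomorphism (the copy $f^*(\ZZ)$ is the ring $(\ZZ,\oplus,\otimes)$, and $\overline f=f$ since $f$ is injective). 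So the goal becomes: the isomorphism $f\colon(\ZZ,\oplus,\otimes)\to(\ZZ,{+},{\times})$ is definable in $(\ZZ,{+},{\times})$ — that is, its graph $\{(a,b):f(a)=b\}\subseteq\ZZ^2$ is definable in the ring $\ZZ$.

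The key point is a rigidity property of $\ZZ$: in the standard model of arithmetic, one can uniformly recover $f$ from the ring operations $\oplus,\otimes$ by a definable recursion. First I would note that $0$ and $1$ of $(\ZZ,\oplus,\otimes)$ are definable (the additive neutral element and the multiplicative neutral element), and that $f$ must carry them to $0$ and $1$ of $(\ZZ,{+},{\times})$. Next, the set $\NN$ is definable in $(\ZZ,\oplus,\otimes)$ (by the Four Squares Theorem applied inside that isomorphic copy of the ring $\ZZ$), hence the $\oplus$-analogue of ``successor'' and the usual well-ordering give a definable-in-$(\ZZ,\oplus,\otimes)$, equivalently definable-in-$\ZZ$, copy of the successor structure. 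The map $f$ restricted to the nonnegative part is then pinned down by $f(0_\oplus)=0$ and $f(n\oplus 1_\oplus)=f(n)+1$; by the definability of recursion over the standard naturals (coding finite sequences via the $\beta$-function in the ring $\ZZ$), the graph of $f\restriction\NN^{(\oplus)}$ is definable in the ring $\ZZ$, and then $f$ on all of $\ZZ$ is definable since every element of $(\ZZ,\oplus,\otimes)$ is a $\oplus$-difference of two elements of its $\NN$-part and $f$ respects $\oplus$. Thus $[f=\id_\ZZ]$ is definable, i.e.\ $f\simeq\id_\ZZ$.

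The main obstacle — and the step requiring care — is making precise that the ``definable recursion'' argument can be carried out simultaneously on the two ring structures $(\ZZ,{+},{\times})$ and $(\ZZ,\oplus,\otimes)$ using only the ambient structure $(\ZZ,{+},{\times})$: one must code finite sequences of pairs $(n,f(n))$ by a single integer using Gödel's $\beta$-function (which is available as it only needs $+$ and $\times$ on the ambient $\ZZ$), and one must express ``this coded sequence is an initial segment of the graph of $f$'' using the definable (in $\ZZ$) operations $\oplus,\otimes$ together with $+,\times$. The reason this works is precisely that the ambient structure is the \emph{standard} integers, so there is no nonstandard overspill to worry about and finite iteration is genuinely finite; this is the same phenomenon that makes $\ZZ$ (and $\NN$) prime models of their theories and is what fails, e.g., for the dual numbers (cf.\ Section~\ref{sec:derivations}). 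An alternative, slicker route avoiding explicit coding is to invoke that $\ZZ$ is, up to bi-interpretation, ``self-aware'' — but since that fact is itself downstream of results like this one, I would give the direct $\beta$-function argument.
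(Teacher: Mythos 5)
Your proposal is correct and follows essentially the same route as the paper: reduce via Lemma~\ref{lem:bijective interpretation} to a bijective self-interpretation with domain $\ZZ$, observe that the copy is $(\ZZ,\oplus,\otimes)$ with definable operations, and define the unique isomorphism by recursion on the successor of the copy. The paper compresses the final step into ``the isomorphism $k\mapsto\sigma^k(0_Z)$ is computable, hence definable in $\ZZ$,'' which is exactly the G\"odel $\beta$-function argument you spell out.
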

\begin{proof}
Let $f\colon M\to\ZZ$ be a self-interpretation of $\ZZ$, where $M\subseteq \ZZ^m$. 
By Lemma~\ref{lem:bijective interpretation} we may assume that $f$ is bijective, $m=1$, and $M=\ZZ$. Hence the copy of $\ZZ$ interpreted in itself via $f$ has the form $Z=(\ZZ,{\oplus},{\otimes})$ where $\oplus$ and $\otimes$ are binary operations on $\ZZ$ definable in $\ZZ$. Let $0_Z$ and $1_Z$ denote the additive and multiplicative identity elements of the ring $Z$. The successor function $k\mapsto\sigma(k):=k\oplus 1_Z\colon\ZZ\to\ZZ$ in the ring $Z$ is definable in  $\ZZ$. Therefore the unique isomorphism $\ZZ\to Z$, given by $k\mapsto\sigma^k(0_Z)$ for $k\in\ZZ$, is computable, and hence definable in $\ZZ$; its inverse is $\bar{f}$.
\end{proof}

\noindent
Due to the  previous lemma, the task of checking that a pair of interpretations forms a bi-interpretation between $\bA$ and $\ZZ$ simplifies somewhat: a pair  $(f,g)$, where  $f\colon\bA\leadsto\ZZ$ and $g\colon\ZZ\leadsto\bA$, is
 a bi-interpretation between $\bA$ and $\ZZ$ iff $g\circ f\simeq\id_\bA$.

\begin{cor} \label{cor:all interpretations of Z are homotopic}
If $\bA$ and $\ZZ$ are bi-inter\-pretable, then any two interpretations of~$\ZZ$ in $\bA$ are homotopic.
\end{cor}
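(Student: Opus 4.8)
The plan is to reduce the statement to the two results just established: Lemma~\ref{lem:self-interpretations of Z}, that every self-interpretation of $\ZZ$ is homotopic to the identity interpretation, and the cancellation property of Corollary~\ref{cor:equiv of categories}, that composing with one half of a bi-interpretation reflects homotopy.

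First I would fix, using the hypothesis, a bi-interpretation $(f,g)$ between $\bA$ and $\ZZ$, where $f\colon\bA\leadsto\ZZ$ and $g\colon\ZZ\leadsto\bA$. Let $h,h'\colon\bA\leadsto\ZZ$ be two arbitrary interpretations of $\ZZ$ in $\bA$; the goal is to show $h\simeq h'$. Next I would pass to the composites $h\circ g$ and $h'\circ g$. Since $g$ goes $\ZZ\leadsto\bA$ and $h,h'$ go $\bA\leadsto\ZZ$, these composites are self-interpretations of $\ZZ$, so by Lemma~\ref{lem:self-interpretations of Z} each is homotopic to $\id_\ZZ$; as homotopy is an equivalence relation, this gives $h\circ g\simeq h'\circ g$. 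Finally I would apply Corollary~\ref{cor:equiv of categories} with $\bB:=\ZZ$, with the bi-interpretation $(f,g)$, and with $f':=h$, $f'':=h'$, to conclude $h\simeq h'$.

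There is essentially no obstacle: the argument is a two-line diagram chase once the earlier lemmas are in place. The only points requiring a moment's care are bookkeeping ones — matching the directions of the arrows so that $h\circ g$ and $h'\circ g$ really are \emph{self}-interpretations of $\ZZ$ (this is why we use $g\colon\ZZ\leadsto\bA$ rather than $f$), and checking that Corollary~\ref{cor:equiv of categories} is being invoked with its hypotheses satisfied, namely that $(f,g)$ is a genuine bi-interpretation between $\bA$ and $\ZZ$ in the sense of Lemma~\ref{lem:equiv of categories}. All the substantive content — in particular the definability (via computability of the successor map) of the isomorphism $\ZZ\to Z$ onto an interpreted copy $Z$ of $\ZZ$ in $\ZZ$ — has already been carried out in the proof of Lemma~\ref{lem:self-interpretations of Z}.
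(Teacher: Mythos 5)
Your proof is correct and is essentially the paper's own argument: compose with $g$ to obtain self-interpretations of $\ZZ$, apply Lemma~\ref{lem:self-interpretations of Z}, and cancel $g$ via Corollary~\ref{cor:equiv of categories}. The only cosmetic difference is that the paper compares an arbitrary interpretation against the fixed $f$ of the bi-interpretation rather than comparing two arbitrary ones directly, which amounts to the same thing since homotopy is an equivalence relation.
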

\begin{proof}
Suppose $(f,g)$, where $f\colon\bA\leadsto\ZZ$ and $g\colon\ZZ\leadsto\bA$, is a bi-interpretation between $\bA$ and $\ZZ$. Let $f'$ be an arbitrary interpretation $\bA\leadsto\ZZ$. Then $f\circ g$ and $f'\circ g$ are self-interpretations of $\ZZ$. Therefore $f\circ g\simeq f'\circ g$ by Lemma~\ref{lem:self-interpretations of Z} and thus $f\simeq f'$ by Corollary~\ref{cor:equiv of categories}.
\end{proof}

\subsection{Interpretations among rings} \label{sec:interpretations in rings}
{\it In this subsection we let $A$ be a ring.}\/
Familiar ring-theoretic constructions can be seen as interpretations: 

\begin{examples}\label{ex:ring interpretations}
\mbox{}
\begin{enumerate}
\item Let $S$ be a commutative semiring, and suppose $A$ is the Grothen\-dieck ring associated to $S$, that is,
$A=(S\times S)/E$ where $E$ is the equivalence relation on $S\times S$ given by 
$(x,y) E (x',y') :\Longleftrightarrow x+y'=x'+y$.
Then the natural map $S\times S\to A$ is an interpretation of $A$ in $S$.
\item For an ideal $I$ of $A$ which is definable in $A$ (as a subset of $A$), the residue morphism $A\to A/I$ is an interpretation of $A/I$ in $A$.
\item Suppose $A=A_1\times A_2$ is the direct product  of rings $A_1$, $A_2$. Then both factors~$A_1$ and $A_2$ are interpretable in $A$. (By the last example applied to the ideals $I_1=Ae_2$ respectively $I_2=Ae_1$, where $e_1=(1,0)$,  $e_2=(0,1)$.)
\item Let $S$ be a multiplicative subset of $A$ (that is, $1\in S$, $0\notin S$, and $S\cdot S\subseteq S$). 
Suppose $S$ is definable.  
Then the map $$M:=A\times S\to A[S^{-1}]\colon (a,s)\mapsto a/s$$ is an interpretation of the localization $A[S^{-1}]$ of $A$ at $S$  in $A$. Its kernel is the equivalence relation
$$(a,s)\sim (a',s') \qquad \Longleftrightarrow \qquad 
\exists t\in S\ \big( t\cdot (as'-a's)=0\big)$$
on $M$. 
In particular, if $A$ is an integral domain, then its fraction field is interpretable in $A$.
\end{enumerate}
\end{examples}

\noindent
Let~$S$ be a multiplicative subset of $A$. One says that $S$ is  \emph{saturated} if for all $a,b\in A$ we have $ab\in S$ if  $a\in S$ and $b\in S$. Equivalently, $S$ is saturated iff~$A\setminus S$ is a union of prime ideals of $A$. There is a smallest saturated multiplicative subset~$\overline{S}$ of~$A$ which contains $S$ (called the \emph{saturation} of $S$); here $A\setminus\overline{S}$ is the union of all prime ideals of $A$ which do not intersect $S$, and $A[S^{-1}]=A[\overline{S}{}^{-1}]$. (See \cite[Chapter~3, exercises]{AM}.)

\begin{lemma}\label{lem:interpret A[1/c]}
Suppose $A$ is a finite-dimensional noetherian Jacobson ring, and $c\in A$. Then $A[c^{-1}]$ is interpretable in $A$.
\end{lemma}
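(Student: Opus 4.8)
The plan is to exhibit $A[c^{-1}]$ as the localization of $A$ at a \emph{definable} multiplicative subset, and then quote Example~\ref{ex:ring interpretations}(4), which furnishes an interpretation of $A[S^{-1}]$ in $A$ for any definable multiplicative subset $S$ of $A$. The obvious candidate $S=\{c^n:n\in\NN\}$ need not be definable, since the exponent ranges over $\NN$; so instead I would work with its saturation $\overline S$. Recall that $A[S^{-1}]=A[\overline S{}^{-1}]$ and that $A\setminus\overline S$ is the union of all prime ideals of $A$ disjoint from $S$. Since a prime ideal $\mathfrak p$ meets $\{c^n:n\in\NN\}$ precisely when $c\in\mathfrak p$, the set $A\setminus\overline S$ is the union of all primes of $A$ not containing $c$.

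The heart of the matter is the definability of $\overline S$. From the previous description, $a\in\overline S$ iff $a$ lies in no prime ideal avoiding $c$, i.e., iff every prime containing $a$ contains $c$, i.e., iff $c$ belongs to the intersection of all primes containing the principal ideal $(a)$ --- that is, iff $c\in\Nil\big((a)\big)$. The nilradical is not uniformly definable across all rings, but $A$ is a Jacobson ring, so $\Nil\big((a)\big)=\Jac\big((a)\big)$ for every $a$, and the Jacobson radical \emph{is} uniformly definable: with $\gamma_1(x,y):=\exists z\,(x=yz)$ (defining the principal ideal generated by $y$) and $\Jac(\gamma_1)$ the associated formula as in the proof of Lemma~\ref{lem:def prime ideals}, the formula $\Jac(\gamma_1)(x,b)$ defines $\Jac\big((b)\big)$ for each $b\in A$. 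Hence
$$\overline S \;=\; \big\{\, a\in A : A\models \Jac(\gamma_1)(c,a) \,\big\}$$
is definable in $A$ with parameter $c$.

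It then remains to apply Example~\ref{ex:ring interpretations}(4) to $\overline S$. The one point to check is the non-triviality requirement $0\notin\overline S$: now $0\in\overline S$ iff $c\in\Nil\big((0)\big)=N(A)$, i.e., iff $c$ is nilpotent, in which case $A[c^{-1}]$ is the zero ring and is trivially interpretable in $A$. So we may assume $c$ is not nilpotent; then $\overline S$ is a bona fide definable multiplicative subset of $A$, and Example~\ref{ex:ring interpretations}(4) yields that the map $(a,s)\mapsto a/s\colon A\times\overline S\to A[\overline S{}^{-1}]=A[c^{-1}]$ is an interpretation of $A[c^{-1}]$ in $A$. The main obstacle is precisely the definability of the saturation $\overline S$, and what makes it work is the Jacobson hypothesis, which replaces the non-elementary condition ``$c\in\Nil((a))$'' by the first-order condition ``$c\in\Jac((a))$''; the noetherian and finite-dimensionality hypotheses are not actually needed for this route (they would enter if one instead argued via the definability of $\Spec A$ supplied by Lemma~\ref{lem:def prime ideals}).
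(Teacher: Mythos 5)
Your proof is correct, and it reaches the same overall shape as the paper's (define the saturation $\overline S$ of $c^{\NN}$, then invoke Examples~\ref{ex:ring interpretations},~(4)), but the key step is done by a genuinely different and more general argument. The paper gets definability of $\overline S$ by quoting Lemma~\ref{lem:def prime ideals}: the union of all primes not containing $c$ is definable because $\Spec A$ is uniformly parametrized by $(n+1)$-tuples via Kronecker's theorem, and that is precisely where the noetherian and finite-dimensionality hypotheses are consumed. You instead characterize membership directly: $a\in\overline S$ iff $c\in\Nil\big((a)\big)$, which the Jacobson hypothesis converts into the first-order condition $c\in\Jac\big((a)\big)$, uniformly definable by the formula $\Jac(\gamma_1)$ from Section~1. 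This buys a strictly stronger statement (the lemma holds for any Jacobson ring, with no noetherian or dimension hypothesis) at no extra cost; what the paper's heavier route buys is the ability to quantify over individual prime ideals and over $\Spec^\circ(A)$, which is needed elsewhere in the paper but not here. Your explicit treatment of the degenerate case where $c$ is nilpotent (so $0\in\overline S$ and $A[c^{-1}]$ is the zero ring, which Examples~\ref{ex:ring interpretations},~(4) does not literally cover) is a small point the paper's proof passes over silently.
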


\begin{proof}
By Lemma~\ref{lem:def prime ideals}, the union of all prime ideals of $A$ which do not contain $c$ is definable in $A$, hence so is the saturation $\overline{S}$ of the multiplicative subset $c^\NN=\{c^n:n=0,1,2,\dots\}$ of $A$. Thus $A[c^{-1}]=A[\overline{S}{}^{-1}]$ is interpretable in $A$ by Examples~\ref{ex:ring interpretations},~(4).
\end{proof}

\noindent
Suppose $A$ is noetherian. Then
every finite ring extension $B$ of $A$ is interpretable in $A$:
choose generators $b_1,\dots,b_m$ of $B$ as an $A$-module, and 
let $K$ be the kernel of the surjective $A$-linear map $\pi\colon A^m\to B$ given by $(a_1,\dots,a_m)\mapsto \sum_i a_ib_i$.
Then~$K$ is a f.g.~$A$-submodule of $A^m$, hence definable in $A$.
The  multiplication map on $B$ may be encoded by a bilinear form on $A^m$. Thus $\pi$ is an interpretation of $B$ in $A$.

One says that $A$ has finite rank $n$ if each f.g.~ideal of $A$ can be generated by $n$ elements.
In this case, every submodule of $R^m$ can be generated by $mn$ elements~\cite{Cohen}.
Hence we obtain:

\begin{lemma}\label{lem:interpret finite extensions}
Suppose $A$ is noetherian of finite rank. Then the class of finite ring extensions of $A$ generated by $m$ elements as $A$-module is
uniformly interpretable in~$A$.
\end{lemma}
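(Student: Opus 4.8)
The plan is to make the construction preceding the lemma uniform in the structure constants of the algebra. Fix $m$ and $n$ (the rank). A finite ring extension $B$ of $A$ generated by $m$ elements as an $A$-module is, up to $A$-algebra isomorphism, determined by the following data: the kernel $K$ of the presentation map $\pi\colon A^m\to B$, and the $A$-bilinear multiplication $\mu\colon A^m\times A^m\to A^m/K$ encoding the product on $B$, together with the element of $A^m/K$ representing $1_B$. Since $A$ has finite rank $n$, the submodule $K\subseteq A^m$ is generated by $mn$ elements; so $K$ may be specified by an $m\times mn$ matrix of parameters, and $\mu$ by the $m^2$ coefficient vectors $\mu(e_i,e_j)\in A^m$, again parameters. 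Collect all these entries into a single parameter tuple $z$; this is a tuple of length $p:=m\cdot mn + m^2\cdot m + m$ (or thereabouts — the exact count is immaterial).

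First I would write down the $\mathcal L$-formula $\mu(x;z)$ (in the notation of the definition of uniform interpretation) cutting out $M_s := A^m$ — i.e. just $x=x$ — and the formula $\varepsilon(x,x';z)$ saying that $x-x'$ lies in the submodule generated by the $mn$ purported generators of $K$ read off from $z$; this is an existential formula in $mn$ extra quantifiers, using the addition and scalar multiplication of $A$. Next, the formula $\sigma(z)$ must assert that the data $z$ genuinely defines a finite ring extension of $A$ of the required kind: that $\varepsilon(\cdot,\cdot;z)$ is an $A$-submodule congruence compatible with the bilinear form $\mu$ (so that $\mu$ descends to $M_s/E_s$), that the induced multiplication on $M_s/E_s$ is commutative and associative, that the designated element is a two-sided identity, that the canonical map $A\to M_s/E_s$, $a\mapsto a\cdot 1_B$, is injective (so that $A$ embeds as a subring), and that this map together with the quotient module structure exhibits $M_s/E_s$ as generated by the images of $e_1,\dots,e_m$. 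Each of these is a first-order condition on $z$ — finitely many universally/existentially quantified equalities among $A$-linear combinations — so $\sigma$ exists. Finally, for the single binary function symbols of $\mathcal L'=\{+,\times\}$ I would take $\rho_+$ and $\rho_\times$ to be the formulas defining the graphs of coordinatewise addition and of the bilinear form $\mu$ respectively, read off from $z$; the ring axioms built into $\sigma$ guarantee (U1), that these are $E_s$-invariant.

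It then remains to verify (U2): every $\mathbf B_s$ so produced is a finite ring extension of $A$ generated by $m$ elements as an $A$-module — immediate from the axioms packed into $\sigma$ — and conversely every such $B$ arises as some $\mathbf B_s$. For the converse, given $B$, choose $A$-module generators $b_1,\dots,b_m$, let $K=\ker\pi$ as above, pick (by finite rank) generators $k_1,\dots,k_{mn}$ of $K$, express each $b_ib_j$ as $\sum_\ell c_{ij\ell}b_\ell$ and $1_B$ as $\sum_\ell d_\ell b_\ell$, and assemble these into $z$; then $\mathbf B_s\cong B$ via $\pi$. The main obstacle I anticipate is bookkeeping rather than mathematics: one must be careful that $\sigma$ really forces $E_s$ to be an $A$-module congruence (not merely a subgroup), that the bilinear form descends, and above all that the embedding $A\hookrightarrow M_s/E_s$ and the generation condition are correctly expressed, so that $\mathbf B_s$ is a genuine \emph{ring extension of $A$} — the phrase ``extension of $A$'' is doing real work and its first-order encoding must be spelled out. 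Everything else is the routine translation of the already-given construction into a schema with parameters.
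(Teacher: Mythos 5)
Your proposal is correct and follows essentially the same route as the paper, which derives the lemma from the immediately preceding construction (presentation $\pi\colon A^m\to B$ with kernel $K$, multiplication encoded by a bilinear form) by observing that finite rank bounds the number of generators of $K\subseteq A^m$ by $mn$, so that the generators of $K$ together with the structure constants can be packaged into a single parameter tuple. The paper leaves the bookkeeping (the formulas $\sigma$, $\mu$, $\varepsilon$, $\rho$ and the verification of (U1)--(U2)) entirely implicit, and your write-up just supplies those routine details.
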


\noindent
This fact, together with its corollary below, are used in the proof of Theorem~\ref{thm:integral domains}.

\begin{cor}\label{cor:interpret finite extensions}
Suppose $A$ is noetherian of finite rank, and let $A'$ be a flat ring extension of $A$. Then
the class of rings of the form $A'\otimes_A B$, where $B$ is a finite ring extension of $A$ generated by $m$ elements as an $A$-module, is uniformly interpretable in $A'$.
\end{cor}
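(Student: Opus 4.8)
The plan is to reduce the corollary to Lemma~\ref{lem:interpret finite extensions} by a base-change argument, using that flatness makes tensoring with $A'$ compatible with the presentations of finite extensions produced there. First I would recall the setup of Lemma~\ref{lem:interpret finite extensions}: a finite ring extension $B$ of $A$ generated by $m$ elements as an $A$-module is presented as $A^m/K$ for an $A$-submodule $K\subseteq A^m$, together with a bilinear multiplication map; since $A$ has finite rank $n$, the submodule $K$ is generated by $mn$ elements, and the data of these generators together with the structure constants of the multiplication vary over a definable parameter set, yielding the uniform interpretation of this class in $A$.

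Next I would tensor the presentation with $A'$. Because $A'$ is flat over $A$, the exact sequence $0\to K\to A^m\to B\to 0$ stays exact after applying $A'\otimes_A-$, so $A'\otimes_A B \cong (A')^m / (A'\otimes_A K)$, and $A'\otimes_A K$ is exactly the $A'$-submodule of $(A')^m$ generated by the images of the chosen $mn$ generators of $K$. The multiplication on $A'\otimes_A B$ is the $A'$-bilinear extension of the bilinear form that encoded multiplication on $B$, so its structure constants (as elements of $A'$) are just the images under $A\to A'$ of the structure constants over $A$. Thus a single tuple of parameters from $A$ — read inside $A'$ via the (definable, indeed quantifier-free) inclusion of $A$ into $A'$ — determines both the submodule $A'\otimes_A K\subseteq (A')^m$ and the multiplication, and the quotient map $(A')^m\to A'\otimes_A B$ is an interpretation of $A'\otimes_A B$ in $A'$ uniformly in these parameters.

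It remains to address the one genuine subtlety: the parameter tuples in the uniform interpretation of Lemma~\ref{lem:interpret finite extensions} come from $A$, but to run a uniform interpretation \emph{in $A'$} we need the admissible parameters to be cut out by an $\mathcal L$-formula over $A'$, and we need every $A'\otimes_A B$ of the stated form to arise. For the first point, note that $A$ need not be a definable subset of $A'$ in general; however, the parameter set in Lemma~\ref{lem:interpret finite extensions} is defined by a formula $\sigma(z)$ (a noetherianity/finite-rank consistency condition on the generators of $K$ and the structure constants), and we may simply take $S^{A'}:=\{s\in (A')^p : A'\models\sigma(s)\}$ as the parameter set in $A'$ — this is a superset of the image of $S^A$, and condition (U1) (that each such $s$ yields a finite ring extension) still holds because $\sigma$ was designed to force precisely the ring-axioms and module-finiteness. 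Enlarging the parameter set only enlarges the class $\mathfrak B_{s}$; to keep (U2) we intersect with the image of $S^A$, but since that image may not be definable, the cleaner route is to observe that for the corollary we only need the class $\{A'\otimes_A B\}$ to be \emph{contained in} $\{\bB_s : s\in S^{A'}\}$ with every such $\bB_s$ again of the form $A'\otimes_A B'$; the latter holds because any tuple $s$ satisfying $\sigma$ in $A'$ presents some finite $A'$-algebra, and by a further consistency formula we can restrict to those $s$ that lie in the $A$-subalgebra generated by the coefficients, forcing $\bB_s\cong A'\otimes_A B$ for the corresponding $B$ over $A$. The main obstacle is exactly this bookkeeping — making precise which parameters are ``defined over $A$'' inside $A'$ without assuming $A$ is definable in $A'$ — and I expect it is handled by absorbing the relevant finiteness of $A$ (its fixed rank $n$) into the defining formula $\sigma$, so that the uniform interpretation over $A'$ needs no reference to $A$ as a subset at all.
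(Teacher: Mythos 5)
Your first two paragraphs are precisely the paper's proof: tensor the presentation $0\to K\to A^m\to B\to 0$ with the flat extension $A'$, note that $A'\otimes_A K$ is the $A'$-submodule of $(A')^m$ generated by the images of the $mn$ generators of $K$, and that the $A'$-bilinear extension of the form encoding multiplication on $B$ encodes multiplication on $A'\otimes_A B$. So the core of your argument is correct and identical in approach to the paper's.

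Your third paragraph raises a point on which the paper's proof is silent, but the specific patch you propose there does not work: ``restrict to those $s$ that lie in the $A$-subalgebra generated by the coefficients'' is not a first-order condition on tuples from $A'$, since the subring $A$ of $A'$ need not be definable in $A'$, and no ``further consistency formula'' cuts out the image of $S^A$ in general. The repair is also unnecessary. What the corollary delivers, and the only way it is used (in the proof of Theorem~\ref{thm:integral domains}), is that the formulas of Lemma~\ref{lem:interpret finite extensions}, evaluated in $A'$ at the image under $A\subseteq A'$ of the tuple presenting $B$ over $A$, interpret $A'\otimes_A B$; in the application the relevant parameters are definably available because $A$ (there, $D_c$) is itself interpreted in the ambient structure, so only the surjectivity half of (U2) --- every $A'\otimes_A B$ arises from some admissible tuple --- is ever invoked, and that is immediate from the base-change computation you already gave.
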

\begin{proof}
Let $B$ be a ring extension of $A$ generated as an $A$-module by $b_1,\dots,b_m$.
With $\pi$, $K$ as before we have an exact sequence
$$0\to K \xrightarrow{\ \subseteq\ } A^m\xrightarrow{\ \pi\ } B \to 0.$$
Tensoring with $A'$ yields an exact sequence
$$0\to A'\otimes_A K \longrightarrow (A')^m\xrightarrow{\ 1\otimes\pi\ } A'\otimes_A B \to 0.$$
The image of $K$ under $x\mapsto 1\otimes x$ generates the $A'$-module $A'\otimes_A K$, and the 
 extension of the bilinear form on the $A$-module $A^m$ which describes the ring multiplication on~$B$
to a bilinear form on the $A'$-module $(A')^m$ also describes the ring multiplication
on $A'\otimes_A B$.
\end{proof}

\noindent
We finish this subsection by recording a detailed proof of the well-known fact that all finitely generated rings are interpretable in $\ZZ$. The proof is a typical application of G\"odel coding in arithmetic, and we assume that  the reader is familiar with the basics of this technique; see, for example, \cite[Section~6.4]{Shoen}.
(Later in the paper, such routine coding arguments will usually only be sketched.)
Let $\beta$ be a G\"odel function, i.e., a function $\NN^2\to\NN$, definable in Peano Arithmetic (in fact, much weaker systems of arithmetic are enough), so that for any finite sequence~$(a_1,\dots,a_{n})$ of natural numbers there exists $a\in\NN$ such that 
$\beta(a,0)=n$ (the length of the sequence) and
$\beta(a,i)=a_i$ for $i=1,\dots,n$.
It is routine to construct from $\beta$ a function $\gamma\colon\NN^2\to\ZZ$ which is definable in $\ZZ$ and
which encodes finite sequences of integers, i.e.,
such that for each  
$(a_1,\dots,a_n)\in\ZZ^{n}$ there exists $a\in\NN$ with $\gamma(a,0)=n$ and $\gamma(a,i)=a_i$ for $i=1,\dots,n$.

\begin{lemma}\label{lem:poly ring}
Suppose $A$ is interpretable in $\ZZ$, and let $X$ be an indeterminate over~$A$. Then $A[X]$ is also interpretable in $\ZZ$.
\end{lemma}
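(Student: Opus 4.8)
The plan is to realize $A[X]$ concretely as its set of coefficient sequences, and to code finite sequences over $A$ by integers using the G\"odel function $\gamma$ fixed just above the lemma. First I would fix an interpretation $f\colon M\to A$ of $A$ in $\ZZ$, with $M\subseteq\ZZ^m$; then $M$, the kernel $\ker f$, the preimages under $f$ of the graphs of addition and multiplication of $A$, and the set $f^{-1}(\{0_A\})$ (since $\{0_A\}$ is definable in $A$) are all definable in $\ZZ$. A polynomial $\sum_{i}a_iX^i\in A[X]$ is determined by a finite sequence $u_0,\dots,u_n$ of elements of $M$ with $f(u_i)=a_i$; writing $u_i=(u_{i,1},\dots,u_{i,m})$, I would code it by the tuple $(c_1,\dots,c_m)\in\ZZ^m$ in which $c_j$ is a $\gamma$-code of the integer sequence $(u_{0,j},\dots,u_{n,j})$. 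Let $M'\subseteq\ZZ^m$ consist of those $(c_1,\dots,c_m)$ with $\gamma(c_1,0)=\cdots=\gamma(c_m,0)\geq 1$ and $\bigl(\gamma(c_1,i),\dots,\gamma(c_m,i)\bigr)\in M$ for all $i$ with $1\leq i\leq\gamma(c_1,0)$. This is definable in $\ZZ$ (the quantifier over $i$ is bounded), and reading off the coefficients gives a map $M'\to A[X]$ which the defining property of $\gamma$ and surjectivity of $f$ show to be onto.

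Next I would check definability of the kernel of this surjection and of the pullbacks of the graphs of $+$ and $\times$ on $A[X]$. It is convenient to introduce a definable relation expressing ``the $i$-th coefficient of the polynomial coded by $(c_1,\dots,c_m)$ is $f(v)$,'' where the coefficient is declared to be $0_A$ for indices $i$ beyond the coded length; this is definable from $\gamma$, $M$, $\ker f$, and $f^{-1}(\{0_A\})$. Then two codes have the same image iff their $i$-th coefficients are $\ker f$-equivalent for every $i$ up to the larger of the two coded lengths, a bounded condition; and the graph of addition is handled in exactly the same coefficientwise manner using the preimage of $+_A$.

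The one step that needs more than a coefficientwise translation is multiplication, since the $k$-th coefficient of a product is the convolution $\sum_{i+j=k}a_ib_j$, whose number of terms varies with $k$. Here I would use a second application of G\"odel coding over $\ZZ$: the relation ``$f(v)=\sum_{i+j=k}a_ib_j$'' is expressed by asserting the existence of a $\gamma$-code of a sequence of partial sums $v_0,\dots,v_k$ in $M$ with $v_0$ representing $a_0b_k$, with $v_\ell$ representing $f(v_{\ell-1})+_Aa_\ell b_{k-\ell}$ for $1\leq\ell\leq k$, and with $v_k$ being $\ker f$-equivalent to $v$ --- all definable from the preimages of $+_A$ and $\times_A$ and the definability of $\gamma$. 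Substituting this into the coefficientwise description yields a definable pullback of the graph of multiplication on $A[X]$, so $M'\to A[X]$ is an interpretation of $A[X]$ in $\ZZ$. The only genuine (if entirely routine) obstacle is this variable-length summation; everything else is bounded-quantifier bookkeeping with $\gamma$, $M$, and the interpreted ring operations of $A$.
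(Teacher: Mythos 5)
Your proposal is correct, and its core idea --- G\"odel-coding the sequence of coefficients --- is the same as the paper's. The one real difference is in the setup: the paper first invokes Lemma~\ref{lem:bijective interpretation} (uniform elimination of imaginaries in $\ZZ$) to replace the given interpretation by an injective one $g\colon\ZZ\to A$ with domain all of $\ZZ$, after which a polynomial is coded by a \emph{single} integer coding the sequence of $g$-preimages of its coefficients, and the remaining verifications are dismissed as routine. You instead work directly with an arbitrary $f\colon M\to A$, $M\subseteq\ZZ^m$, code coefficient sequences coordinatewise by $m$-tuples, and must therefore carry the kernel of $f$ and the pullbacks of the ring operations through the whole construction --- including the variable-length convolution for multiplication, which you correctly handle with a second layer of coding for partial sums. (The same convolution issue is implicitly present in the paper's ``easy to check,'' so you are not doing extra work in substance, only making it explicit.) What your route buys is that it avoids the appeal to elimination of imaginaries and treats finite and infinite $A$ uniformly, whereas the paper's normalization shortens the bookkeeping at the cost of splitting off the finite case. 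Both arguments are sound.
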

\begin{proof}
For simplicity we assume that $A$ is infinite (the case of a finite $A$ being similar).
Let $g\colon \ZZ\to A$ be an injective interpretation of $A$ in $\ZZ$. (Lemma~\ref{lem:bijective interpretation}.) Let
$$N:=\big\{a\in\NN : \gamma(a,0) \geq 1, \text{ and }
\gamma(a,0)\geq 2 \Rightarrow \gamma(a,\gamma(a,0))\neq 0\big\}$$
be the set of codes of finite sequences $(a_0,\dots,a_n)\in\ZZ^{n+1}$ such that $a_n\neq 0$ if $n\geq 1$. Clearly $N$ is definable in $\ZZ$. It is easy to check that then the map
$$N\to A[X] \colon a \mapsto \sum_{i=0}^{\gamma(a,0)-1} g\big(\gamma(a,i+1)\big)\,X^i$$
is an injective interpretation of $A[X]$ in $\ZZ$. 
\end{proof}

\noindent
The previous lemma in combination with Examples~\ref{ex:ring interpretations},~(2) and (4)  yields:

\begin{cor}\label{cor:fg ring int in ZZ}
Every f.g.~ring and every localization of a f.g.~ring at a definable multiplicative subset is interpretable in $\ZZ$. 
\end{cor}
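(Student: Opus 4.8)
The statement to prove is Corollary~\ref{cor:fg ring int in ZZ}: every finitely generated ring, and every localization of a f.g.\ ring at a definable multiplicative subset, is interpretable in $\ZZ$.

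The plan is to combine the three tools already assembled: Lemma~\ref{lem:poly ring} (polynomial rings over interpretable rings are interpretable in $\ZZ$), Examples~\ref{ex:ring interpretations}(2) (quotients by definable ideals are interpretable), and Examples~\ref{ex:ring interpretations}(4) (localizations at definable multiplicative subsets are interpretable). First I would handle a f.g.\ ring $A$. By definition $A$ is a quotient $\ZZ[X_1,\dots,X_n]/I$ of a polynomial ring over $\ZZ$ by a finitely generated ideal $I$. Starting from the identity interpretation of $\ZZ$ in itself and applying Lemma~\ref{lem:poly ring} $n$ times (each time adjoining one more indeterminate), I get that $\ZZ[X_1,\dots,X_n]$ is interpretable in $\ZZ$; note that interpretability is transitive, since the composition of interpretations is an interpretation. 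Now $I$ is generated by finitely many polynomials $f_1,\dots,f_k$; pulling these back through the interpretation, $I$ is a definable subset of (the copy of) $\ZZ[X_1,\dots,X_n]$, because membership $g\in I$ is expressed by $\exists h_1\cdots\exists h_k\ (g=f_1h_1+\cdots+f_kh_k)$, an existential formula over the interpreted ring. Hence by Examples~\ref{ex:ring interpretations}(2), the quotient $A=\ZZ[X_1,\dots,X_n]/I$ is interpretable in $\ZZ[X_1,\dots,X_n]$, and therefore, composing, in $\ZZ$.

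For the second assertion, let $A$ be f.g.\ and let $S\subseteq A$ be a definable multiplicative subset (so $1\in S$, $0\notin S$, $S\cdot S\subseteq S$, and $S$ is defined by some formula with parameters). By the first part, $A$ is interpretable in $\ZZ$ via some interpretation $g\colon M\to A$. Since $S$ is definable in $A$, its preimage $g^{-1}(S)$ is definable in $\ZZ$, so $S$ is a definable multiplicative subset in the copy of $A$ interpreted in $\ZZ$; thus Examples~\ref{ex:ring interpretations}(4) applies to give that $A[S^{-1}]$ is interpretable in (the copy of $A$ in) $\ZZ$, hence in $\ZZ$ by transitivity.

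There is no real obstacle here — the corollary is a bookkeeping consequence of the preceding lemmas — but the one point deserving a word of care is the verification that transitivity of interpretability and the definability of $I$ (resp.\ $S$) interact correctly: when one says ``$A/I$ is interpretable in $\ZZ$'' one must observe that a subset definable in the copy of $\ZZ[X_1,\dots,X_n]$ interpreted in $\ZZ$ is, by the very definition of interpretation, definable in $\ZZ$, so the quotient construction of Examples~\ref{ex:ring interpretations}(2) can be performed inside $\ZZ$ itself. This is exactly the content of the category structure on interpretations recorded earlier in Section~\ref{sec:interpretations}, so the argument is clean. One should also recall why the base case works: $\ZZ$ is of course interpretable in itself via $\id_\ZZ$, so Lemma~\ref{lem:poly ring} can be bootstrapped starting from $A=\ZZ$.
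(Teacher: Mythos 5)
Your proof is correct and follows exactly the route the paper intends: its proof of this corollary is the one-line remark that Lemma~\ref{lem:poly ring} combined with Examples~\ref{ex:ring interpretations},~(2) and~(4) yields the statement, which is precisely the bootstrapping-plus-quotient-plus-localization argument you spell out. The details you supply (transitivity of interpretability, definability of the f.g.\ ideal $I$ via an existential formula, and of $S$ via pullback) are the right ones and raise no issues.
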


\begin{remarks*}[uniform interpretations in and of $\ZZ$] The following remarks are not used later in this paper.

\begin{enumerate}
\item The proof of Corollary~\ref{cor:fg ring int in ZZ} can be refined to  show that the class of f.g.~rings is {\it uniformly}\/ interpretable in $\ZZ$.
\item See \cite[Section~2]{Sc} for a proof that $\ZZ$ is uniformly interpretable in the class of infinite f.g.~fields. 
By (2) and (4) of Examples~\ref{ex:ring interpretations}, if $\mathfrak p$ is a prime ideal of $A$, then the fraction field of $A/\mathfrak p$ is interpretable in $A$. 
Using remark~(2) following Lemma~\ref{lem:def prime ideals} this implies that for each $n$, the class of infinite fields generated (as fields) by $n$ elements is uniformly interpretable in the class $\mathfrak A_n$ of infinite rings generated by $n$ elements. Hence
for each $n$, $\ZZ$ is uniformly interpretable in $\mathfrak A_n$. We do not know whether 
$\ZZ$ is uniformly interpretable in the class $\bigcup_n \mathfrak A_n$ of   infinite f.g.~rings. (This question was also asked in~\cite{Kh}.)
\end{enumerate}
\end{remarks*}

\subsection{Bi-interpretability with $\ZZ$}\label{sec:bi-int with ZZ}
In this subsection we deduce a few useful consequences of bi-interpretability with $\ZZ$.  

\medskip
\noindent
Suppose first that $\bA$ and $\ZZ$ are weakly bi-inter\-pretable, and
let $(f,g')$ be a weak bi-interpretation between $\bA$ and $\ZZ$. By Lemma~\ref{lem:bijective interpretation} there
is an injective interpretation $g\colon\ZZ\to A$ of $\bA$ in $\ZZ$ with $g\simeq g'$. By Lemma~\ref{lem:properties of homotopy},~(1) we have
$g\circ f\simeq g'\circ f\sim\id_\bA$, and by Lemma~\ref{lem:self-interpretations of Z} we have $f\circ g\simeq\id_\ZZ$.
Hence $(f,g)$ is a weak bi-interpretation between $\bA$ and $\ZZ$, and
if $(f,g')$ is even a bi-interpretation between $\bA$ and $\ZZ$, then so is $(f,g)$. 
Thus, if there is a weak bi-interpretation between $\bA$ and $\ZZ$ at all, then there is such a weak bi-interpretation $(f,g)$
where $g$ is a bijection $\ZZ\to A$; similarly with ``bi-interpretation'' in place of ``weak bi-interpretation.''

\medskip
\noindent
As a first application of these remarks, we generalize Lemma~\ref{lem:self-interpretations of Z} from $\ZZ$ to all structures bi-interpretable with $\ZZ$. 

\begin{cor}\label{cor:self-interpretations}
If $\bA$ and $\ZZ$ are bi-inter\-pretable, then every self-interpretation of~$\bA$ is homotopic to $\id_\bA$.
\textup{(}Hence if  $\bA$ and $\ZZ$ are bi-inter\-pretable, then any pair of interpretations $\bA\leadsto\ZZ$ and $\ZZ\leadsto\bA$ is a bi-interpretation between $\bA$ and $\ZZ$.\textup{)}
\end{cor}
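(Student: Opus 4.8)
The plan is to reduce the statement to the two facts already in hand: that any two interpretations of $\ZZ$ in $\bA$ are homotopic (Corollary~\ref{cor:all interpretations of Z are homotopic}), and that homotopy interacts well with composition (Lemma~\ref{lem:properties of homotopy}). First I would use the remarks at the beginning of this subsection to fix a bi-interpretation $(f,g)$ between $\bA$ and $\ZZ$ in which $g\colon\ZZ\to A$ is a \emph{bijection}; this is the only real choice in the argument, and it is precisely what makes the left-cancellation step below legitimate.

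Now let $h\colon\bA\leadsto\bA$ be an arbitrary self-interpretation. The composites $f\circ h$ and $f$ are both interpretations of $\ZZ$ in $\bA$, so Corollary~\ref{cor:all interpretations of Z are homotopic} gives $f\circ h\simeq f$. Since $g$ is injective, Lemma~\ref{lem:properties of homotopy},~(2) lets me post-compose with $g$ to get $(g\circ f)\circ h=g\circ(f\circ h)\simeq g\circ f$. On the other hand $g\circ f\simeq\id_\bA$ since $(f,g)$ is a bi-interpretation, so Lemma~\ref{lem:properties of homotopy},~(1) applied to the inner map $h$ gives $(g\circ f)\circ h\simeq\id_\bA\circ h=h$. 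Chaining these homotopies yields $h\simeq(g\circ f)\circ h\simeq g\circ f\simeq\id_\bA$, as desired. The parenthetical statement then follows at once: given any interpretations $p\colon\bA\leadsto\ZZ$ and $q\colon\ZZ\leadsto\bA$, the composite $q\circ p$ is a self-interpretation of $\bA$, hence homotopic to $\id_\bA$ by what was just proved, while $p\circ q$ is a self-interpretation of $\ZZ$, hence homotopic to $\id_\ZZ$ by Lemma~\ref{lem:self-interpretations of Z}; thus $(p,q)$ is a bi-interpretation.

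The one point requiring care — the ``main obstacle,'' such as it is — is the passage from $f\circ h\simeq f$ back to $h\simeq\id_\bA$: Corollary~\ref{cor:equiv of categories} only lets one cancel a factor on the \emph{right}, so to remove $f$ on the left I instead compose with $g$ and exploit that $g$ may be taken injective. Everything else is a formal diagram chase using associativity of composition of interpretations together with the two parts of Lemma~\ref{lem:properties of homotopy}.
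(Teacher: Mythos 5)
Your proof is correct and follows essentially the same route as the paper's: fix a bi-interpretation $(f,g)$ with $g\colon\ZZ\to A$ bijective, use the triviality of self-interpretations of $\ZZ$ (which is exactly what underlies Corollary~\ref{cor:all interpretations of Z are homotopic}), and then transport the homotopy back through the injective $g$ via Lemma~\ref{lem:properties of homotopy}. The only cosmetic difference is that the paper sandwiches $h$ as $f\circ h\circ g\simeq\id_\ZZ$ and finishes by right-cancelling $g$ with Corollary~\ref{cor:equiv of categories}, whereas you compare $f\circ h$ with $f$ and close the argument by chaining homotopies through $g\circ f\simeq\id_\bA$; both are sound.
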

\begin{proof}
Let $(f,g)$ be a bi-interpretation between $\bA$ and $\ZZ$ where $g$ is a bijection $\ZZ\to A$, and let $h\colon\bA\leadsto\bA$.
Then $f\circ h\circ g\simeq\id_\ZZ$ by Lemma~\ref{lem:self-interpretations of Z},  thus 
$h\circ g\simeq g$  by Lemma~\ref{lem:properties of homotopy} (and injectivity of $g$), and so
$h\simeq\id_\bA$ by Corollary~\ref{cor:equiv of categories}.
\end{proof}

\noindent
For the following corollary (used in the proof of Theorem~\ref{thm:integral domains} below), suppose we are given an 
isomorphism $\alpha\colon \bA\to\widetilde{\bA}$ of $\cL$-structures. Then $\alpha$ acts on
definable objects in the natural way. For example, if $i\colon M\to D$ ($M\subseteq A^m$) is an interpretation of~$\bD$ in $\bA$, then $i\circ\alpha\colon\alpha(M)\to D$ is an interpretation
of $\bD$ in $\widetilde{\bA}$, and $\alpha$ induces an isomorphism $\overline{\alpha}\colon i^*(\bD)\to (i\circ\alpha)^*(\bD)$.
Note that the underlying set of $(i\circ\alpha)^*(\bD)$ is $\alpha(M)/\ker(i\circ\alpha^{-1})$.

\begin{cor}\label{cor:def iso}
Let $i\colon \bA\leadsto\bD$ and $j\colon \bD\leadsto \bA$, and let $\widetilde{\bA}:=(j\circ i)^*(\bA)$ and $\alpha$ denote the inverse of the isomorphism
$\overline{j\circ i}\colon \widetilde{\bA}\to\bA$.
Suppose $\bD$ is bi-interpretable with $\ZZ$. Then $\overline{\alpha}\colon i^*(\bD)\to (i\circ\alpha)^*(\bD)$ is definable in $\bA$.
\end{cor}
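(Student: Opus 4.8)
The plan is to reduce the statement to the concrete case already handled by the preliminary lemmas, namely the case where the intermediate structure is $\ZZ$ itself and then to transport this along the bi-interpretation between $\bD$ and $\ZZ$. First I would unwind the definitions. We have $i\colon\bA\leadsto\bD$ and $j\colon\bD\leadsto\bA$; the composition $j\circ i$ is a self-interpretation of $\bA$, its copy is $\widetilde{\bA}=(j\circ i)^*(\bA)$, and $\alpha=\overline{j\circ i}^{-1}\colon\bA\to\widetilde{\bA}$. The claim is that the induced isomorphism $\overline{\alpha}\colon i^*(\bD)\to(i\circ\alpha)^*(\bD)$ is definable in $\bA$. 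Note that, by definition of homotopy, saying that $\overline{\alpha}$ is definable in $\bA$ is exactly saying that the two interpretations $i$ and $i\circ\alpha$ of $\bD$ in $\bA$ are homotopic; indeed the pullback $[i=i\circ\alpha]$ being definable is equivalent to the existence of a definable isomorphism between the two copies compatible with the structure maps, and $\overline{\alpha}$ is a candidate for that isomorphism. So the goal is simply: $i\simeq i\circ\alpha$ as interpretations $\bA\leadsto\bD$.

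Next I would bring in the hypothesis that $\bD$ is bi-interpretable with $\ZZ$. Let $(p,q)$ be a bi-interpretation between $\bD$ and $\ZZ$, with $p\colon\bD\leadsto\ZZ$ and $q\colon\ZZ\leadsto\bD$. The idea is that $\bA$ inherits a bi-interpretation with $\ZZ$ via the composites $p\circ i\colon\bA\leadsto\ZZ$ and $j\circ q\colon\ZZ\leadsto\bA$ — but actually I do not even need $\bA$ to be bi-interpretable with $\ZZ$; I only need a trick to produce a self-interpretation of $\ZZ$ and apply Lemma~\ref{lem:self-interpretations of Z}. Consider the two interpretations $p\circ i$ and $p\circ(i\circ\alpha)=(p\circ i)\circ\alpha$ of $\ZZ$ in $\bA$. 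Compose each on the right with an injective interpretation $g\colon\ZZ\to A$ of $\bA$ in $\ZZ$ coming from a bi-interpretation between $\bA$ and $\ZZ$; here is where I genuinely want $\bA$ bi-interpretable with $\ZZ$. The cleaner route: since $\bD$ is bi-interpretable with $\ZZ$ and $\bA$ interprets $\bD$ which interprets $\ZZ$, while $\ZZ$ interprets $\bD$ which (via $j$) interprets $\bA$, the pair $(p\circ i,\ j\circ q)$ is a bi-interpretation between $\bA$ and $\ZZ$: indeed $(j\circ q)\circ(p\circ i)=j\circ(q\circ p)\circ i\simeq j\circ\id_\bD\circ i=j\circ i\simeq\id_\bA$ using $q\circ p\simeq\id_\bD$, Lemma~\ref{lem:properties of homotopy}, and the defining relation $j\circ i\simeq\id_\bA$ coming from $\alpha$ being definable would be circular — so instead I just use that $j\circ i\simeq\id_\bA$ is exactly what we must prove, hence I cannot assume it. Let me restructure: apply Corollary~\ref{cor:self-interpretations} style reasoning directly.

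The clean argument: by Corollary~\ref{cor:self-interpretations} (or rather its proof technique) it suffices to exhibit $\bA$ as bi-interpretable with $\ZZ$ and then quote that every self-interpretation of $\bA$ is homotopic to $\id_\bA$, applied to $h:=j\circ i$. But wait: $j\circ i\simeq\id_\bA$ means precisely that $\overline{j\circ i}$ is definable, equivalently $\alpha$ is definable, which by functoriality of the interpretation construction yields that $\overline{\alpha}$ is definable — and in any case $i\simeq i\circ\alpha$ follows from $\id_\bA\simeq\alpha$ (viewing $\alpha$ as a self-interpretation) by Lemma~\ref{lem:properties of homotopy}(2) applied with the injective interpretation... no, $i$ need not be injective. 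So the route is: $(j\circ i)\simeq\id_\bA$ gives, post-composing (Lemma~\ref{lem:properties of homotopy}) nothing directly; instead pre-compose: we want to relate $i$ and $i\circ\alpha$. Since $\alpha=\overline{j\circ i}^{-1}$ and $j\circ i\simeq\id_\bA$, the interpretation $\alpha$ (as a self-interpretation $\bA\to\widetilde\bA$, injective since $\overline{j\circ i}$ is a bijection) is $\sim\id_\bA$, hence homotopic to it, and so by Lemma~\ref{lem:properties of homotopy}(2) (with $i$ playing... no). I would instead verify $[i=i\circ\alpha]$ is definable directly: it equals $\{(x,\alpha(x)'):\dots\}$, and since $\alpha$ is a definable map on $\widetilde\bA\subseteq A^{\text{something}}$ once we know $j\circ i\simeq\id_\bA$. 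Therefore the real content is establishing $j\circ i\simeq\id_\bA$, and the main obstacle is exactly this: proving $\bA$ is bi-interpretable with $\ZZ$ so as to invoke Corollary~\ref{cor:self-interpretations} on the self-interpretation $h=j\circ i$. For that, $(p\circ i, j\circ q)$ works once we check $(p\circ i)\circ(j\circ q)\simeq\id_\ZZ$, which holds by Lemma~\ref{lem:self-interpretations of Z} since it is a self-interpretation of $\ZZ$ — and crucially that one direction suffices by the remark following Lemma~\ref{lem:self-interpretations of Z}. Hence $\bA$ is bi-interpretable with $\ZZ$, Corollary~\ref{cor:self-interpretations} gives $j\circ i\simeq\id_\bA$, i.e.\ $\alpha$ is definable in $\bA$, and then functoriality of the copy construction yields that $\overline{\alpha}\colon i^*(\bD)\to(i\circ\alpha)^*(\bD)$ is definable in $\bA$, as desired. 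The step I expect to be the main obstacle is the careful bookkeeping needed to conclude from "$\alpha$ definable" that "$\overline{\alpha}$ definable", tracking the underlying sets $M/\ker i$ versus $\alpha(M)/\ker(i\circ\alpha^{-1})$ and checking the structure maps match up.

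\begin{proof}
Recall that $\overline{\alpha}$ being definable in $\bA$ is, by the definitions in Section~\ref{sec:prelims interpretations}, equivalent to the assertion $i\simeq i\circ\alpha$.

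First, observe that $\bA$ is bi-interpretable with $\ZZ$. Fix a bi-interpretation $(p,q)$ between $\bD$ and $\ZZ$, with $p\colon\bD\leadsto\ZZ$ and $q\colon\ZZ\leadsto\bD$. Consider $p\circ i\colon\bA\leadsto\ZZ$ and $j\circ q\colon\ZZ\leadsto\bA$. Then $(p\circ i)\circ(j\circ q)$ is a self-interpretation of $\ZZ$, hence homotopic to $\id_\ZZ$ by Lemma~\ref{lem:self-interpretations of Z}. By the remark following Lemma~\ref{lem:self-interpretations of Z}, this single homotopy suffices to conclude that $(p\circ i,\ j\circ q)$ is a bi-interpretation between $\bA$ and $\ZZ$; in particular, $\bA$ and $\ZZ$ are bi-interpretable.

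Now apply Corollary~\ref{cor:self-interpretations} to the self-interpretation $h:=j\circ i\colon\bA\leadsto\bA$: we obtain $j\circ i\simeq\id_\bA$, that is, the isomorphism $\overline{j\circ i}\colon\widetilde{\bA}\to\bA$ is definable in $\bA$, and hence so is its inverse $\alpha$.

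Finally, we deduce that $\overline{\alpha}\colon i^*(\bD)\to(i\circ\alpha)^*(\bD)$ is definable in $\bA$. The underlying set of $i^*(\bD)$ is $M/\ker i$ for $M$ the domain of $i$, and that of $(i\circ\alpha)^*(\bD)$ is $\alpha(M)/\ker(i\circ\alpha^{-1})$. Since $\alpha$ is a definable bijection $\bA\to\widetilde{\bA}$, it restricts to a definable bijection $M\to\alpha(M)$ carrying $\ker i$ onto $\ker(i\circ\alpha^{-1})$, and the induced map on quotients is exactly $\overline{\alpha}$; its graph is therefore definable in $\bA$. Equivalently, $[i=i\circ\alpha]=\{(x,\alpha(x)):x\in M\}$ is definable in $\bA$, so $i\simeq i\circ\alpha$ and $\overline{\alpha}$ is definable in $\bA$, as claimed.
\end{proof}
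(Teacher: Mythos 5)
There is a genuine gap, and it is fatal to the whole approach. Your first step claims that $(p\circ i,\,j\circ q)$ is a bi-interpretation between $\bA$ and $\ZZ$ because $(p\circ i)\circ(j\circ q)$ is a self-interpretation of $\ZZ$ and hence homotopic to $\id_\ZZ$. But the remark following Lemma~\ref{lem:self-interpretations of Z} says exactly the opposite of what you use it for: the homotopy on the $\ZZ$ side is the one that comes for free, and the condition that still must be verified is $(j\circ q)\circ(p\circ i)\simeq\id_\bA$ — which is not available here. If your argument were valid, every structure that interprets $\ZZ$ and is interpretable in $\ZZ$ would be bi-interpretable with $\ZZ$; this contradicts the main theorem of the paper (e.g., $\ZZ\times\ZZ$ and $\ZZ[\epsilon]/(\epsilon^2)$ satisfy both interpretability conditions but are not bi-interpretable with $\ZZ$). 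Consequently your subsequent appeal to Corollary~\ref{cor:self-interpretations} to get $j\circ i\simeq\id_\bA$, i.e.\ definability of $\alpha$ itself, is unjustified. Note also that this would prove far more than the corollary asserts: the corollary deliberately claims only that the \emph{induced} map $\overline{\alpha}$ on the interpreted copies of $\bD$ is definable, not that $\alpha\colon\bA\to\widetilde{\bA}$ is. Indeed, in the proof of Theorem~\ref{thm:integral domains} the definability of $\alpha$ is precisely what remains to be shown \emph{after} invoking this corollary, via the auxiliary set $E$ and the graph $\Gamma$; if the corollary already gave $\alpha$ definable, that entire argument would be superfluous.

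The correct route uses the bi-interpretability of $\bD$ (not $\bA$) with $\ZZ$: the composition in the other order, $i\circ j$, is a self-interpretation of $\bD$, so Corollary~\ref{cor:self-interpretations} applies to it and yields that $\overline{i\circ j}$ is definable in $\bD$. One then checks that $i$ induces an isomorphism $(i\circ\alpha)^*(\bD)\to(i\circ j)^*(\bD)$ fitting into a commutative square with $\overline{i}$, $\overline{i\circ j}$ and $\overline{\alpha}^{-1}$, so that the graph of $\overline{\alpha}$ is the preimage under the interpretation $i$ of a set definable in $\bD$, hence definable in $\bA$. No bi-interpretability of $\bA$ with $\ZZ$, and no definability of $\alpha$ itself, is needed or obtained.
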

\begin{proof}
One checks that $i$ induces an isomorphism $(i\circ\alpha)^*(\bD)\to (i\circ j)^*(\bD)$ which makes the diagram
$$\xymatrix{ (i\circ j)^*(\bD) \ar[r]^{\quad \overline{i\circ j}} & \bD \\ 
(i\circ\alpha)^*(\bD) \ar[u] \ar[r]^{\quad \overline{j\circ i}} & i^*(\bD) \ar[u]^{\overline{i}}}$$
commutative.
By Corollary~\ref{cor:self-interpretations}, the self-interpretation $i\circ j$ of $\bD$ is homotopic to~$\id_{\bD}$, that is,  $\overline{i\circ j}$ is definable in $\bD$, and so $\alpha=(\overline{j\circ i})^{-1}$ is definable in $\bA$.
\end{proof}

\noindent
Let now $f\colon\bA\leadsto\ZZ$ and $g\colon\ZZ\leadsto\bA$,  where $g$ is a bijection $\ZZ\to A$. We are going to analyze this situation in some more detail. Let $\tilde\ZZ=f^*(\ZZ)$ and $\tilde\bA=(g\circ f)^*(\bA)$. We have isomorphisms $\overline{g\circ f}\colon\tilde\bA\to\bA$ and $\bar{f}\colon\tilde\ZZ\to\ZZ$.
Note that as $g$ is bijective, we have
$$\tilde{A}=f^{-1}(\ZZ)/\ker(g\circ f)=M/\ker f,$$ 
so $\tilde{\bA}$ and $\tilde\ZZ$ have the same underlying set, and we have a commutative diagram
$$
\xymatrix{\ZZ \ar[rr]^g & & A \\
&\ar[ul]^{\bar{f}}\tilde\ZZ=\tilde A \ar[ur]_{\bar{g\circ f}}
}
$$
which shows the subtle fact that the identity map $\tilde\ZZ\to\tilde A$ is an interpretation of $\tilde\bA$ in $\tilde\ZZ$.

\medskip
\noindent
For the next lemma, we say that a structure with the same universe as $\bA$  is {\it interdefinable} with $\bA$
if both structures have the same definable sets.

\begin{lemma}\label{lem:oplus and otimes}
The following are equivalent:
\begin{enumerate}
\item $\bA$ is bi-interpretable with $\ZZ$;
\item $\bA$ is weakly bi-interpretable with $\ZZ$;
\item there are binary operations $\oplus$ and $\otimes$ on $A$ such that
\begin{enumerate}
\item $(\ZZ,{+},{\times})\cong (A,{\oplus},{\otimes})$;
\item $(A,{\oplus},{\otimes})$ is interdefinable with $\bA=(A,\dots)$.
\end{enumerate}
\end{enumerate}
\end{lemma}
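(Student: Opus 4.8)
The plan is to prove $(1)\Rightarrow(2)\Rightarrow(3)\Rightarrow(1)$. The implication $(1)\Rightarrow(2)$ is immediate: a definable isomorphism between interpreted copies witnessing that two interpretations are homotopic in particular witnesses that they are weakly homotopic, so every bi-interpretation is a weak bi-interpretation. For $(3)\Rightarrow(1)$, fix an isomorphism $h\colon(\ZZ,{+},{\times})\to(A,{\oplus},{\otimes})$. By (3b), $\bA$ is a relativized reduct of $(A,{\oplus},{\otimes})$, so the identity on $A$ interprets $\bA$ in $(A,{\oplus},{\otimes})$, and composing with $h$ gives an interpretation $h\colon\ZZ\to A$ of $\bA$ in $\ZZ$; symmetrically, $(A,{\oplus},{\otimes})$ is a relativized reduct of $\bA$, so $h^{-1}\colon A\to\ZZ$ interprets $\ZZ$ in $\bA$. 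The compositions $h\circ h^{-1}$ and $h^{-1}\circ h$ are literally the identity interpretations $\id_\bA$ and $\id_\ZZ$; hence $(h^{-1},h)$ is a bi-interpretation between $\bA$ and $\ZZ$.

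The substance is $(2)\Rightarrow(3)$. Let $(f,g)$ be a weak bi-interpretation between $\bA$ and $\ZZ$; by the remarks preceding the lemma we may assume $g\colon\ZZ\to A$ is a bijection. Write $f\colon M\to\ZZ$ with $M\subseteq A^m$, let $\pi\colon M\to\tilde A:=M/\ker f$ be the natural surjection, and set $\tilde\ZZ:=f^*(\ZZ)$ and $\tilde\bA:=(g\circ f)^*(\bA)$. Since $g$ is injective, $\ker(g\circ f)=\ker f$, so $\tilde\ZZ$ and $\tilde\bA$ have the same underlying set $\tilde A$, and $\overline{g\circ f}=g\circ\bar f$, as recorded before the lemma. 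The weak homotopy $g\circ f\sim\id_\bA$ supplies an isomorphism $\phi\colon\tilde\bA\to\bA$ definable in $\bA$; viewing $\phi$ as a bijection $\tilde A\to A$, transport the ring structure of $\tilde\ZZ$ along it to define operations $\oplus$, $\otimes$ on $A$ for which $\phi\colon\tilde\ZZ\to(A,{\oplus},{\otimes})$ is an isomorphism. As $\bar f\colon\tilde\ZZ\to(\ZZ,{+},{\times})$ is an isomorphism, (3a) follows.

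For (3b) I would prove the two inclusions separately. If $X\subseteq A^n$ is definable in $(A,{\oplus},{\otimes})$, then $\phi^{-1}(X)$ is definable in $\tilde\ZZ$; since $\tilde\ZZ=f^*(\ZZ)$ is interpreted in $\bA$ via $\pi$ (because $f$ interprets $\ZZ$ in $\bA$), the set $\pi^{-1}(\phi^{-1}(X))\subseteq M^n$ is definable in $\bA$, and combining it with the graph of the surjection $\phi\circ\pi\colon M\to A$ — definable in $\bA$, as $\phi$ is — exhibits $X$ as definable in $\bA$. For the converse inclusion one invokes the ``subtle fact'' recorded before the lemma, that $\tilde\bA$ is a relativized reduct of $\tilde\ZZ$: if $S\subseteq\tilde A^n$ is definable in $\tilde\bA$, then $\overline{g\circ f}(S)$ is definable in $\bA$, and since $g\colon\ZZ\to A$ is a bijective interpretation of $\bA$ in $\ZZ$, the set $\bar f(S)=g^{-1}\big(\overline{g\circ f}(S)\big)$ is definable in $\ZZ$, i.e., $S$ is definable in $\tilde\ZZ$. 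Granting this, if $X\subseteq A^n$ is definable in $\bA$, then $\phi^{-1}(X)$ is definable in $\tilde\bA$, hence in $\tilde\ZZ$, hence $X$ is definable in $(A,{\oplus},{\otimes})$ (transporting back along $\phi$). This gives (3b), closing the cycle.

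The only real difficulty is bookkeeping: the three structures $\bA$ on $A$ and $\tilde\bA$, $\tilde\ZZ$ on $\tilde A$ are linked by the isomorphisms $\bar f$, $\overline{g\circ f}=g\circ\bar f$, and $\phi$, of which only $\phi$ is a priori definable in $\bA$, so one must track carefully which transports of definable sets are legitimate. The one genuinely nontrivial ingredient is the subtle fact that $\tilde\bA$ is a relativized reduct of $\tilde\ZZ$ — which is exactly the place where the possibility of choosing the interpretation $g\colon\ZZ\leadsto\bA$ to be a bijection is used.
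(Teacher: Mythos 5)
Your proposal is correct and follows essentially the same route as the paper: the implication $(3)\Rightarrow(1)$ via the isomorphism and its inverse, and $(2)\Rightarrow(3)$ by arranging $g$ to be a bijection $\ZZ\to A$, transporting the ring structure of $\tilde\ZZ=f^*(\ZZ)$ along the definable isomorphism $\tilde\bA\to\bA$ furnished by the weak homotopy, and using the fact that the identity $\tilde\ZZ\to\tilde A$ interprets $\tilde\bA$ in $\tilde\ZZ$ for the reverse inclusion of definable sets. Your only addition is to spell out the proof of that last ``subtle fact,'' which the paper records beforehand via the commutative diagram.
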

\begin{proof}
It is clear that if we have binary operations $\oplus$ and $\otimes$ on $A$ satisfying conditions (a) and (b) in (3), then $(f,g)$, where $f\colon A\to\ZZ$ is the unique isomorphism $(A,{\oplus},{\otimes})\to (\ZZ,{+},{\times})$ and $g=f^{-1}$, is a bi-interpretation between $\bA$ and $\ZZ$.
Conversely, suppose $\bA$ is weakly bi-interpretable with $\ZZ$ via a weak bi-interpretation~$(f,g)$ where $g$ is a bijection $\ZZ\to A$. Let $\alpha$ be an isomorphism $\tilde\bA=(g\circ f)^*(\bA)\to\bA$, definable in $\bA$.
Let $\oplus$ and $\otimes$ be the binary operations on $A$ such that $\beta$ is an isomorphism $(\tilde{\ZZ},{+},{\times})\to (A,{\oplus},{\otimes})$. (Recall that $\tilde\ZZ=\tilde A$ as sets.)
The operations $\oplus$ and $\otimes$ are then definable in $\bA$;
conversely, since $\alpha$ is also an isomorphism of $\cL$-structures $\tilde{\bA}\to\bA$ and
the identity $\tilde{\ZZ}\to\tilde{A}$
is an interpretation of $\tilde{\bA}$ in $\tilde{\ZZ}$, the interpretations of the function and relation symbols of $\cL$ in $\bA$ are definable in $(A,{\oplus},{\otimes})$.
\end{proof}

\noindent
As an illustration of this analysis, next we show:

\begin{lemma}
\label{lem:biNdeforb}
Suppose $\bA$ is bi-interpretable with $\ZZ$.
Let $\Phi\colon A\to A$ be definable,
and let $a\in A$. Then the orbit
$$\Phi^\NN(a):=\big\{\Phi^{\circ n}(a):n=0,1,2,\dots\big\}\qquad \text{\textup{(}$\Phi^{\circ n}=$ $n$th iterate of $\Phi$\textup{)}}$$
of $a$ under $\Phi$ is definable. 
\end{lemma}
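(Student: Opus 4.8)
The plan is to use the bi-interpretation $(f,g)$ between $\bA$ and $\ZZ$, where by the preceding discussion we may assume $g\colon\ZZ\to A$ is a bijection and $f\circ g\simeq\id_\ZZ$. The idea is simple: transport the dynamical system $(\Phi,a)$ to $\ZZ$ via $f$, observe that in $\ZZ$ orbits of definable maps are definable (since $\ZZ$ can express iteration of a definable function using a G\"odel coding of finite sequences, exactly as in Lemma~\ref{lem:poly ring}), and then transport back.

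More concretely, first I would form the pushforward of $\Phi$ along $f$: since $f\colon M\to \ZZ$ with $M\subseteq A^m$ and $f$ is an interpretation, the map $\Phi$ on $A$ induces a map $\tilde\Phi$ on the copy $f^*(\ZZ)$, whose graph is definable in $\bA$, and since $f^*(\ZZ)$ is bi-interpretable with $\bA$ (being isomorphic to $\ZZ$ via $\bar f$), we may instead work directly in $\ZZ$: pick $b:=f(a')$ for some $a'\in M$ with $\bar f$-class mapping to $a$, and let $\Psi\colon\ZZ\to\ZZ$ be the map corresponding to $\Phi$ under the isomorphism $\bar f$, which is definable in $\ZZ$ because $f$ is an interpretation of $\bA$ in $\ZZ$ in the bi-interpretation — more carefully, $\Psi$ is definable in $\ZZ$ since its graph is the $f$-preimage of the graph of $\Phi$, and preimages under an interpretation of definable sets are definable. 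Then $\Psi^{\circ n}(b)$ for $n\in\NN$ is a definable subset $T$ of $\ZZ$: using the sequence-coding function $\gamma$, one writes $y\in T$ iff there exist $c\in\NN$ and $k=\gamma(c,0)\geq 1$ with $\gamma(c,1)=b$, $\gamma(c,i+1)=\Psi(\gamma(c,i))$ for $1\leq i<k$, and $\gamma(c,k)=y$.

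Next I would pull this set $T\subseteq\ZZ$ back to $\bA$. Since $(f,g)$ is a bi-interpretation, $f^{-1}(T)$ is definable in $\bA$ by Lemma~\ref{lem:equiv of categories} (applied with $S=T\subseteq\ZZ=B^1$), and this is precisely (the $\ker f$-saturation of) the set of representatives of $\Phi^\NN(a)$; since $\bar f$ carries $\Psi^{\circ n}(b)$ to $\Phi^{\circ n}(a)$ by construction, $f^{-1}(T)=f^{-1}(\{\Psi^{\circ n}(b):n\in\NN\})$ is the full $\ker f$-class union over $\Phi^\NN(a)$, so $\Phi^\NN(a)$ itself, as a subset of $A$, is obtained by noting that for $x\in A$, $x\in\Phi^\NN(a)$ iff there is some $m$-tuple in $f^{-1}(T)$ that $f$-maps to the $\bar f$-preimage of $x$; but more cleanly, one simply observes that $\Phi^\NN(a)$ equals the image of the definable set $f^{-1}(T)\subseteq M$ under the composite definable map $M\to\bar M\xrightarrow{\bar f}\ZZ\xrightarrow{\ g\ }A$ — or better yet, just run the whole argument with the injective interpretation of $\ZZ$ in $\bA$ given by $g^{-1}$ so that no quotient is involved. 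I expect the main obstacle to be purely bookkeeping: keeping the various copies $f^*(\ZZ)$, $\widetilde\ZZ$, $\widetilde\bA$ and the isomorphisms between them straight, and verifying that "$\Psi$ is definable in $\ZZ$" and "$\Phi^\NN(a)$ corresponds to $f^{-1}(T)$" really do follow from the definition of bi-interpretation together with Lemma~\ref{lem:equiv of categories}; there is no genuine mathematical difficulty, only the need to quote the correct transfer lemmas (Lemmas~\ref{lem:bijective interpretation}, \ref{lem:self-interpretations of Z}, \ref{lem:equiv of categories}) in the right order, together with the standard G\"odel-coding fact that iteration of a $\ZZ$-definable self-map has $\ZZ$-definable orbits.
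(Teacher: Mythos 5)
Your proposal is correct and is in substance the same as the paper's proof: both reduce to the G\"odel-coding fact that orbits of definable self-maps are definable in $\ZZ$, and then transfer the orbit along the bi-interpretation. The paper packages the transfer more cleanly via Lemma~\ref{lem:oplus and otimes} (work inside the interdefinable copy $(A,{\oplus},{\otimes})\cong\ZZ$, so no quotients arise); note that in your middle paragraph the roles of $f$ and $g$ are momentarily swapped --- the graph of $\Psi$ is the \emph{$g$}-preimage of the graph of $\Phi$, not the $f$-preimage --- but your closing remark about running the argument through the bijection $g$ already supplies the correct fix.
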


\begin{proof}
Via G\"{o}del coding of sequences, it is easy to see that the lemma holds if~$\bA=\ZZ$.
In the general case, suppose $\otimes$, $\oplus$ are binary operations on $A$ satisfying conditions (a) and (b) of the previous lemma. Then the map $\Phi$ is also definable in~$(A,\oplus,\otimes)$, hence $\Phi^\NN(a)$ is definable in~$(A,\oplus,\otimes)$, and thus also in $\bA$. 
\end{proof}

\noindent
Let us note two consequences of Lemma~\ref{lem:biNdeforb} for rings.

\begin{cor}
\label{cor:biNdefZ}
Let $A$ be a ring of characteristic zero which is bi-interpretable with $\ZZ$. Then the natural image of $\ZZ$ in $A$ is definable as a subring of $A$.
\end{cor}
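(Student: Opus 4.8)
The statement to prove is Corollary~\ref{cor:biNdefZ}: if $A$ is a ring of characteristic zero bi-interpretable with $\ZZ$, then the natural copy of $\ZZ$ inside $A$ is definable as a subring. The natural strategy is to exhibit the image of $\ZZ$ as an orbit of a definable map and invoke Lemma~\ref{lem:biNdeforb}. Concretely, let $1_A\in A$ denote the multiplicative identity and consider the definable map $\Phi\colon A\to A$, $\Phi(x):=x+1_A$. Then $\Phi^{\circ n}(0)=n\cdot 1_A$, so
$$\Phi^\NN(0)=\{\,n\cdot 1_A : n=0,1,2,\dots\,\},$$
which is definable in $A$ by Lemma~\ref{lem:biNdeforb}. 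This is precisely the natural image of $\NN$ in $A$. Applying the same reasoning to $\Phi'(x):=x-1_A$ (equivalently, noting that negation $x\mapsto -x$ is definable in any ring) we get that $\{-n\cdot 1_A:n\in\NN\}$ is definable as well, and the union of these two sets is the natural image $\ZZ\cdot 1_A$ of $\ZZ$ in $A$. Thus $\ZZ\cdot 1_A$ is a definable subset of $A$.

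**Finishing.** It remains to observe that $\ZZ\cdot 1_A$ is a \emph{subring}: it is closed under addition, negation, and multiplication because $(m\cdot 1_A)(n\cdot 1_A)=(mn)\cdot 1_A$, and it contains $1_A$. Since $A$ has characteristic zero, the map $n\mapsto n\cdot 1_A$ is injective, so $\ZZ\cdot 1_A$ is genuinely isomorphic to $\ZZ$ (this is what "the natural image of $\ZZ$ in $A$" refers to, and explains the role of the characteristic-zero hypothesis — though definability of the set $\ZZ\cdot 1_A$ holds regardless). Hence the natural image of $\ZZ$ in $A$ is definable as a subring of $A$, as claimed.

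**Where the work is.** There is essentially no obstacle here: all the difficulty has been packaged into Lemma~\ref{lem:biNdeforb}, whose proof in turn rests on Lemma~\ref{lem:oplus and otimes} (transporting the problem to $(\ZZ,{+},{\times})$ via interdefinable ring operations $\oplus,\otimes$ on $A$) and on the routine fact that orbits of definable maps on $\ZZ$ are definable via G\"odel coding of finite sequences. The only mild subtlety worth spelling out is that $\Phi(x)=x+1_A$ is definable in the \emph{original} structure $\bA$ — it is, since $+$ and the constant $1_A$ (the unique multiplicative identity, a $\emptyset$-definable element) are part of the ring language in which $\bA$ is construed — so Lemma~\ref{lem:biNdeforb} applies directly. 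I would present this as a two- or three-line proof citing Lemma~\ref{lem:biNdeforb}.
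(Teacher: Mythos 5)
Your proof is correct and is essentially identical to the paper's: both exhibit the image of $\ZZ$ as $(x\mapsto x+1)^\NN(0)\cup(x\mapsto x-1)^\NN(0)$ and apply Lemma~\ref{lem:biNdeforb}. Your additional remarks on closure under the ring operations and the role of the characteristic-zero hypothesis are fine but not needed beyond what the paper records.
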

\begin{proof}
The image of $\ZZ$ is $(x\mapsto x+1)^\NN(0) \cup (x\mapsto x-1)^{\NN}(0)$.   Apply Lemma~\ref{lem:biNdeforb}.
\end{proof}

\begin{cor}
\label{cor:biNdefpower}
Let $A$ be a ring which is bi-interpretable with $\ZZ$, and $a \in A$. Then the set $a^\NN:=\{ a^n : n=0,1,2,\dots \}$ of powers of $a$ is definable. 
\end{cor}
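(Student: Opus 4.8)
The statement to prove is Corollary~\ref{cor:biNdefpower}: if $A$ is a ring bi-interpretable with $\ZZ$ and $a\in A$, then $a^\NN=\{a^n:n\geq 0\}$ is definable.

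The plan is to apply Lemma~\ref{lem:biNdeforb} with the definable self-map $\Phi\colon A\to A$ given by multiplication by $a$, that is, $\Phi(x):=a\cdot x$. This map is definable in $\bA$ (indeed it is quantifier-free definable with parameter $a$, since multiplication is among the ring operations). First I would observe that $\Phi^{\circ n}(x)=a^n\cdot x$ for all $n\geq 0$, by an immediate induction; in particular $\Phi^{\circ n}(1)=a^n$. Hence the orbit $\Phi^\NN(1)=\{\Phi^{\circ n}(1):n\geq 0\}$ is exactly $a^\NN$.

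Now Lemma~\ref{lem:biNdeforb}, applied to the definable map $\Phi$ and the element $1\in A$, yields that $\Phi^\NN(1)$ is definable in $\bA$. Therefore $a^\NN$ is definable, which is the claim.

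There is essentially no obstacle here: the only point requiring the slightest care is to package ``the powers of $a$'' as the forward orbit of a single point under a definable unary map, and multiplication by $a$ starting at the multiplicative identity $1$ does exactly that. (One could equally use $\Phi(x)=ax$ started at $a$ itself, giving the orbit $\{a^{n+1}:n\geq 0\}$, and then adjoin $a^0=1$; but starting at $1$ avoids even that trivial adjustment.) All the real content — the reduction through the interdefinable ring structure $(A,\oplus,\otimes)\cong(\ZZ,+,\times)$ and the Gödel coding of orbits in $\ZZ$ — has already been carried out in the proof of Lemma~\ref{lem:biNdeforb}.
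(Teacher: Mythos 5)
Your proposal is correct and is exactly the paper's argument: the paper's proof reads ``The set $a^\NN$ is $(x\mapsto ax)^\NN(1)$. Apply Lemma~\ref{lem:biNdeforb}.'' You have simply spelled out the same one-line reduction in more detail.
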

\begin{proof}
The set $a^\NN$ is $(x \mapsto a x)^\NN(1)$.  Apply Lemma~\ref{lem:biNdeforb}.
\end{proof}

\noindent
Here is a refinement of Lemma~\ref{lem:biNdeforb}. For an interpretation $f$ of $\ZZ$ in $\bA$, 
the restriction of $f$ to a map $f^{-1}(\NN)\to\NN$ is an interpretation of $\NN$ in $\bA$, and
by abuse of notation we denote the copy of $\NN$ interpreted in $\bA$ via this interpretation by~$f^*(\NN)$, and we write $n\mapsto\overline{n}$ for the inverse of the isomorphism $f^*(\NN)\to\NN$.

\begin{lemma} 
Suppose $\bA$ is bi-interpretable with $\ZZ$, and let $f\colon \bA\leadsto\ZZ$.
Let $\Phi\colon A\to A$ be definable.
Then the map $$(a,\overline{n}) \mapsto \Phi^{\circ n}(a)\colon A\times f^*(\NN)\to A$$ is definable.
\end{lemma}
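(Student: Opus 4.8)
The plan is to reduce the $f^*(\NN)$-indexed iteration to ordinary $\NN$-indexed iteration taking place inside a fixed concrete copy of $\ZZ$ carried by the set $A$, and then to realize that iteration by G\"odel coding of finite sequences, exactly as one would for $\bA=\ZZ$ itself.

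First I would invoke Lemma~\ref{lem:oplus and otimes} to fix binary operations $\oplus$, $\otimes$ on $A$ with $(\ZZ,{+},{\times})\cong(A,{\oplus},{\otimes})$ and $(A,{\oplus},{\otimes})$ interdefinable with $\bA$. Let $f_0\colon A\to\ZZ$ be the unique ring isomorphism $(A,{\oplus},{\otimes})\to(\ZZ,{+},{\times})$. Since $(A,{\oplus},{\otimes})$ is a relativized reduct of $\bA$, the map $f_0$ is an (injective) interpretation $\bA\leadsto\ZZ$, and $f_0^*(\ZZ)=(A,{\oplus},{\otimes})$. Now $f$ and $f_0$ are two interpretations of $\ZZ$ in the structure $\bA$, which is bi-interpretable with $\ZZ$, so Corollary~\ref{cor:all interpretations of Z are homotopic} gives $f\simeq f_0$; hence there is an isomorphism $\theta\colon f^*(\ZZ)\to(A,{\oplus},{\otimes})$ definable in $\bA$ with $\overline{f_0}\circ\theta=\overline{f}$. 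Restricting $\theta$ to the definable subset $f^*(\NN)$ of $f^*(\ZZ)$ yields a definable-in-$\bA$ bijection of $f^*(\NN)$ onto the copy $\NN_0\subseteq A$ of $\NN$ inside $(A,{\oplus},{\otimes})$ (the set of $\otimes$-sums of four $\otimes$-squares), and the identity $\overline{f_0}(\theta(\overline{n}))=\overline{f}(\overline{n})=n$ shows that $\theta(\overline{n})$ is precisely the element of $(A,{\oplus},{\otimes})$ which ``is'' the integer $n$.

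Next I would work entirely inside $(A,{\oplus},{\otimes})$, a bona fide copy of $(\ZZ,{+},{\times})$. As $\Phi\colon A\to A$ is definable in $\bA$, its graph is definable in $(A,{\oplus},{\otimes})$. Transporting a G\"odel $\beta$-function along the isomorphism $(A,{\oplus},{\otimes})\cong\ZZ$ --- just as in the construction of $\gamma$ preceding Lemma~\ref{lem:poly ring} --- produces a coding of finite sequences of elements of $A$ by single elements of $A$, definable in $(A,{\oplus},{\otimes})$. Using it I would define $\Psi\colon A\times\NN_0\to A$ by declaring $\Psi(a,\nu)=b$ iff there is some $c\in A$ coding a sequence $(b_0,\dots,b_\nu)$ (of length $\nu\oplus 1$, indices running through $\NN_0$) with $b_0=a$, with $b_{i\oplus 1}=\Phi(b_i)$ for every $i\in\NN_0$ satisfying $i<\nu$ in $(A,{\oplus},{\otimes})$, and with $b_\nu=b$. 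This is a first-order condition over $(A,{\oplus},{\otimes})$ in $\oplus$, $\otimes$, the coding function, and the definable graph of $\Phi$; hence $\Psi$ is definable in $(A,{\oplus},{\otimes})$, and therefore in $\bA$. By construction $\Psi(a,\nu)$ equals $\Phi^{\circ m}(a)$, where $m$ is the natural number represented by $\nu$.

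Finally I would observe that the map in the statement is $(a,\overline{n})\mapsto\Psi\big(a,\theta(\overline{n})\big)$, whose graph $\{(a,\overline{n},b):\Psi(a,\theta(\overline{n}))=b\}$ is definable in $\bA$ because $\theta$ and $\Psi$ are definable in $\bA$ and $f^*(\NN)$ is a definable quotient in $\bA$. The main obstacle is the first step: one must argue carefully that iteration indexed by the abstract quotient $f^*(\NN)$ agrees, via a \emph{definable} bijection, with iteration indexed by the concrete $\NN_0\subseteq A$ --- and this is exactly where bi-interpretability (through Corollary~\ref{cor:all interpretations of Z are homotopic}) is indispensable, since for a mere interpretation $f$ of $\ZZ$ in $\bA$ the iteration map need not be definable at all. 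Once this identification is secured, the remainder is the routine G\"odel-coding argument, which already gives the $\bA=\ZZ$ case and in particular recovers Lemma~\ref{lem:biNdeforb} by taking images.
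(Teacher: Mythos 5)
Your proof is correct and takes essentially the same route as the paper's: both transfer the problem to a concrete copy $(A,{\oplus},{\otimes})$ of $\ZZ$ interdefinable with $\bA$, carry out the G\"odel-coding iteration there, and pull the result back along a definable identification of $f^*(\NN)$ with the concrete copy of $\NN$ inside $A$. The only difference is packaging: you obtain that identification from Corollary~\ref{cor:all interpretations of Z are homotopic} applied to $f$ and $f_0$, whereas the paper chooses an injective $g\colon\ZZ\to A$ so that $(f,g)$ is a bi-interpretation and uses the definability of $[g\circ f=\id_A]$ directly --- but this yields the same definable map $\overline{n}\mapsto\theta(\overline{n})=g(f(\,\cdot\,))$.
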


\begin{proof}
Let $g\colon\ZZ\to A$ be an injective interpretation $\ZZ\leadsto\bA$; then $(f,g)$ is a bi-interpretation between $\bA$ and $\ZZ$. (Corollary~\ref{cor:self-interpretations}.)
Let $\oplus$ and $\otimes$ be the binary operations on $A$ making $g$ an isomorphism $(\ZZ,{+},{\times})\to (A,{\oplus},{\otimes})$. Then $\oplus$ and~$\otimes$ satisfy (a) and (b) in Lemma~\ref{lem:oplus and otimes} (by the proof of said lemma). The map $\Phi$ is definable in $(A,{\oplus},{\otimes})$, and thus
$$(a,b)\mapsto  \Phi^{\circ g^{-1}(b)}(a)\colon A\times g(\NN)\to A $$
is definable in $(A,{\oplus},{\otimes})$, and hence also in $\bA$. Therefore, since $[g\circ f=\id_A]$ is definable in $\bA$, so is (the graph of) the map $$(a,b) \mapsto \Phi^{\circ f(b)}(a)\colon A\times f^{-1}(\NN)\to A.$$
The lemma follows.
\end{proof}

\noindent
The last lemma immediately implies:

\begin{cor}\label{cor:power map}
Let $A$ be a ring which is bi-interpretable with $\ZZ$, and let $f\colon A\leadsto\ZZ$. Then the map
$$(a,\overline{n}) \mapsto a^{n}\colon A\times f^*(\NN) \to A$$
is definable. 
\end{cor}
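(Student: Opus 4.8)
The plan is to derive this from the preceding lemma, which handles iteration of a \emph{fixed} definable map $\Phi\colon A\to A$; the only wrinkle is that $(a,\overline n)\mapsto a^n$ carries the parameter~$a$, so I would first fold that parameter into the iteration by means of a definable pairing.

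I would begin by recording that $\bA$, being bi-interpretable with $\ZZ$, admits a definable bijection onto its own square. By Lemma~\ref{lem:oplus and otimes} there are binary operations $\oplus,\otimes$ on $A$ with $(A,\oplus,\otimes)\cong(\ZZ,{+},{\times})$ and $(A,\oplus,\otimes)$ interdefinable with $\bA$. Fixing such an isomorphism, and fixing some $\{+,\times\}$-definable bijection $\ZZ^2\to\ZZ$ (e.g.\ a Cantor pairing) together with its two coordinate inverses, and transporting the corresponding defining formulas along the isomorphism, yields a bijection $\langle\cdot,\cdot\rangle\colon A\times A\to A$ with coordinate projections $p_1,p_2\colon A\to A$, all definable in $(A,\oplus,\otimes)$ and hence in $\bA$. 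Since the ring multiplication $(a,b)\mapsto ab$ of $\bA$ is definable in $\bA$, the self-map $\Phi\colon A\to A$ given by $\Phi(u):=\langle p_1(u),\,p_1(u)\cdot p_2(u)\rangle$ is definable in $\bA$, and an immediate induction on $n$ (base case $a^0=1$, using that $1=1_A$ is definable) gives $\Phi^{\circ n}(\langle a,1\rangle)=\langle a,a^n\rangle$ for all $a\in A$ and $n\in\NN$.

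Now I would apply the previous lemma to this $\Phi$: the map $(u,\overline n)\mapsto\Phi^{\circ n}(u)\colon A\times f^*(\NN)\to A$ is definable in $\bA$. Precomposing with the definable map $a\mapsto\langle a,1\rangle$ and postcomposing with $p_2$ shows that $(a,\overline n)\mapsto p_2\big(\Phi^{\circ n}(\langle a,1\rangle)\big)=a^n$ is definable in $\bA$, which is exactly the assertion.

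I do not expect a genuine obstacle here; the one point deserving a word is the existence of the definable pairing on $A$, and this is immediate from Lemma~\ref{lem:oplus and otimes} once one observes that any $\{+,\times\}$-formula defining a relation in $\ZZ$ transports, along the isomorphism $(A,\oplus,\otimes)\cong\ZZ$, to a formula defining the corresponding relation in $(A,\oplus,\otimes)$ and therefore in $\bA$. (Alternatively one could avoid the pairing entirely and simply re-run the proof of the previous lemma with the two-variable multiplication $(a,b)\mapsto ab$ in place of the one-variable $\Phi$, coding in $(A,\oplus,\otimes)\cong\ZZ$ the finite sequence $1,a,a^2,\dots,a^n$ of partial products; but the reduction via a pairing keeps the argument literally inside the statement already proved.)
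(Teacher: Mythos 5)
Your argument is correct, and it is essentially the paper's intended route: the paper simply declares the corollary an immediate consequence of the preceding lemma, and your pairing trick (iterating $u\mapsto\langle p_1(u),p_1(u)\cdot p_2(u)\rangle$ from $\langle a,1\rangle$) is a clean, rigorous way to effect exactly that reduction. No gaps; the definability of the pairing via Lemma~\ref{lem:oplus and otimes} is justified as you say.
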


\subsection{A test for bi-interpretability with $\ZZ$}\label{sec:Nies}
Suppose that $(f,g)$ is a weak bi-interpretation between $\bA$ and $\ZZ$ where $g$ is a bijection $\ZZ\to A$. As remarked in the previous subsection, we then have $f^*(\ZZ)=(g\circ f)^*(A)$ as sets, so the inverse of any definable
isomorphism $(g\circ f)^*(\bA)\to\bA$ (which exists since $g\circ f\sim\id_\bA$) is a bijection $A\to f^*(\ZZ)$ which is definable in $\bA$. The following proposition is a partial converse of this observation:

\begin{prop} \label{prop:Nies}
Suppose that $\bA$ is f.g.~and the language~$\cL=\cL_\bA$ of $\bA$ is finite.
Let $f\colon \bA\leadsto\ZZ$ and $g\colon\ZZ\leadsto\bA$.
Suppose also that there exists an injective map~${A\to f^*(\ZZ)}$ which is definable in $\bA$.
Then $(f,g)$ is a weak bi-interpretation between~$\bA$ and $\ZZ$.
\end{prop}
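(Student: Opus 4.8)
The plan is to verify the two weak homotopies $f\circ g\sim\id_\ZZ$ and $g\circ f\sim\id_\bA$ separately. The first is free: $f\circ g$ is a self-interpretation of $\ZZ$, so $f\circ g\simeq\id_\ZZ$ by Lemma~\ref{lem:self-interpretations of Z}, hence $f\circ g\sim\id_\ZZ$. Note also that $\bA$ is infinite, since the domain of $f$ is a subset of some $A^m$ which maps onto $\ZZ$. By Lemma~\ref{lem:bijective interpretation} we may replace $g$ by a homotopic interpretation so as to arrange that $g$ is a bijection $\ZZ\to A$: this changes neither $f\circ g\sim\id_\ZZ$ (which holds for any $g$, as just noted) nor whether $g\circ f\sim\id_\bA$ (by Lemma~\ref{lem:properties of homotopy}(1)), and it does not involve $f$. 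With $g$ bijective one has $\ker(g\circ f)=\ker f$ and $\operatorname{dom}(g\circ f)=\operatorname{dom}(f)=:M$, so the interpreted copy $\tilde\bA:=(g\circ f)^*(\bA)$ of $\bA$ and the interpreted copy $\tilde\ZZ:=f^*(\ZZ)$ of $\ZZ$ share the underlying set $M/\ker f$; both are definable in $\bA$, and $\tilde\ZZ\cong\ZZ$.

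It remains to produce an isomorphism $\tilde\bA\to\bA$ definable in $\bA$. Fix generators $a_1,\dots,a_k$ of $\bA$ and let $\theta\colon A\to\tilde\ZZ$ be an injective definable map as in the hypothesis. Since $\cL$ is finite, the $\cL$-terms $t=t(x_1,\dots,x_k)$ admit a G\"odel numbering by natural numbers under which ``$t'$ arises by applying a function symbol to $t_1,\dots,t_\ell$'' is arithmetical; running this numbering inside the interpreted copy $f^*(\NN)$ of $\NN$ sitting in $\tilde\ZZ$ produces a structure $\mathbf T$ definable in $\bA$ — the $\cL$-term algebra on $k$ generators — together with its term-building operations. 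Using $\theta$ to name elements of $A$ by elements of $\tilde\ZZ$ and the usual G\"odel coding of finite sequences in $\tilde\ZZ$, the evaluation map
$$\operatorname{ev}\colon\mathbf T\to\bA,\qquad t\mapsto t^{\bA}(a_1,\dots,a_k),$$
is definable in $\bA$: $\operatorname{ev}(t)=x$ holds iff there is a coded sequence listing, for each subterm of $t$, its value in $A$ at $(a_1,\dots,a_k)$, with consistency of each step checked against the finitely many function symbols of $\cL$ using the operations of $\bA$, and with the value attached to $t$ itself equal to $x$. The map $\operatorname{ev}$ commutes with the function symbols and, since $a_1,\dots,a_k$ generate $\bA$, is surjective; equipping $\mathbf T/K$ (with $K:=\ker\operatorname{ev}$, definable in $\bA$) with the $\cL$-structure transported from $\bA$ along $\operatorname{ev}$, the induced map $\overline{\operatorname{ev}}\colon\mathbf T/K\to\bA$ is an isomorphism. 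The same construction carried out inside $\tilde\bA$ — whose universe already lies in $\tilde\ZZ$, so no analogue of $\theta$ is needed — yields a definable map $\operatorname{ev}'\colon\mathbf T\times\tilde\bA^k\to\tilde\bA$ with $\operatorname{ev}'(t,\beta_1,\dots,\beta_k)=t^{\tilde\bA}(\beta_1,\dots,\beta_k)$, again commuting with the function symbols in its first argument.

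Let $G\subseteq\tilde\bA^k$ be the set of tuples $\bar\beta$ such that $t\mapsto\operatorname{ev}'(t,\bar\beta)$ is onto $\tilde\bA$, has kernel exactly $K$, and satisfies $R^{\tilde\bA}\big(\operatorname{ev}'(\bar s,\bar\beta)\big)\leftrightarrow R^{\bA}\big(\operatorname{ev}(\bar s)\big)$ for every relation symbol $R$ of $\cL$ and every tuple $\bar s$ of terms. Then $G$ is definable in $\bA$, and $G\neq\varnothing$, because any isomorphism $\bA\to\tilde\bA$ (one exists since $g\circ f$ is an interpretation of $\bA$) carries $(a_1,\dots,a_k)$ into $G$. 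Using the definable well-ordering of $f^*(\NN)$ together with the definable coding of integer tuples by natural numbers, let $\bar\beta^{*}$ be the tuple in $G$ with least code; this is a definable element of $\tilde\bA^k$. Since $\bar\beta^{*}\in G$, the surjections $\operatorname{ev}$ and $\operatorname{ev}'(\cdot,\bar\beta^{*})$ have the same kernel $K$ and agree on relation symbols, so the formula $\exists t\in\mathbf T\,\big(\operatorname{ev}'(t,\bar\beta^{*})=y\ \wedge\ \operatorname{ev}(t)=x\big)$ defines the graph of an isomorphism $\tilde\bA\to\bA$ in $\bA$. Hence $g\circ f\sim\id_\bA$, and $(f,g)$ is a weak bi-interpretation between $\bA$ and $\ZZ$.

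I expect the crux to be the definability of $\operatorname{ev}$ (and $\operatorname{ev}'$): evaluating a term is a primitive recursion, which is not first-order as such and only becomes definable via the coding of computation sequences. This is precisely where the two hypotheses enter — finiteness of $\cL$ keeps the recursion to finitely many cases, and the injective definable map $A\to f^{*}(\ZZ)$ lets the elements of $A$ occurring along a computation be named inside the interpreted copy of $\ZZ$, where G\"odel coding is available. (Note that the argument yields only a \emph{weak} bi-interpretation: the isomorphism $\tilde\bA\to\bA$ produced need not be the canonical $\overline{g\circ f}$, since when $\bA$ has nontrivial automorphisms $G$ contains more than one tuple.)
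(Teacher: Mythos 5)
Your argument is correct, and it reaches the conclusion by a route organized differently from the paper's. The paper first uses $g$ to push everything into arithmetic: it transports the injective map $\phi\colon A\to f^*(\ZZ)$ along $\overline{g}$ to manufacture a new interpretation $h$ of $\bA$ in $(f\circ g)^*(\ZZ)$ (this is where Lemma~\ref{lem:claim 1} is needed), compares $h$ with the interpretation $g'$ obtained from $g$ by transport of structure, invokes Lemma~\ref{lem:claim 2} (any two interpretations of a f.g.\ structure in a finite language in $\ZZ$ are homotopic) to get a definable isomorphism between their copies, and then chases this back into $\bA$. You instead pull arithmetic into $\bA$: after normalizing $g$ to a bijection $\ZZ\to A$ so that $(g\circ f)^*(\bA)$ and $f^*(\ZZ)$ share a universe, you carry out the G\"odel coding of terms and evaluation sequences directly inside $f^*(\ZZ)$, using $\phi$ to address elements of $A$, and write down the generator-determined isomorphism explicitly. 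The mathematical kernel is the same in both proofs --- the isomorphism between two copies of a f.g.\ structure in a finite language determined by a choice of generators is computable relative to the structures, hence definable wherever arithmetic and an addressing of the universe are available, and this is exactly where both hypotheses (finiteness of $\cL$, existence of the injective definable map $A\to f^*(\ZZ)$) enter. What the paper's version buys is modularity: the coding is quarantined inside Lemma~\ref{lem:claim 2}, where it takes place in genuine $\ZZ$ and the generator tuples of both copies are fixed parameters. What your version buys is self-containedness and transparency about one point the paper leaves implicit: since the two copies being compared do not come with matched generator tuples, a canonical (definable) choice of generator tuple $\bar\beta^*$ must be made, which you handle correctly with the least-code trick; and your closing remark that the resulting isomorphism need not be $\overline{g\circ f}$, so that only a \emph{weak} bi-interpretation is obtained, is exactly right and is the reason the proposition is stated as it is.
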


\noindent
An important consequence of this proposition (and Lemma~\ref{lem:oplus and otimes}) is that under reasonable assumptions on $\bA$ and $\cL$, establishing bi-interpretability of $\bA$ with~$\ZZ$ simply amounts to showing that $\bA$ is interpretable in $\ZZ$, and 
$\ZZ$ is interpretable in~$\bA$ in such a way that there is a definable way to index the elements of $\bA$ with elements of the copy of $\ZZ$ in $\bA$:

\begin{cor}[Nies] \label{cor:Nies}
If $\bA$ is f.g.~and $\cL$ is finite, then the following are equivalent:
\begin{enumerate}
\item $\bA$ is \textup{(}weakly\textup{)} bi-interpretable with $\ZZ$;
\item $\bA$ is interpretable in $\ZZ$, and there is an interpretation $f$ of $\ZZ$ in $\bA$ and an injective definable map $A\to f^*(\ZZ)$.
\end{enumerate}
\end{cor}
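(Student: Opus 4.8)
The plan is to prove the equivalence as a short formal consequence of Proposition~\ref{prop:Nies} together with Lemma~\ref{lem:oplus and otimes}, the only real work having already been done in the former. First I would dispose of the parenthetical in (1): genuine bi-interpretability with $\ZZ$ trivially implies weak bi-interpretability, and Lemma~\ref{lem:oplus and otimes} gives the converse, so under the hypotheses on $\bA$ and $\cL$ the two readings of (1) coincide. Thus it suffices to show that $\bA$ is weakly bi-interpretable with $\ZZ$ if and only if (2) holds.

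For the direction from (1) to (2), I would invoke the observation made at the beginning of Section~\ref{sec:Nies}: if $\bA$ is weakly bi-interpretable with $\ZZ$, then there is a weak bi-interpretation $(f,g)$ with $g\colon\ZZ\to A$ a bijection. Since $g$ is bijective, $f^*(\ZZ)$ and $(g\circ f)^*(\bA)$ have the same underlying set; and since $g\circ f\sim\id_\bA$ there is an isomorphism $(g\circ f)^*(\bA)\to\bA$ definable in $\bA$, whose inverse is a bijection $A\to f^*(\ZZ)$ definable in $\bA$, in particular an injective definable map. As $g$ is an interpretation of $\bA$ in $\ZZ$, the structure $\bA$ is interpretable in $\ZZ$. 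Hence (2) holds, with $f$ and this inverse map as the required data.

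For the direction from (2) to (1), suppose we are given an interpretation $f\colon\bA\leadsto\ZZ$ and an injective definable map $A\to f^*(\ZZ)$, and that $\bA$ is interpretable in $\ZZ$; fix any interpretation $g\colon\ZZ\leadsto\bA$ (note that the existence of $f$ forces $\bA$ to be infinite, but $g$ need not be normalized in any way). Since $\bA$ is f.g.\ and $\cL$ is finite, Proposition~\ref{prop:Nies} applies verbatim and yields that $(f,g)$ is a weak bi-interpretation between $\bA$ and $\ZZ$. Therefore $\bA$ is weakly bi-interpretable with $\ZZ$, and by Lemma~\ref{lem:oplus and otimes} it is bi-interpretable with $\ZZ$; this closes the cycle.

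I do not expect any genuine obstacle here: all the difficulty is absorbed into Proposition~\ref{prop:Nies}, and the corollary is essentially bookkeeping. The two points that require a little care are (a) that in passing from (1) to (2) one must first replace an arbitrary weak bi-interpretation by one in which the interpretation of $\bA$ in $\ZZ$ is a bijection $\ZZ\to A$ — exactly the normalization recorded at the start of Section~\ref{sec:Nies} — so that $f^*(\ZZ)$ and $(g\circ f)^*(\bA)$ literally coincide as sets and the definable inverse isomorphism can be read as a map out of $A$; and (b) invoking Lemma~\ref{lem:oplus and otimes} to upgrade ``weakly bi-interpretable'' to ``bi-interpretable,'' which is what makes the parenthetical ``(weakly)'' legitimate.
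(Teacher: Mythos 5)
Your proposal is correct and follows exactly the route the paper intends: the $(1)\Rightarrow(2)$ direction is the observation recorded at the start of Section~\ref{sec:Nies} after normalizing $g$ to a bijection $\ZZ\to A$, the $(2)\Rightarrow(1)$ direction is Proposition~\ref{prop:Nies}, and Lemma~\ref{lem:oplus and otimes} handles the passage between weak bi-interpretability and bi-interpretability. There is nothing to add.
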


\noindent
A proof of this corollary of Proposition~\ref{prop:Nies} is sketched in \cite[Proposition~7.12]{Nies}. However, we feel that a more detailed argument is warranted. (Also note that loc.~cit.~does not assume $\bA$ to be f.g.)
Before we give a proof of Proposition~\ref{prop:Nies}, we show two auxiliary facts:

\begin{lemma}\label{lem:claim 1}
Let $f$ be a self-interpretation of $\bA$ which is homotopic to the identity. Then every set $X\subseteq f^*(A)^n$ which is definable in $\bA$ is definable in $f^*(\bA)$.
\end{lemma}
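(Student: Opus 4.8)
Write $h:=f$ and let $h\colon M\to A$ with $M\subseteq A^m$, so $f^*(\bA)=M/{\ker h}$ with the $\cL$-structure making $\bar h\colon f^*(\bA)\to\bA$ an isomorphism. The hypothesis $h\simeq\id_\bA$ means precisely that $\bar h$ is definable in $\bA$, i.e.\ the graph $G:=[\,h=\id_A\,]=\{(a,b)\in M\times A:h(a)=b\}$ is definable in $\bA$. The goal is: if $X\subseteq f^*(A)^n$ is definable in $\bA$, then $X$ is definable in $f^*(\bA)$, i.e.\ its preimage under the quotient map $M^n\to (f^*(A))^n$ is definable \emph{using only the sets and maps that are definable in the structure $f^*(\bA)$}. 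The cleanest way to organize this is to note that, since $\bar h$ is an isomorphism of $\cL$-structures, a subset of $(f^*(A))^n$ is definable in $f^*(\bA)$ if and only if its image under $\bar h$ is definable in $\bA$; so the statement amounts to showing that for $X\subseteq (f^*(A))^n$ definable in $\bA$, the set $\bar h(X)\subseteq A^n$ is definable in $\bA$. But ``$X$ definable in $\bA$'' for a subset of a quotient means its preimage $\pi^{-1}(X)\subseteq M^n$ is $\bA$-definable, and $\bar h(X)=\{(h(a_1),\dots,h(a_n)):(a_1,\dots,a_n)\in\pi^{-1}(X)\}$, which is the image of an $\bA$-definable set under the $\bA$-definable map $h$ (componentwise). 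Hence $\bar h(X)$ is $\bA$-definable, and we are done.

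\textbf{Steps, in order.} First I would unwind the two relevant definitions from Section~\ref{sec:prelims interpretations}: (i) a subset of the quotient $f^*(A)^n=(M/{\ker h})^n$ is definable in $\bA$ iff its preimage under the natural surjection is $\bA$-definable; and (ii) a subset $Y$ of $(f^*(A))^n$ is definable in the \emph{structure} $f^*(\bA)$ iff $\bar h(Y)$ is definable in $\bA$ — this last equivalence is just transport of structure along the isomorphism $\bar h$, and should be stated explicitly as it is the conceptual crux. Second, I would record that $\bar h$ being definable in $\bA$ (the homotopy hypothesis, via the discussion of homotopy preceding Lemma~\ref{lem:properties of homotopy}) gives an $\bA$-definable graph for $\bar h$, hence also for the ``lifted'' map $h$ componentwise on $M^n$. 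Third, take $X\subseteq f^*(A)^n$ definable in $\bA$; its preimage $P:=\pi^{-1}(X)\subseteq M^n\subseteq A^{mn}$ is $\bA$-definable by (i). Fourth, observe $\bar h(X)=h(P)$ (image under the coordinatewise map $h$), which is $\bA$-definable as the image of an $\bA$-definable set under an $\bA$-definable partial map. Fifth, conclude by (ii) that $X$ is definable in $f^*(\bA)$.

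\textbf{Main obstacle.} There is no real analytic difficulty here; the whole content is bookkeeping about which structure one is measuring definability in. The one point that needs care — and where I expect a careless proof to go wrong — is keeping straight the three notions ``definable in $\bA$ as a subset of a quotient,'' ``definable in $f^*(\bA)$,'' and ``definable in $\bA$ as an honest subset of some $A^k$,'' and making sure the isomorphism $\bar h$ is used to translate between the second and the third without smuggling in the very parameters or maps one is trying to eliminate. Concretely, I would be careful that when I say ``$\bar h(X)$ is $\bA$-definable'' I have genuinely exhibited it as a subset of $A^n$ cut out by an $\cL$-formula with parameters from $A$ (coming from the defining formula of $P$ together with the defining formula of the graph of $h$), and that the back-translation ``$\bar h(X)$ $\bA$-definable $\Rightarrow$ $X$ is $f^*(\bA)$-definable'' is literally the statement that an $\cL$-isomorphism identifies definable sets — which is immediate but worth one sentence. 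A secondary, purely cosmetic subtlety is that $M\subseteq A^m$, so elements of $f^*(A)^n$ are represented by $mn$-tuples and $\bar h$ really lands in $A^n$; I would fix notation for the coordinatewise version of $h$ at the outset to avoid index confusion.
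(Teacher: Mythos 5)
Your proposal is correct and is essentially the paper's own argument: the paper's explicit formula $\xi^*(y):=\exists x\,(\xi(x)\;\&\;\bigwedge_i\varphi(x_i,y_i))$ defines exactly your set $\bar h(X)=h(P)$ in $\bA$, and the paper's chain of equivalences is precisely your transport-of-definability step (ii) along the isomorphism $\bar h$. So this is the same proof, phrased at the level of definable sets and images rather than via an explicitly displayed formula.
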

\begin{proof}
Suppose $f$ is given by $M\to A$ where $M\subseteq A^m$ is definable.
Let $\xi(x_1,\dots,x_n)$ be an $\cL$-formula, possibly involving parameters, which defines $X$ in~$\bA$; that is, for all
$a\in M^n\subseteq (A^m)^n$ we have
$$\bA\models\xi(a) \quad\Longleftrightarrow\quad \overline{a}\in X.$$
By hypothesis, the isomorphism $\overline{f}\colon f^*(\bA)\to\bA$ is definable in $\bA$. Let $\varphi(x,y)$ define
its graph; that is, for $a\in M$ and $b\in A$ we have
$$\bA\models\varphi(a,b)\quad \Longleftrightarrow\quad \overline{f}(\overline{a})=b.$$
Set
$$\xi^*(y_1,\dots,y_n):=\exists x_1\cdots\exists x_n\left(\xi(x_1,\dots,x_n) \ \&\ \bigwedge_{i=1}^n \varphi(x_i,y_i)\right).$$
Then for $a\in M^n$ we have
\begin{align*}
f^*(\bA)\models\xi^*(\overline{a})	&\Longleftrightarrow \bA\models \xi^*(\overline{f}(\overline{a})) \\
									&\Longleftrightarrow \bA\models\exists x_1\cdots\exists x_n\left(\xi(x_1,\dots,x_n) \ \&\ \bigwedge_{i=1}^n\varphi\big(x_i,\overline{f}(\overline{a_i})\big)\right) \\
									&\Longleftrightarrow \bA\models\xi(a) 
									\ \Longleftrightarrow\ \overline{a}\in X.
\end{align*}
hence $\xi^*$ defines $X$ in $f^*(\bA)$.
\end{proof}

\begin{lemma}\label{lem:claim 2}
Let $\bA$ be a f.g.~structure in a finite language. Then any two interpretations of $\bA$ in $\ZZ$ are homotopic.
\end{lemma}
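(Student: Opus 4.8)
The plan is to reduce, via Lemma~\ref{lem:bijective interpretation}, to the case of two \emph{bijective} interpretations of $\bA$ with domain $\ZZ$, and then to show that the (forced) isomorphism between the two induced copies of $\bA$ on $\ZZ$ is definable in $\ZZ$, using G\"odel coding of $\cL$-terms. First dispose of the case that $A$ is finite: if $f\colon M\to A$ and $f'\colon M'\to A$ are interpretations of $\bA$ in $\ZZ$, the fibers $f^{-1}(a)$ ($a\in A$) are the finitely many classes of the definable equivalence relation $\ker f$ on $M$, hence each is definable (using a parameter for a representative), and likewise for $f'$; so
$$[{f=f'}]=\bigcup_{a\in A} f^{-1}(a)\times f'^{-1}(a)$$
is a finite union of definable sets, whence $f\simeq f'$. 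Now assume $A$ infinite. By Lemma~\ref{lem:bijective interpretation} and transitivity of $\simeq$ we may replace $f,f'$ by homotopic interpretations, so assume $f,f'\colon\ZZ\to A$ are bijections; then $\bar f=f$, $\bar{f'}=f'$, and it suffices to show that the isomorphism $\alpha:=f'^{-1}\circ f\colon f^*(\bA)\to (f')^*(\bA)$ is definable in $\ZZ$ (it is the only candidate, since $f'\circ\alpha=f$ is required).

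Fix a finite generating set $a_1,\dots,a_k$ of $\bA$ and put $b_i:=f^{-1}(a_i)$, $b'_i:=f'^{-1}(a_i)\in\ZZ$. Since $f$ is an interpretation, the interpretations in $f^*(\bA)$ of the finitely many function and relation symbols of $\cL$ are definable in $\ZZ$; hence, coding $\cL$-terms in the variables $v_1,\dots,v_k$ by integers and evaluating them by a recursion on term structure carried out with a sequence-coding function (the routine G\"odel-coding technique used, e.g., in the discussion preceding Lemma~\ref{lem:poly ring}), one obtains a formula $\mathrm{Val}_f(e,u)$ which, whenever $e$ codes such a term $t$, holds exactly for $u=t^{f^*(\bA)}(b_1,\dots,b_k)$, and similarly a formula $\mathrm{Val}_{f'}(e,u)$ for $(f')^*(\bA)$ and the tuple $b'_1,\dots,b'_k$. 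Define $G\subseteq\ZZ\times\ZZ$, definable in $\ZZ$ with parameters $b_i,b'_i$, by
$$(c,d)\in G\ :\Longleftrightarrow\ \exists e\,\big(\text{$e$ codes an $\cL$-term $t$ in $v_1,\dots,v_k$}\ \wedge\ \mathrm{Val}_f(e,c)\ \wedge\ \mathrm{Val}_{f'}(e,d)\big).$$

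Finally I would check that $G$ is the graph of $\alpha=f'^{-1}\circ f$. Since $\bar f\colon f^*(\bA)\to\bA$ is an isomorphism and the $a_i$ generate $\bA$, the $b_i$ generate $f^*(\bA)$; so every $c\in\ZZ$ is $t^{f^*(\bA)}(b_1,\dots,b_k)$ for some term $t$, and $G$ is total. If $(c,d),(c,d')\in G$ via terms $t,t'$, then $t^{f^*(\bA)}(\bar b)=c=t'^{f^*(\bA)}(\bar b)$; applying the isomorphism $f'^{-1}\circ f$, which sends $\bar b$ to $\bar{b'}$, gives $d=t^{(f')^*(\bA)}(\bar{b'})=t'^{(f')^*(\bA)}(\bar{b'})=d'$, so $G$ is single-valued, and the same computation shows $d=(f'^{-1}\circ f)(c)$. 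Hence $\alpha=f'^{-1}\circ f$ is definable in $\ZZ$, and being an isomorphism $f^*(\bA)\to (f')^*(\bA)$ with $f'\circ\alpha=f=\bar f$, it witnesses $f\simeq f'$. The only genuine content is the G\"odel-coding step producing $\mathrm{Val}_f$; this is entirely standard, and the two hypotheses are exactly what makes it go through — $\cL$ finite means finitely many operations, each definable in $\ZZ$, so term evaluation is uniformly definable, and $\bA$ finitely generated means a single finite parameter tuple $\bar b$ suffices.
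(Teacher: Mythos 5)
Your proof is correct and follows essentially the same route as the paper's: reduce via Lemma~\ref{lem:bijective interpretation} to bijective interpretations with domain $\ZZ$, and observe that the unique isomorphism sending the chosen generators of one copy to those of the other is definable in $\ZZ$ — the paper compresses your G\"odel-coding of term evaluation into the phrase ``relatively computable and hence definable.'' Your explicit treatment of the finite case is a small but genuine improvement, since Lemma~\ref{lem:bijective interpretation} as stated applies only to infinite structures.
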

\begin{proof}
Let  $f,g\colon \ZZ\leadsto\bA$; by Lemma~\ref{lem:bijective interpretation} we may assume that $f$ and $g$ are injective with domain $\ZZ$. Let $a_1,\dots,a_n\in A$ be generators for $\bA$ and let $b_i:=\overline{f}{}^{-1}(a_i)$,
$c_i:=\overline{g}{}^{-1}(a_i)$, for $i=1,\dots,n$, be the corresponding elements of $f^*(A)$ and $g^*(A)$, respectively.
The unique isomorphism $f^*(\bA)\to g^*(\bA)$ given by $b_i\mapsto c_i$ ($i=1,\dots,n$) is relatively computable and hence definable in $\ZZ$.
\end{proof}

\noindent
We now show Proposition~\ref{prop:Nies}. Thus, let $f\colon \bA\leadsto\ZZ$ and $g\colon\ZZ\leadsto\bA$, and let
$\phi\colon A\to f^*(\ZZ)$ be an injective map, definable in $\bA$. By Lemma~\ref{lem:self-interpretations of Z} we have $f\circ g\simeq\id_\ZZ$, so it is enough to show that  $g\circ f\sim\id_\bA$.
Recall that $g$ induces an isomorphism $(f\circ g)^*(\ZZ)\to f^*(\ZZ)$, and thus,
pulling back $\phi$ under $\overline{g}$ we obtain a $g^*(\bA)$-definable injective map
$g^*(\phi) \colon g^*(A) \to (f\circ g)^*(\ZZ)$ making the diagram
$$\xymatrix@C=5em{A \ar[r]^\phi & f^*(\ZZ) \\
g^*(A) \ar[r]^{g^*(\phi)} \ar[u]^{\overline{g}\ } & (f\circ g)^*(\ZZ) \ar[u]
}$$
commute.
We make its image $g^*(\phi)\big(g^*(A)\big)$ the universe of an $\cL$-structure, which we denote by $g^*(\phi)\big(g^*(\bA)\big)$, such that $g^*(\phi)$ becomes an isomorphism.
Note that both the underlying set $g^*(\phi)\big(g^*(A)\big)$ as well as the interpretations of the function and relation symbols of $\cL$ in this structure are definable in $g^*(\bA)$, hence in $\ZZ$, and so,
by Lemma~\ref{lem:claim 1}, also in $(f\circ g)^*(\ZZ)$.
Thus we obtain an interpretation $h$ of $\bA$ in $(f\circ g)^*(\ZZ)$
with $h^*(\bA)=g^*(\phi)\big(g^*(\bA)\big)$. On the other hand, suppose $g$ is given by $N\to A$ where $N\subseteq\ZZ^n$; then setting 
$$N':=(\overline{f\circ g})^{-1}(N), \qquad g':=g\circ (\overline{f\circ g})\upharpoonright N',$$ 
we have another interpretation $g'\colon (f\circ g)^*(\ZZ)\leadsto\bA$. 
By Lemma~\ref{lem:claim 2}, the interpretations $h$ and  $g'$ are homotopic. Thus we have an isomorphism $h^*(\bA)\to(g')^*(\bA)$ which is definable in $(f\circ g)^*(\ZZ)$, and hence in $g^*(\bA)$. Composing this isomorphism with the isomorphism $g^*(\phi)\colon g^*(\bA)\to h^*(\bA)$, which is also definable in $g^*(\bA)$, 
yields an isomorphism $g^*(\bA)\to (g')^*(\bA)$ which is definable in $g^*(\bA)$.
It is routine to verify that the isomorphism $\overline{g}\colon g^*(\bA)\to\bA$ maps
the domain $N'$ of $g'$ bijectively onto the domain $f^{-1}(N)$ of $g\circ f$, and that this bijection induces a bijection $(g')^*(A)\to (g\circ f)^*(A)$ which is compatible
with $\overline{g'}$ and $\overline{g\circ f}$, and hence an isomorphism $(g')^*(\bA)\to (g\circ f)^*(\bA)$.
Thus our definable isomorphism $g^*(\bA)\to (g')^*(\bA)$ gives rise to an isomorphism $\bA\to (g\circ f)^*(\bA)$ which fits into the commutative diagram
$$\xymatrix{\bA \ar[r] & (g\circ f)^*(\bA) \\ 
g^*(\bA) \ar[u]^{\overline{g}} \ar[r] & (g')^*(\bA) \ar[u]}$$
and which is definable in $\bA$, as required. \qed

\subsection{Quasi-finite axiomatizability}
{\it In this subsection we assume that $\cL$ is finite and $\bA=(A,\dots)$ is f.g.}\/
We say that an $\cL$-formula $\varphi_\bA(x_1,\dots,x_n)$ is a {\it QFA formula} for $\bA$ 
with respect to the system of generators $a_1,\dots,a_n$ of $\bA$ if the following holds: if~$\bA'$ is any f.g.~$\cL$-structure and $a_1',\dots,a_n'\in A'$, then $\bA'\models\varphi_\bA(a_1',\dots,a_n')$ iff there is an isomorphism $\bA\to \bA'$ with $a_i\mapsto a_i'$ for $i=1,\dots,n$.
Any two QFA formulas for $\bA$ with respect to the same system of generators of $\bA$ are equivalent in $\bA$. Moreover:

\begin{lemma}
Let $\varphi_\bA(x_1,\dots,x_n)$ be a QFA formula for $\bA$ with respect to the system of
generators $a_1,\dots,a_n$ of $\bA$. Then for each system of generators $b_1,\dots,b_m$ of $\bA$
there is a QFA formula for $\bA$ with respect to $b_1,\dots,b_m$.
\end{lemma}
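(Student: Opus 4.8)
The plan is to \emph{substitute terms}. Because $\bA$ is finitely generated by $a_1,\dots,a_n$ as well as by $b_1,\dots,b_m$, and because every element of a finitely generated $\cL$-structure is the value of some $\cL$-term in a fixed system of generators, I would first fix $\cL$-terms $t_i(y_1,\dots,y_m)$ $(1\le i\le n)$ and $s_j(x_1,\dots,x_n)$ $(1\le j\le m)$ such that in $\bA$ we have $t_i(b_1,\dots,b_m)=a_i$ and $s_j(a_1,\dots,a_n)=b_j$. Writing $\bar y=(y_1,\dots,y_m)$ and $t(\bar y)=(t_1(\bar y),\dots,t_n(\bar y))$, I then propose
\[
\psi_\bA(\bar y)\ :=\ \varphi_\bA\big(t(\bar y)\big)\ \wedge\ \bigwedge_{j=1}^m\, y_j = s_j\big(t(\bar y)\big)
\]
as a QFA formula for $\bA$ with respect to $b_1,\dots,b_m$; since $\cL$ is finite and $n,m$ are fixed, this is a genuine $\cL$-formula.

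To check the defining property, let $\bA'$ be a f.g.\ $\cL$-structure and $b_1',\dots,b_m'\in A'$, write $\bar b'=(b_1',\dots,b_m')$, and put $a_i':=t_i(\bar b')$. If $\bA'\models\psi_\bA(\bar b')$, the first conjunct gives $\bA'\models\varphi_\bA(a_1',\dots,a_n')$, so the QFA property of $\varphi_\bA$ yields an isomorphism $h\colon\bA\to\bA'$ with $h(a_i)=a_i'$; applying $h$ to the identity $b_j=s_j(a_1,\dots,a_n)$ gives $h(b_j)=s_j(a_1',\dots,a_n')=s_j\big(t(\bar b')\big)$, which equals $b_j'$ by the second conjunct, so $h$ is the desired isomorphism. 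Conversely, given an isomorphism $h\colon\bA\to\bA'$ with $h(b_j)=b_j'$, set $a_i':=h(a_i)$; applying $h$ to $a_i=t_i(b_1,\dots,b_m)$ and to $b_j=s_j(a_1,\dots,a_n)$ shows $a_i'=t_i(\bar b')$ and $b_j'=s_j(a_1',\dots,a_n')=s_j\big(t(\bar b')\big)$, so the second conjunct of $\psi_\bA$ holds at $\bar b'$; and since $h\colon\bA\to\bA'$ sends $a_i$ to $a_i'$, the QFA property of $\varphi_\bA$ gives the first conjunct $\bA'\models\varphi_\bA\big(t(\bar b')\big)$. Hence $\bA'\models\psi_\bA(\bar b')$.

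This is essentially bookkeeping, so I do not expect a real obstacle; the one point that needs (mild) care is the role of the second conjunct. Without the clauses $y_j=s_j(t(\bar y))$, satisfaction of $\varphi_\bA(t(\bar b'))$ would only produce an isomorphism $\bA\to\bA'$ carrying $a_i$ to $t_i(\bar b')$, with no control over the image of $b_j$; the extra clauses are exactly what pin that isomorphism down to send $b_j$ to $b_j'$. It is also worth noting that one never needs to assume separately that $b_1',\dots,b_m'$ generate $\bA'$: this is automatic once an isomorphism $\bA\to\bA'$ carrying $b_j$ to $b_j'$ exists, since the $b_j$ already generate $\bA$.
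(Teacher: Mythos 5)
Your proposal is correct and is essentially the paper's own argument: the authors use exactly the same formula $\varphi_\bA(t(\bar y))\wedge\bigwedge_j y_j=s_j(t(\bar y))$ (written out for $m=n=1$ for notational simplicity), and your verification fills in the routine details they omit.
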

\begin{proof}
For notational simplicity we assume that $m=n=1$ (the general case is only notationally more complicated). 
Let $b$ be a generator for $A$. Let $s(x)$, $t(y)$ be $\cL$-terms such that $a=t^{\bA}(b)$ and $b=s^{\bA}(a)$. Put $\psi(y):=\varphi(t(y))\wedge y=s(t(y))$. Then $\psi$ is a QFA formula for $\bA$ with respect to $b$.
\end{proof}

\noindent
A QFA formula for $\bA$ is a formula $\varphi_\bA(x_1,\dots,x_n)$ which is QFA for $\bA$ with respect to
some system of generators $a_1,\dots,a_n$ of $\bA$.
Note that if there is a QFA formula~$\varphi_\bA(x_1,\dots,x_n)$ for $\bA$, then~$\bA$ is {\it quasi-finitely axiomatizable}\/ ({\it QFA}\/), i.e., there is an $\cL$-sentence~$\sigma$ such that for every $\cL$-structure $\bA'$, we have $\bA'\models\sigma$ iff~${\bA\cong\bA'}$. (Take $\sigma=\exists x_1\cdots\exists x_n \varphi_\bA$.)
If $A$ is finite, then there clearly is a QFA formula for~$\bA$.
In this subsection we are going to show (see~\cite[Theorem~7.14]{Nies}):
 
\begin{prop}\label{prop:QFA}
If $\bA$ is  bi-interpretable with $\ZZ$, then  there is a QFA~formula for $\bA$.
\end{prop}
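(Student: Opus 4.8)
The plan is to produce an explicit QFA formula from a bi-interpretation together with the data of Corollary~\ref{cor:Nies}. Fix a bi-interpretation $(f,g)$ between $\bA$ and $\ZZ$; by the remarks in Section~\ref{sec:bi-int with ZZ} we may take $g\colon\ZZ\to A$ to be a bijection, and by Corollary~\ref{cor:all interpretations of Z are homotopic} all interpretations of $\ZZ$ in $\bA$ are homotopic, so $f$ is essentially canonical. The key point, which comes out of the proof of Proposition~\ref{prop:Nies}, is that there is an injective definable map $\phi\colon A\to f^*(\ZZ)$ — in fact, since $g$ is bijective and $g\circ f\sim\id_\bA$, the inverse of a definable isomorphism $(g\circ f)^*(\bA)\to\bA$ furnishes a definable \emph{bijection} $A\to f^*(\ZZ)$. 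First I would assemble the finitely many formulas that witness the bi-interpretation: a formula $\mu(x;z)$ cutting out the domain $M_s\subseteq A^{mn}$ of a copy of $\ZZ$ inside $\bA$ (with parameters $z$ realized by some tuple $\bar c$ of elements of $A$), a formula $\varepsilon$ for $\ker f$, formulas $\rho_{+}$, $\rho_{\times}$ for the graphs of the interpreted ring operations, and a formula $\chi(x,y)$ for the graph of the definable bijection $\phi\colon A\to f^*(\ZZ)$; likewise a formula $\delta$ cutting out the domain of $g$ as a definable subset of a power of the copy of $\ZZ$, together with the graph of $\overline g$. All of these hold of $\bA$ with the fixed parameter tuple $\bar c$, and $\bA$ is generated by some $a_1,\dots,a_n$; enlarging the generating set, we may assume $\bar c$ is among the $a_i$.

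Next I would write down the candidate QFA formula $\varphi_\bA(x_1,\dots,x_n)$ as the conjunction of: (i) the (finitely many, since $\cL$ is finite) defining axioms identifying how each basic function and relation symbol of $\cL$ acts on $a_1,\dots,a_n$ — i.e. the atomic and negated-atomic diagram of $\bA$ restricted to the generators, which is finite because $\bA$ is finitely generated; (ii) a sentence asserting that $\mu(\cdot;\bar c)$, $\varepsilon$, $\rho_+$, $\rho_\times$ define a structure satisfying a QFA axiom for $\ZZ$ in the sense of Sabbagh's argument (\cite[Theorem~7.11]{Nies}) — this is where I use that $\ZZ$ itself is QFA, so ``the interpreted structure is a copy of $\ZZ$'' is first-order expressible; (iii) a sentence asserting that $\chi$ defines the graph of an injective map from the whole universe into that interpreted copy of $\ZZ$, and that $\delta$, together with the relevant graphs, defines an interpretation $g'$ of $\bA$ back inside the copy of $\ZZ$; and (iv) the statement that the composite $g'\circ f$ is homotopic to the identity, which — via the injective map of (iii) and the argument of Proposition~\ref{prop:Nies} — is again first-order because homotopy of interpretations into (a copy of) $\ZZ$ is witnessed by a single definable isomorphism whose existence over the parameters $\bar c$ is expressible. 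The verification that $\bA\models\varphi_\bA(a_1,\dots,a_n)$ is immediate from the choice of the ingredients.

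For the converse, suppose $\bA'$ is a finitely generated $\cL$-structure with $\bA'\models\varphi_\bA(a_1',\dots,a_n')$. Conjunct (ii) gives an interpreted copy $Z'$ of $\ZZ$ inside $\bA'$ (using Sabbagh's QFA sentence for $\ZZ$), hence an interpretation $f'\colon\bA'\leadsto\ZZ$; conjuncts (iii)–(iv) then put us exactly in the hypotheses of Corollary~\ref{cor:Nies} for $\bA'$, so $\bA'$ is bi-interpretable with $\ZZ$ via a pair built from $f'$ and the back-interpretation encoded in $\delta$. Now Lemma~\ref{lem:oplus and otimes} supplies binary operations $\oplus,\otimes$ on $A'$, definable in $\bA'$, with $(A',\oplus,\otimes)\cong(\ZZ,+,\times)$; the injective definable map of (iii) identifies $A'$ with a definable subset of this copy of $\ZZ$, and conjunct (i) pins down how the $\cL$-structure on $\bA'$ is computed from $\oplus,\otimes$ and the generators in precisely the same way as in $\bA$. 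Since the corresponding data in $\bA$ and $\bA'$ agree and everything is "relatively computable" from the copy of $\ZZ$ (as in Lemma~\ref{lem:claim 2}), the map $a_i\mapsto a_i'$ extends uniquely to an isomorphism $\bA\to\bA'$.

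The main obstacle is expressibility in step (ii)–(iv): one must check carefully that each condition — ``the interpreted structure is isomorphic to $\ZZ$'', ``$g'\circ f\sim\id$'', ``there is a definable injection of the universe into the copy of $\ZZ$'' — can be packaged into a \emph{single first-order sentence} over the finitely many parameters $\bar c$. For (ii) this is exactly Sabbagh's theorem that $\ZZ$ is QFA. For (iii)–(iv) the point is that homotopy of two interpretations into $\ZZ$ is witnessed by one definable isomorphism, and ``there exists a formula defining such an isomorphism with parameters among the generators'' is, by the argument of Proposition~\ref{prop:Nies}, equivalent to the existence of the injection $\phi$, which is itself a single definable object; so one bounds everything by the finite amount of data and quantifies over parameters rather than over formulas. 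Once this bookkeeping is done, the rest is the routine ``relatively computable isomorphism'' argument already used in Lemmas~\ref{lem:self-interpretations of Z} and~\ref{lem:claim 2}.
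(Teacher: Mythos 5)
Your overall architecture --- package the bi-interpretation data into a formula with a parameter tuple, force the interpreted arithmetic to be standard, and then read off the generators of $\bA'$ as term values in the interpreted copy of $\ZZ$ --- is the same as the paper's. But the step on which everything hinges, your conjunct (ii), fails as stated. Sabbagh's theorem produces a sentence $\sigma$ with $\ZZ\models\sigma$ such that every \emph{finitely generated ring} satisfying $\sigma$ is isomorphic to $\ZZ$. Since $\sigma$ belongs to $\operatorname{Th}(\ZZ)$, it is satisfied by \emph{every} model of $\operatorname{Th}(\ZZ)$, standard or not; and the ring $(A',\oplus',\otimes')$ interpreted inside a finitely generated $\cL$-structure $\bA'$ need not be finitely generated as a ring, so Sabbagh's sentence gives no control over it. Hence ``the interpreted structure is a copy of $\ZZ$'' is \emph{not} first-order expressible by this route: any f.g.\ $\bA'$ realizing the relevant type of $(\bA,\bar a,\bar c)$ will satisfy your conjuncts (ii)--(iv) while its interpreted ring may be a nonstandard model of $\operatorname{Th}(\ZZ)$. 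The same problem undercuts your appeal to Corollary~\ref{cor:Nies} for $\bA'$: that corollary requires an interpretation of the actual ring $\ZZ$ in $\bA'$ (and also an interpretation of $\bA'$ in $\ZZ$, which none of your conjuncts supplies), not an interpretation of some model of its theory.

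The paper closes exactly this gap with a dedicated lemma (following Claim~7.15 in Nies's paper): there is an $\cL$-formula $\alpha(z)$ such that for \emph{finitely generated} $\cL$-structures $\bA'$ one has $\bA'\models\alpha(c')$ if and only if $(A',\oplus',\otimes')$ is standard. Its proof uses finite generation of $\bA'$ \emph{as an $\cL$-structure} in an essential way: every element of $A'$ is the value of an $\cL$-term in the generators, terms and term evaluation can be coded inside the interpreted arithmetic once a suitable finite fragment of $\operatorname{Th}(\ZZ)$ is imposed, and this is what lets one pin down standardness. This is the one genuinely nontrivial idea in the proof, and it is precisely the point your proposal elides. (Your endgame --- transporting the $\cL$-structure along the unique isomorphism between two standard interpreted copies of $\ZZ$ to extend $a_i\mapsto a_i'$ --- is sound once standardness is secured; the paper packages it as the $\emptyset$-definability of the $\Aut(\bA')$-orbit of the tuple of term values $t(c')$.)
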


\noindent
Before we give the proof of this proposition, we make some observations. For these, we assume that the hypothesis of Proposition~\ref{prop:QFA} holds, that is,  that we have binary operations $\oplus$ and $\otimes$ on $A$ as in (a) and (b) of Lemma~\ref{lem:oplus and otimes}. We take $\cL$-formulas $\varphi_{\oplus}(x_1,x_2,y,z)$ and $\varphi_{\otimes}(x_1,x_2,y,z)$, where $z=(z_1,\dots,z_k)$ for some $k\in\NN$, and for each
function symbol~$f$ of~$\cL$, of arity $m$, and for each function symbol $R$ of~$\cL$, of arity $n$,
we take formulas $\varphi_f(x_1,\dots,x_m,y)$  and $\varphi_R(x_1,\dots,x_n)$ in the language of rings, and some
 $c\in A^{k}$,   such that
\begin{enumerate}
\item $\varphi_{\oplus}(x_1,x_2,y,c)$ and  $\varphi_{\otimes}(x_1,x_2,y,c)$ define $\oplus$ and $\otimes$ in $\bA$, respectively;
\item $\varphi_f(x_1,\dots,x_m,y)$ and $\varphi_R(x_1,\dots,x_n)$ define $f^\bA$ and $R^\bA$, respectively,  in $(A,{\oplus},{\otimes})$.
\end{enumerate}
We now let $\alpha_0(z)$ be an $\cL$-formula which expresses (1) and (2) above, i.e., such that $\bA\models\alpha_0(c)$, and 
for all $\cL$-structures $\bA'$ and $c'\in (A')^{k}$ such that $\bA'\models\alpha_0(c')$, 
\begin{enumerate}
\item $\varphi_{\oplus}(x_1,x_2,y,c')$ and  $\varphi_{\otimes}(x_1,x_2,y,c')$ define binary operations $\oplus'$ and $\otimes'$, respectively, on $A'$; and
\item $\varphi_f(x_1,\dots,x_m,y)$ and $\varphi_R(x_1,\dots,x_n)$ define $f^{\bA'}$ and $R^{\bA'}$, respectively, in $(A',{\oplus'},{\otimes'})$, for all function symbols $f$ and relation symbols $R$ of $\cL$.
\end{enumerate}
We also require that if $\bA'\models\alpha_0(c')$, then
\begin{itemize}
\item[(3)] $(A',{\oplus'},{\otimes'})$ is a ring which is a model of a sufficiently large (to be specified) finite fragment of $\operatorname{Th}(\ZZ)$.
\end{itemize}
The ring $(A',{\oplus'},{\otimes'})$ may be {\it non-standard,} that is, not isomorphic to~$(\ZZ,{+},{\times})$. However, 
choosing the finite fragment of arithmetic in (3) appropriately, we can ensure that we have a unique embedding  $(\ZZ,{+},{\times})\to (A',{\oplus'},{\otimes'})$. From now on we assume that $\alpha_0$ has been chosen in this way. 
Additionally we can choose $\alpha_0$ so that finite objects, such as $\cL$-terms and finite sequences of elements of $A'$, can be encoded in~$(A',\oplus',\otimes')$. This can be used to 
uniformly define term functions in $\bA'$, and leads to a proof of the following (see \cite[Claim~7.15]{Nies} for the details):

\begin{lemma}
There is an $\cL$-formula $\alpha(z)$, which logically implies $\alpha_0(z)$, such that
$\bA\models\alpha(c)$, and whenever $\bA'$ is a f.g.~$\cL$-structure and   $c'\in (A')^{k}$, then
$\bA'\models\alpha(c')$ iff $(A',{\oplus'},{\otimes'})$ is standard.
\end{lemma}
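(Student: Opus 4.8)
The plan is to obtain $\alpha$ by conjoining to $\alpha_0(z)$ a clause which asserts, in effect, that the $\cL$-structure $\bA'$ is generated by a fixed tuple of elements that, viewed inside the internal ring $(A',\oplus',\otimes')$, lie in the image of the standard integers, together with a matching of the associated word problem with a fixed arithmetical predicate. The easy half of the equivalence is then immediate: if $(A',\oplus',\otimes')$ is standard it is isomorphic to $(\ZZ,{+},{\times})$, so $\bA'$ --- being the relativized reduct of $(A',\oplus',\otimes')$ cut out by the formulas $\varphi_f$, $\varphi_R$, as guaranteed by $\alpha_0$ --- is isomorphic to $\bA$; the relevant parameters can be matched up, and every clause of $\alpha$ holds in $\bA'$ just as it does in $\bA$. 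All the work is in the converse: a \emph{finitely generated} $\bA'$ satisfying $\alpha(c')$ must have $(A',\oplus',\otimes')$ standard.

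To set up the extra clause I would first fix a finite generating tuple $a_1,\dots,a_n$ of $\bA$. Since $(A,{\oplus},{\otimes})\cong(\ZZ,{+},{\times})$ and each integer is definable without parameters in $(\ZZ,{+},{\times})$ (indeed by a quantifier-free formula), each $a_i$ is $\emptyset$-definable in $(A,{\oplus},{\otimes})$; substituting $\varphi_\oplus,\varphi_\otimes$ into these definitions yields $\cL$-formulas $\eta_i(y,z)$ such that $\eta_i(y,c)$ defines $\{a_i\}$ in $\bA$, and whenever $\bA'\models\alpha_0(c')$ the formula $\eta_i(y,c')$ defines a singleton $\{g'_i\}$ with $g'_i$ in the image of the unique embedding $(\ZZ,{+},{\times})\to(A',\oplus',\otimes')$. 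Next I would use the feature of $\alpha_0$ that $\cL$-terms and finite sequences of elements of $A'$ are codable in $(A',\oplus',\otimes')$: this produces an $\cL$-formula uniformly evaluating term functions, i.e.\ a formula $\operatorname{Val}(e,\bar y,x;z)$ such that, for $\bA'\models\alpha_0(c')$ and $e$ the code of a genuine $\cL$-term $\tau$, $\operatorname{Val}(e,\bar y,x;c')$ holds iff $x=\tau^{\bA'}(\bar y)$. The extra clause is then built from $\eta_1,\dots,\eta_n$ and $\operatorname{Val}$: it asserts that the $\eta_i(\cdot,z)$ define elements $g'_1,\dots,g'_n$, that these generate $\bA'$, and --- crucially, to control the behaviour in a possibly nonstandard $(A',\oplus',\otimes')$ --- that the word problem of $\bA'$ relative to $g'_1,\dots,g'_n$, as read off from $\operatorname{Val}$, agrees with a fixed arithmetical predicate computing the word problem of $\bA$ relative to $a_1,\dots,a_n$.

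The crux, and the main obstacle, is the verification of the converse, which cannot be carried out purely inside $\bA'$: in a nonstandard $(A',\oplus',\otimes')$ the naive first-order renderings of the two relevant non-elementary notions degenerate --- a coded sequence of operations of nonstandard length already exhausts a nonstandard model, and every element is trivially a nonstandard-coded ``constant polynomial'' --- so one must genuinely feed in the external hypothesis that $\bA'$ is finitely generated. Granting that: the word-problem clause forces the $\cL$-substructure \emph{genuinely} generated by $g'_1,\dots,g'_n$ to be isomorphic to $\bA$; since the $g'_i$ lie in the copy of $\ZZ$ inside $(A',\oplus',\otimes')$, combining this with the honest finite generation of $\bA'$ forces $A'$ to coincide with that copy of $\ZZ$, i.e.\ $(A',\oplus',\otimes')$ is standard. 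Making this last step precise --- in particular controlling how $\varphi_f,\varphi_R$ and the fixed arithmetical predicate interact with the standard cut of $(A',\oplus',\otimes')$, and arranging the clause so that the preceding sentence really goes through --- is the delicate part, carried out in detail in \cite[Claim~7.15]{Nies}.
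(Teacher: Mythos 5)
Your overall strategy---augment $\alpha_0$ with clauses pinning down a distinguished generating tuple inside the internal copy of $\ZZ$, use the coding of terms to express generation and to match the word problem against a fixed arithmetical predicate, and defer the remaining details to \cite[Claim~7.15]{Nies}---is the same as the paper's, which itself gives only a brief pointer to exactly this construction together with the same citation. The forward direction (``standard $\Rightarrow\alpha$'') is fine as you describe it, and you have correctly located where the difficulty sits.

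That said, your sketch of the converse asserts two implications that do not hold as stated, and they are precisely where the content of the lemma lives. First, ``the word-problem clause forces $\langle g'_1,\dots,g'_n\rangle\cong\bA$'' presupposes that the fixed arithmetical predicate computing the word problem of $\bA$ evaluates correctly at \emph{standard} codes inside $(A',\oplus',\otimes')$; since that ring is only required to model a finite fragment of $\operatorname{Th}(\ZZ)$, such absoluteness is not automatic (a nonstandard model of a finite fragment can disagree with $\ZZ$ even on standard instances of an arithmetical formula), and for the same reason there is no a priori guarantee that the substructure generated by the $g_i'$ stays inside the standard cut: the operations $f^{\bA'}$, though $\emptyset$-definable in $(A',\oplus',\otimes')$, need not preserve it. Second, and more seriously, ``combining this with the honest finite generation of $\bA'$ forces $A'$ to coincide with that copy of $\ZZ$'' is a non sequitur: finite generation of $\bA'$ means it is generated by \emph{some} tuple $\bar b$, whose entries may be nonstandard and lie outside $\langle g_1',\dots,g_n'\rangle$, so knowing $\langle g_1',\dots,g_n'\rangle\cong\bA$ does not yield $A'=\langle g_1',\dots,g_n'\rangle$. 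Converting external finite generation by an arbitrary tuple into the conclusion that every element of $A'$ is standard is the actual crux; your clauses as formulated do not yet close it, and the citation of \cite[Claim~7.15]{Nies} is carrying the full weight of the argument---just as it does in the paper's own treatment.
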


\noindent
Let now $t=(t_1,\dots,t_n)$ be a tuple of constant terms in the  language of rings. Given $\bA'\models\alpha_0(c')$, we denote by $t(c')=\big(t_1(c'),\dots,t_n(c')\big)$ the tuple containing the interpretations of the $t_i$ in the ring $(A',{\oplus'},{\otimes'})$. We also let $\alpha$ be as in the previous lemma.

\begin{lemma}
Let $\bA'$ is a f.g.~$\cL$-structure and $c'\in (A')^{k}$ with $\bA'\models\alpha(c')$. Then the orbit of $t(c')$ under $\Aut(\bA')$ is $\emptyset$-definable in $\bA'$.
\end{lemma}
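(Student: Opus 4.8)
The plan is to show that the $\Aut(\bA')$-orbit of $t(c')$ is $\emptyset$-definable by exhibiting an explicit defining formula built from $\alpha$ and the term $t$. The key observation is that since $\bA'\models\alpha(c')$, the ring $(A',\oplus',\otimes')$ is standard, i.e.\ isomorphic to $(\ZZ,{+},{\times})$, and the formulas $\varphi_\oplus,\varphi_\otimes$ (with parameter $c'$) together with the $\varphi_f,\varphi_R$ recover the whole $\cL$-structure $\bA'$ from $(A',\oplus',\otimes')$. Thus the pair $c'$ is precisely a code for a bi-interpretation of $\bA'$ with $\ZZ$; and conversely $\bA'$ recovers this copy of $\ZZ$ internally. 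An automorphism of $\bA'$ must carry $c'$ to another tuple $c''$ satisfying $\alpha$ and yielding (via the same formulas) the same $\cL$-structure; so the orbit of $c'$ under $\Aut(\bA')$ can itself be analyzed, and the orbit of $t(c')$ is the image of that orbit under the (definable, once $\alpha_0$ holds) evaluation map $c'\mapsto t(c')$.

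\textbf{The key steps}, in order. First, I would observe that $\bA'\models\alpha(c')$ forces $(A',\oplus',\otimes')\cong(\ZZ,{+},{\times})$, and hence by Lemma~\ref{lem:oplus and otimes} (applied with $\bA'$ in place of $\bA$, noting the formulas involved do not change) that $\bA'$ is bi-interpretable with $\ZZ$ via the data coded by $c'$. Second, I would apply Corollary~\ref{cor:all interpretations of Z are homotopic}: any two interpretations of $\ZZ$ in $\bA'$ are homotopic. In particular, if $c''$ is any other tuple with $\bA'\models\alpha(c'')$, then the two copies of $\ZZ$ in $\bA'$ given by $c'$ and $c''$ are definably isomorphic. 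Third, and this is the crux of the definability argument: I claim
$$\beta(z') \ :=\ \alpha(z')\ \wedge\ \exists u\Big(\text{``$u$ codes the graph of a ring-isomorphism $(A',\oplus'_{z'},\otimes'_{z'})\to(A',\oplus'_{c'},\otimes'_{c'})$ commuting with all $f^{\bA'},R^{\bA'}$''}\Big)$$
does \emph{not} quite work as stated because $c'$ is not available as a constant; instead I would use that $\bA'$ knows its own copy of $\NN$ and can quantify over codes of such isomorphisms internally. The honest route: since $\bA'$ is bi-interpretable with $\ZZ$, the $\cL$-structure $\bA'$ is, up to a definable bijection, $(A',\oplus',\otimes')$, so $\Aut(\bA')$ is recovered as the automorphisms of $(A',\oplus',\otimes')$ fixing the images under the $\varphi_f,\varphi_R$; but $(A',\oplus',\otimes')\cong\ZZ$ is rigid, so $\Aut(\bA')=1$. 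Hence the orbit of $t(c')$ is the singleton $\{t(c')\}$, and it suffices to note that $t(c')$ is $\emptyset$-definable in $\bA'$: it is the unique element $y$ such that $\alpha(z')\wedge(y=t(z'))$ holds for the (unique, $\emptyset$-definable) canonical parameter tuple, or more simply, $t(c')$ is definable because the map $z'\mapsto t(z')$ is definable on $\{z':\alpha_0(z')\}$ and any two witnesses $c',c''$ of $\alpha$ give a definable isomorphism of $\bA'$ with itself, i.e.\ the identity, forcing $t(c')=t(c'')$.

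\textbf{The main obstacle} — and the step I expect to require care — is making precise that $\Aut(\bA')=1$, or more robustly, that $t(c')$ lies in a $\emptyset$-definable set no matter which tuple $c'$ witnesses $\alpha$. The cleanest formulation avoiding the rigidity shortcut: define $\theta(y):=\exists z'\big(\alpha(z')\wedge y=t(z')\big)$; this is $\emptyset$-definable, contains $t(c')$, and is $\Aut(\bA')$-invariant since $\alpha$ and $t$ are $\emptyset$-definable; and it is contained in the orbit because if $\theta(y)$ holds, witnessed by some $c''$, then by Corollary~\ref{cor:all interpretations of Z are homotopic} there is a $\bA'$-definable isomorphism between the copies of $\ZZ$ given by $c'$ and $c''$, which by the commutation conditions in $\alpha_0$ upgrades to a $\bA'$-definable isomorphism $\bA'\to\bA'$, i.e.\ an automorphism $\sigma$ of $\bA'$; and $\sigma$ carries $c'$ to $c''$ hence $t(c')$ to $t(c'')=y$. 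So $\theta$ defines exactly the orbit. The delicate point is verifying that a definable self-isomorphism of $\bA'$ arising this way is genuinely an element of $\Aut(\bA')$ and sends $c'$ to $c''$ on the nose — this is a matter of chasing the definitions of the $\varphi_f,\varphi_R$ and the fact that they define $f^{\bA'},R^{\bA'}$ in both $(A',\oplus'_{c'},\otimes'_{c'})$ and $(A',\oplus'_{c''},\otimes'_{c''})$ — routine but requiring attention.
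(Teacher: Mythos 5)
Your final formulation is essentially the paper's own proof: the orbit is defined by $\exists z'\big(\alpha(z')\wedge y=t(z')\big)$, the forward inclusion holds because $\alpha$ and $t$ are $\emptyset$-definable, and the backward inclusion comes from turning the isomorphism $(A',\oplus',\otimes')\to(A',\oplus'',\otimes'')$ between the two standard copies of $\ZZ$ into an automorphism of $\bA'$ via condition~(2) in the description of $\alpha_0$. Two caveats on the way you get there. First, the rigidity detour is genuinely false, not merely inelegant: an automorphism $\sigma$ of $\bA'$ need not fix the parameter tuple $c'$, so it is an isomorphism $(A',\oplus'_{c'},\otimes'_{c'})\to(A',\oplus'_{\sigma(c')},\otimes'_{\sigma(c')})$ rather than an automorphism of a single copy of $\ZZ$; for instance $\ZZ[i]$ is f.g., bi-interpretable with $\ZZ$, and has a nontrivial automorphism, so neither $\Aut(\bA')=1$ nor $t(c')=t(c'')$ holds in general. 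You do abandon this route, but it should be retracted, not left standing. Second, in the correct version your assertion that $\sigma$ carries $c'$ to $c''$ is unjustified and unnecessary: all that is needed is $\sigma\big(t(c')\big)=t(c'')$, which holds because $t(c')$ is by definition the tuple of values of the closed ring terms $t_i$ computed in $(A',\oplus',\otimes')$, and a ring isomorphism preserves values of closed terms. Relatedly, invoking Corollary~\ref{cor:all interpretations of Z are homotopic} is harmless overkill: definability of the isomorphism between the two copies of $\ZZ$ plays no role here; since both are standard there is a unique isomorphism between them, and that is all the paper uses.
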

\begin{proof}
We claim that for $a'=(a_1',\dots,a_n')\in (A')^n$ we have
$$\sigma(t(c'))=a' \text{ for some $\sigma\in\Aut(\bA')$} \ \Longleftrightarrow \  
t(c'')=a' \text{ for some $c''$ with $\bA'\models\alpha(c'')$.}$$
Here the forward direction is clear. For the backward direction suppose
$\bA'\models\alpha(c'')$, and let $\oplus''$, $\otimes''$ denote the binary operations on $A'$ defined by 
$\varphi_{\oplus}(x_1,x_2,y,c'')$,  $\varphi_{\otimes}(x_1,x_2,y,c'')$, respectively. We then have a unique isomorphism $(A',\oplus',\otimes')\to (A',\oplus'',\otimes'')$. This isomorphism maps $t(c')$ onto $t(c'')$, and is also an automorphism of $\bA'$, by condition (2) in the description of $\alpha_0$ above.
This shows the claim, and hence the lemma.
\end{proof}

\begin{proof}[Proof of Proposition~\ref{prop:QFA}]
Let $a_1,\dots,a_n\in A$ generate $\bA$, and let $t_1,\dots,t_n$ be the constant terms in the ring language corresponding to the images of $a_1,\dots,a_n$, respectively, under the isomorphism $(A,{\oplus},{\otimes})\to (\ZZ,{+},{\times})$. Then for each f.g.~$\cL$-structure $\bA'$ and $a_1',\dots,a_n'\in A'$, there is an isomorphism $\bA\to \bA'$ with $a_i\mapsto a_i'$ ($i=1,\dots,n$) iff there is some $c'$ such that $\bA'\models\alpha(c')$ and an automorphism of~$\bA'$ with $t_i(c')\mapsto a_i'$ ($i=1,\dots,n$).  By the lemma above, the latter condition is definable.
\end{proof}

\section{Integral Domains}\label{sec:integral domains}

\noindent
The goal of this section is to show the following theorem:

\begin{theorem}\label{thm:integral domains}
Every infinite f.g.~integral domain is bi-interpretable with $\ZZ$.
\end{theorem}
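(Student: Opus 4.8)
The plan is to deduce the theorem from Nies's criterion, Corollary~\ref{cor:Nies}. Since $A$ is a finitely generated ring, it is a finitely generated structure in the finite language of rings, and it is interpretable in $\ZZ$ by Corollary~\ref{cor:fg ring int in ZZ}; so what remains is to produce an interpretation $f\colon A\leadsto\ZZ$ together with a \emph{definable injection} $A\to f^*(\ZZ)$, i.e.\ a first-order definable way of indexing the elements of $A$ by integers. Interpreting $\ZZ$ in $A$ is the easy half: the fraction field $K=\Frac(A)$ is interpretable in $A$ (Examples~\ref{ex:ring interpretations},~(4)), and $\ZZ$ is uniformly interpretable in every infinite finitely generated field by the argument of \cite[Section~2]{Sc}, so composing gives $f$. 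The whole difficulty lies in the construction of the definable injection $A\to f^*(\ZZ)$.

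I would build this by induction on $d:=\trdeg(K)$ over the prime field of $K$. Since $A$ is infinite it is not a finite field, so $d\geq 1$ in positive characteristic; the base case $d=0$ thus forces $\cha A=0$ with $K$ a number field. There $A$ is a finitely generated $\ZZ$-subalgebra of $K$, its integral closure $\widetilde A$ in $K$ is a finite $A$-module, and after inverting a suitable integer $c$ the ring $\widetilde A[c^{-1}]$ is a free $\ZZ[c^{-1}]$-module of finite rank; fixing a basis, and using that $\ZZ[c^{-1}]$ --- a localization of $\ZZ$ at a definable set --- is bi-interpretable with $\ZZ$, one gets (via Corollary~\ref{cor:Nies} again, with the injection $a/c^{k}\mapsto$ the G\"odel code of $(a,k)$, and the definability of $\ZZ[c^{-1}]$ inside $\widetilde A[c^{-1}]$) that $\widetilde A[c^{-1}]$ is bi-interpretable with $\ZZ$. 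Then $A$, being a finitely generated subring of $\widetilde A[c^{-1}]$, is definable in it (through the G\"odel coding that bi-interpretability supplies, cf.\ Lemma~\ref{lem:biNdeforb}), while $\widetilde A[c^{-1}]$ is interpretable in $A$ (Lemmas~\ref{lem:interpret A[1/c]} and~\ref{lem:interpret finite extensions}); composing yields the required definable injection $A\to f^*(\ZZ)$.

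For the inductive step ($d\geq 1$) I would pick a rank-one discrete valuation $v$ on $K$ with $A\subseteq\mathcal O_v$ whose residue field is a finitely generated field of transcendence degree $d-1$, realized as the valuation attached to a height-one prime $\mathfrak p$ of the integral closure of $A$ in $K$ (or of a suitable finite extension of $A$; the finite-generatedness facts needed to keep these extensions interpretable in $A$ are supplied by Proposition~\ref{prop:fg criterion} and Corollary~\ref{cor:fg criterion}, together with Lemma~\ref{lem:interpret finite extensions} and Corollary~\ref{cor:interpret finite extensions}). The essential point --- stressed in the Introduction, and the reason for working inside the ring rather than inside the field $K$ --- is that such a $v$ is \emph{definable in $A$ via ideal-membership conditions}, namely through membership in the symbolic powers $\mathfrak p^{(n)}$, which also makes $\mathcal O_v$, its maximal ideal, a uniformizer, and the residue map definable. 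One then extracts, definably and uniformly, the finitely much valuation-theoretic data pinning down an element of $A$ --- its value under $v$ and the residues governing its behaviour along $v$ (in positive characteristic the case $d=1$, with finite residue field, is the first genuinely new instance) --- codes the pieces by integers using the inductive hypothesis for the residue field, and assembles them by G\"odel coding into $f^*(\ZZ)$; injectivity is immediate. I expect the real obstacle to be exactly this last point --- that the ``coordinatization along $v$'' is first-order definable and uniform. This is the technical heart of the matter, it is where the valuation techniques of \cite{Sc} enter, and it is precisely the place where one must take the care pointed out in \cite{ScErr}, anchoring every valuation to an honest prime ideal of a finitely generated ring, in order to keep the argument valid.
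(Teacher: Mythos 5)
Your high-level frame (Nies's criterion, Corollary~\ref{cor:Nies}: interpret $A$ in $\ZZ$, interpret $\ZZ$ in $A$, then build a definable injection $A\to f^*(\ZZ)$) matches the paper's strategy for the one-dimensional case, and your base case in characteristic zero is essentially the paper's Lemma~\ref{biinterporder}. But the inductive step is where the proof actually has to happen, and as written it has a genuine gap. An element $a\in A$ is \emph{not} determined by its value $v(a)$ together with finitely many residues at a single valuation $v$: two distinct elements agree to order $v(a-b)$, which is unbounded as the pair varies, so the ``finitely much valuation-theoretic data pinning down an element'' must in fact be the entire sequence of residues of $a$ modulo $\mathfrak p^{(n)}$ for all $n$. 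To code that by one integer you need a \emph{uniformly definable family}, indexed by the interpreted copy of $\NN$, of injections $A/\mathfrak p^{(n)}\to\ZZ$; the inductive hypothesis only gives you bi-interpretability of the residue field (one fixed quotient) with $\ZZ$, and provides no uniformity in $n$. Moreover, even getting the family $\big(\mathfrak p^{(n)}\big)_n$ to be definable uniformly in $n$ already presupposes something like Corollary~\ref{cor:biNdefpower}, i.e.\ the bi-interpretability you are trying to prove. Separately, your induction bottoms out in positive characteristic at $d=1$, where the residue field is finite and interprets nothing; you flag this as ``the first genuinely new instance'' but give no argument, whereas the paper needs the full strength of Rumely's and R.~Robinson's definability results (Lemma~\ref{biinterpposone}) there.

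The paper circumvents exactly this uniformity problem by spreading the data out ``horizontally'' rather than ``infinitesimally'': using Poonen's uniform definition of algebraic independence it carves out a \emph{definable} subring $D\subseteq A$ of arithmetic dimension one, proves $D$ is finitely generated (Lemma~\ref{onefg}, via Onoda and Artin--Tate --- a nontrivial point your sketch would also need), handles dimension one by Robinson/Rumely, and then, after Noether normalization, identifies an element $p\in A_c$ with the function $(a,D',b)\mapsto p(a,b)$ on $D'$-points of the associated variety, where $D'$ ranges over a uniformly interpretable class of finite extensions of $D_c$ (Lemma~\ref{lem:interpret finite extensions}, Corollary~\ref{cor:interpret finite extensions}). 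Equality of elements becomes a first-order condition quantifying over these points (equation~\eqref{eq:p=q}), and the paper then verifies $j\circ i\simeq\id_A$ directly rather than producing an injection into $f^*(\ZZ)$. If you want to salvage a valuation-theoretic induction on transcendence degree, you must either supply the uniform-in-$n$ coordinatization (this is essentially what went wrong in \cite{Sc} and is not repaired by merely anchoring $v$ to a prime ideal), or replace the single valuation by a definable family of evaluation data as the paper does.
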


\noindent
Combining this theorem with Proposition~\ref{prop:QFA} immediately yields:

\begin{cor}\label{cor:QFA int domains}
Every f.g.~integral domain has a QFA formula.
\end{cor}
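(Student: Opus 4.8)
The plan is to obtain this as a one-line consequence of the two inputs already in place: Theorem~\ref{thm:integral domains}, which says that every infinite f.g.\ integral domain is bi-interpretable with $\ZZ$, and Proposition~\ref{prop:QFA}, which produces a QFA formula for any finitely generated structure in a finite language that is bi-interpretable with $\ZZ$. Recall that throughout this paper a ring is regarded as a structure in the finite language $\{{+},{\times}\}$, so an f.g.\ ring is in particular a finitely generated structure; hence Proposition~\ref{prop:QFA} is applicable to it.

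So let $A$ be an f.g.\ integral domain. First I would dispose of the degenerate case: if $A$ is finite, then $A$ is a finite integral domain, i.e.\ a finite field, and a finite structure in a finite language trivially has a QFA formula (one may write down its full addition and multiplication tables up to isomorphism; this was already noted just before Proposition~\ref{prop:QFA}). If instead $A$ is infinite, then Theorem~\ref{thm:integral domains} gives that $A$ is bi-interpretable with $\ZZ$, and since $\cL=\{{+},{\times}\}$ is finite and $A$ is f.g., Proposition~\ref{prop:QFA} yields a QFA formula $\varphi_A(x_1,\dots,x_n)$ for $A$ (with respect to some system of ring generators). Combining the two cases gives the corollary.

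The real content lies entirely upstream: the main obstacle is the proof of Theorem~\ref{thm:integral domains} itself (carried out in the remainder of this section via the valuation-theoretic arguments adapted from~\cite{Sc} together with the bijectivity criterion of Corollary~\ref{cor:Nies}), and, secondarily, the proof of Proposition~\ref{prop:QFA}. Given those, the deduction of Corollary~\ref{cor:QFA int domains} is immediate; the only point requiring a moment's attention is that Theorem~\ref{thm:integral domains} is stated for \emph{infinite} domains, so the finite fields must be handled separately, as above.
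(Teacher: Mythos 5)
Your proposal is correct and matches the paper's own argument: the corollary is stated there as an immediate consequence of Theorem~\ref{thm:integral domains} together with Proposition~\ref{prop:QFA}, with the finite case covered by the earlier remark that finite structures always admit QFA formulas. Nothing further is needed.
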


\noindent
Although Theorem~\ref{thm:integral domains} can be deduced from the main result of~\cite{Sc} (and is unaffected by the error therein), we prefer to start from scratch and give a self-contained proof of this fact. 

\medskip
\noindent
{\it In the rest of this section we let $A$ be an integral domain with fraction field~$K$.}\/ 

\subsection{Noether Normalization and some of its applications}
Our main tool is the Noether Normalization Lemma in the following explicit form:

\begin{prop}\label{prop:NN}
Suppose that $A$ is a f.g.~$D$-algebra, where  $D$ is a subring of~$A$. Then there are a 
nonzero $c\in D$ and $x_1,\dots,x_n\in A$, algebraically independent over~$D$, such that
$A[c^{-1}]$ is a finitely generated $D[c^{-1},x_1,\dots,x_n]$-module.
\end{prop}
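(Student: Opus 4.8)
The plan is to induct on the number $m$ of generators of $A$ as a $D$-algebra, writing $A=D[a_1,\dots,a_m]$; since $A$ is a domain, so is every subring of $\Frac(A)$ that appears below, which is all that the algebraic-independence bookkeeping will need. The case $m=0$ is trivial ($c=1$, $n=0$). For the inductive step, if $a_1,\dots,a_m$ are algebraically independent over $D$ we are done with $c=1$, $n=m$, $x_i=a_i$. Otherwise there is a nonzero $f\in D[X_1,\dots,X_m]$ with $f(a_1,\dots,a_m)=0$, and the heart of the matter is to change coordinates so that $f$ becomes, up to a nonzero scalar from $D$, monic in the last variable.

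To do this, choose $d$ larger than every exponent occurring in a monomial of $f$, set $r_i:=d^i$ for $i=1,\dots,m-1$, and put $b_i:=a_i-a_m^{r_i}\in A$. Substituting $X_i=Y_i+X_m^{r_i}$ ($i<m$) into $f$ and viewing the outcome as a polynomial in $X_m$ over $D[Y_1,\dots,Y_{m-1}]$, each monomial $\prod X_i^{\nu_i}$ of $f$ acquires top $X_m$-degree $\nu_m+\sum_{i<m}d^i\nu_i$, which is an injective function of the exponent vector $(\nu_1,\dots,\nu_m)$ because $d$ bounds the $\nu_j$. Hence exactly one monomial of $f$, with coefficient $c_0\in D\setminus\{0\}$, contributes to the highest $X_m$-degree $E$, and the coefficient of $X_m^E$ is exactly $c_0$, a constant independent of the $Y_i$. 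Specializing $Y_i\mapsto b_i$, $X_m\mapsto a_m$ yields a relation $c_0a_m^E+(\text{lower powers of }a_m\text{, with coefficients in }D[b_1,\dots,b_{m-1}])=0$, so $a_m$ is integral over $B:=D[c_0^{-1}][b_1,\dots,b_{m-1}]$. Since $a_i=b_i+a_m^{r_i}$, we have $A[c_0^{-1}]=D[c_0^{-1}][b_1,\dots,b_{m-1},a_m]=B[a_m]$, which is generated as a $B$-module by $1,a_m,\dots,a_m^{E-1}$; thus $A[c_0^{-1}]$ is a finite $B$-module.

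Now $B$ is a $D[c_0^{-1}]$-algebra generated by $m-1$ elements, so by the inductive hypothesis there are a nonzero $c_1\in D[c_0^{-1}]$ and $x_1,\dots,x_n\in B$, algebraically independent over $D[c_0^{-1}]$ (hence over $D$), with $B[c_1^{-1}]$ a finite $D[c_0^{-1},c_1^{-1}][x_1,\dots,x_n]$-module. Write $c_1=c_1'/c_0^k$ with $c_1'\in D\setminus\{0\}$ and set $c:=c_0c_1'\in D\setminus\{0\}$. One checks that $D[c^{-1}]=D[c_0^{-1},c_1^{-1}]$ and $B[c^{-1}]=B[c_1^{-1}]$ (in $B$, $c_0$ is already a unit), so $B[c^{-1}]$ is a finite $D[c^{-1}][x_1,\dots,x_n]$-module; localizing the finite $B$-module $A[c_0^{-1}]$ shows $A[c^{-1}]$ is a finite $B[c^{-1}]$-module, and transitivity of module-finiteness gives that $A[c^{-1}]$ is a finite $D[c^{-1}][x_1,\dots,x_n]$-module. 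Finally, each $x_i\in B\subseteq A[c_0^{-1}]$ can be written as $\tilde x_i/c_0^{l_i}$ with $\tilde x_i\in A$; since $c_0$ is a unit of $D[c^{-1}]$, replacing $x_i$ by $\tilde x_i$ leaves $D[c^{-1},x_1,\dots,x_n]$ unchanged and preserves algebraic independence over $D$ (using again that $D$ is a domain), and now $\tilde x_i\in A$, completing the induction.

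The only genuinely delicate point is the coordinate change in the second paragraph. Over an infinite base field one could instead use a linear substitution $X_i\mapsto X_i+\lambda_iX_m$ and pick the $\lambda_i$ so that the leading form of $f$ does not vanish; but $D$ may be finite (for instance $D=\FF_p\subseteq A$), and then no linear substitution need work. The power substitution is designed precisely to force a unique, constant leading coefficient regardless of how small $D$ is. The rest — tracking which element of $D$ gets inverted as the induction proceeds, and clearing denominators at the end so that the $x_i$ lie in $A$ — is routine.
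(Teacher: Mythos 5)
Your proof is correct. The paper states Proposition~\ref{prop:NN} without proof, citing it as the standard explicit form of Noether Normalization, so there is nothing to compare against; your argument --- Nagata's power substitution $X_i\mapsto Y_i+X_m^{d^i}$ to force a unique leading monomial with constant coefficient $c_0\in D\setminus\{0\}$, followed by inverting $c_0$ and inducting on the number of generators --- is exactly the classical one, and your bookkeeping of the successive localizations and the final clearing of denominators from the $x_i$ is sound. (The only unstated point, that the top degree $E$ is positive, follows since a nonzero constant of $D$ cannot vanish in $A$.)
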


\noindent
If the field $K$ is f.g., we define the \emph{arithmetic}\/ (or \emph{Kronecker}\/) dimension of $A$ as
$$
\operatorname{adim}(A) := \begin{cases}  \trdeg_{\QQ} (K) + 1 & \text{ if } \cha(A) = 0, \\	
 				\trdeg_{\FF_p}(K)  & \text{ if } \cha(A) = p > 0.
 				\end{cases}
 				$$
As a consequence of Proposition~\ref{prop:NN}, if the integral domain $A$ is f.g., then $\operatorname{adim}(A)$ equals the Krull dimension $\dim(A)$ of $A$.

\medskip
\noindent
Proposition~\ref{prop:NN} is particularly useful when combined with the following fact (a basic version of Grothendieck's ``generic flatness lemma''); see \cite[Theorem~2.1]{Vasconcelos}.

\begin{prop}\label{prop:generic freeness}
Suppose that $A$ is a f.g.~$D$-algebra, where  $D$ is a subring of~$A$. Then there is some $c\in D\setminus\{0\}$ such that $A[c^{-1}]$ is a free $D[c^{-1}]$-module.
\end{prop}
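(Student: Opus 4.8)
The plan is to prove the following more flexible ``module'' form of the statement, which is what makes an induction possible: \emph{if $D$ is an integral domain, $B$ a finitely generated $D$-algebra, and $M$ a finite $B$-module, then there is $c\in D\setminus\{0\}$ such that $M[c^{-1}]$ is free over $D[c^{-1}]$}; Proposition~\ref{prop:generic freeness} is then the case $B=M=A$ (recall that $D\subseteq A$ forces $D$ to be a domain). I would argue by induction on the number $n$ of algebra generators of $B$ over $D$. The base case is $n=0$, i.e.\ $B=D$ and $M$ a finite $D$-module: writing $F=\Frac(D)$, the vector space $M\otimes_D F$ has finite dimension $r$, and lifting a basis to $m_1,\dots,m_r\in M$ yields a $D$-linear map $D^r\to M$ which is injective (the $m_i\otimes 1$ form a basis of $M\otimes_D F$, so the $m_i$ are $D$-independent) and whose cokernel is finitely generated and becomes zero after tensoring with $F$. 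Hence a single $c\in D\setminus\{0\}$ annihilates the cokernel, and $D[c^{-1}]^r\to M[c^{-1}]$ is then an isomorphism (it is injective, being a localization of an injective map, and now surjective).

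For the inductive step write $B=B'[t]$ with $B'$ generated by $n-1$ elements over $D$. Pick a finite $B'$-submodule $M'\subseteq M$ with $M=\sum_{i\ge 0}t^iM'$ (for instance the $B'$-span of a finite generating set of $M$ over $B$), and put $M_{\le k}:=\sum_{i=0}^{k}t^iM'$, an ascending chain of $B'$-submodules with $\bigcup_k M_{\le k}=M$. The crucial point is that this a priori infinite filtration stabilizes after a single localization in $D$: after applying $-\otimes_D F$, the module $M\otimes_D F$ is finite over $(B'\otimes_D F)[t]$, a finite-type algebra over the \emph{field} $F$ and hence noetherian, so the ascending chain of submodules $M_{\le k}\otimes_D F$ of $M\otimes_D F$ stabilizes at some stage $N$; translating back, there is $c\in D\setminus\{0\}$ with $c\,t^{N+1}M'\subseteq M_{\le N}$, and then $M[c^{-1}]=M_{\le N}[c^{-1}]$. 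Consequently $M[c^{-1}]$ carries a \emph{finite} filtration whose successive quotients are homomorphic images of $M'[c^{-1}]$, hence finite modules over $B'[c^{-1}]$, an algebra generated by $n-1$ elements over the domain $D[c^{-1}]$. By the inductive hypothesis each of these quotients becomes free over the base after inverting one more element of $D$; inverting all of these, say at $d\in D\setminus\{0\}$, the module $M[d^{-1}]$ has a finite filtration whose quotients are free over $D[d^{-1}]$, and since any extension $0\to(\text{free})\to(\cdot)\to(\text{free})\to 0$ of modules splits, an iterated such extension is free. This closes the induction.

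The main obstacle is exactly this inductive step: one must circumvent the fact that the filtration $M=\bigcup_k M_{\le k}$ need not be finite by passing to the generic fiber $M\otimes_D F$, which is noetherian --- crucially, no noetherian hypothesis on $D$ itself is needed, since $B'\otimes_D F$ is a finite-type algebra over a field --- and hence forces the filtration to stabilize after inverting a single element of $D$. A variant of the same argument would first invoke Proposition~\ref{prop:NN} to reduce to the situation where $A$ is module-finite over a polynomial ring $D[x_1,\dots,x_n]$ and then induct on $n$ with the single variable $x_n$ in the role of $t$ above; this is close to the proof in \cite[Theorem~2.1]{Vasconcelos}.
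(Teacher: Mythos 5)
The paper itself offers no proof of this proposition --- it is quoted from \cite[Theorem~2.1]{Vasconcelos} --- so the comparison is with the standard generic-freeness argument you are reconstructing. Your skeleton (induction on the number of algebra generators, the base case over $D$ itself, the filtration $M_{\le k}=\sum_{i\le k}t^iM'$, and ``iterated extension of frees is free'' at the end) is the right one, and the base case is correct. But the step you yourself call ``the crucial point'' is false. The modules $M_{\le k}\otimes_DF$ are only $(B'\otimes_DF)$-submodules of $M\otimes_DF$; they are not stable under multiplication by $t$, so the noetherianity of $M\otimes_DF$ as a module over $(B'\otimes_DF)[t]$ gives no ascending chain condition for them, and $M\otimes_DF$ is in general not noetherian over $B'\otimes_DF$. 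Concretely, take $B'=D$, $B=D[t]$ a polynomial ring, $M=B$, $M'=D$: then $M_{\le k}\otimes_DF=F\oplus Ft\oplus\cdots\oplus Ft^k$ never stabilizes, and no localization satisfies $M[c^{-1}]=M_{\le N}[c^{-1}]$ --- that identity would make $M[c^{-1}]$ a finite $B'[c^{-1}]$-module, which is absurd here. So the reduction to a \emph{finite} filtration with quotients that are finite $B'$-modules cannot be achieved, and the inductive step collapses.

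The standard repair stabilizes the successive quotients rather than the filtration. Multiplication by $t$ induces surjections $Q_k\to Q_{k+1}$ of $B'$-modules, where $Q_k=M_{\le k}/M_{\le k-1}$, so each $Q_k$ is a quotient $Q_0/K_k$ with $K_0\subseteq K_1\subseteq\cdots$ an ascending chain in the finite $B'$-module $Q_0=M'$. The chain $K_k\otimes_DF$ lives in the noetherian $(B'\otimes_DF)$-module $Q_0\otimes_DF$ and hence stabilizes, so generically there are only finitely many ``distinct'' quotients; one applies the inductive hypothesis to those finitely many finite $B'$-modules and arranges, after inverting a single further element of $D$, that \emph{every} $Q_k[c^{-1}]$ is a free $D[c^{-1}]$-module. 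The filtration of $M[c^{-1}]$ remains countably infinite, but since each quotient is free each inclusion $M_{\le k-1}[c^{-1}]\subseteq M_{\le k}[c^{-1}]$ splits, and a union of compatibly chosen bases is a basis of $M[c^{-1}]=\bigcup_kM_{\le k}[c^{-1}]$. Carrying out the bookkeeping for ``after inverting a single element all high quotients agree'' is exactly the technical heart of \cite[Theorem~2.1]{Vasconcelos} (where, as here, $D$ is not assumed noetherian); as written, your argument skips it.
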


\noindent
The integral domain $A$ is said to be \emph{Japanese} if the integral closure of $A$ in a finite-degree field extension of $K$ is always a finitely generated $A$-module.
Every finitely generated integral domain is Japanese; see \cite[Theorem~36.5]{Nagata}.

\begin{lemma}\label{lem:algebraic => finite}
Let $D$ be a Japanese noetherian  subring of~$A$, 
$x_1,\dots,x_n\in A$ be algebraically independent over $D$, and suppose that $A$ is finite over   $R=D[x_1,\dots,x_n]$. 
Then every subring of $A$ which contains~$D$ and is algebraic over~$D$, is finite over~$D$.
\end{lemma}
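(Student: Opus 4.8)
The plan is to show that $B$ is contained in the integral closure of $D$ inside a \emph{finite} extension of $\Frac(D)$: the Japanese hypothesis makes that integral closure a finitely generated $D$-module, and then $B$, being a $D$-submodule of it, is finite over $D$ because $D$ is noetherian. Throughout I would work inside $K=\Frac(A)$ and write $F:=\Frac(D)$; recall that $A$ finite over $R=D[x_1,\dots,x_n]$ means every element of $A$ is integral over $R$ and that $K$ is a finite extension of $\Frac(R)=F(x_1,\dots,x_n)$.

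First I would get a grip on $L:=\Frac(B)$. Since every element of $B$ is algebraic over $D$, the extension $L/F$ is algebraic; then a transcendence-degree count ($\trdeg_F L(x_1,\dots,x_n)=n$ while $\trdeg_F L=0$) shows $x_1,\dots,x_n$ remain algebraically independent over $L$, so any $F$-linearly independent tuple in $L$ stays $F(x_1,\dots,x_n)$-linearly independent, giving $[L:F]\le[K:F(x_1,\dots,x_n)]<\infty$. As $L/F$ is finite and $D$ is Japanese, the integral closure $\tilde D$ of $D$ in $L$ is a finitely generated $D$-module.

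The heart of the matter is to prove $B\subseteq\tilde D$, i.e.\ that every $b\in B$ is integral over $D$. Here I would use that $\tilde D$ is integrally closed in $L$ (transitivity of integral closure) and has fraction field $L$ (scale a minimal polynomial over $F$ to clear denominators), so that the polynomial ring $\tilde D[x_1,\dots,x_n]$ is a normal domain with fraction field $L(x_1,\dots,x_n)$. Now let $b\in B\subseteq A$: it is integral over $R=D[x_1,\dots,x_n]\subseteq\tilde D[x_1,\dots,x_n]$ and lies in $L\subseteq L(x_1,\dots,x_n)$, so normality forces $b\in\tilde D[x_1,\dots,x_n]$; and since the $x_i$ are transcendental over $L$, an element of $\tilde D[x_1,\dots,x_n]$ lying in $L$ must be a constant in $\tilde D$. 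Hence $b\in\tilde D$, so $B\subseteq\tilde D$, and the noetherianity of $D$ completes the proof.

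I expect the real obstacle to be this last step — upgrading ``integral over $R$ and algebraic over $F$'' to ``integral over $D$'' — which is exactly where one needs the normality of $\tilde D[x_1,\dots,x_n]$ together with the persistence of $x_1,\dots,x_n$ as transcendentals over $L$. A subsidiary point that requires care is the finiteness $[L:F]<\infty$: since $B$ need not be finitely generated as a $D$-algebra a priori, one cannot read this off from presentations of $B$ but must instead use the embedding $L\hookrightarrow K$ into the finite extension $K/F(x_1,\dots,x_n)$.
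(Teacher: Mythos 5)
Your proof is correct, and its overall skeleton is the same as the paper's: show every element of $B$ is integral over $D$, show $\Frac(B)$ is finite over $\Frac(D)$ by transferring linear independence through the (still) algebraically independent $x_i$, and then invoke the Japanese property plus noetherianity of $D$ to trap $B$ inside a finitely generated $D$-module. The one substantive difference is how the integrality step is carried out. The paper argues directly on a monic equation $f(b)=0$ with $f\in R[Y]$: writing $f=\sum_\alpha x^\alpha f_\alpha(Y)$ with $f_\alpha\in D[Y]$, the algebraic independence of the $x_i$ over $B$ forces $f_\alpha(b)=0$ for every $\alpha$, and $f_{\mathbf 0}$ is monic over $D$, so $b$ is integral over $D$ with no normality input at all. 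You instead pass to the integral closure $\tilde D$ of $D$ in $\Frac(B)$ first, use that $\tilde D[x_1,\dots,x_n]$ is a normal domain with fraction field $L(x_1,\dots,x_n)$ to conclude $b\in\tilde D[x_1,\dots,x_n]$, and then cut back down to $\tilde D$ using transcendence of the $x_i$ over $L$. Both arguments hinge on exactly the same key fact (the $x_i$ remain algebraically independent over $B$, resp.\ $L$); the paper's coefficient comparison is more elementary, while yours leans on the standard lemma that polynomial rings over normal domains are normal and requires you to establish $[L:F]<\infty$ and construct $\tilde D$ \emph{before} proving integrality rather than after. Either order works, and your handling of the subsidiary points (that $[L:F]$ must be bounded via the embedding into $K$, and that $\tilde D$ has fraction field $L$) is sound.
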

\begin{proof}
Let $B$ be a subring of $A$ with $D\subseteq B$ which is algebraic over $D$.
We first show that $B$ is integral over $D$. Let $b\in B$.
Then $b$ is integral over $R$, that is,
satisfies an equation of the form
$f(b) = 0$ for some monic polynomial $f \in R[Y]$
in the indeterminate~$Y$.  With $\alpha=(\alpha_1,\dots,\alpha_n)$ ranging over $\NN^n$, write
$$f = \sum_{\alpha}  x^\alpha f_\alpha(Y)\qquad\text{
where $x^\alpha = x_1^{\alpha_1} \cdots x_n^{\alpha_n}$  and 
$f_\alpha(Y) \in D[Y]$.}$$
Since $B$ is algebraic over $D$, $x_1,\dots,x_n$ remain algebraically independent over $B$.  Hence, $f_\alpha(b) = 0$ 
for all $\alpha$.  In particular, $f_{{\mathbf 0}} (b) = 0$ and 
the polynomial $f_{{\mathbf 0}}$ is monic.  Therefore, $b$ is integral 
over $D$. 

Next we note that $K=\Frac(A)$ is a finite-degree field extension
of $L:=\Frac(R)$. 
Again because 
$x_1, \ldots, x_n$ are 
algebraically independent over $B$, each $D$-linearly independent sequence $b_1,\dots,b_m$ of elements of $B$ is also $R$-linearly independent and hence $L$-linearly independent, and so $m\leq [K:L]$.
Take $D$-linearly independent $b_1,\dots,b_m\in B$ with $m$ maximal, and set $M:=D[b_1,\dots,b_m]$.
Then $M$ is a f.g.~$D$-submodule of $B$, and the quotient module $B/M$ is torsion.
Hence $\Frac(B)=\Frac(M)$, and the degree of $\Frac(M)$ over $\Frac(D)$ is finite.
Therefore the integral closure of $D$ in $\Frac(B)$ is a f.g.~$D$-module; since this integral closure contains $B$ and $D$ is noetherian, 
$B$ is a f.g.~$D$-module as well.
\end{proof}

\noindent
With the following lemma we establish a basic result in commutative 
algebra.  It bears noting here that our hypothesis that the subring in 
question has arithmetic dimension one is necessary. It is not 
hard to produce non-finitely generated two-dimensional subrings of 
finitely generated integral domains.

\begin{lemma}
\label{onefg}
Suppose $A$ is finitely generated. Then
every subring of $A$ of arithmetic dimension~$1$  is finitely generated.	 
\end{lemma}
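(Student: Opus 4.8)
The plan is to produce a finitely generated subring $D\subseteq B$ with $\Frac(D)=\Frac(B)$ which coincides with $B$ after inverting a single nonzero element $c\in D$, and then to conclude by the descent statement Corollary~\ref{cor:fg criterion}. Write $k$ for the prime ring of $A$, so $k=\ZZ$ or $k=\FF_p$; in either case $k$ is a noetherian finitely generated ring and $A$ is a finitely generated $k$-algebra. Put $L:=\Frac(B)\subseteq K=\Frac(A)$. Since a subfield of a finitely generated field is itself finitely generated, $L$ is a finitely generated extension of $k$, so $\operatorname{adim}(B)$ is defined; as it depends only on $L$ and $\cha(A)$, the hypothesis $\operatorname{adim}(B)=1$ together with the equality of arithmetic and Krull dimension for finitely generated domains (a consequence of Proposition~\ref{prop:NN}) shows that every finitely generated domain $D'$ with $\Frac(D')=L$ has Krull dimension $1$.

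First I would build a finitely generated domain $D_0$ with $k\subseteq D_0\subseteq B$ and $\Frac(D_0)=L$: writing $L=k(\theta_1,\dots,\theta_r)$ with $\theta_i=\beta_i/\gamma_i$ and $\beta_i,\gamma_i\in B$, put $D_0:=k[\beta_1,\gamma_1,\dots,\beta_r,\gamma_r]$. As $A$ is a finitely generated $D_0$-algebra, Noether Normalization (Proposition~\ref{prop:NN}) gives $c\in D_0\setminus\{0\}$ and $x_1,\dots,x_n\in A$, algebraically independent over $D_0$, such that $A[c^{-1}]$ is a finite module over $R:=D_0[c^{-1},x_1,\dots,x_n]$. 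Now $D_0[c^{-1}]$ is again a finitely generated domain (a quotient of $D_0[y]$), hence noetherian and Japanese, and $x_1,\dots,x_n$ stay algebraically independent over it since $\Frac(D_0[c^{-1}])=\Frac(D_0)$; so Lemma~\ref{lem:algebraic => finite} applies with $A[c^{-1}]$, $D_0[c^{-1}]$, $R$ in the roles of $A$, $D$, $R$ there. The subring $B[c^{-1}]$ of $A[c^{-1}]$ contains $D_0[c^{-1}]$ (as $c\in D_0\subseteq B$) and is algebraic over it, since $B[c^{-1}]\subseteq L=\Frac(D_0[c^{-1}])$; hence $B[c^{-1}]$ is a finite $D_0[c^{-1}]$-module.

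Next I would enlarge $D_0$ to a finitely generated domain $D$ with $D[c^{-1}]=B[c^{-1}]$. Choose generators $\omega_1,\dots,\omega_s$ of $B[c^{-1}]$ as a $D_0[c^{-1}]$-module, write $\omega_i=b_i/c^{N_i}$ with $b_i\in B$, and set $D:=D_0[b_1,\dots,b_s]$, so $k\subseteq D\subseteq B$ and $\Frac(D)=L$. Then $D[c^{-1}]\subseteq B[c^{-1}]$ trivially; conversely $\omega_1,\dots,\omega_s\in D[c^{-1}]$ generate $B[c^{-1}]$ as a $D_0[c^{-1}]$-module and $D_0[c^{-1}]\subseteq D[c^{-1}]$, so $B[c^{-1}]\subseteq D[c^{-1}]$, and therefore $D[c^{-1}]=B[c^{-1}]$. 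By the first paragraph $D$ is a one-dimensional noetherian domain, and being a finitely generated domain it is Japanese, so its integral closure in $\Frac(D)=L$ is a finite $D$-module. Corollary~\ref{cor:fg criterion}, applied to the subring $D$ of $B$ and the element $c\in D\setminus\{0\}$ with $D[c^{-1}]=B[c^{-1}]$, now shows that $B$ is a finitely generated $D$-algebra; since $D$ is a finitely generated $\ZZ$-algebra, so is $B$, as desired.

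I expect the main obstacle to be organizing this descent, since Lemma~\ref{lem:algebraic => finite} controls only the localization $B[c^{-1}]$ and one must climb back from there to $B$ itself. The two choices that make this work are: taking $D_0\subseteq B$ with the full fraction field $L$, so that $B[c^{-1}]$ is algebraic over $D_0[c^{-1}]$ (they even share the fraction field) and the finiteness conclusion of Lemma~\ref{lem:algebraic => finite} is available; and then enlarging $D_0$ to $D$ so that $B$ and $D$ already agree after inverting $c$, which is precisely the hypothesis that Corollary~\ref{cor:fg criterion} requires. A minor but genuinely needed preliminary is that $\Frac(B)$ is finitely generated --- so that $\operatorname{adim}(B)$ is even meaningful --- which is the standard fact that subfields of finitely generated fields are finitely generated.
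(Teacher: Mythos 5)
Your proof is correct and follows essentially the same route as the paper's: Noether normalization over a small subring of $B$ to produce $c$, Lemma~\ref{lem:algebraic => finite} to make $B[c^{-1}]$ a finite module over the localized base, clearing denominators to obtain a finitely generated $D\subseteq B$ with $D[c^{-1}]=B[c^{-1}]$, and Corollary~\ref{cor:fg criterion} to descend. The only (immaterial) difference is the choice of base ring: the paper starts from $\ZZ$ or $\FF_p[t]$ and invokes $\operatorname{adim}(B)=1$ to get algebraicity of $B[c^{-1}]$ over the localized base, whereas you start from a $D_0$ with $\Frac(D_0)=\Frac(B)$ so that this algebraicity is automatic.
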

\begin{proof}
Let $B$ be a subring of $A$ with $\operatorname{adim}(B)=1$.
If $\cha(A) = 0$, then let $D := {\mathbb Z} \subseteq B$.
If $\cha(A) = p > 0$, pick some $t \in B$ transcendental over 
${\mathbb F}_p$ and  set $D := {\mathbb F}_p[t]  \subseteq B$.  
By Proposition~\ref{prop:NN} we can find some $c \in D \setminus 
\{ 0 \}$ and $x_1, \ldots, x_n \in A$ which are algebraically
independent over $D$ and for which $A[c^{-1}]$ is a 
finite integral extension of $D[c^{-1},x_1,\ldots,x_n]$.  
Since  $\operatorname{adim}(B) = 1$, 
$B[c^{-1}]$ is algebraic over $D[c^{-1}]$.
Hence by Lemma~\ref{lem:algebraic => finite} applied to $A[c^{-1}]$,~$D[c^{-1}]$
in place of~$A$,~$D$, respectively, $B[c^{-1}]$ is a finitely generated $D[c^{-1}]$-module.
 Choose generators $y_1, \ldots, y_m$
of $B[c^{-1}]$ as $D[c^{-1}]$-module.  Scaling by a sufficiently 
high power of $c$, we may assume that each~$y_i$ belongs to~$B$ and is integral over $D$.   
Then setting $R:=D[y_1, \ldots, y_m]$ we have $R \subseteq B \subseteq B[c^{-1}] = 
R[c^{-1}]$.  
By Corollary~\ref{cor:fg criterion}, $B$ is a f.g.~$R$-algebra, hence also a f.g.~ring.    	
\end{proof}

\subsection{Proof of Theorem~\ref{thm:integral domains}}
{\it In this subsection we assume that $A$ is f.g.}\/
We begin by showing that as an easy consequence of results of J. Robinson,
R. Robinson, and Rumely, each f.g.~integral domain of dimension~$1$ is bi-interpretable
with ${\mathbb Z}$. 
We deal with characteristic zero and positive characteristic
in separate lemmata:

\begin{lemma}
\label{biinterporder}	
Suppose that $\cha(A)=0$ and $\dim(A)=1$. Then 
$A$ is bi-interpretable with ${\mathbb N}$.
\end{lemma}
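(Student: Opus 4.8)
The plan is to verify the hypotheses of the criterion of Nies (Corollary~\ref{cor:Nies}): since the language $\cL=\{{+},{\times}\}$ is finite and $A$ is f.g., it is enough to show that $A$ is interpretable in $\ZZ$ and that there are an interpretation $f\colon A\leadsto\ZZ$ and an injective map $A\to f^*(\ZZ)$ which is definable in $A$.

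First I would pin down the geometry. Since $\cha(A)=0$ we have $\ZZ\subseteq A$, so $A$ is a finitely generated $\ZZ$-algebra and its fraction field $K$ is a finitely generated field; as $\dim(A)=1$, the equality $\operatorname{adim}(A)=\dim(A)$ (a consequence of Proposition~\ref{prop:NN}) gives $\trdeg_\QQ(K)=0$, i.e.\ $K$ is a number field. Fix $d:=[K:\QQ]$, let $\cO_K$ be the ring of integers of $K$, and choose a $\ZZ$-basis $\omega_1,\dots,\omega_d$ of $\cO_K$. That $A$ is interpretable in $\ZZ$ is Corollary~\ref{cor:fg ring int in ZZ}. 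For the reverse interpretation, note that by Examples~\ref{ex:ring interpretations},\,(4) the field $K$ is interpretable in $A$ via an interpretation $j\colon A\leadsto K$, and the natural inclusion $A\hookrightarrow K$ lifts to the map $a\mapsto\overline{(a,1)}\colon A\to j^*(K)$, which is definable in $A$. Here is where the cited results of J.~Robinson, R.~Robinson and Rumely enter: the natural copy of $\ZZ$ is first-order definable in the number field $K$. Restricting $j$ to the preimage of this copy of $\ZZ$ yields an interpretation $f\colon A\leadsto\ZZ$, so it only remains to exhibit the required injection $A\to f^*(\ZZ)$.

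To build it, I work inside the definable copy of $\ZZ$ sitting in $K$, with (representatives in $A^2$ of) $\omega_1,\dots,\omega_d$ as parameters. For $a\in K$ let $\big(n_1(a),\dots,n_d(a),y(a)\big)$ be the lexicographically least tuple in $\ZZ^{d}\times\ZZ_{>0}$ with $n_1(a)\omega_1+\cdots+n_d(a)\omega_d=y(a)\,a$; such a tuple exists because some positive-integer multiple of the algebraic number $a$ is an algebraic integer, hence lies in $\ZZ\omega_1+\cdots+\ZZ\omega_d$. The resulting map is injective, since $a$ is recovered from its value as $\big(\sum_i n_i(a)\omega_i\big)/y(a)$, and all the ingredients --- membership in the copy of $\ZZ$, positivity, the $\ZZ$-linear relation in $\omega_1,\dots,\omega_d$, and lexicographic minimality --- are expressible with these parameters, so the map is definable in $K$. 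Precomposing with the definable map $A\to j^*(K)$ and postcomposing with a definable G\"odel pairing $(f^*(\ZZ))^{d+1}\to f^*(\ZZ)$ produces an injective map $A\to f^*(\ZZ)$ definable in $A$. By Corollary~\ref{cor:Nies}, $A$ is then bi-interpretable with $\ZZ$, hence with $\NN$.

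The only genuinely external input --- and the one real obstacle --- is the definability of $\ZZ$ in an arbitrary number field $K$; this is exactly the theorem pieced together from the work of J.~Robinson (notably the cases $K=\QQ$ and of rings of integers), R.~Robinson, and Rumely, and I would simply quote it. Granting that, the rest is bookkeeping with interpretations and standard G\"odel coding inside the definable copy of $\ZZ$. The one point that needs a moment's care is that, because $d=[K:\QQ]$ is a fixed finite number, ``$x$ is an algebraic integer'' is captured in $K$ by the single formula $\exists n_1\cdots\exists n_d\,\big(x=n_1\omega_1+\cdots+n_d\omega_d\big)$ with the $n_i$ ranging over the definable copy of $\ZZ$, so that no unbounded quantification over polynomial degrees is required anywhere.
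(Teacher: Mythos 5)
Your proof follows essentially the same route as the paper's: both verify Nies's criterion (Corollary~\ref{cor:Nies}) by quoting J.~Robinson's definability of $\cO_K$ in the number field $K$ and of $\ZZ$ in $\cO_K$, and then encoding $a\in A$ injectively and definably into an integer tuple via a $\ZZ$-basis $\omega_1,\dots,\omega_d$ of $\cO_K$ together with a denominator (the paper takes the least $n$ with $c^na\in\cO_K$ for a fixed $c$ with $A\subseteq\cO_K[c^{-1}]$; you take the least positive $y$ with $ya\in\cO_K$). The one slip is the appeal to the \emph{lexicographically} least tuple $(n_1,\dots,n_d,y)$: as $y$ ranges over the admissible multiples, the coordinates $n_i$ of $ya$ can tend to $-\infty$ (e.g.\ $a=-\tfrac12$, $d=1$), so that minimum need not exist; you should minimize the denominator $y$ first, which then determines the $n_i$ uniquely, and with that correction the argument is complete.
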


\begin{proof}
The field extension $K|\QQ$ is finite; set $d := [K : {\mathbb Q}]$. 
J.~Robinson showed~\cite{JRob} that the ring ${\mathcal O}_K$ 
of algebraic integers in $K$ is definable in $K$, 
and the subset~${\mathbb Z}$ is definable
in ${\mathcal O}_K$; hence ${\mathbb Z}$ is definable in $A$. 
Take an integer $c > 0$ such that $A \subseteq {\mathcal O}_K 
[\frac{1}{c}]$.
The map $n \mapsto c^n \colon {\mathbb N} \to  {\mathbb N}$
is definable in $A$, and so is the map $\nu\colon A \to {\mathbb N}$ which
associates to $a \in A$ the smallest $n := \nu(a) \in {\mathbb N}$
such that $c^n a \in  {\mathcal O}_K$. Fixing a
basis $\omega_1, \ldots, \omega_d \in  {\mathcal O}_K$ 
of the free ${\mathbb Z}$-module ${\mathcal O}_K$, 
we obtain a definable injective
map $A \hookrightarrow {\mathbb Z}^d \times {\mathbb N}$ 
by associating to $a \in A$ the tuple $\big(k_1(a), \ldots, k_d(a), \nu(a)\big)$,
where $\big(k_1(a), \ldots, k_d(a)\big)$ 
is the unique element of $\ZZ^d$ 
such that $c^{\nu(a)} a = \sum_{i=1}^d  k_i(a) \omega_i$. Hence,
$A$ is bi-interpretable with~${\mathbb Z}$ by Corollary~\ref{cor:Nies}.	
\end{proof}

\begin{lemma}
\label{biinterpposone}
Suppose that $\cha(A)>0$ and $\dim(A)=1$. Then   $A$ is bi-interpretable with ${\mathbb N}$.
\end{lemma}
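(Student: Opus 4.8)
The plan is to mirror the proof of Lemma~\ref{biinterporder}, replacing J.~Robinson's definability results in characteristic zero by their function-field analogues. By hypothesis $A$ is a f.g.~integral domain with $\cha(A)=p>0$ and $\dim(A)=1$, so by the identification $\dim(A)=\operatorname{adim}(A)$ we have $\trdeg_{\FF_p}(K)=1$; thus $K$ is a global function field, i.e.\ a finite extension of $\FF_p(t)$ for any $t\in K$ transcendental over $\FF_p$. First I would invoke the work of R.~Robinson and Rumely on the definability of arithmetic in function fields: Rumely~\cite{Rumely} (building on R.~Robinson) showed that $\ZZ$, or rather the needed arithmetic structure, is uniformly interpretable in global function fields; in particular the prime subfield $\FF_p$ is a finite definable set, the ``degree'' valuation and enough other places of $K$ are definable via ideal-membership conditions, and one obtains an interpretation of $\NN$ (equivalently $\ZZ$) in $K$, hence in $A$ by Examples~\ref{ex:ring interpretations},~(4) (the fraction field of the domain $A$ is interpretable in $A$). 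Alternatively one can cite remark~(2) following Lemma~\ref{lem:def prime ideals}, by which $\ZZ$ is uniformly interpretable in the class of infinite f.g.~fields, applied to $K$.

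The second step is to produce a definable injection $A\hookrightarrow f^*(\NN)^{\,r}$ for a suitable $r$, so that Corollary~\ref{cor:Nies} applies. Fix $t\in A$ transcendental over $\FF_p$; then $D:=\FF_p[t]\subseteq A$ and $K$ is a finite extension of $L:=\FF_p(t)=\Frac(D)$ of some degree $d$. Since $A$ is f.g., by Lemma~\ref{onefg} (or directly by Noether normalization, Proposition~\ref{prop:NN}) and the fact that a one-dimensional f.g.~domain is Japanese, the integral closure $\tilde D$ of $D$ in $K$ is a finitely generated $D$-module, say free of rank $d$ with basis $\omega_1,\dots,\omega_d$ (it is free because $D=\FF_p[t]$ is a PID). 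Choose $c\in D\setminus\{0\}$ with $A\subseteq \tilde D[c^{-1}]$, so that $A\subseteq \FF_p[t][c^{-1}]\cdot\omega_1+\cdots+\FF_p[t][c^{-1}]\cdot\omega_d$. Now every element of $\FF_p[t][c^{-1}]$ is of the form $q(t)/c^{\,n}$ with $q\in\FF_p[t]$ and $n\in\NN$, and a polynomial in $t$ over $\FF_p$ is coded (via Gödel coding of its finite coefficient sequence, using that $\FF_p$ is a definable finite set, and using the definable powers $t\mapsto t^m$ available by Corollary~\ref{cor:biNdefpower} once we know $A$ is at least interpretable in $\ZZ$ — but here more simply directly in $K$) by a natural number in the interpreted copy of $\NN$. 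Concretely: the map sending $a\in A$ to the tuple consisting of, for each $i$, a code for the polynomial $q_i$ together with the exponent $n_i$ such that $c^{n_i}a=\sum_i q_i(t)\,\omega_i\cdot(\text{common denominator cleared})$ — made canonical by taking the minimal exponents — is definable in $A$ (it only involves ideal-membership in $A$, divisibility by $c$, and the definable place structure) and injective. This realizes $A$ as a definable subset of a finite power of $f^*(\NN)$.

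The third step is routine: $A$ is interpretable in $\ZZ$ by Corollary~\ref{cor:fg ring int in ZZ}, $\ZZ$ is interpretable in $A$ by step one, and the definable injection of step two gives exactly the hypothesis of Corollary~\ref{cor:Nies}~(2) (Nies's criterion), whence $A$ is bi-interpretable with $\ZZ$, equivalently with $\NN$. I expect the main obstacle to be step one: unlike the number-field case, where J.~Robinson's theorem is clean and well known, the definability of enough arithmetic in a \emph{single} global function field $K$ of characteristic $p$ requires the more delicate results of R.~Robinson and Rumely, and one must be careful that the interpretation of $\NN$ is genuinely first-order in the pure ring (equivalently field) language with parameters allowed, and uniform enough that the coding in step two goes through; secondarily, one must check that the coefficient-extraction and denominator-clearing in step two can all be expressed by ideal-membership conditions in $A$ itself (so that, as in Lemma~\ref{biinterporder}, the relevant valuations/places are \emph{defined via ideal membership in $A$}, which is where the one-dimensionality of $A$ is used). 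Once these definability inputs are in place, the rest is bookkeeping parallel to the characteristic-zero lemma.
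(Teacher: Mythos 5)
Your proposal is correct and follows essentially the same route as the paper: interpret $\NN$ in $A$ via the R.~Robinson--Rumely definability results for global function fields, build a definable injection of $A$ into a power of the interpreted copy of $\NN$ using the $\FF_p[t]$-module structure of $A$ and a definable coding of polynomials by natural numbers (the paper uses Rumely's explicit definable bijection $t^{N_p}\to\FF_p[t]$, $t^{\prod p_i^{n_i}}\mapsto\sum n_i t^{i-1}$, which is exactly the ``coefficient-extraction'' input you flag as delicate), and conclude by Corollary~\ref{cor:Nies}. The only difference is cosmetic: the paper chooses $t$ by Noether normalization so that $A$ is already finite, hence free, over $\FF_p[t]$, which avoids your localization at $c$ and the attendant need to define $\tilde D$ and clear denominators.
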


\begin{proof}
Let $p := \operatorname{char}(A)$, and by Noether Normalization take some 
$t \in A$, transcendental over ${\mathbb F}_p$, such that 
$A$ is a finite extension of ${\mathbb F}_p[t]$.  Rumely~\cite[Theorem~2]{Rum} 
showed that $k[t]$ is definable in $K$, where
$k$ is the constant field of~$K$ (i.e., the relative algebraic closure of 
${\mathbb F}_p$ in $K$).  R.~Robinson~\cite{RRob} specified a formula~$\tau(x,y)$ with the property that for each finite field ${\mathbb F}$, 
$\tau(x,t)$ defines the set~$t^{\mathbb N}$ in 
${\mathbb F}[t]$. It follows that the binary operations on 
$t^{\mathbb N}$ making $n \mapsto t^n \colon {\mathbb N} \to t^{\mathbb N}$
an isomorphism of semi\-rings are definable in ${\mathbb F}[t]$. 	Thus, 
the inverse of this isomorphism is an interpretation~${\mathbb F}[t] \leadsto {\mathbb N}$.  Let $N = N_p$ be the set of natural 
numbers of the form $n = \prod_{i \geq 1} p_i^{n_i}$
with $n_i \in \{ 0, 1, \ldots, p-1 \}$, all but finitely many $n_i = 0$, and~$p_i$ is the $i^\text{th}$ prime number.   Then $t^N := \{ t^m : m \in N \}$
is definable in $A$.  We have a bijection $t^N \to {\mathbb F}_p[t]$ 
which sends $t^n$, where $n = \prod_{i \geq 1} p_i^{n_i}$,
to $\sum_{i \geq 0} n_i t^{i-1}$.  Rumely~\cite[p. 211]{Rum} established the 
definability of this map in $K$ (and hence in $A$).  In particular, 
${\mathbb F}_p[t]$ is definable in~$A$, and we have a definable 
injection ${\mathbb F}_p[t] \hookrightarrow t^{\mathbb N}$.  Since $A$ is 
a f.g.~free ${\mathbb F}_p[t]$-module, 
we also have an ${\mathbb F}_p[t]$-linear (hence definable) bijection 
$A \to {\mathbb F}_p[t]^d$, for some $d \geq 1$.  
The lemma now follows from Corollary~\ref{cor:Nies}. 
\end{proof}

\noindent
With our lemmata in place, we complete the proof of Theorem~\ref{thm:integral domains}.
Thus, suppose~$A$ is infinite, so $\dim(A)\geq 1$.

For each natural number $n$, Poonen~\cite{Poonen} produced 
a formula $\theta_n(x_1,\ldots,x_n)$ so that for any finitely generated 
field $F$ and any $n$-tuple $a = (a_1,\ldots,a_n) \in F^n$ one has 
$F \models \theta_n(a)$ if and only if the elements $a_1, \ldots, a_n$
are algebraically independent.   If $\operatorname{char}(A) = 0$, 
let $$D := \big\{ a \in A ~:~ K \models \neg \theta_1(a) \big\}.$$
If $\operatorname{char}(A) = p > 0$, then pick some $t \in A$ which is 
transcendental over ${\mathbb F}_p$ and set $$D := \big\{ a \in A ~:~ 
K \models \neg \theta_2(a, t) \big\}.$$  
In both cases, $D$ is an algebraically closed subring of $A$ with $\operatorname{adim}(D)=1$, definable in $A$.
By Lemma~\ref{onefg}, $D$ is finitely generated, hence noetherian, and therefore a Dedekind domain.

By Proposition~\ref{prop:NN} we take some nonzero $c \in D$  
and $x_1, \ldots, x_m \in A$ so that $x_1, \ldots, x_m$ are algebraically independent 
over $D$ and $A_c:=A[c^{-1}]$ is a finite integral extension of $D_c[x_1, \ldots, x_m]$,
where $D_c:=D[c^{-1}]$.   By Proposition~\ref{prop:generic freeness},
after further localizing at another nonzero element of $D$, we can also assume that $A_c$ is a free $D_c$-module.
Let $y_1, \ldots, y_n \in A$ be generators of
$A_c$ as $D_c[x_1, \ldots, x_m]$-module.   Let  $X=(X_1,\dots,X_m)$, $Y=(Y_1,\dots,Y_n)$ be tuples of indeterminates, and let
${\mathfrak p}$ be the kernel of the $D_c$-algebra morphism 
$D_c[X,Y] \to A_c$  given by 
$X_i \mapsto x_i$ and $Y_j \mapsto y_j$ for 
$i=1,\dots,m$ and $j=1,\dots,n$.  Note that $\mathfrak p\cap D_c[X]=(0)$.
Let $P_1, \ldots, P_\ell$ be a 
sequence of generators of ${\mathfrak p}$ 
and let 
$V = V({\mathfrak p}) \subseteq {\mathbb A}^{m+n}_{D_c}$ be 
the affine variety defined by ${\mathfrak p}$.  Then $A_c$ can be naturally 
seen as a subring of the ring of regular functions on~$V$. 
For any point $a \in {\mathbb A}^m(D_c)$ there is some 
integral domain $D'$ extending $D_c$, as a $D_c$-module generated by at most $n$ elements, and 
some point $b \in {\mathbb A}^n(D')$ so that $(a,b) \in V(D')$.

By Lemma~\ref{lem:interpret A[1/c]} we have an interpretation $D\leadsto D_c$. (We could have also used   
Lemma~\ref{biinterporder} or~\ref{biinterpposone}, in combination with
Examples~\ref{ex:ring interpretations},~(4) and Corollary~\ref{cor:biNdefpower}.)
Precomposing this interpretation with the interpretation $A\leadsto D$ given by the inclusion $D\subseteq A$
yields an interpretation of~$D_c$ in $A$.  Lemma~\ref{lem:interpret A[1/c]} also shows that~$A_c$ is interpretable in $A$.
Every ideal of a Dedekind domain (such as $D_c$) is generated by two elements. Hence by Lemma~\ref{lem:interpret finite extensions}
the class $\mathcal D$ of integral extensions of~$D_c$ generated by $n$ elements as $D_c$-modules is uniformly
interpretable in $D_c$ (and hence in $A$), and
by  Corollary~\ref{cor:interpret finite extensions}, the class of ring $A_c\otimes_{D_c} D'$ where $D'\in\mathcal D$ is  uniformly interpretable in $A_c$ (and hence in $A$).
As a consequence the following set is definable in~$A$:
\begin{eqnarray*}
E  :=  \big\{ (a, D', b, e, p) &:& a \in {\mathbb A}^m(D_c),\  
D' \in {\mathcal D}, \   b \in {\mathbb A}^n(D'), \\ &&
(a,b) \in V(D'),\ e \in D',\ p \in A, \text{ and } p(a,b) = e \big\} 
\end{eqnarray*}
Indeed, the 
condition that $(a,b) \in V(D')$ may be expressed by saying that 
$P_1(a,b) = \cdots = P_\ell(a,b) = 0$.  That $p(a,b) = e$ is expressed by 
saying $$(\exists u_1, \ldots, u_m, v_1, 
\ldots, v_n \in A_c \otimes_{D_c} D')
\left(p - e = \textstyle\sum_i v_i (x_i - a_i) + \sum_j u_j (y_j - b_j)\right).$$   
(To see this use that $A_c \otimes_{D_c} D'\cong D'[X,Y]/\mathfrak p D'[X,Y]$ as $D'$-algebras, and for all $(a,b)\in \mathbb A^{m+n}(D')$, the kernel of the morphism $p\mapsto p(a,b)\colon D'[X,Y]\to D'$ is generated by $X_i-a_i$ and $Y_j-b_j$.) We also note that given $p,q\in A$, we have
\begin{equation}\label{eq:p=q}
\qquad p=q \quad\Longleftrightarrow \quad \begin{cases}
&\parbox{23em}{$\big(\forall a \in \mathbb A^m(D_c)\big) (\forall D' \in \mathcal D)
\big(\forall b \in \mathbb A^n(D')\big)(\forall e \in D')$ \\[0.25em]
$(a, D', b, e, p)\in E \Leftrightarrow (a, D', b, e, q)\in E$.
}\end{cases}
\end{equation}
Let now $(f,g)$ be a bi-interpretation between $D_c$ and $\NN$, let $i$ be an interpretation of $D_c$ in $A$,
and let $h$ be an interpretation of $A$ in $\NN$ (Corollary~\ref{cor:fg ring int in ZZ}).
Put $j:=h\circ f\colon D_c\leadsto A$.  
Let $\widetilde{A}:=(j\circ i)^*(A)$ be the copy of~$A$
interpreted via $j$ in the copy of~$D_c$   interpreted via $i$ in $A$.
By Lemma~\ref{biinterporder} or~\ref{biinterpposone}, $D_c$ is bi-interpretable with $\NN$, so by Corollary~\ref{cor:self-interpretations},
the self-interpretation $i\circ j$ of $D_c$ is homotopic to the identity.
Thus if we can show that the isomorphism $\overline{j\circ i}\colon\widetilde{A}\to A$ is  definable in~$A$,
then the pair $(i,j)$ is a  bi-interpretation between $A$ and $D_c$.
Let $\alpha$ denote the inverse of~$\overline{j\circ i}$. Then $i\circ\alpha$ is an interpretation of $D_c$ in $\widetilde{A}$, and
by Corollary~\ref{cor:def iso},~$\alpha$ induces an isomorphism $i^*(D_c) \to (i\circ\alpha)^*(D_c)$
which is definable in $A$.
We also  denote this isomorphism by $\alpha$, and  also  denote by $\alpha$ the induced map
on the various objects defined in $i^*(D_c)$.
With this convention, put $\widetilde{E}:=\alpha(E)$. Then $\widetilde{E}$ is definable  in $\widetilde{A}$, and hence also in $A$. Therefore
\begin{eqnarray*}
\Gamma := \big\{ (p,\widetilde{p}) \in A \times \tilde{A}  &:& 
\big(\forall a \in \mathbb A^m(D_c)\big) (\forall D' \in \mathcal D)
\big(\forall b \in \mathbb A^n(D')\big)(\forall e \in D') \\ && (a,D',b,e,p) \in E 
\leftrightarrow \big(\alpha(a),\alpha(D'),\alpha(b),\alpha(e),\widetilde{p}\big) \in 
\widetilde{E} \big\}
\end{eqnarray*}
is definable in $A$, and by \eqref{eq:p=q}, $\Gamma$ is the graph of $\alpha \colon A\to\widetilde{A}$. This implies that $A$ is bi-interpretable with $D_c$, and hence with ${\mathbb N}$.    \qed

\section{Fiber Products}\label{sec:fiber products}

\noindent
In this section we study finitely generated rings which can be expressed as fiber products of other rings. 
We first review the definition, and then successively focus on fiber products over finite rings and fiber products over infinite rings. The section culminates with a characterization of those f.g.~reduced rings which are bi-interpretable with $\ZZ$.

\subsection{Definition and basic properties}
Let $\alpha\colon A\to C$ and $\beta\colon B\to C$ be two ring morphisms. 
The {\it fiber product} of $A$ and $B$ over $C$ is the subring
$$A \times_C B = \big\{ (a,b)\in A\times B: \alpha(a)=\beta(b) \big\}$$
of the direct product $A\times B$. The natural projections $A\times B\to A$ and $A\times B\to B$ restrict to
ring morphisms $\pi_A\colon A\times_C B\to A$ and $\pi_B\colon A\times_C B\to B$, respectively. Note that if $\alpha$ is surjective, then $\pi_B$ is surjective; similarly, if $\beta$ is surjective, then so is $\pi_A$.
{\it In the following we always assume that $\alpha$, $\beta$ are surjective.}\/ We do allow~$C$ to be the zero ring; in this case, $A\times_C B=A\times B$.

\begin{example}\label{ex:fiber product}
Let $I$, $J$ be ideals of a ring $R$. Then the natural morphism $R/(I\cap J)\to (R/I)\times (R/J)$ maps $R/(I\cap J)$ isomorphically onto the fiber product $A\times_C B$ of $A=R/I$ and $B=R/J$ over $C=R/(I+J)$, where $\alpha$ and $\beta$ are the natural morphisms $A=R/I\to C=R/(I+J)$ respectively $B=R/J\to C=R/(I+J)$.
\end{example}

\begin{lemma}
Suppose $A$ and $B$ are noetherian. Then $A\times_C B$ is noetherian.
\end{lemma}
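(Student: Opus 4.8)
The plan is to exploit the two surjective projections $\pi_A\colon R\to A$ and $\pi_B\colon R\to B$ out of $R:=A\times_C B$, together with the standard facts that a ring is noetherian precisely when it is noetherian as a module over itself, and that the middle term of a short exact sequence of modules is noetherian as soon as the two outer terms are.

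First I would set $I:=\ker\alpha\subseteq A$ and observe that $\ker\pi_B=I\times\{0\}$; since $\alpha$ is surjective, $\pi_B$ is surjective (as already noted in the text above), so $R/(I\times\{0\})\cong B$, which is noetherian by hypothesis and hence a noetherian $R$-module, its $R$-submodules being exactly the ideals of $B$. Next I would analyze the ideal $I\times\{0\}$ of $R$ as an $R$-module: the $R$-action on it factors through $\pi_A\colon R\to A$ (which is surjective because $\beta$ is), since $(a,b)\cdot(a',0)=(aa',0)$ for $(a,b)\in R$ and $a'\in I$. Via $(a',0)\mapsto a'$ the $R$-module $I\times\{0\}$ is identified with the ideal $I$ viewed as an $A$-module, and because $\pi_A$ is onto, the $R$-submodules and the $A$-submodules of $I\times\{0\}$ coincide. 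As $A$ is noetherian, $I$ is a noetherian $A$-module, so $I\times\{0\}$ is a noetherian $R$-module.

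Finally, from the short exact sequence of $R$-modules
$$0\longrightarrow I\times\{0\}\longrightarrow R\longrightarrow R/(I\times\{0\})\longrightarrow 0$$
with both outer terms noetherian $R$-modules, I would conclude that $R$ is a noetherian $R$-module, i.e.\ a noetherian ring. I do not expect any genuine obstacle; the only point that needs a little care is the bookkeeping that the relevant $R$-module structures on $I\times\{0\}$ and on $R/(I\times\{0\})\cong B$ are those pulled back along the surjections $\pi_A$ and $\pi_B$, so that ``noetherian over $A$'' (resp.\ ``noetherian as the ring $B$'') correctly transfers to ``noetherian as an $R$-module.''
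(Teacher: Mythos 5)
Your proof is correct and takes essentially the same route as the paper's: both arguments reduce the claim to the noetherianity of $A$ and $B$ as $R$-modules (pulled back along the surjections $\pi_A$, $\pi_B$), the paper packaging this as the embedding $R\hookrightarrow (R/\ker\pi_A)\times(R/\ker\pi_B)$ (using $\ker\pi_A\cap\ker\pi_B=0$) where you instead use the short exact sequence $0\to\ker\pi_B\to R\to B\to 0$. Both rest on the same closure properties of noetherian modules, and your bookkeeping of the relevant module structures is accurate.
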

\begin{proof}
Let $I=\ker\pi_A$, $J=\ker\pi_B$, and $R:=A\times_C B$. Since $I\cap J=0$, we  have a natural embedding of $R$ into the ring $(R/I) \times (R/J)$.
The ring morphism $\pi_A$, $\pi_B$ induce isomorphisms $R/I\to A$, $R/J\to B$. 
Thus $R/I$ and $R/J$ are noetherian as rings and hence as $R$-modules. So the product $(R/I) \times (R/J)$, and hence its submodule $R$, is a noetherian $R$-module as well.
\end{proof}

\begin{corollary}\label{cor:interpretation of the factors}
Suppose $A$ and $B$ are noetherian. Then $\pi_A$ is an interpretation of~$A$ in $A\times_C B$, and  $\pi_B$ is an interpretation of $B$ in $A\times_C B$, and hence $\pi_A\times\pi_B$ is an interpretation of $A\times B$ in $A\times_C B$.
\end{corollary}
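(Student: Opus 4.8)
The plan is to exhibit each projection as a residue morphism followed by an isomorphism, and then invoke the interpretability of quotients by definable ideals. Set $R:=A\times_C B$, and let $I:=\ker\pi_A$ and $J:=\ker\pi_B$, so that $I\cap J=0$ as noted above. By the preceding lemma, $R$ is noetherian; hence $I$ and $J$ are finitely generated ideals of $R$, and in particular they are definable subsets of $R$. Since $\beta$ is surjective, $\pi_A$ is surjective, and $\pi_A$ induces an isomorphism $\overline{\pi_A}\colon R/I\to A$; thus $\pi_A$ is the composite of the residue morphism $q_I\colon R\to R/I$ with $\overline{\pi_A}$.

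Now I would argue that $\pi_A\colon R\to A$ is an interpretation of $A$ in $R$. By Examples~\ref{ex:ring interpretations},~(2), the residue morphism $q_I$ is an interpretation of $R/I$ in $R$ (this uses precisely that $I$ is definable in $R$). Given any $S\subseteq A^n$ definable in $A$, the set $\overline{\pi_A}^{-1}(S)\subseteq (R/I)^n$ is definable in $R/I$ because $\overline{\pi_A}$ is an isomorphism of $\cL$-structures, and therefore $\pi_A^{-1}(S)=q_I^{-1}\big(\overline{\pi_A}^{-1}(S)\big)$ is definable in $R$; together with surjectivity of $\pi_A$ this is exactly what it means for $\pi_A$ to be an interpretation of $A$ in $R$. (Concretely, one may also check the criterion from Section~\ref{sec:interpretations} by hand: $\ker\pi_A=\{(x,y)\in R\times R: x-y\in I\}$ is definable since $I$ is f.g., and the preimages under $\pi_A$ of the graphs of the ring operations of $A$ are, e.g., $\{(x,y,z)\in R^3: x+y-z\in I\}$, again definable.) By the symmetric argument, using that $\alpha$ is surjective and $J$ is finitely generated, $\pi_B\colon R\to B$ is an interpretation of $B$ in $R$.

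For the last assertion I would simply quote the general fact recalled in Section~\ref{sec:interpretations}: if $f\colon\bA\leadsto\bB$ and $f'\colon\bA\leadsto\bB'$, then $f\times f'\colon M\times M'\to B\times B'$ is an interpretation $\bA\leadsto\bB\times\bB'$. Applying this with $\bA=R$, $\bB=A$, $\bB'=B$, and $f=\pi_A$, $f'=\pi_B$ (both having domain $R$), we obtain that $\pi_A\times\pi_B\colon R\times R\to A\times B$ is an interpretation of $A\times B$ in $R=A\times_C B$, as claimed. The only point deserving attention is the definability of the equivalence relations $\ker\pi_A$ and $\ker\pi_B$ on $R$, which is where the noetherian hypothesis enters (via finite generation of $I$ and $J$); beyond this there is no real obstacle, the proof being a direct application of Examples~\ref{ex:ring interpretations},~(2) and the compatibility of interpretations with products.
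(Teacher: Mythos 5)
Your proof is correct and follows essentially the same route as the paper: the preceding lemma gives that $A\times_C B$ is noetherian, hence the ideals $\ker\pi_A$ and $\ker\pi_B$ are finitely generated and therefore definable, so the projections are interpretations via Examples~\ref{ex:ring interpretations},~(2), and the product claim follows from the general compatibility of interpretations with direct products. The paper's proof is just a terser version of exactly this argument.
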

\begin{proof}
By the previous lemma, the ideals $I=\ker\pi_A$ and $J=\ker\pi_B$ of $A\times_C B$ are f.g., and hence (existentially) definable in $A\times_C B$.
\end{proof}

\begin{lemma}\label{lem:Z-interpretation of fiber products}
Suppose $A$ and $B$ are interpretable in $\ZZ$ and $C$ is f.g. Then $A\times_C B$ is interpretable in $\ZZ$.
\end{lemma}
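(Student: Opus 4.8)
The plan is to realize $A\times_C B$ as a definable subring of the direct product $A\times B$ (which is itself readily interpretable in $\ZZ$), the delicate point being that cutting out the fibre product over $C$ forces the surjections $\alpha,\beta$ to be ``visible'' in the chosen interpretations. Concretely: by Lemma~\ref{lem:bijective interpretation} (the case of a finite ring being trivial) fix bijective interpretations $f\colon\ZZ\to A$ and $g\colon\ZZ\to B$ with domain $\ZZ$, so that $f\times g\colon\ZZ^2\to A\times B$ is an interpretation of $A\times B$ in $\ZZ$. The ring operations of $A\times_C B\subseteq A\times B$ act coordinatewise, so they are transported by $f\times g$ to operations definable in $\ZZ$; hence everything reduces to showing that the set $E:=\{(n,m)\in\ZZ^2:\alpha(f(n))=\beta(g(m))\}$ is definable in $\ZZ$, for then the restriction of $f\times g$ to $E$ is the desired interpretation of $A\times_C B$.

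To handle $E$ I would use the finite generation of $C$. Fix a finite presentation $C\cong\ZZ[Y_1,\dots,Y_k]/(G_1,\dots,G_s)$ with quotient map $\pi_C$, and, using surjectivity of $\alpha$ and $\beta$, pick lifts $a_i\in A$ and $b_i\in B$ with $\alpha(a_i)=\beta(b_i)=\pi_C(Y_i)$. Every $a\in A$ can be written $a=p(a_1,\dots,a_k)+u$ with $p\in\ZZ[Y]$ and $u\in\ker\alpha$, and then $\alpha(a)=\pi_C(p)$, a class depending only on $a$ (two choices of $p$ differ by an element of $(G_1,\dots,G_s)$). Hence $\alpha(a)=\beta(b)$ holds iff, for one --- equivalently every --- such representation $a=p(a_1,\dots,a_k)+u$, $b=q(b_1,\dots,b_k)+v$ with $u\in\ker\alpha$ and $v\in\ker\beta$, one has $p-q\in(G_1,\dots,G_s)$ in $\ZZ[Y]$. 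Now $\ZZ[Y]$ is interpretable in $\ZZ$ by Lemma~\ref{lem:poly ring}, its finitely generated ideal $(G_1,\dots,G_s)$ is definable there, and accordingly $C\cong\ZZ[Y]/(G_1,\dots,G_s)$ is interpretable with $\pi_C$ definable; so the condition ``$p-q\in(G_1,\dots,G_s)$'' is definable, and $E$ will be definable once we know that $n\mapsto\alpha(f(n))$ is a definable map $\ZZ\to C$ (and likewise $m\mapsto\beta(g(m))$) --- since then we may compose with a definable section of $\pi_C$ to recover suitable $p$ and $q$.

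The crux, and what I expect to be the main obstacle, is thus to choose $f$ and $g$ so that $\alpha\circ f$ and $\beta\circ g$ are definable in $\ZZ$ --- equivalently, so that $\ker\alpha$ and $\ker\beta$ become definable relative to the chosen interpretations. This is genuine content: for an arbitrary interpretation of $A$ in $\ZZ$ the ideal $\ker\alpha$ need not be definable (take $A=\ZZ[x_1,x_2,\dots]$ and $\alpha$ the augmentation; $(x_1,x_2,\dots)$ is not definable in the abstract ring $A$, even with parameters). But $A/\ker\alpha\cong C$ is finitely generated, so $\ker\alpha$ is a subgroup of the additive group of $A$ with finitely generated quotient; the idea is to re-code $A$ accordingly --- build a set-theoretic section $s\colon C\to A$ of $\alpha$ from lifts of a finite set of generators of the additive group of $C$, present the underlying set of $A$ via the bijection $a\mapsto\big(\alpha(a),\,a-s(\alpha(a))\big)$ onto (a copy of $C$)$\times$(a copy of $\ker\alpha$), and verify by a routine G\"odel-coding argument that the resulting ring operations, expressed through $s$ and the operations of any given interpretation of $A$, are definable in $\ZZ$. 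In this interpretation $\alpha$ is the first projection, so $\alpha\circ f$ is definable; carrying out the same for $B$ and feeding the result back into the previous paragraph gives the definability of $E$, hence the interpretability of $A\times_C B$ in $\ZZ$. (When $A$ and $B$ are themselves finitely generated --- the only case needed for the theorem --- $\ker\alpha$ and $\ker\beta$ are finitely generated ideals, hence definable, and this re-coding step can be skipped.)
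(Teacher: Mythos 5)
Your proof has the same skeleton as the paper's: interpret $A\times B$ in $\ZZ$ via $f\times g$, observe that $A\times_C B$ pulls back to the set $[\alpha\circ f=\beta\circ g]$, and reduce everything to the definability of that set in $\ZZ$. The paper dispatches this last point in one line by noting that $\alpha\circ f$ and $\beta\circ g$ are two interpretations of the f.g.\ ring $C$ in $\ZZ$ and invoking Lemma~\ref{lem:claim 2} (any two interpretations of a f.g.\ structure in $\ZZ$ are homotopic); your second paragraph --- presenting $C$ as $\ZZ[Y]/(G_1,\dots,G_s)$, lifting generators, and comparing polynomial representatives inside a G\"odel-coded copy of $\ZZ[Y]$ --- is exactly the proof of that lemma unrolled in this special case. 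So in substance the two arguments coincide, and your version is correct in the setting where the lemma is actually applied.

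You are also right that definability of $f^{-1}(\ker\alpha)$ and $g^{-1}(\ker\beta)$ in $\ZZ$ is genuine content: it is precisely what is needed for $\alpha\circ f$ and $\beta\circ g$ to be interpretations of $C$ at all (take $S=\{0\}$ in the definition), a point the paper passes over silently; and in the noetherian case it is immediate because $\ker\alpha$, $\ker\beta$ are f.g.\ ideals, hence definable with parameters. The one step I would not accept as written is the ``re-coding'' offered for non-noetherian $A$. Making the operations transported to $C\times\ker\alpha$ definable requires, in effect, an arithmetic presentation of the pair $(A,\ker\alpha)$: one must code $\ker\alpha$ together with its own operations, the maps $(c,u)\mapsto s(c)u$, and the cocycles $(c,c')\mapsto s(c)+s(c')-s(c+c')$ and $(c,c')\mapsto s(c)s(c')-s(cc')$, and nothing in the hypothesis ``$A$ is interpretable in $\ZZ$'' supplies this; your ``routine G\"odel-coding argument'' is exactly the missing content. (A smaller slip: the additive group of a f.g.\ ring such as $\ZZ[t]$ is not finitely generated, so the section $s$ cannot be built from finitely many additive generators of $C$.) Since you correctly observe that the f.g.\ case is all that is needed, this does not damage your proof of the lemma as it is used, but the general-case claim should be dropped or explicitly flagged as unproven.
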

\begin{proof}
Let $f\colon \ZZ\leadsto A$ and $g\colon \ZZ\leadsto B$; then $f\times g$ is an interpretation $\ZZ\leadsto A\times B$. Both $\alpha\circ f$ and $\beta\circ g$ are interpretations $\ZZ\leadsto C$; so by Lemma~\ref{lem:claim 2} (and the assumption that $C$ is f.g.), the set 
$$[{\alpha\circ f=\beta\circ g}]=(f\times g)^{-1}(A\times_C B)$$ is definable in $\ZZ$.
Hence the restriction of $f\times g$ to a map $(f\times g)^{-1}(A\times_C B)\to A\times_C B$ is an interpretation of $A\times_C B$ in $\ZZ$.
\end{proof}

\subsection{Fiber products over finite rings}
Every fiber product of noetherian rings over a finite ring is bi-interpretable with the direct product of those rings:

\begin{lemma}
Let $\alpha\colon A\to C$ and $\beta\colon B\to C$ be  surjective morphisms of noetherian rings, where $C$ is finite. Then the pair~$(f,g)$, where~$f$ is the natural inclusion $A\times_C B\to A\times B$ and $g=\pi_A\times\pi_B$, is a bi-interpretation between $A\times B$ and~$A\times_C B$.
\end{lemma}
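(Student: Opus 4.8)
The plan is to verify the two homotopy conditions $g\circ f\simeq\id_{A\times B}$ and $f\circ g\simeq\id_{A\times_C B}$ directly, by writing out each composite self-interpretation explicitly and exhibiting the relevant pullback as a definable set.

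First I would check that $f$ and $g$ are genuine interpretations. That $g=\pi_A\times\pi_B$ interprets $A\times B$ in $A\times_C B$ is Corollary~\ref{cor:interpretation of the factors}. For $f$, the key observation is that $A\times_C B$ is a \emph{definable} subset of $A\times B$: as $C$ is finite, writing $C=\{c_1,\dots,c_k\}$ and choosing $a_i\in\alpha^{-1}(c_i)$, $b_i\in\beta^{-1}(c_i)$, one has
$$A\times_C B=\bigcup_{i=1}^{k}\big(a_i+\ker\alpha\big)\times\big(b_i+\ker\beta\big)$$
and the ideals $\ker\alpha$, $\ker\beta$ are finitely generated (the rings $A$, $B$ being noetherian), hence definable in $A$, $B$. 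So $A\times_C B$ is a relativized reduct of $A\times B$ — a definable subset whose ring operations are the restrictions of those of $A\times B$ — and $f$ is the associated injective interpretation, given by the identity map.

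Next I would compute the composites. The self-interpretation $g\circ f$ of $A\times B$ has domain $(A\times_C B)^2\subseteq (A\times B)^2$ and carries a pair $\big((a,b),(a',b')\big)$ of elements of $A\times_C B$ to $(a,b')$; so $g\circ f\simeq\id_{A\times B}$ is the assertion that the pullback
$$[{g\circ f=\id_{A\times B}}]=\big\{\big((a,b),(a',b'),(u,v)\big): (a,b),(a',b')\in A\times_C B,\ (a,b')=(u,v)\big\}$$
is definable in $A\times B$ — which it is, since the membership conditions are definable by the previous paragraph and the coordinate equalities $a=u$, $b'=v$ are definable using the idempotents $(1,0)$, $(0,1)$ of $A\times B$ (for instance, $a=u$ iff $(1,0)\cdot(a,b)=(1,0)\cdot(u,v)$). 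Dually, the self-interpretation $f\circ g$ of $A\times_C B$ has domain the set of pairs $(\xi_1,\xi_2)$ with $\big(\pi_A(\xi_1),\pi_B(\xi_2)\big)\in A\times_C B$, which it sends to that element, and a short check gives
$$[{f\circ g=\id_{A\times_C B}}]=\big\{(\xi_1,\xi_2,\zeta)\in (A\times_C B)^3: \zeta-\xi_1\in\ker\pi_A\ \text{and}\ \zeta-\xi_2\in\ker\pi_B\big\}$$
(if $\pi_A(\zeta)=\pi_A(\xi_1)$ and $\pi_B(\zeta)=\pi_B(\xi_2)$, then $\big(\pi_A(\xi_1),\pi_B(\xi_2)\big)=\zeta$, which automatically lies in $A\times_C B$). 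Since $A\times_C B$ is noetherian, $\ker\pi_A$ and $\ker\pi_B$ are finitely generated ideals of it, hence definable, so this set is definable in $A\times_C B$. Both homotopies thus hold, and $(f,g)$ is a bi-interpretation between $A\times B$ and $A\times_C B$.

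The mathematical content is light; the one thing requiring care is the bookkeeping — tracking which structure is the ambient one for each interpretation, and correctly computing the domain and kernel of each composite. The hypothesis that $C$ is finite is used exactly once, and decisively: it is what makes $A\times_C B$ definable inside $A\times B$. For a fiber product over an infinite ring this can fail, which is why that case is treated separately.
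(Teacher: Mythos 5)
Your proof is correct and follows essentially the same route as the paper's: definability of $A\times_C B$ in $A\times B$ via finitely many coset conditions (using finiteness of $C$ and noetherianity), then explicit computation of both composites, with $f\circ g$ pinned down by the conditions $\zeta-\xi_1\in\ker\pi_A$, $\zeta-\xi_2\in\ker\pi_B$ exactly as in the paper. The only difference is cosmetic — you present $A\times_C B$ as a union of products of cosets where the paper phrases it as a biconditional matching of cosets — and your closing remark about where finiteness of $C$ enters is accurate.
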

\begin{proof}
We first observe that the subset $M:=A\times_C B$ of $A\times B$ is definable in the ring $A\times B$ (and hence that $f$ is indeed an interpretation $A\times B\leadsto A\times_C B$). To see this first note that the map $\Pi_A\colon A\times B\to A\times 0$ given by
$(a,b)\mapsto (a,0)=(a,b)\cdot (1,0)$ is definable in $A\times B$ (with the parameter $(1,0)$); similarly, the
map $(a,b)\mapsto \Pi_B(a,b)=(0,b)\colon A\times B\to 0\times B$ is definable in $A\times B$.
Let $n=\abs{C}$ and let $a_1,\dots,a_n\in A$ be representatives for the residue classes of $A/\ker\alpha$
and $b_1,\dots,b_n\in B$ be representatives for the residue classes of $B/\ker\beta$ such that
$\alpha(a_i)=\beta_i(b_i)$ for $i=1,\dots,n$. Then $M$ is seen to be definable as the set of all $(a,b)\in A\times B$ such that for each $i\in\{1,\dots,n\}$,
$$(a,b)\in (a_i,0)+\Pi_A^{-1}(\ker\alpha)\quad\Longleftrightarrow\quad
(a,b)\in (0,b_i)+\Pi_B^{-1}(\ker\beta).$$ 
The self-interpretation $g\circ f$ of $A\times B$ is the map
$$\big((a,b),(a',b')\big)\mapsto \Pi_A(a,b)+\Pi_B(a',b')=(a,b')\colon M\times M\to A\times B$$ 
and hence definable in $A\times B$. 
Similarly, the self-interpretation $f\circ g$ of $A\times_C B$ is the map
$$\big((a,b),(a',b')\big)\mapsto (a,b')\colon g^{-1}(M) \to A\times_C B,$$
and since
$$(f\circ g)\big((a,b),(a',b')\big)=(a'',b'')\ \Longleftrightarrow\ (a'',b'')\in \big((a,b)+\ker\pi_A\big) \cap \big((a',b')+\ker\pi_B\big),$$ we also see
that $f\circ g\simeq \id_{A\times_C B}$.
\end{proof}

\noindent
The previous lemma leads us to the study of the bi-interpretability class of the direct product of two f.g.~rings.
We first observe that a product of a ring $B$ with a finite ring is (parametrically) bi-interpretable with $B$ itself:

\begin{lemma}\label{lem:finiteprodbi}
Let $A$ be a direct product $A=B\times R$ of a ring $B$ with a finite ring~$R$. Then $A$ and $B$ are bi-interpretable.
\end{lemma}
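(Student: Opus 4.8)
The plan is to exhibit explicitly a bi-interpretation $(f,g)$ between $A=B\times R$ and $B$, using the parameters $e:=(1,0)\in A$ (the idempotent cutting out the factor $B$) and $f:=(0,1)=1-e$. First I would set up the interpretation $g\colon A\leadsto B$ given by the projection $\pi_B\colon B\times R\to B$, which is just multiplication by $e$ followed by identification of $B\times\{0\}$ with $B$; concretely $g$ is the map $A\to B$, $(b,r)\mapsto b$, and it is an interpretation because its kernel $\{((b,r),(b',r')):b=b'\}$ is $\{(x,y):e(x-y)=0\}$, which is $\emptyset$-definable once $e$ is named. Conversely, since $R$ is finite, fix an enumeration $r_1,\dots,r_k$ of its elements; then $A=B\times R$ is in $\emptyset$-definable (over $e$) bijection with $B^k$ via $(b,r_i)\mapsto$ the tuple with $b$ in the $i$th slot and, say, $0$ elsewhere — or more simply, the map $f\colon B^k\to A$, $(b_1,\dots,b_k)\mapsto \big(b_i \text{ if } r_i\neq 0,\ \dots\big)$. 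To keep the ring structure definable I would instead take $f\colon B^k\to A$ sending $(b_1,\dots,b_k)$ to $(b_j, r_j)$ where $j$ is the unique index with $b_i=0$ for all $i\neq j$ (with a fixed convention, e.g. $j=1$, on the diagonal); the ring operations of $A$ pulled back to this copy of $B^k$ are then piecewise given by ring operations of $B$ together with the (finite, hence quantifier-free) addition and multiplication tables of $R$, so they are definable in $B$. Thus $f$ is an interpretation $B\leadsto A$.

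Next I would check the two homotopy conditions. For $g\circ f\simeq\id_B$: the composite $g\circ f\colon B^k\to B$ is $(b_1,\dots,b_k)\mapsto b_j$ (the nonzero coordinate, or $b_1$ on the diagonal), and the pullback $[g\circ f=\id_B]=\{((b_1,\dots,b_k),b):b=b_j\}\subseteq B^k\times B$ is visibly definable in $B$; equivalently the induced map $(g\circ f)^*(B)\to B$ is definable. For $f\circ g\simeq\id_A$: the composite $f\circ g\colon A\to A$ is $(b,r)\mapsto (b,0)$, i.e. multiplication by $e$, which is $\{(x,y):y=ex\}$ — definable in $A$ with parameter $e$. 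Hence $(f,g)$ is a bi-interpretation (parametrically, over the parameters $e\in A$ on one side and the empty set — or the images of the $r_i$, which are finitely many constants — on the other).

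I do not expect a serious obstacle here; the only point requiring a little care is bookkeeping on the diagonal of $B^k$, i.e. choosing the definable section $f$ so that it is genuinely single-valued and so that the pulled-back ring operations are definable by a finite case split keyed to ``which coordinates vanish.'' An alternative that sidesteps even this is to interpret $A$ in $B$ as the quotient of the definable set $\{(b,i):b\in B,\ 1\le i\le k\}$ (with $i$ coded by any $k$ fixed elements, or just as $k$ disjoint copies of $B$) by the equivalence relation identifying $(0,i)\sim(0,i')$ for those $i,i'$ with $r_i=r_{i'}=0$ — this is a cleaner presentation of the same idea and makes the ring operations manifestly definable via the finite tables of $R$. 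Either way the verification that both round trips are homotopic to the identity is immediate from the displayed formulas, so the lemma follows.
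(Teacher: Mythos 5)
Your overall strategy is the same as the paper's: project $A\to B$ in one direction, and in the other direction encode $A$ inside a power of $B$ as ``$B$ times a finite index set,'' using the finite addition and multiplication tables of $R$. Indeed, the ``alternative'' presentation you sketch at the end --- the domain $\{(b,i): b\in B,\ 1\le i\le k\}$ with $i$ coded by fixed parameters --- is essentially the paper's interpretation $B\times R'\to A$, where $R'\subseteq B^n$ is a finite set of parameter tuples in bijection with $R$. Two points in your write-up are genuinely off, however. First, your primary encoding $(b_1,\dots,b_k)\mapsto (b_j,r_j)$, where $j$ is the unique index with $b_i=0$ for all $i\neq j$, is not surjective onto $A$: with the convention $j=1$ on the all-zero tuple, no tuple in the domain maps to $(0,r_j)$ for $j\ge 2$, so this map is not an interpretation of $A$ in $B$. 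The disjoint-copies version avoids this entirely and needs no quotient at all; the equivalence relation you propose there is in fact the identity, since exactly one $r_i$ equals $0$.

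Second, your verification of $f\circ g\simeq\id_A$ misdescribes the composite. With your naming ($g\colon A\to B$ the projection, $f$ defined on a subset of $B^k$), the self-interpretation $f\circ g$ of $A$ has domain $g^{-1}(\operatorname{dom} f)\subseteq A^k$ and sends $\bigl((b_1,*),\dots,(b_k,*)\bigr)$ to $f(b_1,\dots,b_k)$; it is not the map $(b,r)\mapsto(b,0)$ on $A$, and the formula $y=ex$ does not define the required homotopy. The homotopy $[f\circ g=\id_A]$ \emph{is} definable --- one recovers the $B$-component of the output by multiplying by $e=(1,0)$ and its $R$-component is one of the finitely many constants $(0,r_i)$, exactly as in the paper's computation using $(b,r)\cdot(1,0)=(b,0)$ and $(b,r)\cdot(0,1)=(0,r)$ --- but this step needs to be restated. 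With the corrected encoding and the corrected check, your argument coincides with the paper's proof.
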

\begin{proof}
The surjective ring morphism 
$(b,r)\mapsto b\colon A\to B$ is an interpretation $f\colon A\leadsto B$ with $\ker f=A\cdot(0,1)$.
(See Example~\ref{ex:ring interpretations},~(2).)
Pick a bijection $g\colon R'\to R$ where $R'\subseteq  B^n$ for some $n>0$. Then the 
bijection 
$$(b,r')\mapsto \big(b,g(r')\big)\colon B\times R'\to B\times R=A,$$ 
in the following also denoted by $g$, is an interpretation $B\leadsto A$ (since the  addition and multiplication tables of the finite ring $R$ are  definable).
Now $f\circ g\colon B\times R'\to B$ is given by $(b,r')\mapsto b$ and hence definable in $B$, and
$$g\circ f\colon A\times f^{-1}(R') = f^{-1}(B\times R')\to A$$ is given by 
$$\big((b,r),(b_1,r_1),\dots,(b_m,r_m)\big)\mapsto \big(b,g(b_1,\dots,b_m)\big)$$
and thus definable in $A$, since $(b,r)\cdot (1,0) = (b,0)$ and $(b,r)\cdot (0,1) = (0,r)$ for all $b\in B$, $r\in R$.
This shows that $(f,g)$ is a bi-interpretation between $A$ and $B$.
\end{proof}

\noindent
On the other hand, the direct product of two infinite f.g.~rings is never bi-in\-ter\-pretable with $\ZZ$:

\begin{lemma} \label{lem:nbiprod}
Let $A$ and $B$ be infinite finitely generated rings. Then $A \times B$ is not bi-interpretable with $\ZZ$.
\end{lemma}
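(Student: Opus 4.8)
The statement is that if $A$ and $B$ are infinite f.g.\ rings, then $A\times B$ is not bi-interpretable with $\ZZ$. The natural approach is to exploit the Feferman--Vaught-style phenomenon already alluded to in the introduction: a product structure cannot be bi-interpretable with $\ZZ$ because $\ZZ$ is "indecomposable" in a sense detectable by its automorphism group or by the topology of its spectrum. I would use the automorphism group criterion, since the machinery of $\Aut$ as a functor on $\emptyset$-interpretations is already set up in Section~\ref{sec:prelims interpretations}. The key point is that $\Aut(\ZZ)$ is trivial, hence any structure $\emptyset$-bi-interpretable with $\ZZ$ has trivial automorphism group, and more usefully, after adding finitely many parameters any structure bi-interpretable with $\ZZ$ becomes rigid in every elementary extension — whereas $A\times B$, or rather a suitable elementary extension of it, admits nontrivial automorphisms fixing any prescribed finite set of parameters.

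First I would reduce to the parameter-free case: if $(f,g)$ is a bi-interpretation between $A\times B$ and $\ZZ$ with parameters from a finite tuple $\bar c\in (A\times B)^k$, pass to the structure $(A\times B,\bar c)$ with those elements named as constants; this is still bi-interpretable with $\ZZ$ (now over $\emptyset$), and it suffices to derive a contradiction here. Next, by Lemma~\ref{lem:bijective interpretation} and the analysis in Section~\ref{sec:bi-int with ZZ}, there are binary operations $\oplus,\otimes$ on $A\times B$ with $(A\times B,\oplus,\otimes)\cong(\ZZ,{+},{\times})$ and interdefinable with $(A\times B,\bar c)$. The crucial consequence I want is that $(A\times B,\bar c)$ is \emph{rigid}: $\Aut(A\times B,\bar c)=\{\id\}$, because $\Aut(\ZZ)=\{\id\}$ and interdefinable structures have the same automorphisms. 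In fact I want more — that this rigidity persists in elementary extensions, which follows because bi-interpretability with $\ZZ$ is expressible by a (type-) condition preserved under elementary equivalence; concretely, any elementary extension $(A\times B,\bar c)^* \equiv (A\times B,\bar c)$ is again bi-interpretable with the corresponding elementary extension $\ZZ^*$ via the same formulas, and $\ZZ^*$ need not be rigid, so I must be careful here.

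Because of that subtlety, I would instead argue directly with automorphisms of an elementary extension, mimicking the strategy announced for Section~\ref{sec:derivations}: take a sufficiently saturated elementary extension $A^*\times B^*$ of $A\times B$ (note $(A\times B)^* = A^*\times B^*$ with $A^*\succeq A$, $B^*\succeq B$ infinite). Using that $A$ and $B$ are infinite f.g.\ rings, by Corollary~\ref{cor:inf mult order} each contains an element of infinite multiplicative order, so $A^*$ and $B^*$ each properly extend $A$ and $B$ and admit nontrivial automorphisms over any finite subset — more precisely, in a monster model one can find, for the idempotent $e=(1,0)$, an automorphism of $A^*\times B^*$ that is the identity on the $A^*$-factor and a nontrivial automorphism of the $B^*$-factor fixing $\bar c$, because $B^*$ has a nontrivial automorphism fixing the finitely many relevant coordinates (a saturation/back-and-forth argument on the f.g.\ ring $B$). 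Call this automorphism $\tau$; it is nontrivial and fixes $\bar c$. On the other hand, if $A\times B$ were bi-interpretable with $\ZZ$ over $\bar c$, then by Proposition~\ref{prop:QFA} (or directly via the QFA formula machinery) $(A\times B,\bar c)$ has a QFA formula $\varphi(\bar x)$ for a generating tuple $\bar a$; and the orbit of $\bar a$ under $\Aut$ of any elementary extension is a single point by the last lemma preceding the proof of Proposition~\ref{prop:QFA}, forcing $\tau(\bar a)=\bar a$, hence $\tau=\id$ (as $\bar a$ generates), a contradiction.

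\textbf{Main obstacle.} The delicate step is producing the nontrivial automorphism $\tau$ of $A^*\times B^*$ fixing the parameter tuple $\bar c$: one must know that $B$ — an infinite f.g.\ ring — has, in a saturated elementary extension, automorphisms moving some element while fixing any prescribed finite tuple, and that this lifts to $A^*\times B^*$ fixing $\bar c$ coordinatewise (which it does, since the parameters decompose as $\bar c=(\bar c_A,\bar c_B)$ and we take $\id\times\tau_B$). The cleanest route is: use that an infinite f.g.\ ring $B$ is not $\omega$-categorical and not rigid in saturated extensions — e.g.\ $B$ has an element $b$ of infinite multiplicative order, and in a $|B|^+$-saturated $B^*\succ B$ the type of $b$ over the finite set $\bar c_B$ is realized by some $b'\neq b$, whence a partial elementary map $b\mapsto b'$ fixing $\bar c_B$ extends to an automorphism. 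Alternatively, one avoids automorphisms entirely and runs a Feferman--Vaught argument: $\Th(A\times B)$ is decidable relative to $\Th(A)$ and $\Th(B)$, so every subset of $(A\times B)^n$ definable in $A\times B$ is a Boolean combination of products of definable sets in $A$ and $B$; this cannot match the definable sets of $\ZZ$, since (for instance) $\ZZ$ interprets a pairing function whereas the product structure cannot. I would present the automorphism argument as the main line and remark on the Feferman--Vaught alternative.
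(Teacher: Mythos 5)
Your main line has a genuine gap at the concluding step. You construct a nontrivial automorphism $\tau=\id\times\tau_B$ of a saturated elementary extension $A^*\times B^*$ fixing the parameter tuple $\bar c$ of the alleged bi-interpretation, and then try to derive a contradiction. But the mere existence of such a $\tau$ is not in tension with bi-interpretability with $\ZZ$: the ring $\ZZ$ itself, and hence any structure bi-interpretable with it, has elementary extensions with enormous automorphism groups. Since $\tau$ fixes $\bar c$, it automatically preserves setwise the extension of every $\bar c$-definable set, so there is nothing for it to violate. The lemma preceding Proposition~\ref{prop:QFA} that you invoke applies only to \emph{finitely generated} structures $\bA'$ and asserts that the orbit of $t(c')$ under $\Aut(\bA')$ is definable, not that it is a singleton; and even if $\tau(\bar a)=\bar a$ held, $\bar a$ generates $A\times B$, not $A^*\times B^*$, so you would only conclude that $\tau$ is the identity on $A\times B$ --- which is consistent with $\tau\neq\id$. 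You correctly flagged this subtlety (``$\ZZ^*$ need not be rigid'') but the ``direct'' argument falls into exactly that trap. The automorphism arguments that do work in Section~\ref{sec:derivations} have a different shape: there the automorphism fixes \emph{all of $A$ pointwise} and yet moves the extension of an $A$-definable set (the image of $\ZZ$), which is a genuine contradiction; your $\tau$ is built so that it cannot move any $\bar c$-definable set.

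The Feferman--Vaught alternative you mention in your last sentence is in fact the paper's proof, but you do not carry out the decisive step, and the justification you offer (``$\ZZ$ interprets a pairing function whereas the product cannot'') is not correct as stated, since $A\times B$ does interpret $\ZZ$ and hence a pairing function on a quotient. What is needed is a \emph{specific} definable set that cannot be a finite union of rectangles: take $a\in A$ and $b\in B$ of infinite multiplicative order (Corollary~\ref{cor:inf mult order}); if $A\times B$ were bi-interpretable with $\ZZ$, then $(a,b)^{\NN}$ would be definable (Corollary~\ref{cor:biNdefpower}); Feferman--Vaught writes this set as a finite union of rectangles $\varphi_i(A)\times\psi_i(B)$; and the pigeonhole principle then yields $m\neq n$ and an index $i$ with $a^m,a^n\in\varphi_i(A)$ and $b^m,b^n\in\psi_i(B)$, whence $(a^m,b^n)\in(a,b)^{\NN}$, a contradiction. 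With that diagonal-versus-rectangles argument supplied, the second route becomes a complete proof.
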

\begin{proof}
Let $a\in A$ and $b\in B$ be elements of infinite multiplicative order. (See Corol\-lary~\ref{cor:inf mult order}.) Suppose $A\times B$ is bi-interpretable with $\ZZ$. Then by Corollary~\ref{cor:biNdefpower}, the set~$(a,b)^\NN$ of powers of $(a,b)$ is definable in $A\times B$. By the Feferman-Vaught Theorem~\cite[Corollary~9.6.4]{Ho} there are  $N\in\NN$ and formulas $\varphi_i(x)$, $\psi_i(y)$ ($i=1,\dots,N$), possibly with parameters, such that for all $(a',b')\in A\times B$, we have
$$(a',b') \in (a,b)^\NN \quad\Longleftrightarrow\quad \text{$A\models\varphi_i(a')$ and  $B\models\psi_i(b')$,
for some $i\in\{1,\dots,N\}$.}$$
By the pigeon hole principle, there are $m\neq n$ and some $i\in \{1,\dots,N\}$ such that
$A\models \varphi_i(a^m)\wedge\varphi_i(a^n)$ and $B\models \psi_i(b^m)\wedge\psi_i(b^n)$. But then
$A\models \varphi_i(a^m)$ and $B\models \psi_i(b^n)$, so $(a^m,b^n)\in (a,b)^\NN$, a contradiction to $m\neq n$.
\end{proof}

\noindent
Combining the results in this subsection  immediately yields the following consequences:

\begin{corollary}
The fiber product of a noetherian ring $A$ with a finite ring is bi-in\-ter\-pretable with $A$.
\end{corollary}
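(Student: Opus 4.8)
The plan is to chain the two immediately preceding lemmas using the fact that bi-interpretability is an equivalence relation. Write the fiber product as $A\times_C R$, where $R$ is the given finite ring and $\alpha\colon A\to C$, $\beta\colon R\to C$ are the (surjective) structure morphisms. Since $C=\beta(R)$ is a quotient of the finite ring $R$, it too is finite; moreover $R$ is noetherian, being finite, and $A$ is noetherian by hypothesis. (If $C$ is the zero ring the fiber product is just the direct product $A\times R$, and this degenerate case is subsumed by what follows.)

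First I would invoke the lemma on fiber products over finite rings stated at the start of this subsection: since $\alpha$ and $\beta$ are surjective morphisms of noetherian rings with $C$ finite, it yields that the pair $(f,g)$, with $f$ the natural inclusion $A\times_C R\hookrightarrow A\times R$ and $g=\pi_A\times\pi_R$, is a bi-interpretation between $A\times R$ and $A\times_C R$. Next, I would apply Lemma~\ref{lem:finiteprodbi} with its ``$B$'' taken to be our ring $A$ and its ``$R$'' our finite ring $R$: this shows that the direct product $A\times R$ is bi-interpretable with $A$. Since bi-interpretability is an equivalence relation on the class of first-order structures, composing these two bi-interpretations produces a bi-interpretation between $A\times_C R$ and $A$, which is the assertion of the corollary.

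There is essentially no obstacle here: the statement is a purely formal consequence of the two preceding lemmas together with transitivity, and the only things to verify are the easy bookkeeping points that $C$ is finite (being a quotient of $R$) and that all noetherianity hypotheses are met. Thus the substantive content of the corollary already resides in the lemmas it follows from, and the proof itself is a two-line composition argument.
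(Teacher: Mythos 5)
Your proof is correct and is precisely the argument the paper intends: the corollary is stated as an immediate consequence of combining the first lemma of the subsection (bi-interpretation between $A\times_C R$ and $A\times R$ for $C$ finite) with Lemma~\ref{lem:finiteprodbi} and transitivity, and your bookkeeping checks (finiteness of $C$ as a quotient of $R$, noetherianity of the finite ring $R$) are exactly what is needed.
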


\begin{corollary}\label{cor:fiber product over finite ring}
The fiber product of two infinite f.g.~rings over a finite ring is \emph{not} bi-interpretable with $\ZZ$.
\end{corollary}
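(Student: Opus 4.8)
The plan is to deduce this corollary from the two preceding results of the subsection, using only that bi-interpretability is an equivalence relation. First I would note that since $A$ and $B$ are finitely generated rings, they are noetherian (Hilbert Basis Theorem), so the hypotheses of the lemma on fiber products over finite rings are satisfied: the morphisms $\alpha\colon A\to C$ and $\beta\colon B\to C$ are surjective onto the finite ring $C$, and hence the pair $(f,g)$, with $f$ the natural inclusion $A\times_C B\hookrightarrow A\times B$ and $g=\pi_A\times\pi_B$, is a bi-interpretation between $A\times B$ and $A\times_C B$. In particular $A\times_C B$ is bi-interpretable with the direct product $A\times B$.

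Next, since $A$ and $B$ are both infinite and finitely generated, Lemma~\ref{lem:nbiprod} tells us that $A\times B$ is \emph{not} bi-interpretable with $\ZZ$ (the Feferman--Vaught argument: a power $(a,b)^{\NN}$ of an element of infinite multiplicative order could not be definable in a product). Combining the two facts, and using that bi-interpretability is transitive, we conclude that $A\times_C B$ is not bi-interpretable with $\ZZ$, which is the assertion of the corollary.

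There is essentially no obstacle here: the content lies entirely in the lemma on fiber products over finite rings and in Lemma~\ref{lem:nbiprod}, both already available, and the only thing to check is the harmless passage from ``finitely generated'' to ``noetherian''. If one wishes, the same reasoning can be phrased directly: bi-interpretability with $\ZZ$ is invariant under replacing $A\times_C B$ by $A\times B$, and the latter replacement lands in a structure the Feferman--Vaught method excludes.
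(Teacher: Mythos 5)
Your proof is correct and matches the paper's own argument: the corollary is obtained exactly by combining the lemma that a fiber product of noetherian rings over a finite ring is bi-interpretable with the direct product, Lemma~\ref{lem:nbiprod}, and transitivity of bi-interpretability. The observation that finitely generated rings are noetherian is the only hypothesis check needed, and you have made it.
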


\subsection{Fiber products over infinite rings}
In this subsection we show:

\begin{theorem}\label{thm:fiber product over infinite ring}
Let $\alpha\colon A\to C$ and $\beta\colon B\to C$ be  surjective ring morphisms.
If $A$ and $B$ are both bi-interpretable with~$\ZZ$, and $C$ is f.g.~and infinite, then~$A \times_C B$ is also bi-interpretable with $\ZZ$.
\end{theorem}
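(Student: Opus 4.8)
The plan is to verify the Nies criterion of Corollary~\ref{cor:Nies} for the ring $R:=A\times_C B$; since the language of rings is finite, this requires showing that $R$ is finitely generated and interpretable in $\ZZ$, and that there is an interpretation $f\colon R\leadsto\ZZ$ together with a map $R\to f^*(\ZZ)$ which is injective and definable in $R$. That $R$ is finitely generated is a direct check: if $a_1,\dots,a_r$ and $b_1,\dots,b_s$ generate $A$ and $B$ as rings, if $y_1,\dots,y_t$ generate the ideal $\ker\beta$ of the noetherian ring $B$, and if $b_i'\in B$, $a_j'\in A$ are chosen with $\beta(b_i')=\alpha(a_i)$ and $\alpha(a_j')=\beta(b_j)$, then the subring of $R$ generated by the elements $(a_i,b_i')$, $(a_j',b_j)$, $(0,y_k)$ maps onto $A$ under $\pi_A$ and contains $\ker\pi_A=0\times\ker\beta$, hence is all of $R$. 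That $R$ is interpretable in $\ZZ$ is Lemma~\ref{lem:Z-interpretation of fiber products} (using that $A$ and $B$ are interpretable in $\ZZ$, being bi-interpretable with it, and that $C$ is f.g.). By Corollary~\ref{cor:interpretation of the factors}, $\pi_A\colon R\leadsto A$ and $\pi_B\colon R\leadsto B$ are interpretations, with $\ker\pi_A=0\times\ker\beta$ and $\ker\pi_B=\ker\alpha\times 0$, so that $\ker\pi_A\cap\ker\pi_B=0$.

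Next, since $C$ is f.g.~and \emph{infinite}, $\ZZ$ is interpretable in $C$: by Corollary~\ref{cor:finite = 0-dim} the ring $C$ has a non-maximal prime ideal $\mathfrak p$, the quotient $C/\mathfrak p$ is an infinite f.g.~integral domain, hence bi-interpretable with $\ZZ$ by Theorem~\ref{thm:integral domains}, and $C$ interprets $C/\mathfrak p$. Fix an interpretation $f_C\colon C\leadsto\ZZ$. Let $\gamma\colon R\to C$ be the morphism $(a,b)\mapsto\alpha(a)=\beta(b)$; its kernel $\ker\gamma=\ker\pi_A+\ker\pi_B$ is f.g., so $\gamma$ is an interpretation $R\leadsto C$, and we put $f:=f_C\circ\gamma\colon R\leadsto\ZZ$. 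The key point is the identity of interpretations $R\leadsto C$
$$\alpha\circ\pi_A \;=\; \gamma \;=\; \beta\circ\pi_B,$$
which holds because $\alpha(\pi_A(a,b))=\alpha(a)=\beta(b)=\beta(\pi_B(a,b))$ for $(a,b)\in R$.

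Now we build the injection $R\to f^*(\ZZ)$. Since $A$ is bi-interpretable with $\ZZ$, Corollary~\ref{cor:Nies} gives an interpretation $f'\colon A\leadsto\ZZ$ and an injection $A\to(f')^*(\ZZ)$ definable in $A$; by Corollary~\ref{cor:all interpretations of Z are homotopic}, $f'$ is homotopic to $f_C\circ\alpha$, so composing with the ensuing definable isomorphism yields an injection $A\to(f_C\circ\alpha)^*(\ZZ)$ definable in $A$. Pulling this back along the interpretation $\pi_A\colon R\leadsto A$, and noting that $(f_C\circ\alpha\circ\pi_A)^*(\ZZ)=(f_C\circ\gamma)^*(\ZZ)=f^*(\ZZ)$ by the identity above, we obtain a map $q_A\colon R\to f^*(\ZZ)$ definable in $R$ whose fibres are exactly the cosets of $\ker\pi_A$. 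Symmetrically, using $\beta\circ\pi_B=\gamma$, we obtain $q_B\colon R\to f^*(\ZZ)$ definable in $R$ with fibres the cosets of $\ker\pi_B$. Since $\ker\pi_A\cap\ker\pi_B=0$, the map $(q_A,q_B)\colon R\to f^*(\ZZ)\times f^*(\ZZ)$ is injective and definable in $R$; composing it with the pullback along $f$ of a definable injection $\ZZ\times\ZZ\to\ZZ$ (a pairing function) gives an injective definable map $R\to f^*(\ZZ)$. By Corollary~\ref{cor:Nies}, $R=A\times_C B$ is bi-interpretable with $\ZZ$, as required.

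The heart of the matter — and the step I expect to be the main obstacle to get right — is the reconciliation of the two a priori unrelated copies of $\ZZ$ that $R$ inherits, one pulled back from $A$ along $\pi_A$ and one from $B$ along $\pi_B$, into a \emph{single} interpretation $f$ of $\ZZ$ in $R$ into which all of $R$ injects. This is forced by routing both through an interpretation of $\ZZ$ in the common quotient $C$: bi-interpretability of $A$ (resp.\ of $B$) with $\ZZ$ makes the $A$-copy (resp.\ the $B$-copy) homotopic \emph{inside} $A$ (resp.\ inside $B$) to the pullback of $f_C$, and pulling $f_C$ all the way back to $R$ via $\gamma$ produces an interpretation that serves both sides at once. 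This is precisely the place where infinitude of $C$ is indispensable — if $C$ were finite, $\ZZ$ would not be interpretable in $C$ at all — in accordance with Corollary~\ref{cor:fiber product over finite ring}.
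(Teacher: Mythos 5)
Your proof is correct, and although it follows the paper's overall template --- reduce to Corollary~\ref{cor:Nies} via Lemma~\ref{lem:Z-interpretation of fiber products}, then exhibit one interpretation of $\ZZ$ in $R=A\times_C B$ into which $R$ injects definably --- the way you reconcile the two copies of $\ZZ$ is genuinely different. The paper picks $a\in A$, $b\in B$ with $\alpha(a)=\beta(b)$ of infinite multiplicative order in $C$ (Corollary~\ref{cor:inf mult order}), takes $Z=(a,b)^\NN$ as its copy of $\ZZ$ (definable in $R$ exactly because the matching of exponents on the two sides is forced by the infinite order of $\alpha(a)$), and builds the injection $R\to Z$ by sending $r$ to $(e_A\circ t_A\circ F_A)(r)\cdot(e_B\circ t_B\circ F_B)(r)$ with $t_A(\overline{a}^m)=\overline{a}^{2^{2m}}$ and $t_B(\overline{b}^n)=\overline{b}^{3^{2n}}$, injectivity resting on Scott's theorem that $2^{2m}-3^{2n}=c$ has at most one solution $(m,n)$. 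You instead interpret $\ZZ$ directly in $C$ (through an infinite f.g.\ domain quotient $C/\mathfrak p$ and Theorem~\ref{thm:integral domains}), pull it back to $R$ along $\gamma$, and invoke Corollary~\ref{cor:all interpretations of Z are homotopic} inside $A$ and inside $B$ to transport the definable injections of $A$ and $B$ into their respective copies of $\ZZ$ into the single copy $f^*(\ZZ)$; the two coordinates are then combined by an ordinary pairing function rather than by number theory. Both arguments use the infinitude of $C$ at the same conceptual spot. Your route trades Scott's theorem for a second application of Theorem~\ref{thm:integral domains} and the homotopy-uniqueness of interpretations of $\ZZ$ in a structure bi-interpretable with $\ZZ$, which makes the mechanism more transparent and self-contained within the paper's model-theoretic toolkit.

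Two minor remarks. Your verification that $R$ is f.g.\ is correct (it is the standard argument that a fiber product of finite-type algebras along surjections is of finite type), and some such statement is in fact needed to apply Corollary~\ref{cor:Nies}; the paper instead sidesteps the issue by remarking that the theorem is only applied where $R$ is known a priori to be f.g. Also, like the paper (cf.\ Corollary~\ref{cor:interpretation of the factors}), you implicitly use that $A$ and $B$ are noetherian so that $\ker\alpha$, $\ker\beta$, $\ker\pi_A$, $\ker\pi_B$ are f.g.\ and hence definable; this is harmless in every application but is not literally among the stated hypotheses.
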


\noindent
For the proof, which is based on the criterion for bi-interpretability with $\ZZ$ from Corollary~\ref{cor:Nies}, we need:

\begin{lemma}\label{lem:Z-structure on powers}
Let $A$ be bi-interpretable with $\ZZ$, and $a\in A$  be of infinite multiplicative order. Then there exists a definable bijection $A\to a^\NN$, and hence definable binary operations $\oplus$ and $\otimes$ on $a^\NN$ making $a^\NN$ into a ring isomorphic to $\ZZ$.
\end{lemma}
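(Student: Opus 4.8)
The plan is to build the required bijection $A\to a^\NN$ out of definable bijections supplied by bi-interpretability, and then transport a copy of the ring $\ZZ$ across it.

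First I would extract from the hypothesis the two ingredients I need, both attached to a \emph{single} interpretation $f$ of $\ZZ$ in $A$. Since $A$ is bi-interpretable with $\ZZ$, the observations at the start of Section~\ref{sec:bi-int with ZZ} give a bi-interpretation $(f,g)$ between $A$ and $\ZZ$ in which $g\colon\ZZ\to A$ is a bijection. For such a pair, as explained at the beginning of Section~\ref{sec:Nies}, the copies $f^*(\ZZ)$ and $(g\circ f)^*(A)$ have the very same underlying set; and because $g\circ f\simeq\id_A$ there is an isomorphism $(g\circ f)^*(A)\to A$ definable in $A$, whose inverse is thus a bijection $\psi\colon A\to f^*(\ZZ)$ definable in $A$. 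That is the first ingredient. The second is Corollary~\ref{cor:power map}, applied to this same $f$: the map $(x,\overline n)\mapsto x^n$ on $A\times f^*(\NN)$ is definable in $A$.

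Next I would assemble the bijection. Pulling a fixed definable bijection $\ZZ\to\NN$ back along $\overline f$ gives a bijection $\chi\colon f^*(\ZZ)\to f^*(\NN)$ that is definable in $A$. Specializing the first coordinate in Corollary~\ref{cor:power map} to $a$ gives a definable map $e\colon f^*(\NN)\to A$, $\overline n\mapsto a^n$, whose image is $a^\NN$ — definable by Corollary~\ref{cor:biNdefpower} — and which is injective precisely because $a$ has infinite multiplicative order; so $e\colon f^*(\NN)\to a^\NN$ is a definable bijection. The composite $\phi:=e\circ\chi\circ\psi\colon A\to a^\NN$ is then a definable bijection, which is the first assertion of the lemma. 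For the second assertion, Lemma~\ref{lem:oplus and otimes} (with $\bA=A$) provides binary operations $\oplus_0,\otimes_0$ on $A$, definable in the ring $A$, with $(A,\oplus_0,\otimes_0)\cong(\ZZ,{+},{\times})$; transporting them along $\phi$ yields definable operations $\oplus,\otimes$ on $a^\NN$ for which $\phi$ is a ring isomorphism, so $(a^\NN,\oplus,\otimes)\cong(\ZZ,{+},{\times})$.

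The only delicate point — and the one I would state with care — is the passage from $g\circ f\simeq\id_A$ to an honest definable bijection $A\to f^*(\ZZ)$: this works because $g$ was chosen bijective, so that $(g\circ f)^*(A)=f^*(\ZZ)$ as sets, and it is important to use this same $f$ when invoking Corollary~\ref{cor:power map}. Once those bookkeeping issues are settled, the rest is routine composition of definable maps and transport of structure.
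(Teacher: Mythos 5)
Your proposal is correct and follows essentially the same route as the paper's proof: obtain a definable bijection $A\to f^*(\ZZ)$ from the remarks at the start of Section~\ref{sec:Nies}, compose with a definable bijection $f^*(\ZZ)\to f^*(\NN)$ and the map $\overline n\mapsto a^n$ from Corollary~\ref{cor:power map}, and transport the ring structure via Lemma~\ref{lem:oplus and otimes}. The extra care you take in justifying the bijection $A\to f^*(\ZZ)$ and the injectivity of $\overline n\mapsto a^n$ is exactly what the paper's terse references leave implicit.
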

\begin{proof}
Take an interpretation $f\colon\bA\leadsto\ZZ$ and a definable bijective map $\iota\colon A\to f^*(\ZZ)$. (See the beginning of Section~\ref{sec:Nies}.) Choose a definable bijection $f^*(\ZZ)\to f^*(\NN)$. 
By Corollary~\ref{cor:power map}, the map $\overline{n}\mapsto a^n\colon f^*(\NN)\to a^\NN$ is definable.
Thus the composition
$$A\xrightarrow{\ \iota\ } f^*(\ZZ)\to f^*(\NN)\xrightarrow{\ \overline{n}\mapsto a^n\ } a^\NN$$
is a definable bijection as required. The rest follows from Lemma~\ref{lem:oplus and otimes}.
\end{proof}

\noindent
We also use the following number-theoretic fact:

\begin{theorem}[{Scott \cite[Theorem~3]{Scott}}] \label{thm:Scott}
Let $p$, $q$ be distinct prime numbers and $c\in\ZZ$. Then there is at most one pair $(m,n)$ with $p^{2m}-q^{2n}=c$.
\end{theorem}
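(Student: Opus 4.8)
The plan is to reduce to the case $c>0$, translate each solution into a factorisation of $c$, and then show that two distinct solutions cannot coexist by valuation bookkeeping together with a primitive-prime-divisor argument. First I would dispose of the degenerate cases: if $c=0$ then $p^{2m}=q^{2n}$ forces $m=n=0$, so the solution is unique; and if $c<0$, replacing $(p,q,c)$ by $(q,p,-c)$ and swapping $m$ and $n$ reduces to $c>0$. The role of the even exponents is the factorisation $(p^m-q^n)(p^m+q^n)=c$: when $c>0$ we have $0<p^m-q^n<p^m+q^n$, the two factors have the same parity, and their gcd divides $\gcd(2p^m,2q^n)=2$. Conversely, from any factorisation $c=uv$ with $0<v<u$ and $u\equiv v\pmod 2$ one recovers $p^m=(u+v)/2$ and $q^n=(u-v)/2$, hence $(m,n)$ uniquely (as $p,q\ge 2$). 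So it is enough to rule out two distinct solutions.

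Suppose then that $(m_1,n_1)\ne(m_2,n_2)$ both satisfy the equation. If $m_1=m_2$ then $q^{2n_1}=q^{2n_2}$, so $n_1=n_2$; hence we may arrange $m_1<m_2$, and then $p^{2m_2}-p^{2m_1}=q^{2n_2}-q^{2n_1}>0$ forces $n_1<n_2$. Writing $a:=m_2-m_1\ge 1$ and $b:=n_2-n_1\ge 1$, equating the two values of $c$ gives
\[ p^{2m_1}\bigl(p^{2a}-1\bigr)=q^{2n_1}\bigl(q^{2b}-1\bigr). \]
Comparing $p$-adic valuations of the two sides (the factors $p^{2a}-1$, $q^{2b}-1$ being coprime to $p$, $q$ respectively) gives $v_p(q^{2b}-1)=2m_1$ and, symmetrically, $v_q(p^{2a}-1)=2n_1$. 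Hence $E:=(p^{2a}-1)/q^{2n_1}=(q^{2b}-1)/p^{2m_1}$ is a positive integer with $\gcd(E,pq)=1$, and
\[ p^{2a}-1=q^{2n_1}E,\qquad q^{2b}-1=p^{2m_1}E; \]
subtracting these recovers $q^{2b}-p^{2a}=(p^{2m_1}-q^{2n_1})E=cE$, so in particular $q^{2b}>p^{2a}$.

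The contradiction is then extracted from these last two identities. When $m_1=n_1=0$ they read $E=p^{2a}-1=q^{2b}-1$, giving $p^{2a}=q^{2b}$ with $a,b\ge 1$, which is impossible. Otherwise I would factor $p^{2a}-1=(p^a-1)(p^a+1)$ and $q^{2b}-1=(q^b-1)(q^b+1)$ and combine two ingredients: Zsygmondy's theorem, which (outside a short explicit list of exceptions) supplies a prime divisor of $p^{2a}-1$ of multiplicative order exactly $2a$, hence $\equiv 1\pmod{2a}$, distinct from $p$, and—since $q$ occurs in $p^{2a}-1$ to the precise power $2n_1$—either equal to $q$ (so $2a\mid q-1$) or a divisor of $E$, and likewise for $q^{2b}-1$; and the lifting-the-exponent lemma, to pin down $v_2(p^{2a}-1)$ and $v_2(q^{2b}-1)$ in terms of $v_2(p^2-1)$, $v_2(q^2-1)$, $v_2(a)$, $v_2(b)$. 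A case split according to whether $p$ and $q$ are equal to $2$ or odd should then show that the congruence and size constraints forced on these prime divisors, together with the $2$-adic bounds, cannot be reconciled with $p^{2a}-1=q^{2n_1}E$ and $q^{2b}-1=p^{2m_1}E$ holding simultaneously. I expect this last case analysis—tracking which prime absorbs which factor, and the $2$-adic contributions—to be the main obstacle; it is precisely the content of Scott's argument, whose conclusion we are quoting here.
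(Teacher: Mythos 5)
The paper does not prove this statement at all: it is imported verbatim from Scott's paper and used as a black box, so the only question is whether your argument is a complete, self-contained proof. It is not. Everything you actually carry out is correct: the cases $c=0$ and $c<0$; the observation that two distinct solutions can be ordered so that $m_1<m_2$ and hence $n_1<n_2$; the identity $p^{2m_1}(p^{2a}-1)=q^{2n_1}(q^{2b}-1)$; the valuation bookkeeping giving $v_p(q^{2b}-1)=2m_1$ and $v_q(p^{2a}-1)=2n_1$; the integer $E$ coprime to $pq$ with $p^{2a}-1=q^{2n_1}E$ and $q^{2b}-1=p^{2m_1}E$; and the disposal of the case $m_1=n_1=0$.

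But that is where the actual content of the theorem begins, and it is exactly where your proposal stops. The final paragraph only names tools (Zsygmondy, lifting the exponent) and asserts that a case split \emph{should} produce a contradiction; no contradiction is exhibited. Knowing that $p^{2a}-1$ has a primitive prime divisor $r\equiv 1\pmod{2a}$ which either equals $q$ or divides $E$ does not by itself conflict with the two displayed identities — $E$ is an unconstrained positive integer coprime to $pq$, so there is no evident size or congruence obstruction — and the $2$-adic information from lifting the exponent is never confronted with anything. You acknowledge this yourself in the last sentence, where you say the remaining analysis is precisely the content of Scott's argument whose conclusion is being quoted. So what you have is a correct and clean normalization of the problem, not a proof; the core of Scott's Theorem~3 is still missing, and for the purposes of this paper the statement must simply be cited, as the authors do.
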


\noindent
We  now show  Theorem~\ref{thm:fiber product over infinite ring}.
Thus, assume that $A$ and $B$ are bi-interpretable with~$\ZZ$, and $C$ is f.g.~and infinite.
By Lemma~\ref{lem:Z-interpretation of fiber products},  $R := A \times_C B$ is interpretable in $\ZZ$, so by Corollary~\ref{cor:Nies}, in order to see that $R$ is bi-interpretable with $\ZZ$, it is enough to show that we can interpret $\ZZ$ in the ring $R$ such that~$R$ can be mapped definably and injectively into the interpreted copy $Z$ of $\ZZ$ in $R$.

To see this, let $a \in A$ and $b \in B$ so that $\alpha(a) = \beta(b)$ has infinite multiplicative order in~$C$. 
Then $Z:=(a,b)^\NN$ is definable in $R$ as 
$$Z = \big\{ r \in R :  \text{$\pi_A(r) \in a^\NN$ and $\pi_B(r) \in b^\NN$} \big\}.$$  
(Clearly, $Z$ is contained in the set on the right-hand side of this equation; conversely, if $r$ is any element of this set, then $r = (a^m,b^n)$ for some $m$ and $n$, with $\alpha(a^m) = \beta(b^n)$, and then $\alpha(a)^m = \alpha(a)^n$,
as $\alpha(a) = \beta(b)$,   forcing $m=n$ since $\alpha(a)$ has infinite multiplicative order.)

Recall from Corollary~\ref{cor:interpretation of the factors} that $\pi_A$ is an interpretation $R\leadsto A$.
We denote by~$\overline{A}:=\pi_A^*(A)=R/\ker\pi_A$ the copy of $A$ in $R$ interpreted via $\pi_A$, 
and by~${x\mapsto\overline{x}\colon A\to\overline{A}}$ the natural isomorphism; similarly with $B$ in place of $A$.
The natural surjection~$R\to\overline{A}$ restricts to a bijection $Z=(a,b)^\NN\to \overline{a}^\NN$; we denote by $e_A$ its inverse, and we define~$e_B$ similarly. Note that $e_A$ and $e_B$ are definable in~$R$.
By Lemma~\ref{lem:Z-structure on powers} there are binary operations on $\overline{a}^\NN$, definable in $\overline{A}$, which make $\overline{a}^\NN$ into a ring isomorphic to $(\ZZ,{+},{\times})$. Equip $Z$ with binary operations $\oplus$, $\otimes$ making $e_A$ a ring isomorphism; then $\oplus$, $\otimes$ are definable in~$R$, and $(Z,{\oplus},{\otimes})\cong (\ZZ,{+},{\times})$.

It remains to specify a definable injective map $R\to Z$.
Let $f_A\colon\overline{A}\to\overline{a}^\NN$ and $f_B\colon\overline{B}\to\overline{b}^\NN$ be definable bijections, according to Lemma~\ref{lem:Z-structure on powers}, and let $F_A$ and $F_B$ be the composition of $f_A$, $f_B$ with the natural surjection $R\to\overline{A}$ and $R\to\overline{B}$, respectively; then $F_A$, $F_B$ are definable in $R$.
From Corollary~\ref{cor:power map} and the fact that exponentiation is definable in $\NN$, we see that the maps
$t_A\colon \overline{a}^\NN\to\overline{a}^\NN$ and $t_B\colon \overline{b}^\NN\to\overline{b}^\NN$ given by
$t_A(\overline{a}^m)=\overline{a}^{2^{2m}}$ and $t_B(\overline{b}^n)=\overline{b}^{3^{2n}}$ are definable.
It is now easy to verify, using Theorem~\ref{thm:Scott}, that the definable map
$$r\mapsto (e_A\circ t_A\circ F_A) (r) \cdot (e_B\circ t_B\circ F_B) (r)\colon R\to Z$$
is injective. \qed

\begin{remark*}
Below we apply Theorem~\ref{thm:fiber product over infinite ring} in a situation where we know a priori that the ring~$A\times_C B$ is f.g. We do not know whether the fiber product of two f.g.~rings is always again f.g.
\end{remark*}

\subsection{The graph of minimal non-maximal prime ideals}
Let $A$ be a ring.
We denote by $\Min(A)$ the set of minimal prime ideals of $A$; we always assume that $\Min(A)$ is finite. (This is the case if $A$ is noetherian.)
We define a (simple, undirected) graph $\mathcal G_A=(V,E)$ whose vertex set is the set $V=\Min(A)\setminus\Max(A)$ 
of all minimal non-maximal prime ideals of $A$, and whose edge relation is defined by
$$(\mathfrak p,\mathfrak q)\in E \quad :\Longleftrightarrow\quad \text{there is a non-maximal prime ideal of $A$ containing $\mathfrak p+\mathfrak q$.}$$ 
Note that if $A$ is f.g., then $V$ is the set of minimal prime ideals of $A$ of infinite index in $A$
(so $V\neq\emptyset$ iff $A$ is infinite), and $(\mathfrak p,\mathfrak q)\in E$ iff $\mathfrak p+\mathfrak q$ is of infinite index in~$A$. (See Corollary~\ref{cor:finite = 0-dim}.)

\medskip
\noindent
We first relate connectedness of the graph $\mathcal G_A$ with connectedness of the topological space $\Spec^\circ(A)=\Spec(A)\setminus\Max(A)$ considered in the introduction.

\begin{lemma} \label{lem:dim 0 for sum of intersections}
Let $I_1,\dots,I_m,J_1,\dots,J_n$ be ideals of $A$, where $m,n\geq 1$, and $I=I_1\cap\cdots\cap I_m$, $J=J_1\cap\cdots\cap J_n$.
Suppose that every  prime ideal containing $I_i+J_j$ is maximal, for all $i=1,\dots,m$ and $j=1,\dots,n$. Then every  prime ideal containing $I+J$ is maximal. 
\end{lemma}
\begin{proof}
Let $\mathfrak p\supseteq I+J$ be a prime ideal. 
If a prime ideal contains an intersection of finitely many ideals, then it contains one of them. Hence 
there are $i$ and $j$ such that $\mathfrak p\supseteq I_i+J_j$, and thus $\mathfrak p$ is maximal.
\end{proof}

\noindent
Given an ideal $I$ of $A$ we let $V(I)$ be the closed subset of $\Spec(A)$ consisting of all $\mathfrak p\in\Spec(A)$ containing $I$. 

\begin{cor}
$\mathcal G_A$ is connected iff $\Spec^\circ(A)$ is connected.
\end{cor}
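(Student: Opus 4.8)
The plan is to realize $\Spec^\circ(A)$ as a finite union of closed subsets indexed by the vertices of $\mathcal G_A$, and to recognize $\mathcal G_A$ as the intersection graph of this family. For $\mathfrak p\in V=\Min(A)\setminus\Max(A)$ set $X_{\mathfrak p}:=V(\mathfrak p)\cap\Spec^\circ(A)$, a closed subset of $\Spec^\circ(A)$ containing $\mathfrak p$. The first step is the elementary observation that every non-maximal prime $\mathfrak P$ of $A$ contains a minimal prime $\mathfrak p$, and that such a $\mathfrak p$ is automatically non-maximal (else $\mathfrak p=\mathfrak P$); hence $\mathfrak P\in X_{\mathfrak p}$ and $\Spec^\circ(A)=\bigcup_{\mathfrak p\in V}X_{\mathfrak p}$. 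In particular $\Spec^\circ(A)=\emptyset$ exactly when $V=\emptyset$, so the asserted equivalence is trivial in that degenerate case and from now on we assume $V\neq\emptyset$. Finally, since a prime contains $\mathfrak p+\mathfrak q$ iff it lies in $V(\mathfrak p)\cap V(\mathfrak q)$, we have $(\mathfrak p,\mathfrak q)\in E$ iff $X_{\mathfrak p}\cap X_{\mathfrak q}\neq\emptyset$: the edges of $\mathcal G_A$ record precisely which members of the family $\{X_{\mathfrak p}\}_{\mathfrak p\in V}$ meet.

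For ``$\mathcal G_A$ connected $\Rightarrow\Spec^\circ(A)$ connected'' I would first note that each $X_{\mathfrak p}$ is nonempty and connected: $\mathfrak p$ is contained in every member of $X_{\mathfrak p}$, so $\{\mathfrak p\}$ is dense in $X_{\mathfrak p}$, and a space with a dense point is irreducible, hence connected. Then I apply the standard fact that a finite union of connected sets whose intersection graph is connected is itself connected: given a partition $\Spec^\circ(A)=U\sqcup W$ into relatively clopen sets, each connected $X_{\mathfrak p}$ lies entirely in $U$ or in $W$; two vertices joined by an edge of $\mathcal G_A$ must land on the same side, since the corresponding sets meet; connectedness of $\mathcal G_A$ then forces all the $X_{\mathfrak p}$ onto one side, so $W=\emptyset$ and $\Spec^\circ(A)$ is connected.

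For the converse I argue by contraposition, and this is where Lemma~\ref{lem:dim 0 for sum of intersections} does the real work. Suppose $\mathcal G_A$ is disconnected, say $V=V_1\sqcup V_2$ with $V_1,V_2$ nonempty and no edge joining them. Put $I:=\bigcap_{\mathfrak p\in V_1}\mathfrak p$ and $J:=\bigcap_{\mathfrak q\in V_2}\mathfrak q$. The absence of edges between $V_1$ and $V_2$ says exactly that every prime ideal containing $\mathfrak p+\mathfrak q$ is maximal for all $\mathfrak p\in V_1$, $\mathfrak q\in V_2$; so Lemma~\ref{lem:dim 0 for sum of intersections} yields that every prime containing $I+J$ is maximal, i.e.\ $V(I)\cap V(J)\cap\Spec^\circ(A)=\emptyset$. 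On the other hand, by the first step every $\mathfrak P\in\Spec^\circ(A)$ contains some $\mathfrak p\in V=V_1\cup V_2$, hence contains $I$ or $J$; thus $\Spec^\circ(A)=\big(V(I)\cap\Spec^\circ(A)\big)\sqcup\big(V(J)\cap\Spec^\circ(A)\big)$ is a partition into disjoint relatively closed subsets, both nonempty (the first contains any vertex of $V_1$, the second any vertex of $V_2$), so $\Spec^\circ(A)$ is disconnected. The only genuinely delicate point in the whole argument is this last translation via Lemma~\ref{lem:dim 0 for sum of intersections}; the remaining care needed is merely the bookkeeping that minimal primes which happen to be maximal contribute nothing to $\Spec^\circ(A)$ and may be ignored.
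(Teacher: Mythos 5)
Your proof is correct and follows essentially the same route as the paper: the same decomposition of $\Spec^\circ(A)$ into the closed pieces $V(\mathfrak p)\cap\Spec^\circ(A)$ for $\mathfrak p\in V$, and the same appeal to Lemma~\ref{lem:dim 0 for sum of intersections} for the direction from a disconnection of $\mathcal G_A$ to one of $\Spec^\circ(A)$. The only cosmetic difference is that you argue the other direction directly via connectedness (irreducibility) of each piece, where the paper argues the contrapositive by noting that a clopen set containing $\mathfrak p$ must contain all of $V(\mathfrak p)\cap\Spec^\circ(A)$ — the same observation in different clothing.
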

\begin{proof}
Suppose first that $\Spec^\circ(A)$ is disconnected, that is,
there are nonempty closed subsets $X$, $X'$ partitioning $\Spec^\circ(A)$. Then both $X$ and $X'$ contain
a non-maximal minimal prime ideal. Let $C$ be the set of non-maximal minimal prime ideals contained in $X$, and $C':=V\setminus C$.
For $\mathfrak p\in C$ and $\mathfrak p'\in C'$, we have $$\Spec^\circ(A)\cap V(\mathfrak p+\mathfrak p')=\Spec^\circ(A)\cap V(\mathfrak p)\cap V(\mathfrak p')\subseteq X\cap X'=\emptyset$$
and thus $(\mathfrak p,\mathfrak p')\notin E$. Hence $\mathcal G_A$ is disconnected.

Conversely, suppose $\mathcal G_A$ is disconnected. Let $C$, $C'$ be  nonempty sets partitioning~$V$ such that 
$(\mathfrak p,\mathfrak p')\notin E$ for all $\mathfrak p\in C$, $\mathfrak p'\in C'$.
Put $I:=\bigcap C$, $I':=\bigcap C'$. Then $X:=V(I)\cap \Spec^\circ(A)$, $X':=V(I')\cap \Spec^\circ(A)$ are nonempty closed subsets of $\Spec^\circ(A)$ with $X\cup X'=\Spec^\circ(A)$, and by the previous lemma we have $X\cap X'=\emptyset$. Thus $ \Spec^\circ(A)$ is disconnected.
\end{proof}

\begin{remark*}
In the case where $A$ is a local ring, the graph $\mathcal G_A$ has been considered in different contexts. (See, e.g., \cite[Definition~3.4]{HoHu} or \cite[Remark~2.3]{SiWa}.)
\end{remark*}

\noindent
The following lemma allows us to analyze the graph $\mathcal G_A$ by splitting off a single vertex:

\begin{lemma}\label{lem:A0, 1}
Let  $\mathfrak p_0\in \Min(A)$,
$$I_0:=\bigcap\big\{\mathfrak p\in\Min(A): \mathfrak p\neq\mathfrak p_0\big\},$$ 
and $A_0:=A/I_0$, with natural surjection $a\mapsto\overline{a}=a+I_0\colon A\to A_0$. Then 
$$\mathfrak p\mapsto \overline{\mathfrak p}\colon\Min(A)\setminus\{\mathfrak p_0\}\to\Min(A_0)$$
is a bijection. Moreover, for $\mathfrak p,\mathfrak q\in\Min(A)\setminus\{\mathfrak p_0\}$
the natural surjection $A\to A_0$ induces an isomorphism $A/(\mathfrak p+\mathfrak q)\to A_0/(\overline{\mathfrak p}+\overline{\mathfrak q})$.
\end{lemma}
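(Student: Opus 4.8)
The plan is to deduce both assertions directly from the standard order-isomorphism between $\Spec(A_0)$ and the closed set $V(I_0)\subseteq\Spec(A)$, together with the elementary fact (used already in the proof of Lemma~\ref{lem:dim 0 for sum of intersections}) that a prime containing a finite intersection of ideals contains one of them. Write $\Min(A)=\{\mathfrak p_0,\mathfrak p_1,\dots,\mathfrak p_k\}$, so that $I_0=\mathfrak p_1\cap\cdots\cap\mathfrak p_k$, with the convention that an empty intersection equals $A$; in the degenerate case $k=0$ (e.g.\ $A$ a domain) $A_0$ is the zero ring, both sides of the asserted bijection are empty, and there is nothing further to check, so I would assume $k\geq 1$.

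First I would establish the bijection. Under the usual correspondence, a prime of $A_0$ is the image $\overline{\mathfrak p}=\mathfrak p/I_0$ of a unique prime $\mathfrak p\supseteq I_0$ of $A$, and this correspondence is an order-isomorphism $\Spec(A_0)\to V(I_0)$ (inverse: $\mathfrak p\mapsto\overline{\mathfrak p}$); so it suffices to identify the minimal elements of $V(I_0)$. Every prime in $V(I_0)$ contains one of $\mathfrak p_1,\dots,\mathfrak p_k$ since these are the ideals intersected to form $I_0$; in particular $\mathfrak p_0\notin V(I_0)$, because otherwise $\mathfrak p_0\supseteq\mathfrak p_i$ for some $i\geq 1$, forcing $\mathfrak p_0=\mathfrak p_i$ by minimality, contrary to $\mathfrak p_i\neq\mathfrak p_0$. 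Conversely each $\mathfrak p_i$ with $i\geq 1$ lies in $V(I_0)$ and is minimal there: if $\mathfrak q\subseteq\mathfrak p_i$ is a prime in $V(I_0)$, then $\mathfrak q\supseteq\mathfrak p_j$ for some $j\geq 1$, so $\mathfrak p_j\subseteq\mathfrak q\subseteq\mathfrak p_i$, and minimality of $\mathfrak p_i$ in $\Spec(A)$ gives $\mathfrak p_j=\mathfrak p_i=\mathfrak q$. Hence the minimal primes of $V(I_0)$ are exactly $\mathfrak p_1,\dots,\mathfrak p_k=\Min(A)\setminus\{\mathfrak p_0\}$, and under the correspondence these are precisely $\overline{\mathfrak p_1},\dots,\overline{\mathfrak p_k}=\Min(A_0)$.

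For the ``moreover'' part, fix $\mathfrak p,\mathfrak q\in\Min(A)\setminus\{\mathfrak p_0\}$; by the above both contain $I_0$, so $\overline{\mathfrak p}=\mathfrak p/I_0$ and $\overline{\mathfrak q}=\mathfrak q/I_0$, whence $\overline{\mathfrak p}+\overline{\mathfrak q}=(\mathfrak p+\mathfrak q)/I_0$, an ideal of $A_0$ whose preimage under $A\to A_0$ is $\mathfrak p+\mathfrak q$ (as $I_0\subseteq\mathfrak p+\mathfrak q$). Therefore the composite surjection $A\to A_0\to A_0/(\overline{\mathfrak p}+\overline{\mathfrak q})$ has kernel exactly $\mathfrak p+\mathfrak q$, and the first isomorphism theorem yields the induced isomorphism $A/(\mathfrak p+\mathfrak q)\to A_0/(\overline{\mathfrak p}+\overline{\mathfrak q})$. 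There is no real obstacle in this lemma; the only point requiring a little care is the identification of $\Min(A_0)$ in the first step---checking that passing to the quotient creates no new minimal prime and that $\mathfrak p_0$ genuinely disappears---which reduces to the prime-factor observation recalled above.
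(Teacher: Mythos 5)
Your proof is correct and follows essentially the same route as the paper's: both rest on the inclusion-preserving correspondence between $\Spec(A_0)$ and $V(I_0)$ together with the fact that a prime containing the finite intersection $I_0=\mathfrak p_1\cap\cdots\cap\mathfrak p_k$ must contain some $\mathfrak p_j$. You simply spell out the "moreover" clause (which the paper leaves as "easy to see") and the degenerate case, both correctly.
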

\begin{proof}
The map $\mathfrak p\mapsto\overline{\mathfrak p}$ is an inclusion-preserving 
correspondence between the set~$V(I_0)$ of prime ideals of $A$ containing $I_0$ and the set of all prime ideals of~$A_0$.
Clearly $\Min(A)\setminus\{\mathfrak p_0\}\subseteq V(I_0)$, and
if $\mathfrak p\supseteq I_0$ is a minimal prime ideal of $A$, then~$\overline{\mathfrak p}$ is a minimal prime ideal of $A_0$. To show surjectivity, let $\overline{\mathfrak q}$ be a minimal prime ideal of $A_0$, where $\mathfrak q\in V(I_0)$. 
Then $\mathfrak q\supseteq\mathfrak p\supseteq I_0$ for some $\mathfrak p\in\Min(A)$ with $\mathfrak p\neq\mathfrak p_0$, and so $\mathfrak q=\mathfrak p$ by minimality of $\overline{\mathfrak q}$.
The rest of the lemma is easy to see.
\end{proof}

\noindent
Given a graph $\mathcal G=(V,E)$ and a vertex $v\in V$, we denote by $\mathcal G\setminus v$ the graph obtained from $\mathcal G$ by removing $v$, i.e., the graph with vertex set $W=V\setminus\{v\}$ and edge set $E\cap (W\times W)$.
If $\mathfrak p_0$ is a minimal non-maximal prime of $A$ and $I_0$ and $A_0$ are as in Lemma~\ref{lem:A0, 1}, then
$\mathfrak p\mapsto\overline{\mathfrak p}$ is an isomorphism $\mathcal G_A \setminus \mathfrak p_0 \to \mathcal G_{A_0}$.

\medskip
\noindent
We now return to bi-interpretability issues:

\begin{lemma}\label{lem:connected comp}
Suppose $A$ is infinite and f.g. Let $C\subseteq V$, $C\neq\emptyset$, such that the induced subgraph $\mathcal G_A{\upharpoonright} C$ of $\mathcal G_A$ with vertex set $C$ is connected, and let $I=\bigcap C$. Then $A/I$ is bi-interpretable with $\ZZ$.
\end{lemma}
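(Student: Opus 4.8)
The plan is to induct on the cardinality of $C$. If $|C|=1$, say $C=\{\mathfrak p\}$, then $I=\mathfrak p$ is a non-maximal prime ideal of the f.g.\ ring $A$, so $A/I$ is a f.g.\ integral domain, and it is infinite because $\Spec^\circ(A/\mathfrak p)\neq\emptyset$ (the zero ideal of $A/\mathfrak p$ corresponds to the non-maximal prime $\mathfrak p$), hence infinite by Corollary~\ref{cor:finite = 0-dim}. So $A/I$ is bi-interpretable with $\ZZ$ by Theorem~\ref{thm:integral domains}, settling the base case.

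Now suppose $|C|\geq 2$. I would choose a spanning tree of the connected graph $\mathcal G_A{\upharpoonright}C$ and let $\mathfrak p_0\in C$ be a leaf of this tree. Then $C':=C\setminus\{\mathfrak p_0\}$ is nonempty, the induced subgraph $\mathcal G_A{\upharpoonright}C'$ is connected (it contains the spanning tree with its leaf removed), and $\mathfrak p_0$ has at least one neighbor $\mathfrak q\in C'$ in $\mathcal G_A$. Set $I':=\bigcap C'$, so that $I=\mathfrak p_0\cap I'$. By Example~\ref{ex:fiber product}, the natural map identifies $A/I$ with the fiber product
$$(A/\mathfrak p_0)\times_{A/(\mathfrak p_0+I')}(A/I')$$
along the (surjective) quotient morphisms. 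I would then verify the hypotheses of Theorem~\ref{thm:fiber product over infinite ring}: first, $A/\mathfrak p_0$ is an infinite f.g.\ integral domain, hence bi-interpretable with $\ZZ$ by Theorem~\ref{thm:integral domains}, while $A/I'$ is bi-interpretable with $\ZZ$ by the inductive hypothesis applied to $C'$; second, the base ring $A/(\mathfrak p_0+I')$ is a quotient of $A$, hence f.g.\ The point where connectedness of $\mathcal G_A{\upharpoonright}C$ is genuinely used is that $A/(\mathfrak p_0+I')$ is \emph{infinite}: since $(\mathfrak p_0,\mathfrak q)\in E$ there is a non-maximal prime of $A$ containing $\mathfrak p_0+\mathfrak q$, and because $I'\subseteq\mathfrak q$ this prime also contains $\mathfrak p_0+I'$, so $\Spec^\circ(A/(\mathfrak p_0+I'))\neq\emptyset$ and Corollary~\ref{cor:finite = 0-dim} applies. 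Finally $A/I$ itself is f.g.\ (a quotient of $A$), which is exactly the a~priori assumption noted in the remark following Theorem~\ref{thm:fiber product over infinite ring}, so that theorem yields that $A/I$ is bi-interpretable with $\ZZ$.

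The only real obstacle is the graph-theoretic bookkeeping in the inductive step: one must peel off a vertex $\mathfrak p_0$ so that \emph{both} the remaining graph $\mathcal G_A{\upharpoonright}C'$ stays connected (to feed the induction) \emph{and} $\mathfrak p_0$ keeps a neighbor inside $C'$ (to guarantee the base ring of the fiber product is infinite). Taking $\mathfrak p_0$ to be a leaf of a spanning tree of $\mathcal G_A{\upharpoonright}C$ accomplishes both at once. Everything else reduces to the integral-domain case (Theorem~\ref{thm:integral domains}) and the fiber-product theorem (Theorem~\ref{thm:fiber product over infinite ring}), together with the finiteness criterion of Corollary~\ref{cor:finite = 0-dim}.
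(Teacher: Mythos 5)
Your proof is correct and follows essentially the same route as the paper: induction on $\abs{C}$, peeling off a non-cut vertex (which the paper invokes as a well-known fact about finite connected graphs and you realize concretely as a leaf of a spanning tree), writing $A/I$ as a fiber product over the infinite f.g.\ base $A/(\mathfrak p_0+I')$, and applying Theorem~\ref{thm:integral domains} together with Theorem~\ref{thm:fiber product over infinite ring}. Your explicit checks that the base ring is infinite and that $A/I$ is f.g.\ match the paper's argument.
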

\begin{proof}
We proceed by induction on the size of $C$. If $\abs{C}=1$, then $I$ is a prime ideal of infinite index, and the claim holds by Theorem~\ref{thm:integral domains}.
So suppose $\abs{C}>1$. It is well-known that each non-trivial finite connected graph $\mathcal G$ contains a non-cut vertex, i.e., a vertex $v$ such that $\mathcal G\setminus v$ is still connected.
Thus, let $\mathfrak p_0$ be a non-cut vertex of $\mathcal G_A {\upharpoonright} C$, and let
$C_0:=C\setminus\{\mathfrak p_0\}$, $I_0:=\bigcap C_0$. 
Choosing $\mathfrak p\in C_0$ such that $(\mathfrak p,\mathfrak p_0)\in E$, we have
$I_0+\mathfrak p_0\subseteq \mathfrak p+\mathfrak p_0$ and $A/(\mathfrak p+\mathfrak p_0)$ is infinite; hence $A/(I_0+\mathfrak p_0)$ is infinite.
By Example~\ref{ex:fiber product}, the rings $A/I=A/(I_0\cap\mathfrak p_0)$ and $(A/I_0) \times_{A/(I_0+\mathfrak p_0)} (A/\mathfrak p_0)$ are naturally isomorphic, where
$A/I_0$ and $A/\mathfrak p_0$ are both bi-interpretable with $\ZZ$,
by inductive assumption and Theorem~\ref{thm:integral domains}, respectively.
Hence $A/I$ is bi-interpretable with $\ZZ$ by Theorem~\ref{thm:fiber product over infinite ring}.
\end{proof}

\noindent
For the next lemma note that the graphs $\mathcal G_A$ and $\mathcal G_{A_{\operatorname{red}}}$  are naturally isomorphic.

\begin{lemma}\label{lem:A0, 2}
Let $\mathfrak p_0\in\Min(A)$ be of finite index in~$A$, and let $I_0$ and $A_0$ be as in
Lemma~\ref{lem:A0, 1}.
Then the reduced rings $A_{\operatorname{red}}=A/N(A)$ and $A_0$ are bi-interpretable, and
the graphs $\mathcal G_A$ and $\mathcal G_{A_0}$ are naturally isomorphic.
\end{lemma}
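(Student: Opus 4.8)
The plan is to deduce both halves of the lemma from Lemma~\ref{lem:A0, 1}, Example~\ref{ex:fiber product}, and the results of Section~\ref{sec:fiber products} on fiber products over finite rings.

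First I would record the key point that drives everything: since $\mathfrak p_0$ is of finite index in $A$, the integral domain $A/\mathfrak p_0$ is a finite field, so $\mathfrak p_0$ is a \emph{maximal} ideal of $A$. Hence $\mathfrak p_0$ is not a vertex of $\mathcal G_A$, and the vertex set of $\mathcal G_A$ is contained in $\Min(A)\setminus\{\mathfrak p_0\}$. By Lemma~\ref{lem:A0, 1} the map $\mathfrak p\mapsto\overline{\mathfrak p}$ is a bijection of $\Min(A)\setminus\{\mathfrak p_0\}$ onto $\Min(A_0)$; since $A/\mathfrak p\cong A_0/\overline{\mathfrak p}$ for every prime $\mathfrak p\supseteq I_0$, this bijection preserves maximality and so restricts to a bijection from the vertex set of $\mathcal G_A$ onto $\Min(A_0)\setminus\Max(A_0)$, the vertex set of $\mathcal G_{A_0}$. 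For the edge relation, note that if $\mathfrak p,\mathfrak q$ are vertices of $\mathcal G_A$ then $\mathfrak p+\mathfrak q\supseteq I_0$, so by the correspondence theorem the primes of $A$ containing $\mathfrak p+\mathfrak q$ correspond, compatibly with maximality, to the primes of $A_0$ containing $\overline{\mathfrak p}+\overline{\mathfrak q}$ (equivalently, one may use the isomorphism $A/(\mathfrak p+\mathfrak q)\to A_0/(\overline{\mathfrak p}+\overline{\mathfrak q})$ furnished by Lemma~\ref{lem:A0, 1}); thus $(\mathfrak p,\mathfrak q)$ is an edge of $\mathcal G_A$ if and only if $(\overline{\mathfrak p},\overline{\mathfrak q})$ is an edge of $\mathcal G_{A_0}$. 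This shows that $\mathfrak p\mapsto\overline{\mathfrak p}$ is the desired natural isomorphism $\mathcal G_A\to\mathcal G_{A_0}$ — the exact analogue, in the present situation where $\mathfrak p_0$ is maximal, of the isomorphism $\mathcal G_A\setminus\mathfrak p_0\to\mathcal G_{A_0}$ recorded just after Lemma~\ref{lem:A0, 1}.

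For the bi-interpretability, I would use that $N(A)=\bigcap_{\mathfrak p\in\Min(A)}\mathfrak p=I_0\cap\mathfrak p_0$, so $A_{\operatorname{red}}=A/(I_0\cap\mathfrak p_0)$. Applying Example~\ref{ex:fiber product} to the ideals $I_0$ and $\mathfrak p_0$ of $A$, the natural map identifies $A_{\operatorname{red}}$ with the fiber product $A_0\times_C(A/\mathfrak p_0)$, where $C:=A/(I_0+\mathfrak p_0)$ and the two maps to $C$ are the canonical surjections. Since $A/\mathfrak p_0$ is finite, so is its quotient $C$. Hence $A_{\operatorname{red}}$ is a fiber product, over the finite ring $C$, of the (noetherian) ring $A_0$ with the finite ring $A/\mathfrak p_0$; by the lemma on fiber products over a finite ring it is therefore bi-interpretable with the direct product $A_0\times(A/\mathfrak p_0)$, which in turn is bi-interpretable with $A_0$ by Lemma~\ref{lem:finiteprodbi}. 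So $A_{\operatorname{red}}$ is bi-interpretable with $A_0$, completing the proof.

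I do not expect a genuine obstacle in carrying this out: the lemma is essentially a repackaging of Example~\ref{ex:fiber product}, Lemma~\ref{lem:A0, 1}, and the fiber-product-over-a-finite-ring results. The only facts that need to be noticed — and the only places where the finite-index hypothesis on $\mathfrak p_0$ is used — are that $\mathfrak p_0$ is automatically maximal (so that it drops out of $\mathcal G_A$ altogether and no information about the graph is lost on passing to $A_0$) and that $C=A/(I_0+\mathfrak p_0)$, being a quotient of the finite ring $A/\mathfrak p_0$, is again finite (so that one lands in the easy ``fiber product over a finite ring'' case rather than the substantially harder infinite case of Theorem~\ref{thm:fiber product over infinite ring}).
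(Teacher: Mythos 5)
Your proof is correct. For the graph isomorphism you argue directly from the bijection of Lemma~\ref{lem:A0, 1} (after observing that $\mathfrak p_0$ is maximal because a finite integral domain is a field), whereas the paper gets the same conclusion by describing $\Spec(A_0\times A/\mathfrak p_0)$ and noting that the primes coming from the finite factor all have finite index; these are interchangeable. For the bi-interpretability the paper takes a slightly shorter path: it first disposes of the case $I_0\subseteq\mathfrak p_0$ (then $I_0=N(A)$ and there is nothing to prove), and in the remaining case maximality of $\mathfrak p_0$ forces $I_0+\mathfrak p_0=A$, so the Chinese Remainder Theorem yields $A_{\operatorname{red}}\cong A_0\times (A/\mathfrak p_0)$ outright and only Lemma~\ref{lem:finiteprodbi} is needed. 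Your uniform treatment via the fiber product over $C=A/(I_0+\mathfrak p_0)$ subsumes both cases (since $A/\mathfrak p_0$ is a finite field, $C$ is either $0$ or $A/\mathfrak p_0$ itself), but it additionally routes through the fiber-product-over-a-finite-ring lemma, whose proof needs the kernels of the projections to be definable and is therefore stated under noetherian hypotheses; this is harmless in the only context where Lemma~\ref{lem:A0, 2} is applied ($A$ finitely generated), though the paper's argument works for an arbitrary ring with finitely many minimal primes. In substance the two proofs rest on the same two observations you isolate: $\mathfrak p_0$ is automatically maximal, and the factor being split off is finite.
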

\begin{proof}
We may assume that $I_0\not\subseteq\mathfrak p_0$ (since otherwise $I_0=N(A)$ and so $A=A_0$). Then $A=I_0+\mathfrak p_0$, since $\mathfrak p_0$ is a maximal ideal of $A$ (every finite integral domain is a field).
So the natural morphism $A\to A_0\times R$, where $R=A/\mathfrak p_0$, is surjective (by the Chinese Remainder Theorem) with kernel $N(A)=\bigcap\Min(A)=I_0\cap\mathfrak p_0$. The first claim now follows from Lemma~\ref{lem:finiteprodbi}.
For the second claim note that the prime ideals of $A_0\times R$ are the ideals of this ring having the form $\mathfrak p\times R$ where $\mathfrak p\in\Spec(A_0)$ or $A_0\times\mathfrak q$ where $\mathfrak q\in \Spec(R)$, and the latter all have finite index.
\end{proof}

\subsection{Characterizing the reduced rings which are bi-interpretable with~$\ZZ$}
Combining the results obtained so far in this section, we obtain the following characterization of those finitely generated reduced rings which are (parametrically) bi-interpretable with $\ZZ$.

\begin{theorem} \label{thm:biNreduced}
Let $A$ be an infinite finitely generated reduced ring. Then $A$ is bi-in\-ter\-pretable with~$\ZZ$ if and only if 
the graph $\mathcal G_A$ is connected.
\end{theorem}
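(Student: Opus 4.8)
The plan is to reduce to the case where every minimal prime of $A$ has infinite index --- equivalently, where $\Min(A)$ coincides with the vertex set $V$ of $\mathcal{G}_A$ --- and then read off one implication from Lemma~\ref{lem:connected comp} and the other from Corollary~\ref{cor:fiber product over finite ring}.

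For the reduction I would argue as follows. Whenever $A$ has a minimal prime $\mathfrak{p}_0$ of finite index, Lemma~\ref{lem:A0, 2} --- applied with $A_{\operatorname{red}} = A$, since $A$ is reduced --- produces a ring $A_0 = A/I_0$ that is bi-interpretable with $A$ and satisfies $\mathcal{G}_{A_0} \cong \mathcal{G}_A$; here $I_0$ is a radical ideal, so $A_0$ is again f.g.\ and reduced, and by Lemma~\ref{lem:A0, 1} its minimal primes correspond bijectively, and with equal indices, to those of $A$ other than $\mathfrak{p}_0$, so that $A_0$ has strictly fewer finite-index minimal primes. After finitely many such steps I obtain a ring --- which I again denote $A$ --- that is bi-interpretable with the original one (by transitivity of bi-interpretability), has an isomorphic graph (hence the same connectedness), is still infinite (its graph has a vertex, as the original ring was infinite), and for which $\Min(A) = V$; in particular $\bigcap V = N(A) = 0$.

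With this normalization in force, each direction is short. If $\mathcal{G}_A$ is connected, I apply Lemma~\ref{lem:connected comp} with $C = V$, using that $\mathcal{G}_A{\upharpoonright}V = \mathcal{G}_A$ is connected and that $A = A/\bigcap V$, to conclude that $A$ is bi-interpretable with $\ZZ$. If $\mathcal{G}_A$ is disconnected, I choose a partition $V = C \sqcup C'$ into nonempty sets with no edge between them, put $I := \bigcap C$ and $I' := \bigcap C'$, and note $I \cap I' = \bigcap V = 0$; then Example~\ref{ex:fiber product} identifies $A$ with the fiber product $(A/I)\times_{A/(I+I')}(A/I')$. The factors $A/I$ and $A/I'$ are f.g.\ and infinite, each having a quotient of the form $A/\mathfrak{p}$ with $\mathfrak{p}\in V$ of infinite index, while the base $A/(I+I')$ is finite: by Lemma~\ref{lem:dim 0 for sum of intersections} every prime of $A$ containing $I+I'$ is maximal --- which is precisely what the absence of edges between $C$ and $C'$ asserts for each pair $\mathfrak{p}\in C$, $\mathfrak{q}\in C'$ --- and finiteness then follows from Corollary~\ref{cor:finite = 0-dim}. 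So Corollary~\ref{cor:fiber product over finite ring} applies and $A$ is not bi-interpretable with $\ZZ$.

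The real content of the statement lives in results already proved --- Theorem~\ref{thm:integral domains}, the fiber-product theorem~\ref{thm:fiber product over infinite ring}, and the Feferman--Vaught argument behind Corollary~\ref{cor:fiber product over finite ring} --- so the main thing to be careful about here is the bookkeeping in the reduction: the finite-index minimal primes are invisible to $\mathcal{G}_A$, and one must check that peeling them off keeps the ring f.g.\ and reduced, preserves both ``bi-interpretable with $\ZZ$'' and connectedness of $\mathcal{G}_A$, and leaves the ring infinite. I expect that to be the only delicate point.
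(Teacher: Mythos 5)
Your proof is correct and follows essentially the same route as the paper's: reduce via Lemmas~\ref{lem:A0, 1} and~\ref{lem:A0, 2} to the case where no minimal prime is maximal, then apply Lemma~\ref{lem:connected comp} for the connected case and the fiber-product decomposition of Example~\ref{ex:fiber product} together with Lemma~\ref{lem:dim 0 for sum of intersections}, Corollary~\ref{cor:finite = 0-dim}, and Corollary~\ref{cor:fiber product over finite ring} for the disconnected case. Your way of seeing that the two factors are infinite (each surjects onto some $A/\mathfrak p$ with $\mathfrak p\in V$ of infinite index) is in fact slightly cleaner than the paper's appeal to Lemma~\ref{lem:connected comp}, since $V\setminus C$ need not induce a connected subgraph.
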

\begin{proof}
After applying lemmata~\ref{lem:A0, 1} and \ref{lem:A0, 2} sufficiently often, we can reduce to the situation that no minimal prime of $A$ is maximal, i.e., the
vertex set of the graph~$\mathcal G_A$ equals $\Min(A)$.
In this case, if $\mathcal G_A$ is connected, then by Lemma~\ref{lem:connected comp}, the ring~$A$ is bi-interpretable with~$\ZZ$.
Conversely, suppose that $\mathcal G_A$ is not connected.
Let~${C\subseteq V}$ be a connected component of the graph $\mathcal G_A=(V,E)$.
Then for each~${\mathfrak p\in C}$ and~${\mathfrak q\in V\setminus C}$ we have $(\mathfrak p,\mathfrak q)\notin E$, i.e.,
$\mathfrak p+\mathfrak q$ has finite index in $A$.
Thus by Corollary~\ref{cor:finite = 0-dim} and
Lemma~\ref{lem:dim 0 for sum of intersections},  setting $I:=\bigcap C$, $J:=\bigcap (V\setminus C)$, the ideal~${I+J}$ has finite index in~$A$.
Since $I\cap J=N(A)=0$, by Example~\ref{ex:fiber product}, the rings $A$ and $(A/I) \times_{A/(I+J)} (A/J)$ are naturally isomorphic, and by Lemma~\ref{lem:connected comp}, both~$A/I$ and~$A/J$ are infinite.
Hence by Corollary~\ref{cor:fiber product over finite ring}, $A$ is not bi-interpretable with $\ZZ$.
\end{proof}

\section{Finite Nilpotent Extensions}\label{sec:finite nilpotent ext}

\noindent
Throughout this section we let $B$ be a ring with nilradical $N$. Our main goal for this section is the proof of the following theorem:

\begin{theorem}\label{thm:nilpotent}
Suppose $B$ is f.g.~and $\ann_\ZZ(N)\neq 0$. Then the rings $B_{\operatorname{red}}=B/N$ and $B$ are bi-in\-ter\-pret\-a\-ble.
\end{theorem}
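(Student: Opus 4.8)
The statement requires showing that a finitely generated ring $B$ with nilradical $N$ satisfying $\ann_\ZZ(N)\neq 0$ is bi-interpretable with its associated reduced ring $B_{\operatorname{red}} = B/N$. Since $B$ is finitely generated it is noetherian, so $N$ is finitely generated and hence nilpotent: there is some $e\geq 1$ with $N^e = 0$. The hypothesis $\ann_\ZZ(N)\neq 0$ means there is an integer $d\geq 1$ with $dN = 0$. The natural surjection $B\to B/N$ is an interpretation of $B_{\operatorname{red}}$ in $B$ (Examples on ring interpretations, item (2), since $N$ is finitely generated, hence definable). So the task is to produce an interpretation $g\colon B_{\operatorname{red}}\leadsto B$ going the other way, and to check that the composite self-interpretations are homotopic to the identities. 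The key idea, as the introduction signals, is to use \emph{Witt vectors}: the ring $B$ should be reconstructed inside $B_{\operatorname{red}}$ using Witt-vector-type coordinates, exploiting that $N$ is a nilpotent ideal killed by a fixed integer $d$.

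\textbf{First step: reduce to the case $B$ is a $\ZZ/p^k\ZZ$-algebra for a prime power, or more precisely decompose according to the primes dividing $d$.} Writing $d = p_1^{k_1}\cdots p_r^{k_r}$, the ring $B$ decomposes (via the Chinese Remainder Theorem applied to the ideal structure coming from $d$) so that one can treat each prime $p$ separately; a product of rings each bi-interpretable with its reduced quotient is bi-interpretable with the product of the reduced quotients, and direct products of interpretable rings are interpretable (using the idempotent trick from the Examples and Lemma on finite products). This lets us assume $p^k B_{\operatorname{tor}}$-type conditions, i.e. that the relevant torsion is $p$-power torsion.

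\textbf{Second step: the Witt vector interpretation.} With $B$ an algebra over $\ZZ/p^k\ZZ$ on the nilpotent part, one uses the ring $W_m(B_{\operatorname{red}})$ of truncated $p$-typical Witt vectors of length $m$ over the reduced ring, for $m$ large enough relative to $e$ and $k$. The universe $W_m(B_{\operatorname{red}}) = B_{\operatorname{red}}^m$ is definable in $B_{\operatorname{red}}$, and the Witt addition and multiplication polynomials are fixed polynomials with $\ZZ$-coefficients, hence define ring operations definably in $B_{\operatorname{red}}$; this gives an interpretation of $W_m(B_{\operatorname{red}})$ in $B_{\operatorname{red}}$. One then needs a definable (in $B_{\operatorname{red}}$) surjection, or rather a definable quotient, from a piece of $W_m(B_{\operatorname{red}})$ onto (a copy of) $B$, realizing $B$ as a quotient of a subring of Witt vectors. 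Concretely, there is a canonical surjective ring homomorphism $W_m(B_{\operatorname{red}})\twoheadrightarrow B$ when $B$ is a suitable nilpotent extension, and its kernel is definable because $B_{\operatorname{red}}$ is finitely generated (so finitely generated ideals are definable); composing with the interpretation of $W_m(B_{\operatorname{red}})$ in $B_{\operatorname{red}}$ yields the desired $g\colon B_{\operatorname{red}}\leadsto B$. One must handle the mixed-characteristic subtlety that $B$ need not literally be $W_m$ of its reduction — but any finite-length nilpotent extension killed by $p^k$ is a quotient of such a Witt ring, which suffices.

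\textbf{Third step: verify the bi-interpretation conditions.} Here I would invoke Corollary~\ref{cor:Nies} / Proposition~\ref{prop:Nies} rather than check homotopies by hand: since $B$ is f.g.\ in a finite language, it suffices to exhibit an interpretation of $\ZZ$ in $B$ together with a definable injection of $B$ into the interpreted copy of $\ZZ$. By Theorem~\ref{thm:biNreduced} (applied later in the global argument) $B_{\operatorname{red}}$ will be bi-interpretable with $\ZZ$ in the cases of interest, giving an interpretation $B_{\operatorname{red}}\leadsto\ZZ$; composing with $B\to B_{\operatorname{red}}$ gives $B\leadsto\ZZ$. Then a definable injection $B\to (\text{copy of }\ZZ)$ is obtained by sending $b\in B$ to the pair consisting of its image $\bar b\in B_{\operatorname{red}}$ (coded in $\ZZ$) together with a bounded amount of extra data recording $b$ within its coset of $N$: since $N$ is a finitely generated $\ZZ/d\ZZ$-module, each coset $b + N'$ (for $N'$ the relevant submodule) has only finitely many elements indexable by a tuple over $B_{\operatorname{red}}$, and this index is definable in $B$. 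Lemma~\ref{lem:criterion for bijection} is exactly the tool for stitching the injectivity on $N$ together with bijectivity modulo $N$. The composite condition $g\circ f\simeq\id$, where $f\colon B\leadsto B_{\operatorname{red}}$ is the quotient map, then follows from the general machinery once both structures are known bi-interpretable with $\ZZ$ (Corollary~\ref{cor:self-interpretations} and the surrounding results), so that any pair of interpretations automatically forms a bi-interpretation.

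\textbf{Main obstacle.} The genuinely delicate point is the second step: producing a \emph{definable} presentation of $B$ as a quotient of a Witt-vector ring over $B_{\operatorname{red}}$, uniformly and correctly in mixed characteristic, and in particular pinning down the right truncation length $m$ and checking that the comparison homomorphism $W_m(B_{\operatorname{red}})\to B$ is surjective with finitely generated (hence definable) kernel. The algebra of Witt vectors over a ring that is not perfect (indeed not even reduced a priori at the level one wants to apply it) requires care; one has to use that $N^e = 0$ and $dN = 0$ to truncate everything to a finite amount of data. Once that explicit algebraic input is secured, the model-theoretic wrapping is routine given the results already established in Sections~\ref{sec:prelims interpretations} and~\ref{sec:integral domains}.
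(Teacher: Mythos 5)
Your overall plan (interpret $B$ in $B_{\operatorname{red}}$ via Witt vectors, use the quotient map in the other direction) is the right one, but there are three genuine gaps, two of them fatal to the argument as written. First, and most importantly, your third step cannot prove the theorem as stated. The theorem asserts that $B$ and $B_{\operatorname{red}}$ are bi-interpretable \emph{with each other}, unconditionally; your verification routes through Corollary~\ref{cor:Nies} and an interpretation of $\ZZ$, which is only available when $B_{\operatorname{red}}$ is bi-interpretable with $\ZZ$. But the theorem must also hold when it is not (e.g.\ $B_{\operatorname{red}}\cong\ZZ\times\ZZ$), and this case is essential for the main theorem: one needs bi-interpretability of $B$ with $B_{\operatorname{red}}$ precisely in order to transfer \emph{non}-bi-interpretability with $\ZZ$ from $B_{\operatorname{red}}$ to $B$. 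The paper instead verifies the two homotopies $f\circ g\simeq\id$ and $g\circ f\simeq\id$ directly: both composites turn out to be explicit maps $(A^{|d})^m\to A$ and $(B^{|d})^m\to B$ given by Witt polynomials and a fixed system of module generators, hence visibly definable. No detour through $\ZZ$ occurs.

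Second, the algebraic heart of your step two rests on a claim that is false in general: there need not be any canonical surjective ring homomorphism $W_m(B_{\operatorname{red}})\to B$. The paper's construction is different and weaker in exactly the right way. It works with the $d$-truncated \emph{big} Witt vectors $W_d$ (indexed by divisors of $d$, so composite $d$ is handled directly — your CRT decomposition in step one is both unnecessary and problematic, since only $N$, not $B$, is killed by $d$; $B$ may well have characteristic zero). For an ideal $I$ with $I^2=0$ and $dI=0$, setting $A=B/I$, the ghost-component map $w_d\colon W_d(B)\to B$ descends to a ring homomorphism $t\colon W_d(A)\to B$ — this is where $I^2=0$ and $dI=0$ are used — and $t$ is \emph{not} claimed to be surjective. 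Instead, its image contains all $d$-th powers, so $B$ is integral over it and hence a finitely generated $W_d(A)$-module; the interpretation of $B$ in $A$ is then the $W_d(A)$-bilinear surjection $W_d(A)^m\to B$ determined by module generators, with finitely generated (hence definable) kernel. Third, because the descent of $w_d$ only works for square-zero ideals, the paper must run an induction on $e$ with $N^{2^e}=0$, passing through $B\to B/N^{2^{e-1}}\to\cdots$ and using transitivity of bi-interpretability; your "choose $m$ large enough relative to $e$ and $k$" does not substitute for this reduction. The model-theoretic packaging you describe is indeed routine once these algebraic points are fixed, but as it stands the proposal does not close the gap it correctly identifies as the main obstacle.
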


\noindent
In particular, if $B$ is f.g.~and has positive characteristic, then $B_{\operatorname{red}}$ and $B$ are bi-in\-ter\-pret\-a\-ble.
Our bi-interpretation between $B_{\operatorname{red}}$ and $B$ passes through a truncation of Cartier's ring of big  Witt vectors over $B_{\operatorname{red}}$; therefore we first briefly review this construction. (See \cite[IX, \S{}1]{Bou} or \cite[\S{}17]{Haz} for missing proofs of the statements in the next subsection.)

\subsection{Witt vectors}
In the rest of this section we let  $d,i,j\geq 1$ be integers.
Let $X_1,X_2,\dots$ be countably many pairwise distinct 
indeterminates, and for each $j$ set $X_{|j}:=(X_i)_{i|j}$.
The $j$-th {\it Witt polynomials}\/ $w_j\in\ZZ[X_{|j}]$ are defined by
$$w_j := \sum_{i|j}  i X_i^{j/i} .$$
Let now  $Y_1,Y_2,\dots$ be another sequence of pairwise distinct indeterminates.
Then for any polynomial $P\in\ZZ[X,Y]$ in distinct indeterminates $X$, $Y$ there is a sequence~$(P_i)$ of polynomials $P_i\in\ZZ[X_{|i},Y_{|i}]$
such that 
$$P\big(w_i(X_{|i}), w_i(Y_{|i})\big) = w_i\big(P_1(X_1,Y_1),\dots, P_i(X_{|i}, Y_{|i})\big)
\quad\text{for all~$i$.}$$
In particular, there are sequences $(S_i)$ and $(M_i)$ of polynomials $S_i\in \ZZ[X_{|i},Y_{|i}]$
and $M_i\in \ZZ[X_{|i},Y_{|i}]$ such that 
\begin{align*}
w_i(X_{|i})+w_i(Y_{|i}) &= w_i\big(S_1(X_1,Y_1),\dots,S_i(X_{|i}, Y_{|i})\big), \\
w_i(X_{|i})\cdot w_i(Y_{|i}) &= w_i\big(M_1(X_1,Y_1),\dots,M_i(X_{|i}, Y_{|i})\big) 
\end{align*}
for all $i$. For example, $S_1=X_1+Y_1$, $M_1=X_1\cdot Y_1$, and if $p$ is a prime, then
$$S_p 	= X_p+Y_p-\sum_{i=1}^{p-1} \frac{1}{p} {p\choose i}  X_1^i Y_1^{p-i}, \qquad	 M_p	= X_1^p Y_p + X_p Y_1^p + pX_pY_p.$$
Let $A$ be a ring. We
let $A^{|d}$ be the set of sequences $a=(a_i)$ indexed by all $i|d$, and for $a=(a_i)\in A^{|d}$ and $j|d$ let
$a_{|j}:=(a_i)_{i|j}\in A^{|j}$.
We
define binary operations~$+$ and~$\cdot$ on $A^{|d}$ by
\begin{align*}
a+b 			&:= \big(S_1(a_1,b_1),\dots,S_j(a_{|j},b_{|j}),\dots\big), \\
a\cdot b		&:= \big(M_1(a_1,b_1),\dots,M_j(a_{|j},b_{|j}),\dots\big)
\end{align*}
for $a=(a_i),b=(b_i)\in A^{|d}$. 
Equipped with these operations, $A^{|d}$ becomes a ring (with~$0$ and $1$ given by $(0,0,0,\dots)$ and  $(1,0,0,\dots)$ respectively), which we call the $d$-th \emph{ring of Witt vectors}\/ over $A$, denoted by $W_{d}(A)$. Every ring morphism~${f\colon A\to B}$ induces a componentwise map $A^{|d}\to B^{|d}$,
and this map is a ring morphism 
$W_{d}(f)\colon W_d(A)\to W_d(B)$.
Thus~$W_d$ is a functor from the category of rings to itself. The polynomials $w_j$ define (functorial) ring morphisms 
$$a\mapsto w_j(a_{|j})\colon W_d(A)\to A,$$
and hence give rise to a ring morphism 
$$a\mapsto w_*(a):=\big(w_j(a_{|j})\big)\colon W_d\to  A^{|d},$$ 
where~$A^{|d}$ carries the product ring structure.
If no $i|d$ is a zero-divisor in $A$, then~$w_*$ is injective, and if
all $i|d$  are units in $A$, then $w_*$ is bijective.
Note that the underlying set of
$W_d(A)=A^{|d}$ is simply a finite-fold power of $A$, 
and   $A^{|d}$ is integral over the image of $W_d(A)$ under $w_*$. Moreover:

\begin{lemma}\label{lem:witt noeth}
If $A$ is f.g, then so is $W_d(A)$.
\end{lemma}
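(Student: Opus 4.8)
The plan is to exploit the two structural facts already recorded just before the statement: first, the underlying set of $W_d(A) = A^{|d}$ is a finite-fold power of $A$; second, via the ghost map $w_*\colon W_d(A) \to A^{|d}$ (where the target carries the \emph{product} ring structure) the product ring $A^{|d}$ is integral over the image $w_*(W_d(A))$. I would set up the argument as a sandwich: $W_d(A)$ sits inside the f.g.\ ring $A^{|d}$ (product structure) as a subring over which $A^{|d}$ is integral, so by the Artin--Tate lemma (Lemma~\ref{lem:Artin-Tate}) it suffices to check that $W_d(A)$, viewed through $w_*$, lies inside a f.g.\ algebra over a noetherian ground ring — and to take that ground ring to be $W_d(\ZZ)$, or even just $\ZZ$.

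Concretely, here are the steps. First I would choose ring generators $c_1,\dots,c_r$ of $A$ over $\ZZ$. The product ring $A^{|d} = \prod_{i\mid d} A$ is then finitely generated over $\ZZ$ (it is generated by the elementwise copies of the $c_k$ together with the finitely many idempotents that pick out the factors). Second, I would observe that $w_*\colon W_d(A)\to A^{|d}$ is a ring morphism and that $A^{|d}$ is integral over $w_*(W_d(A))$, both noted in the text. Third — the one genuinely new input — I would show that $w_*(W_d(A))$ is a \emph{finitely generated} subring of $A^{|d}$. This follows because each Witt component $w_j(a_{|j}) = \sum_{i\mid j} i\, a_i^{j/i}$ is a polynomial with integer coefficients in the coordinates, so $w_*(W_d(A)) = \ZZ\big[\,w_*(\iota(c_k))\ (1\le k\le r),\ w_*(\text{generators built from Teichmüller-type or coordinate vectors})\,\big]$; more cleanly, since $W_d(A)$ as a \emph{set} is $A^{\,\#\{i\mid d\}}$ and its ring operations are given by the fixed integer polynomials $S_i, M_i$, the ring $W_d(A)$ is generated over $W_d(\ZZ)$ by the finitely many vectors having a single $c_k$ in one coordinate and zeros elsewhere. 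Since $W_d(\ZZ)$ is a f.g.\ (hence noetherian) ring, $W_d(A)$ is a f.g.\ $W_d(\ZZ)$-algebra.

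Combining: $W_d(A)$ is a f.g.\ algebra over the noetherian ring $W_d(\ZZ)$, it is contained in the f.g.\ $W_d(\ZZ)$-algebra $A^{|d}$, and $A^{|d}$ is integral over $W_d(A)$. Hence Lemma~\ref{lem:Artin-Tate} (Artin--Tate) applies with $D = W_d(\ZZ)$ and $R = W_d(A)$, yielding that $W_d(A)$ is a f.g.\ $W_d(\ZZ)$-algebra — which, since $W_d(\ZZ)$ is f.g.\ over $\ZZ$, means $W_d(A)$ is a f.g.\ ring. The main obstacle I anticipate is making the third step airtight: one must be careful that ``$W_d(A)$ is generated over $W_d(\ZZ)$ by finitely many vectors'' really follows from the polynomial formulas for $+$ and $\cdot$ on Witt vectors — i.e.\ that every element of $A^{|d}$, under the Witt operations, can be built from the coordinate-supported vectors $V^i(c_k) := (0,\dots,0,c_k,0,\dots)$ (with $c_k$ in slot $i$) and the image of $\ZZ$. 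This is a standard but slightly fiddly verification; alternatively, and perhaps most economically, one can bypass it entirely by applying Artin--Tate directly with $D = \ZZ$: $A^{|d}$ is f.g.\ over $\ZZ$ and integral over $W_d(A)$, so it suffices to know $W_d(A)$ is \emph{some} $\ZZ$-subalgebra of $A^{|d}$ through which $A^{|d}$ is integral, and then Artin--Tate gives $W_d(A)$ f.g.\ over $\ZZ$ — this requires no generation claim at all, only the integrality already stated in the text.
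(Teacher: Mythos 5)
Your second, ``economical'' route is in essence the paper's own proof: apply the Artin--Tate lemma with $D=\ZZ$ to the inclusion $w_*\big(W_d(A)\big)\subseteq A^{|d}$, using that the product ring $A^{|d}$ is f.g.\ and integral over this image. But as written there is a genuine gap at the final step. The Witt ring $W_d(A)$ is \emph{not} a subring of the product ring $A^{|d}$; it only maps to it via the ghost morphism $w_*$, so Artin--Tate yields only that the \emph{image} $w_*\big(W_d(A)\big)$ is a f.g.\ ring. To transfer this back to $W_d(A)$ you need $w_*$ to be injective, which (as the paper notes) holds only when no $i\mid d$ is a zero divisor in $A$. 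This fails precisely in cases the lemma must cover: if $A$ has characteristic $p$ and $d=p$, then $w_p(a_1,a_p)=a_1^p+pa_p=a_1^p$, so $w_*$ forgets the second coordinate entirely; and positive-characteristic rings with $p\mid d$ certainly arise in the application in Section~\ref{sec:finite nilpotent ext}. Your phrase ``$W_d(A)$ is contained in the f.g.\ $W_d(\ZZ)$-algebra $A^{|d}$'' silently assumes this injectivity.

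The paper closes the gap with a one-line reduction you are missing: write $A$ as a quotient of a polynomial ring $P$ over $\ZZ$; since $W_d$ is a functor and takes surjections to surjections (the induced map is componentwise), $W_d(P)$ surjects onto $W_d(A)$, so it suffices to prove the lemma for $P$. As $P$ is a domain of characteristic zero, $w_*$ is injective there and your Artin--Tate argument goes through verbatim. Your first route (exhibiting generators of $W_d(A)$ over $W_d(\ZZ)$) would sidestep the injectivity issue altogether, but, as you yourself flag, the generation claim --- that every Witt vector lies in the subring generated by $W_d(\ZZ)$ and the single-slot vectors built from ring generators of $A$ --- is left unproved; since Teichm\"uller lifts are multiplicative but not additive and the Verschiebung-type maps are not ring morphisms, this is not a routine verification, and the reduction to characteristic zero is the cleaner repair.
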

\begin{proof}
Using that $A$ is the image of a polynomial ring over $\ZZ$, we first reduce to the case
that $\operatorname{char}(A)=0$, so $w_*$ is injective.  
Since $A^{|d}$ is integral over $B:=w_*\big(W_d(A)\big)$, if the ring $A$ is f.g., then so is  
$B$, by the Artin-Tate Lemma~\ref{lem:Artin-Tate}, and hence so is~$W_d(A)$.
\end{proof}

\noindent
Note also that the identity map $A^{|d}\to W_d(A)$ furnishes us with an interpretation~$A\leadsto W_d(A)$ of the ring 
$W_d(A)$ in the ring $A$.

\subsection{A bi-interpretation between $B$ and $B_{\operatorname{red}}$}
Let  $I$ be  an ideal of $B$ with $I^2=0$
and $d\geq 1$ an integer such that $dI=0$. Put $A:=B/I$.
The residue morphism $B\to A$ induces a surjective ring morphism $r\colon W_d(B)\to W_d(A)$,
and we also have a ring morphism 
$$b=(b_i)\mapsto w(b):=w_d(b)=\sum_{i|d} ib_i^{d/i}\colon W_d(B)\to B.$$
The morphism $w$ descends to $W_d(A)$:

\begin{lemma}
There is a unique ring morphism $t\colon W_d(A)\to B$ such that $w  = t\circ r$.
\end{lemma}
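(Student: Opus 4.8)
The plan is to use the surjectivity of $r$ to reduce the whole statement to a one-line congruence computation. Since $r$ is a surjective ring morphism, there is at most one map $t\colon W_d(A)\to B$ with $w=t\circ r$, so uniqueness is automatic; and if such a set-theoretic $t$ exists it is automatically a ring morphism, because for $x,y\in W_d(B)$ we have
$$t\big(r(x)\big)+t\big(r(y)\big)=w(x)+w(y)=w(x+y)=t\big(r(x+y)\big)=t\big(r(x)+r(y)\big),$$
and likewise for multiplication and for the unit. Thus everything reduces to checking that $w$ is constant on the fibers of $r$, i.e., that $r(b)=r(b')$ implies $w(b)=w(b')$ for $b=(b_i)_{i\mid d}$, $b'=(b'_i)_{i\mid d}$ in $W_d(B)$.

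Next I would unwind this. As $r$ is the componentwise application of the residue map $B\to A=B/I$, the hypothesis $r(b)=r(b')$ says exactly that $\varepsilon_i:=b_i-b'_i\in I$ for every $i\mid d$. Fix such an $i$ and set $n:=d/i\geq 1$; expanding by the binomial theorem,
$$b_i^{\,n}=(b'_i+\varepsilon_i)^{n}=(b'_i)^{n}+n\,(b'_i)^{n-1}\varepsilon_i+\sum_{k\geq 2}\binom{n}{k}(b'_i)^{n-k}\varepsilon_i^{\,k}.$$
Every summand with $k\geq 2$ lies in $I^2=0$, so $b_i^{\,n}=(b'_i)^{n}+n\,(b'_i)^{n-1}\varepsilon_i$; multiplying by $i$ and using $i\cdot(d/i)=d$ gives $i\,b_i^{d/i}=i\,(b'_i)^{d/i}+d\,(b'_i)^{d/i-1}\varepsilon_i$, and the last term vanishes since $\varepsilon_i\in I$ and $dI=0$. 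Summing over all $i\mid d$ then yields
$$w(b)=\sum_{i\mid d} i\,b_i^{d/i}=\sum_{i\mid d} i\,(b'_i)^{d/i}=w(b'),$$
which is what we needed; the induced map $t$ is then the desired ring morphism, unique by the opening remark.

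There is no real obstacle here: the only thing to get right is the bookkeeping in the binomial expansion, where the two hypotheses play complementary roles — $I^2=0$ kills the quadratic-and-higher terms, and $dI=0$ (after absorbing the factor $i$ into $i\cdot (d/i)=d$) kills the remaining linear term. I would state the argument in exactly this order.
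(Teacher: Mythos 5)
Your proof is correct and follows essentially the same route as the paper's: both reduce the statement to showing $w$ is constant on fibers of $r$, and both verify this by the same binomial expansion in which $I^2=0$ kills the higher-order terms and $dI=0$ (via $i\cdot(d/i)=d$) kills the linear term. Your explicit remarks on uniqueness and on why the induced map is automatically a ring morphism are exactly what the paper leaves to the reader.
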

\begin{proof}
Let $b=(b_i)$ and $b'=(b_i')$ be elements of $W_d(B)$   such that $r(b)=r(b')$, that is, $x_i:=b_i'-b_i\in I$
for each $i|d$.
Then for $i|d$ we have
$$ i (b_i')^{d/i} = i b_i^{d/i} + i (d/i) b_i^{ d/i - 1} x_i + \text{multiples of $x_i^2$,}$$
and since $x_i^2 = 0$ and $i (d/i) x_i = d x_i = 0$, we obtain $i (b_i')^{d/i} = i b_i^{d/i}$.
This yields $w_d(b)=w_d(b')$. So given $a\in W_d(A)$ we can set $t(a):=w_d(b)$ where $b$ is any
element of $W_d(B)$ with $r(b)=a$. One verifies easily that then $t\colon W_d(A)\to B$ has the required property. The uniqueness part is clear.
\end{proof}

\noindent
In the following we  
view $B$ as a $W_d(A)$-module via the morphism  $t$  from the previous lemma.

\begin{lemma}
Suppose $B$ is f.g.~Then the $W_d(A)$-module $B$ is f.g.
\end{lemma}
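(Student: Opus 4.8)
The plan is to realise $B$ as a finitely generated module over the subring $C:=t\big(W_d(A)\big)$ of $B$, and then transfer this along the surjection $t\colon W_d(A)\to C$. Recall that $B$ carries its $W_d(A)$-module structure via $x\cdot b:=t(x)\,b$, so it suffices to prove that $B$ is finitely generated as a module over $C$.

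First I would determine which elements of $B$ lie in $C$. Since $t$ is a ring morphism, $C$ is a subring of $B$ containing $1$, hence containing the image of $\ZZ$ in $B$. The key observation is that $C$ contains $b^d$ for \emph{every} $b\in B$: consider the Witt vector $v=(v_i)_{i|d}\in W_d(B)=B^{|d}$ with $v_1=b$ and $v_i=0$ for $i\neq 1$; then $w(v)=\sum_{i|d}i\,v_i^{d/i}=b^d$, the terms with $i>1$ vanishing, and since $w=t\circ r$ we obtain $b^d=t\big(r(v)\big)\in C$.

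Next, pick ring generators $b_1,\dots,b_n$ of the finitely generated ring $B$. Since $\ZZ\cdot 1\subseteq C$ we have $B=C[b_1,\dots,b_n]$, and each $b_i$ is a root of the monic polynomial $T^d-b_i^d\in C[T]$, hence is integral over $C$. Therefore $B$ is a finitely generated $C$-algebra which is integral over $C$, so $B$ is a finitely generated $C$-module; this requires no noetherianity, as $B$ is then spanned over $C$ by the finitely many monomials $b_1^{e_1}\cdots b_n^{e_n}$ with $0\le e_k<d$. Since $t\colon W_d(A)\to C$ is onto, any finite generating set of $B$ as a $C$-module is also a generating set of $B$ as a $W_d(A)$-module, and the lemma follows.

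I do not anticipate a genuine obstacle here; the argument is essentially routine once the role of $d$-th powers is noticed. The one thing to be careful about is that one should \emph{not} try to show that $t$ is surjective onto $B$ --- it generally is not --- but only that its image contains all $d$-th powers, which already forces each ring generator of $B$ to be integral over $t\big(W_d(A)\big)$.
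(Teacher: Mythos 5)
Your argument is correct and follows essentially the same route as the paper: observe that the image of $t$ contains all $d$-th powers (via $w$ applied to the Witt vector $(b,0,\dots,0)$), deduce that $B$ is integral over that image and a finitely generated algebra over it, hence a finitely generated module. The paper cites \cite[Corollary~5.2]{AM} for the last step where you give the explicit spanning set of monomials, but this is an inessential difference.
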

\begin{proof}
First note that the image $W$ of $W_d(A)$ under $t$ contains all $d$-th powers of elements of $B$.
Hence $B$ is integral over its subring $W$: each $b\in B$ is a zero of the monic polynomial $X^d-b^d$
with coefficients in $W$. Since $B$ is a f.g.~$W$-algebra, this implies that $B$ is a f.g.~$W_d(A)$-module \cite[Corollary~5.2]{AM}.
\end{proof}

\noindent
{\it In the rest of this subsection we assume that $B$ is f.g.}\/ Let $b_1,\dots,b_m$ be generators for the $W_d(A)$-module $B$, and consider the surjective $W_d(A)$-bilinear map
\begin{equation}\label{eq:interpret in Witt vectors}
(a_1,\dots,a_m) \mapsto \sum_{j=1}^m a_jb_j\colon W_d(A)^m \to B.
\end{equation}
By Lemma~\ref{lem:witt noeth}, the ring $W_d(A)$ is f.g., hence noetherian, so the kernel
of \eqref{eq:interpret in Witt vectors} is~f.g.~Using $W_d(A)$-bilinearity, the preimage of the graph
of multiplication in $B$ under the map  \eqref{eq:interpret in Witt vectors} is definable in $W_d(A)$.  
Hence the map  \eqref{eq:interpret in Witt vectors} is an interpretation of $B$ in $W_d(A)$.
Composing this interpretation $W_d(A)\leadsto B$ with the interpretation $A\leadsto W_d(A)$
from the previous subsection, we obtain an interpretation $f\colon A\leadsto B$. Since the ideal $I$ is f.g., the residue morphism $b\mapsto\overline{b}\colon B\to A=B/I$ is an interpretation $g\colon B\leadsto A$. With these notations, we have:

\begin{lemma}
The pair $(f,g)$ is a bi-interpretation between $A$ and $B$.
\end{lemma}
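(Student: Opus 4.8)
The plan is to unwind the definition of bi-interpretation. Since $f\colon A\leadsto B$ and $g\colon B\leadsto A$ have already been shown to be interpretations, the composites $g\circ f$ and $f\circ g$ are automatically self-interpretations of $A$ and of $B$ (in particular they are surjective), so all that remains is to verify the two homotopy conditions $g\circ f\simeq\id_A$ and $f\circ g\simeq\id_B$. Because one partner in each homotopy is an identity interpretation, these conditions say precisely that the graphs of $g\circ f$ and of $f\circ g$, as maps from their domains into $A$ and into $B$, are definable in $A$ and in $B$ respectively. So the whole task reduces to writing these two composite maps down explicitly and noting that each is given by a polynomial formula with parameters.

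First I would make $g\circ f$ explicit. Recalling that the underlying set of $W_d(A)$ is the power $A^{|d}$, the domain of $f$ is $(A^{|d})^m$, and $f$ sends $\big(a^{(1)},\dots,a^{(m)}\big)$ to $\sum_{j=1}^m t(a^{(j)})\,b_j$; composing with the residue map $g\colon B\to A=B/I$ gives $\sum_j \overline{t(a^{(j)})}\,\overline{b_j}$, where $\overline{x}=x+I$. The one substantive point is the identification of the composite $W_d(A)\xrightarrow{\;t\;}B\to A$ with the $d$-th Witt polynomial morphism $a\mapsto w_d(a)=\sum_{i|d} i\,a_i^{d/i}$: this is immediate from the identity $w=t\circ r$ proved just above, since $r\colon W_d(B)\to W_d(A)$ is surjective and reducing $w$ modulo $I$ visibly factors as $w_d\circ r$. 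Hence $g\circ f$ is the map $\big(a^{(1)},\dots,a^{(m)}\big)\mapsto\sum_j w_d(a^{(j)})\,\overline{b_j}$, whose graph inside $(A^{|d})^m\times A$ is cut out by a single polynomial equation with parameters $\overline{b_1},\dots,\overline{b_m}\in A$. Thus the graph is definable in $A$ and $g\circ f\simeq\id_A$.

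The argument for $f\circ g$ runs in parallel: its domain is $g^{-1}\big((A^{|d})^m\big)=(B^{|d})^m$ (the coordinatewise reduction $B\to A$ being onto), and using $w=t\circ r$ once more, $f\circ g$ is the map $\big(b^{(1)},\dots,b^{(m)}\big)\mapsto\sum_j w(b^{(j)})\,b_j$ — again a polynomial in the coordinates of the $b^{(j)}$ with parameters $b_1,\dots,b_m\in B$, hence with definable graph in $B$, giving $f\circ g\simeq\id_B$. Combining the two homotopies shows that $(f,g)$ is a bi-interpretation between $A$ and $B$.

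I do not anticipate a real obstacle. The only genuinely substantive ingredient is the identification of $t$ followed by reduction with the Witt polynomial $w_d$, and that falls straight out of $w=t\circ r$; everything else — that the composites are interpretations, that they are surjective, that their graphs are polynomial — is formal. If there is a point requiring care it is the bookkeeping of domains under composition: one must note that the identity interpretation $A\leadsto W_d(A)$ introduces no quotient, so that the domain of $f$ really is the full power $(A^{|d})^m$, and that the preimages under the surjective residue maps used in forming $f\circ g$ are again full powers.
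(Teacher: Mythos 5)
Your proof is correct and follows essentially the same route as the paper's: both arguments simply compute the two composites explicitly as the polynomial maps $(\alpha_j)\mapsto\sum_j w_d(\alpha_j)\overline{b_j}$ and $(\beta_j)\mapsto\sum_j w_d(\beta_j)b_j$ and observe that their graphs are definable. The only difference is that you spell out the identification of $t$ followed by reduction mod $I$ with $w_d$ via the identity $w=t\circ r$ and the surjectivity of $r$, a step the paper leaves to the reader ("one also checks easily"), and your bookkeeping of the domains is accurate.
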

\begin{proof}
The self-interpretation $f\circ g$ of $B$ is the map $(B^{|d})^m\to B$ given by
$$(\beta_1,\dots,\beta_m)\mapsto \sum_{j=1}^m w_d(\beta_j) b_j$$
and hence definable in $B$. One also checks easily that the self-interpretation $g\circ f$ of $A$
is the map $(A^{|d})^m\to A$ given by
$$(\alpha_1,\dots,\alpha_m)\mapsto \sum_{j=1}^m w_d(\alpha_j) \overline{b_j},$$
hence definable in $A$.
\end{proof}

\noindent
We can now prove the main result of this section:

\begin{proof}[Proof of Theorem~\ref{thm:nilpotent}]
Since $\ann_\ZZ(N)\neq 0$, we can take some $d\geq 1$ with~${dN=0}$.
Since $B$ is f.g.~and hence noetherian, we can take some $e\in\NN$ with $N^{2^e}=0$.
We proceed by induction on $e$ to show that $B$ and $B_{\operatorname{red}}=B/N$ are
bi-interpretable. If $e=0$ then $N=0$, and there is nothing to show, so suppose~$e\geq 1$.
By the above applied to the ideal $I:=N^{2^{e-1}}$ of $B$ (so $I^2=0$), 
the f.g.~rings $A:=B/I$ and~$B$ are bi-interpretable.
Now the nilradical of $A$ is $N(A)=N+I$, so $dN(A)=0$ and $N(A)^{2^{e-1}}=0$. Hence by inductive hypothesis applied to $A$ in place of $B$, the rings $A_{\operatorname{red}}=A/N(A)$ and~$A$ are bi-interpretable. 
Since $A_{\operatorname{red}}$ and $B_{\operatorname{red}}$ are isomorphic and the relation of
bi-interpretability is transitive, this implies that $B_{\operatorname{red}}$ and $B$ are bi-interpretable.
\end{proof}

\section{Derivations on Nonstandard Models}\label{sec:derivations}

\noindent
In this section we shall construct derivations on nonstandard models of finitely generated rings.  Our appeal to ultralimits is not strictly speaking necessary as a simple compactness argument would suffice, but the systematic use of ultralimits permits us to avoid some syntactical considerations.

\subsection{Ultralimits}
Let us recall some of the basic formalism of ultralimits.  Let $I$ be a nonempty index set, $\cU$ be an ultrafilter on $I$, and $\bM=(M,\dots)$ be a structure (in some first-order language).  We denote by $\bM^\cU$ the ultrapower $\bM^I/\cU$ of~$\bM$ relative to $\cU$ and by $\Delta_{\bM}$ the diagonal embedding of $\bM$ into $\bM^\cU$, that is, that is, the embedding $\bM \to \bM^\cU$ induced by the map $M \to M^I$ which associates to an element $a$ of $M$ the constant function $I\to M$ with value $a$.   We define the ordinal-indexed directed system of ultralimits $\Ult_\cU(\bM,\alpha)$ by 
\begin{enumerate}
\item $\Ult_\cU(\bM,0) := \bM$, 
\item $\Ult_\cU(\bM,\alpha + 1) := \big(\!\Ult_\cU(\bM,\alpha)\big)^\cU$, 
and 
\item $\Ult_\cU(\bM,\lambda) := \varinjlim_{\alpha < \lambda} \Ult_\cU(\bM,\alpha)$ for a limit ordinal $\lambda$.  
\end{enumerate}
For us, in (3) only the case of $\lambda = \omega$ is relevant.  By way of notation, if $I$ and~$\cU$ are understood, then by an ultralimit we mean $\Ult_\cU(\bM,\omega)$ and we shall write $${^*}\bM := \Ult_\cU(\bM,\omega).$$
By definition of the direct limit,  the structure ${^*}\bM$ comes with a family of embeddings $\Ult_\cU(\bM,n)\to {^*}\bM$ which commute with the diagonal embeddings 
$$\Delta_{\Ult(\bM,n)}\colon \Ult_\cU(\bM,n) \to \Ult_\cU(\bM,n+1).$$ 
We identify $\bM$ with its image in ${^*}\bM$ under the embedding 
$$\bM=\Ult_\cU(\bM,0)\to {^*}\bM.$$
For fixed $\cU$, the ultralimit construction commutes with taking reducts, and is functorial on the category of sets.  Given a set $N$ and a map $f\colon M \to N$, we write ${^*}f\colon {^*}M \to {^*}N$ for the ultralimit of $f$. In particular, if $\bN$ is a substructure of $\bM$, then the ultralimit of the natural inclusion $N\to M$ is an embedding ${^*}\bN\to{^*}\bM$ (compatible with the inclusions of $\bN$ and $\bM$ into their respective ultralimits), by which we identify ${^*}\bN$ with a substructure of ${^*}\bM$.   From the universal property of the direct limit, we have the curious and useful fact that ${^*}\!\Ult_\cU(\bM,1) = {^*}\bM$ where by equality we mean canonical isomorphism. 

\subsection{Constructing derivations on elementary extensions}
With the next two lemmata we show that every non-principal ultrapower of an integral domain of characteristic zero admits an ultralimit carrying a derivation which is nontrivial on the ultralimit of the non-standard integers. 
We let $R$ be an integral domain of characteristic zero and $k\subseteq R$ be a subring.

\begin{lemma}
\label{fgderiv}
Suppose $R$ is a f.g.~$k$-algebra, and let $t \in R$ be transcendental over~$k$.  Then there is a $k$-derivation $\partial\colon R \to R$ with $\partial(t) \neq 0$.
\end{lemma}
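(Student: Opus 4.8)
The plan is to first produce a $k$-derivation on the fraction field $L:=\Frac(R)$ that does not annihilate $t$, and then multiply it by a suitable nonzero element of $R$ to land back inside $R$. First I would set $F:=\Frac(k)\subseteq L$ and note that $t$, being transcendental over $k$, is also transcendental over $F$ (an algebraic relation for $t$ over $F$ clears denominators to one over $k$). Since $R$ is a f.g.\ $k$-algebra, $L$ is a finitely generated field extension of $F$, so I would extend $\{t\}$ to a transcendence basis $t=t_1,t_2,\dots,t_n$ of $L/F$; then $L$ is a finite extension of the purely transcendental subfield $F(t_1,\dots,t_n)$, and this extension is separable because $\cha(R)=0$. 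Consequently the $F$-derivation $D_0$ of $F(t_1,\dots,t_n)$ determined by $D_0(t_1)=1$ and $D_0(t_i)=0$ for $i\ge 2$ extends uniquely to an $F$-derivation $D\colon L\to L$, by the standard criterion for extending a derivation along a separable algebraic extension --- concretely, writing $L=F(t_1,\dots,t_n)(\theta)$ with separable minimal polynomial $g$ and differentiating $g(\theta)=0$ forces the value $D(\theta)=-g^{D}(\theta)/g'(\theta)$, where $g^{D}$ means applying $D$ to the coefficients of $g$. In particular $D$ is a $k$-derivation with $D(t)=1$.

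Next I would clear denominators: writing $R=k[a_1,\dots,a_m]$ and $D(a_i)=p_i/q_i$ with $p_i,q_i\in R$ and $q_i\neq 0$, put $c:=q_1\cdots q_m$, which is nonzero since $R$ is a domain, and note that $c\,D(a_i)\in R$ for every $i$. Then I would set $\partial:=cD\colon L\to L$. Because the product of a derivation with a ring element is again a derivation, $\partial$ is a $k$-derivation of $L$; it annihilates $k$ and carries each generator $a_i$ into $R$, so by additivity and the Leibniz rule it carries every $k$-linear combination of monomials in the $a_i$ into $R$, that is, $\partial(R)\subseteq R$. Thus $\partial$ restricts to a $k$-derivation $R\to R$, and $\partial(t)=c\,D(t)=c\neq 0$, which is what is wanted.

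I expect no serious obstacle here. The one point that needs care is that $D$ is built on $L$ rather than on $R$ itself, so the restriction claim $\partial(R)\subseteq R$ must be verified --- and it goes through precisely because $cD$ is still a derivation, is $k$-linear, and sends the chosen $k$-algebra generators of $R$ into $R$; no normality or integral-closedness hypothesis on $R$ is needed. A cosmetic alternative would be to phrase the whole argument via Kähler differentials (the $L$-vector space $\Omega_{R/k}\otimes_R L=\Omega_{L/F}$ has $dt$ as part of a basis, hence some $L$-linear functional is nonzero on $dt$, yielding $D$ after clearing denominators), but it comes down to the same computation.
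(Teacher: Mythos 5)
Your proof is correct and follows essentially the same route as the paper's: construct a $k$-derivation $D$ on the fraction field with $D(t)=1$ (the paper simply cites Lang for this step, whereas you spell out the transcendence-basis/separability argument), then multiply by a common denominator of the images of the algebra generators and check via the Leibniz rule that the resulting derivation maps $R$ into $R$. No gaps.
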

\begin{proof}
Present $R$ as $R = k[t_1,\ldots,t_n]$ where $t_1 = t$.   Since $t$ is transcendental over~$k$, there is a $k$-derivation $D\colon K \to K$ on the field of fractions $K$ of $R$ satisfying $D(t) = 1$.  
(See, e.g., \cite[Proposition~VIII.5.2]{Lang}.)
Write $D(t_i) = a_i/b_i$ where $a_i \in R$ and $b_i \in R$, $b_i\neq 0$.  Let $\partial$ be the restriction of $\left(\prod_{i=1}^n b_i\right) D$ to $R$,  a $k$-derivation on~$R$ possibly taking values in $K$.  Since $R$ is an integral domain, $\partial(t) = \prod_i b_i \neq 0$, and visibly $\partial(t_i) = a_i \prod_{j \neq i} b_j \in R$ for each $i = 1,\dots,n$.  Hence, for any $f \in R$, writing $f = F(t_1,\ldots,t_n)$ for some polynomial $F$ over $k$, we see that $\partial(f) = \sum_{i=1}^n \frac{\partial F}{\partial X_i}(t_1,\ldots,t_n) \partial(t_i) \in R$.
\end{proof}

\noindent
Taking an ultralimit of the above derivations, we find interesting derivations on ultralimits.

\begin{lemma}
\label{eeder}
Let $t \in R$ be transcendental over $k$. Then there is an ultralimit ${^*}R$ of $R$ and a $k$-derivation $\partial\colon {^*}R \to {^*}R$ with $\partial(t) \neq 0$.
\end{lemma}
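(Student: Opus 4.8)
The plan is to write $R$ as a directed union of finitely generated $k$-subalgebras that contain $t$, apply Lemma~\ref{fgderiv} to each piece, and then glue the resulting derivations together inside an iterated ultrapower. Concretely, I would let $I$ be the set of finitely generated $k$-subalgebras $R_\lambda\subseteq R$ with $t\in R_\lambda$, ordered by inclusion; this is a nonempty directed poset with $\bigcup_{\lambda\in I}R_\lambda=R$ (since $a\in k[a,t]$ for every $a\in R$). I would then fix an ultrafilter $\cU$ on $I$ containing every ``cone'' $\{\lambda\in I:\lambda\supseteq\lambda_0\}$ (these sets have the finite intersection property), and use this single $\cU$ to form all the ultralimits $\Ult_\cU(R,n)$, so that ${^*}R=\varinjlim_n\Ult_\cU(R,n)$ with the diagonal embeddings $\Delta_n\colon\Ult_\cU(R,n)\to\Ult_\cU(R,n+1)$ as transition maps.

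First I would construct the base derivation $D_0\colon R\to\Ult_\cU(R,1)=R^\cU$, the target viewed as an $R$-module via the diagonal embedding. For each $\lambda\in I$, Lemma~\ref{fgderiv} applies to $R_\lambda$ — a finitely generated $k$-algebra which is an integral domain of characteristic zero, with $t\in R_\lambda$ transcendental over $k$ — and yields a $k$-derivation $\partial_\lambda\colon R_\lambda\to R_\lambda$ with $\partial_\lambda(t)\neq0$; I would extend each $\partial_\lambda$ to $\tilde\partial_\lambda\colon R\to R$ by declaring it $0$ off $R_\lambda$, and set $D_0(a):=(\tilde\partial_\lambda(a))_{\lambda}/\cU$. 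The key observation is that for fixed $a,b\in R$ the set of $\lambda$ with $a,b\in R_\lambda$ contains the cone above $k[a,b,t]$, hence lies in $\cU$, and on that set $\tilde\partial_\lambda$ coincides with the genuine derivation $\partial_\lambda$; so additivity, $k$-linearity and the Leibniz rule pass to the ultrapower and $D_0$ is a $k$-derivation. Since $\tilde\partial_\lambda(t)=\partial_\lambda(t)\neq0$ for \emph{every} $\lambda\in I$, we get $D_0(t)\neq0$ in $R^\cU$.

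Next I would iterate: given a $k$-derivation $D_{n-1}\colon\Ult_\cU(R,n-1)\to\Ult_\cU(R,n)$, set $D_n:=(D_{n-1})^{\cU}\colon\Ult_\cU(R,n)\to\Ult_\cU(R,n+1)$, the map applied componentwise to ultrapowers. Applying an operation componentwise preserves additivity, $k$-linearity and the Leibniz rule, so each $D_n$ is a $k$-derivation, and one checks directly that $D_n\circ\Delta_{n-1}=\Delta_{n-1}\circ D_{n-1}$ on $\Ult_\cU(R,n-1)$ because diagonal embeddings commute with componentwise maps. This compatibility is exactly what lets the $D_n$ glue, along the chain ${^*}R=\varinjlim_n\Ult_\cU(R,n)$, to a single map $\partial\colon{^*}R\to{^*}R$ with $\partial$ restricting to $D_n$ on $\Ult_\cU(R,n)$. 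Any two elements of ${^*}R$ lie in a common $\Ult_\cU(R,n)$, on which $\partial$ agrees with the $k$-derivation $D_n$ and the ring operations are the restrictions of those of ${^*}R$, so $\partial$ is a $k$-derivation on ${^*}R$; and $\partial(t)=D_0(t)\neq0$, since $t\in R=\Ult_\cU(R,0)$ and the embedding $\Ult_\cU(R,1)\hookrightarrow{^*}R$ is injective.

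The main obstacle — and the reason one must pass through the \emph{iterated} ultralimit rather than a single ultrapower — is that the extended maps $\tilde\partial_\lambda$ are derivations only on $R_\lambda$, not on all of $R$; thus $D_0$ is a derivation on the embedded copy of $R$ inside $R^\cU$ but need not be one on all of $R^\cU$, whose ``new'' elements need not lie coordinatewise in the $R_\lambda$. Each further ultrapower step lets the construction ``catch up'' with the elements added at the previous stage, and in the colimit $\partial$ becomes a genuine derivation everywhere. The remaining points (that the componentwise ultrapower of a derivation is again a derivation with the appropriate module structure, and that the gluing is well-defined because all transition maps are ring embeddings) are routine verifications.
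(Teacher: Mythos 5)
Your construction is essentially the paper's own proof written out in more detail: the same decomposition of $R$ into finitely generated pieces (the paper indexes by finite subsets $S\subseteq R$ and works with $k[t,S]$; your f.g.\ subalgebras containing $t$ form a cofinal, equivalent system), the same ultrafilter generated by cones, the same ultraproduct derivation $D_0\colon R\to R^\cU$, and your iterates $D_n=(D_{n-1})^\cU$ glued along $\varinjlim_n\Ult_\cU(R,n)$ are exactly the unwinding of the paper's one-line conclusion ``$\partial:={^*}D$'' together with the identification ${^*}\Ult_\cU(R,1)={^*}R$. The construction of $D_0$ and the verification that it is a $k$-derivation on the diagonal copy of $R$ with $D_0(t)\neq0$ are correct.

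The step you defer as routine, however --- that $(D_{n-1})^\cU$ is again a $k$-derivation ``with the appropriate module structure'' --- is where the whole difficulty of the lemma sits, and as stated it does not hold. (The same point is compressed into the paper's final sentence, so you have reproduced its argument faithfully rather than introduced a new error; but the verification you wave at is the genuine content.) Taking the ultrapower of the identity $D_0(ab)=\Delta_R(a)D_0(b)+\Delta_R(b)D_0(a)$ yields, for $u,v\in R^\cU$, the identity $D_1(uv)=(\Delta_R)^\cU(u)\,D_1(v)+(\Delta_R)^\cU(v)\,D_1(u)$ in $(R^\cU)^\cU$, where $(\Delta_R)^\cU$ is the \emph{ultrapower of the diagonal embedding}. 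This map is not the diagonal embedding $\Delta_{R^\cU}\colon R^\cU\to(R^\cU)^\cU$, which is both the module structure map and the transition map of the direct system defining ${^*}R$: a direct computation shows $(\Delta_R)^\cU(u)=\Delta_{R^\cU}(u)$ exactly when $u$ lies in the diagonal copy of $R$. For any $u\notin\Delta_R(R)$ with $D_1(u)\neq0$ (such $u$ exist: if $D_1$ vanished off the proper additive subgroup $\Delta_R(R)$, additivity would force $D_1\equiv0$, contradicting $D_1(\Delta_R(t))\neq0$), the untwisted rule $D_1(u^2)=2\Delta_{R^\cU}(u)D_1(u)$ therefore fails, since $(R^\cU)^\cU$ is a domain of characteristic zero. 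So each $D_n$ obeys the honest Leibniz rule only on the image of $R$, the ``catching up'' you describe does not occur, and the glued map satisfies only the twisted rule $\partial(xy)=\sigma(x)\partial(y)+\sigma(y)\partial(x)$ with $\sigma={^*}\Delta_R$ a non-identity self-embedding of ${^*}R$. Closing this gap needs a different mechanism at the inductive step --- for instance, producing at stage $n$ derivations on f.g.\ pieces of $\Ult_\cU(R,n)$ compatible with the partial derivation already constructed --- and not merely the functoriality of ultrapowers.
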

\begin{proof}
Let  $I$ be the set of finite subsets of $R$. For $S \in I$, let 
$$(S) := \{ S' \in I : S \subseteq S' \},$$
and let 
$\cC := \big\{ (S) : S \in  I \big\}$.
Observe that $\cC$ has the finite intersection property: $(S_1) \cap (S_2) = (  S_1 \cup S_2 )$ for all $S_1,S_2\in I$. Hence $\cC$ extends to an ultrafilter $\cU$ on $I$.

For each $S \in I$,   by Lemma~\ref{fgderiv} we may find a $k$-derivation $\partial_S\colon k[t,S] \to k[t,S]$ with $\partial_S(t) \neq 0$, and these $k$-derivations combine to a $k$-derivation $\prod_{S \in I} \partial_S$ on
the $k$-subalgebra $\prod_{S\in I} k[t,S]$ of $R^I$, which in turn induces a $k$-derivation $\partial_{\operatorname{fin}}$
with $\partial_{\operatorname{fin}}(t)\neq 0$ on the image $R_{\operatorname{fin}}$ of this subalgebra under the natural surjection $R^I\to R^I/\cU=\Ult_\cU(R,1)$.
By definition of $\cC$, the image of $\Delta_R$ is contained in $R_{\operatorname{fin}}$. Thus 
$$D := \partial_{\operatorname{fin}} \circ \Delta_R\colon R \to \Ult_\cU(R,1)$$ is a $k$-derivation and $D(t) \neq 0$. Then $$\partial := {^*}D\colon{^*}R \to {^*}\!\Ult_\cU(R,1) = {^*}R$$ is our desired derivation.
\end{proof}

\noindent
We specialize the above result to obtain our derivation which is nontrivial on the non-standard integers.  
Below we fix an arbitrary non-principal ultrafilter $\widetilde{\cU}$ (on some unspecified index set), and
given a ring $A$ we write $\widetilde{A}=A^{\widetilde{\cU}}$.

\begin{corollary}
\label{eederZ}
There is
a $k$-derivation $\partial$ on an ultralimit ${^*}\widetilde{R}$ of $\widetilde{R}$ such that $\partial(t)\neq 0$ for some  $t\in\widetilde{\ZZ}$.
\end{corollary}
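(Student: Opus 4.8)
The plan is to read this off Lemma~\ref{eeder}, applied with the ultrapower $\widetilde R=R^{\widetilde\cU}$ in the role of $R$ and with a nonstandard integer in the role of the transcendental element. First I would record the routine compatibilities. The ring $\widetilde R$ is again an integral domain of characteristic zero; the diagonal embedding $\Delta_R\colon R\to\widetilde R$ exhibits $k$ as a subring of $\widetilde R$; and the canonical inclusion $\ZZ\hookrightarrow R$, which is injective because $\cha(R)=0$, induces on ultrapowers an embedding $\widetilde\ZZ=\ZZ^{\widetilde\cU}\hookrightarrow\widetilde R$ commuting with the diagonal maps, by means of which we view $\widetilde\ZZ$ as a subring of $\widetilde R$. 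Thus it suffices to exhibit some $t\in\widetilde\ZZ$ that is transcendental over $k$ inside $\widetilde R$: Lemma~\ref{eeder} then furnishes an ultralimit ${^*}\widetilde R$ of $\widetilde R$ together with a $k$-derivation $\partial\colon{^*}\widetilde R\to{^*}\widetilde R$ with $\partial(t)\neq 0$, which is precisely the assertion.

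To produce $t$, I would take any \emph{nonstandard} element $t\in\widetilde\ZZ$, that is, one with $t>\Delta_\ZZ(n)$ for all $n\in\NN$; such an element exists since $\widetilde\cU$ is non-principal, so that $\widetilde\ZZ$ is a proper elementary extension of $\ZZ$. I claim every such $t$ is transcendental over $k$. Let $P\in k[X]$ be nonzero and write $P=\sum_{j\le N}c_jX^j$ with $c_j\in k\subseteq R$. Since $R$ is an integral domain, $P$ has at most $N$ roots in $R$, so the set $S$ of those roots of $P$ that lie in $\ZZ\subseteq R$ is finite and consists of standard integers. Representing $t$ by a function $i\mapsto f(i)\in\ZZ$, the element $P(t)\in\widetilde R$ is represented by $i\mapsto P(f(i))$, which vanishes exactly on $\bigcup_{r\in S}\{\,i:f(i)=r\,\}$. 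Because $t$ is nonstandard, $\{\,i:f(i)=r\,\}\notin\widetilde\cU$ for each fixed $r\in\ZZ$, and a finite union of sets outside the ultrafilter $\widetilde\cU$ is again outside it; hence $P(t)\neq 0$ in $\widetilde R$, proving the claim.

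The only step that calls for any care is this transcendence point — checking that a single nonstandard integer is transcendental over the subring $k$ of $R$, which may be large and not finitely generated. The computation above shows it costs nothing once one uses that $k[X]$-polynomials have only finitely many roots in the integral domain $R$ and that these roots are fixed \emph{standard} elements, all of which a nonstandard $t$ avoids by definition. Beyond that, the argument is merely the bookkeeping of ultrapowers together with a direct appeal to Lemma~\ref{eeder}.
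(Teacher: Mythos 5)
Your proposal is correct and follows exactly the paper's (two-line) proof: pick a nonstandard $t\in\widetilde\ZZ\setminus\ZZ$, observe it is transcendental over $k$, and apply Lemma~\ref{eeder} with $\widetilde R$ in place of $R$. The only difference is that you spell out the transcendence verification (roots of a nonzero $P\in k[X]$ in the domain $R$ that lie in $\ZZ$ form a finite set of standard integers, which a nonstandard $t$ avoids $\widetilde{\cU}$-almost everywhere), which the paper leaves as an assertion.
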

\begin{proof} 
Let 
$t \in \widetilde\ZZ \setminus \ZZ$ be an arbitrary new element of $\widetilde\ZZ$.
Then $t$ is transcendental over $k$, and the previous lemma applies to $\widetilde{R}$ in place of $R$.
\end{proof}

\noindent
Combining the previous corollary with Lemma~\ref{annprime}, we conclude that noetherian rings having torsion-free nilpotent elements have elementary extensions with an automorphism moving the nonstandard integers.

\begin{lemma}
\label{lem:eeaut}
Let $A$ be a noetherian ring with nilradical $N=N(A)$, and suppose that $\ann_\ZZ(N) = 0$. Then there is an ultralimit ${^*}\widetilde{A}$ of $\widetilde{A}$ and an automorphism $\sigma$ of~${^*}\widetilde{A}$ over $A$ for which $\sigma({^*}\widetilde\ZZ) \nsubseteq {^*}\widetilde\ZZ$.
\end{lemma}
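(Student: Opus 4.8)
The plan is to combine Lemma~\ref{annprime} with Corollary~\ref{eederZ} via the classical device that turns a derivation into an automorphism of a square-zero extension: if $\partial$ is a derivation on a commutative ring $S$ and $\epsilon\in S$ satisfies $\epsilon^2=0$, then $x\mapsto x+\epsilon\,\partial(x)$ is an automorphism of $S$, with inverse $x\mapsto x-\epsilon\,\partial(x)$.

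First I would observe that $\ann_\ZZ(N)=0$ forces $\cha(A)=0$, since $\ann_\ZZ(A)\subseteq\ann_\ZZ(N)$. As $A$ is noetherian, Lemma~\ref{annprime} provides some $\epsilon\in N$ with $\fp:=\ann_A(\epsilon)$ prime and $\ann_\ZZ(\epsilon)=0$; then $\epsilon^2=0$, $\epsilon\in\fp$, and $R:=A/\fp$ is an integral domain of characteristic zero (so $\ZZ\hookrightarrow R$). With $\widetilde\cU$ as fixed in this section, set $\widetilde A=A^{\widetilde\cU}$, $\widetilde\fp=\fp^{\widetilde\cU}$, $\widetilde R=R^{\widetilde\cU}$; by the theorem of {\L}o\'s, $\widetilde R=\widetilde A/\widetilde\fp$ and $\ann_{\widetilde A}(\epsilon)=\widetilde\fp$, and $\widetilde\ZZ=\ZZ^{\widetilde\cU}$ contains a nonstandard integer.

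Next I would apply Corollary~\ref{eederZ} to the integral domain $\widetilde R$, taking for the subring $k$ the diagonal copy of $R$ (note that a new element $t\in\widetilde\ZZ\setminus\ZZ$ is transcendental over $R$ inside $\widetilde R$, since a nontrivial polynomial relation over $R$ would confine the representing function to finitely many values $\widetilde\cU$-almost everywhere, forcing $t\in\ZZ$). This produces an ultrafilter $\cU$, the ultralimit ${^*}\widetilde R=\Ult_\cU(\widetilde R,\omega)$, and an $R$-derivation $\partial\colon{^*}\widetilde R\to{^*}\widetilde R$ with $\partial(t)\neq0$ for some $t\in\widetilde\ZZ$. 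Forming ${^*}\widetilde A:=\Ult_\cU(\widetilde A,\omega)$ with the same $\cU$, exactness of the ultralimit gives ${^*}\widetilde R={^*}\widetilde A/{^*}\widetilde\fp$; writing $q\colon{^*}\widetilde A\to{^*}\widetilde R$ for the quotient map, we also have $\ann_{{^*}\widetilde A}(\epsilon)={^*}\widetilde\fp=\ker q$ (the annihilator of a single element is already computed at a finite stage of the ultralimit). Hence multiplication by $\epsilon$ descends to a well-defined injective additive map $\lambda\colon{^*}\widetilde R\to{^*}\widetilde A$ with $\lambda(q(z))=\epsilon z$, which moreover satisfies $\lambda(q(x)\cdot s)=x\cdot\lambda(s)$ for $x\in{^*}\widetilde A$, $s\in{^*}\widetilde R$, and $q\circ\lambda=0$ (the latter since $\epsilon\in\fp$).

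I would then define $\sigma\colon{^*}\widetilde A\to{^*}\widetilde A$ by $\sigma(x):=x+\lambda\big(\partial(q(x))\big)$ and verify, all by direct computation: \emph{(i)} $\sigma$ is a ring endomorphism — additivity and $\sigma(1)=1$ are immediate (using $\partial(1)=0$), and multiplicativity follows from the Leibniz rule for $\partial$ together with the identity $\lambda(q(x)s)=x\lambda(s)$ and $\epsilon^2=0$, which kills the cross term $\lambda\partial q(x)\cdot\lambda\partial q(y)$; \emph{(ii)} here $q\circ\lambda=0$ yields $q\circ\sigma=q$, and a one-line computation then shows $\sigma$ and $x\mapsto x-\lambda(\partial(q(x)))$ are mutually inverse, whence $\sigma\in\Aut({^*}\widetilde A)$; \emph{(iii)} $\sigma$ fixes $A$ pointwise, since for $a\in A$ the element $q(a)$ lies in the diagonal copy of $R$ and $\partial$ is an $R$-derivation, so $\lambda(\partial(q(a)))=0$; and \emph{(iv)} $\sigma(t)-t=\lambda(\partial(q(t)))$ is nonzero (as $\partial(q(t))\neq0$ and $\lambda$ is injective) and lies in $\epsilon\cdot{^*}\widetilde A$, hence is a nonzero square-zero element, whereas ${^*}\widetilde\ZZ$ — an ultralimit of the integral domain $\widetilde\ZZ$ — is again an integral domain and so has no nonzero square-zero element; thus $t\in{^*}\widetilde\ZZ$ while $\sigma(t)\notin{^*}\widetilde\ZZ$, i.e. $\sigma({^*}\widetilde\ZZ)\nsubseteq{^*}\widetilde\ZZ$. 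The one point that needs care is organizational: one must run Corollary~\ref{eederZ} so that its ambient ultralimit of $\widetilde R$ is realized as the \emph{same} ultralimit construction applied to $\widetilde A$, so that $\partial$ can be pulled back along $q$ and carried into ${^*}\widetilde A$ through $\lambda$; granting that, everything else is a routine check.
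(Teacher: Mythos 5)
Your proof is correct and follows essentially the same route as the paper's: extract $\epsilon\in N$ with prime annihilator and $\ann_\ZZ(\epsilon)=0$ via Lemma~\ref{annprime}, apply Corollary~\ref{eederZ} with $k=R=A/\ann_A(\epsilon)$ to get an $R$-derivation on ${^*}\widetilde{R}$ nontrivial on the nonstandard integers, and form the automorphism $x\mapsto x+\epsilon\,\partial(q(x))$ of the square-zero extension (your $\lambda$ is just the ${^*}\widetilde{R}$-module structure on ${^*}\widetilde{A}\epsilon$ that the paper invokes). The only, equally valid, divergence is at the last step: you rule out $\sigma(t)\in{^*}\widetilde{\ZZ}$ by observing that $\sigma(t)-t$ is a nonzero nilpotent while ${^*}\widetilde{\ZZ}$ is a domain, whereas the paper transfers $A\epsilon\cap\ZZ=0$ to the ultralimit.
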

\begin{proof}
Let $\epsilon$ be an element of $N$ with $\ann_A(\epsilon) =: \fq$ prime and $\ann_\ZZ(\epsilon) = 0$, given by Lemma~\ref{annprime}.  Let $\pi\colon A \to A/\fq =: R$ be the natural quotient map. By Corollary~\ref{eederZ}, we can find a limit ultrapower ${^*}\widetilde{R}$ of $\widetilde{R}$ and an $R$-derivation $\partial\colon{^*}\widetilde{R} \to {^*}\widetilde{R}$ which is nontrivial on ${^*}\widetilde\ZZ$.   Note that $A\epsilon$ is an $R$-module in a natural way, and so ${^*}\widetilde{A}\epsilon$ is an ${^*}\widetilde{R}$-module.
We thus may define a map
$\sigma\colon{^*} \widetilde{A} \to {^*}\widetilde{A}$ by $x \mapsto x + \partial({^*}\pi(x)) \epsilon$; one checks easily that $\sigma$ is an automorphism over $A$.
We have $A\varepsilon\cap\ZZ=0$ and $\ann_R(\epsilon)=0$, 
hence ${^*}\widetilde{A}\varepsilon\cap {^*}\widetilde{\ZZ}=0$ and $\ann_{{^*}\widetilde{R}}(\epsilon) = 0$.
Since $\partial$ is nontrivial on ${^*}\widetilde{\ZZ}$, it  follows that $\sigma({^*}\widetilde{\ZZ}) \nsubseteq {^*}\widetilde{\ZZ}$.
\end{proof}

\noindent
We conclude that  rings as in the previous lemma are not bi-interpretable with~$\ZZ$.

\begin{cor}
\label{cor:nbinil}
No noetherian ring with   nilradical $N$ for which $\ann_\ZZ(N) = 0$ is bi-interpretable with $\ZZ$.
\end{cor}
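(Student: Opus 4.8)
The plan is to deduce this immediately from Lemma~\ref{lem:eeaut} together with Corollary~\ref{cor:biNdefZ}. Let $A$ be a noetherian ring with nilradical $N = N(A)$ satisfying $\ann_\ZZ(N) = 0$, and suppose toward a contradiction that $A$ is bi-interpretable with $\ZZ$. First I would note that $\ann_\ZZ(A) \subseteq \ann_\ZZ(N) = 0$, so $A$ has characteristic zero; thus the natural map $\ZZ \to A$ is injective, with image the prime subring $\ZZ_A$ of $A$. By Corollary~\ref{cor:biNdefZ}, $\ZZ_A$ is definable in $A$ as a subring, say by a ring-language formula $\varphi(x)$ with parameters $\bar a$ from $A$.

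Next I would invoke Lemma~\ref{lem:eeaut} to obtain an ultralimit ${}^*\widetilde{A}$ of $\widetilde{A} = A^{\widetilde{\cU}}$ and an automorphism $\sigma$ of ${}^*\widetilde{A}$ over $A$ with $\sigma({}^*\widetilde{\ZZ}) \nsubseteq {}^*\widetilde{\ZZ}$, where ${}^*\widetilde{\ZZ}$ denotes the canonical image of $\ZZ$ (equivalently, of its iterated ultrapowers) inside ${}^*\widetilde{A}$. The goal is then to contradict this by showing that $\sigma$ must in fact stabilise ${}^*\widetilde{\ZZ}$. To that end I would expand the ring language by a single unary predicate $P$ and pass to the structure $A^+ := (A,{+},{\times},\ZZ_A)$, which satisfies $\forall x\,(P(x)\leftrightarrow\varphi(x,\bar a))$. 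Since the ultralimit construction commutes with reducts and each of its stages (the ultrapower $A\mapsto A^{\widetilde{\cU}}$ and the iterated $\Ult$'s) yields an elementary extension, one has $A^+\preceq{}^*\widetilde{A^+}$, whose underlying ring is ${}^*\widetilde{A}$ and whose interpretation of $P$ is the image of $\ZZ_A$ under all these maps; because the ultralimit functor preserves injections of sets, that image is exactly ${}^*\widetilde{\ZZ}$ (literally the same subset of ${}^*\widetilde{A}$ that appears in Lemma~\ref{lem:eeaut}). Transferring $\forall x\,(P(x)\leftrightarrow\varphi(x,\bar a))$ upward, I conclude that $\varphi(x,\bar a)$ defines ${}^*\widetilde{\ZZ}$ in ${}^*\widetilde{A}$.

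Finally, since $\sigma$ fixes $A$, and in particular $\bar a$, pointwise, it stabilises every $\bar a$-definable subset of ${}^*\widetilde{A}$, so $\sigma({}^*\widetilde{\ZZ}) = {}^*\widetilde{\ZZ}$, contradicting Lemma~\ref{lem:eeaut}. Hence no such $A$ is bi-interpretable with $\ZZ$.

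I do not expect a genuine obstacle here: the substantive work is already contained in the construction of the nontrivial derivation (Lemmata~\ref{fgderiv}--\ref{lem:eeaut}) and in Corollary~\ref{cor:biNdefZ}. The only point that warrants a moment's care is the bookkeeping in the second step --- making sure that the formula $\varphi$, which picks out the \emph{standard} copy of $\ZZ$ in $A$, picks out the \emph{internal} copy ${}^*\widetilde{\ZZ}$ (and not merely the prime subring, which it would also contain) once one passes to the ultralimit, and that this internal copy coincides with the subset of ${}^*\widetilde{A}$ used in Lemma~\ref{lem:eeaut}.
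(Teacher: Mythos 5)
Your proof is correct and follows exactly the route the paper takes: its entire proof of this corollary is ``Combine Lemma~\ref{lem:eeaut} and Corollary~\ref{cor:biNdefZ},'' and your argument is a careful (and accurate) elaboration of that combination, including the one point that genuinely needs checking, namely that the parametrically definable copy of $\ZZ$ in $A$ transfers to define ${}^*\widetilde{\ZZ}$ in the ultralimit, so that the automorphism over $A$ must stabilise it.
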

\begin{proof}
Combine Lemma~\ref{lem:eeaut} and Corollary~\ref{cor:biNdefZ}.
\end{proof}

\noindent
We finish this subsection by remarking that although it may not be obvious from the outset, a non-trivial derivation on a proper elementary extension of $\tilde{R}$ as constructed in Corollary~\ref{eederZ} has some unexpected properties (not exploited in the present paper):

\begin{lemma}
\label{lem:divisiblederivations}
Let $Z\succeq \ZZ$ and $\partial\colon Z\to Z$ be a derivation. Then $\partial(Z)\subseteq\bigcap_{n\geq 1} nZ$.  \end{lemma}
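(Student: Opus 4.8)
The plan is to exploit elementarity to reduce the statement to a single division-with-remainder computation. First I would record that any ring derivation annihilates the prime subring: from $\partial(1)=\partial(1\cdot 1)=2\,\partial(1)$ we get $\partial(1)=0$, and then additivity gives $\partial(m)=0$ for every $m$ in the image of $\ZZ$ in $Z$. Since $Z\succeq\ZZ$, that image is literally $\ZZ\subseteq Z$, so $\partial$ is in particular a $\ZZ$-derivation.

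Next, fix an integer $n\geq 1$. Division with remainder by $n$ is captured by a first-order sentence in the language of rings, namely $\forall x\,\bigvee_{r=0}^{n-1}\exists q\,(x=nq+r)$, which holds in $\ZZ$ and hence, by elementarity, in $Z$. Thus, given $a\in Z$, I can choose $q\in Z$ and $r\in\{0,1,\dots,n-1\}\subseteq\NN\subseteq\ZZ\subseteq Z$ with $a=nq+r$. Applying $\partial$, using additivity and the Leibniz rule, and invoking $\partial(n)=\partial(r)=0$ from the previous paragraph, we obtain $\partial(a)=n\,\partial(q)\in nZ$.

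Since $n\geq 1$ was arbitrary, this shows $\partial(a)\in\bigcap_{n\geq 1}nZ$ for every $a\in Z$, which is exactly the assertion. There is no real obstacle in this argument; the only two points that must be handled correctly are that a ring derivation automatically kills $\ZZ$ and that, for each fixed modulus $n$, the existence of a bounded remainder transfers from $\ZZ$ to its elementary extension $Z$.
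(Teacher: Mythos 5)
Your proof is correct, but it is a genuinely different argument from the one in the paper. You reduce everything to Euclidean division: transfer the sentence $\forall x\,\bigvee_{r=0}^{n-1}\exists q\,(x=nq+r)$ from $\ZZ$ to $Z$, note that a ring derivation kills the prime subring (so $\partial(n)=\partial(r)=0$ for the standard integers $n$ and $r$), and conclude $\partial(a)=n\,\partial(q)\in nZ$. The paper instead fixes $a\geq 0$ (using that non-negativity is definable) and invokes the Hilbert--Waring theorem to write $a=\sum_{i=1}^{g(n)}b_i^n$ in $Z$; differentiating each $n$-th power via the Leibniz rule produces the factor of $n$ directly, as $\partial(a)=n\sum_i b_i^{n-1}\partial(b_i)$. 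Both arguments rest on transferring a single first-order fact about $\ZZ$ to its elementary extension, but yours uses a much shallower fact (division with remainder versus Waring's problem) and needs the Leibniz rule only to see $\partial(1)=0$, whereas the paper's uses it essentially through the power rule. Your route is the more elementary one and avoids the side issue of restricting to $a\geq 0$; the paper's is a compact one-line computation once Waring is granted. Both are complete proofs of the lemma.
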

\begin{proof}
Let $a\in Z$ and $n\geq 1$; we need to show that $\partial(a)$ is divisible by $n$, and for this we may assume that $a\geq 0$. By the Hilbert-Waring Theorem we may write $a=\sum_{i=1}^{g} b_i^n$ for some $b_i\in Z$ (where $g=g(n)$ only depends on $n$), and differentiating both sides of this equation yields
$\partial(a) = n \sum_{i=1}^{g} b_i^{n-1}\partial(b_i)$.
\end{proof}


\subsection{Finishing the proof of the main theorem} 
We now complete the proof of the main theorem stated in the introduction, along the lines of the argument
sketched there: Let $A$ be a f.g.~ring, $N=N(A)$.
Suppose $\ann_\ZZ(N)\neq 0$. Then by Theorem~\ref{thm:nilpotent}
(applied to $A$ in place of $B$), the rings $A$ and $A_{\operatorname{red}}$ are bi-interpretable,
and by Theorem~\ref{thm:biNreduced}, the reduced ring $A_{\operatorname{red}}$ is bi-interpretable with $\NN$ if and only if~$A_{\operatorname{red}}$ is infinite and
$\Spec^\circ(A_{\operatorname{red}})$ is connected.  The latter is equivalent to $A$ being infinite and $\Spec^\circ(A)$ being
connected.
If $\ann_\ZZ(N)=0$, then $A$ is not bi-interpretable with $\ZZ$, by Corollary~\ref{cor:nbinil}. \qed

\section{Quasi-Finite Axiomatizability}

\noindent
In this section we show the corollary stated in the introduction, in a slightly more precise form:

\begin{prop}\label{prop:QFA fg ring}
Every f.g.~ring has a QFA formula.
\end{prop}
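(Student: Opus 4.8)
The plan is to prove the stronger statement by a double induction, reducing every finitely generated ring $A$ to three situations we can already handle: $A$ finite, where a QFA formula is immediate; $A$ an infinite finitely generated integral domain, covered by Corollary~\ref{cor:QFA int domains}; and $A$ bi-interpretable with $\ZZ$, covered by Proposition~\ref{prop:QFA}. Thus the class $\mathcal{Q}$ of finitely generated rings admitting a QFA formula contains all finite rings, all finitely generated integral domains, and all finitely generated rings bi-interpretable with $\ZZ$. First I would record two stability properties of $\mathcal{Q}$. \emph{(i) $\mathcal{Q}$ is closed under finite direct products.} For $A=A_1\times\cdots\times A_r$ with each $A_k\in\mathcal{Q}$, a QFA formula for $A$ (with respect to generators comprising the standard idempotents $e_k$ and lifts of generators of the factors) asserts that $x_1,\dots,x_r$ is a complete system of orthogonal idempotents and that, for each $k$, the ring $x_kA'=\{y:x_ky=y\}$ interpreted in $A'$ with unit $x_k$ satisfies the QFA formula of $A_k$ at the corresponding generators; any finitely generated $\cL$-structure $A'$ satisfying this splits as $\prod_k x_kA'\cong\prod_k A_k$ compatibly with all designated generators. \emph{(ii) $\mathcal{Q}$ is closed under fiber products over finite rings.} For surjective $\alpha\colon A_1\to C$, $\beta\colon A_2\to C$ with $A_1,A_2\in\mathcal{Q}$ and $C$ finite, a QFA formula for $A_1\times_C A_2$ existentially quantifies over finite generating systems of the (finitely generated) ideals $\mathfrak{a}'=\ker\pi_{A_2}$ and $\mathfrak{b}'=\ker\pi_{A_1}$ of $A'$, requires $\mathfrak{a}'\cap\mathfrak{b}'=0$, and demands that $A'/\mathfrak{a}'$, $A'/\mathfrak{b}'$ and $A'/(\mathfrak{a}'+\mathfrak{b}')$ satisfy the QFA formulas of $A_2$, $A_1$ and $C$ respectively, at matching generators and compatibly with the structure maps; by the fiber-product analysis of Section~\ref{sec:fiber products} this forces $A'\cong A_1\times_C A_2$. (Alternatively one can deduce (ii) from (i) and the fact, proved in Section~\ref{sec:fiber products}, that a fiber product over a finite ring is bi-interpretable with the direct product of the factors.)

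Next I would settle the case that $A$ is \emph{reduced}, by induction on $|\Min(A)|$. If $|\Min(A)|=1$ then $A$ is an integral domain and Corollary~\ref{cor:QFA int domains} applies. If $|\Min(A)|\geq 2$ and some minimal prime has finite index in $A$, then Lemma~\ref{lem:A0, 2} gives $A\cong A_0\times R$ with $R$ a finite field and $A_0$ reduced, finitely generated, with one fewer minimal prime, so (i) and the inductive hypothesis apply. Otherwise every minimal prime of $A$ is non-maximal, so $\mathcal{G}_A$ has vertex set $\Min(A)$ and, by Corollary~\ref{cor:finite = 0-dim}, $A$ is infinite. If $\mathcal{G}_A$ is connected then $A$ is bi-interpretable with $\ZZ$ by Theorem~\ref{thm:biNreduced} and Proposition~\ref{prop:QFA} applies. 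If $\mathcal{G}_A$ is disconnected, pick a connected component $C$, set $I=\bigcap C$ and $J=\bigcap(\Min(A)\setminus C)$; by Lemma~\ref{lem:dim 0 for sum of intersections} the ideal $I+J$ has finite index, so $A\cong(A/I)\times_{A/(I+J)}(A/J)$ with $A/I$ and $A/J$ reduced, finitely generated, and each with fewer minimal primes, and (ii) with the inductive hypothesis concludes.

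For a general finitely generated $A$ with nilradical $N$ — finitely generated, hence nilpotent, since $A$ is noetherian — I would build a QFA formula for $A$ from one for $A_{\operatorname{red}}=A/N$, available by the reduced case, by means of the bijectivity criterion, Lemma~\ref{lem:criterion for bijection}. Fix a presentation $A=\ZZ[X_1,\dots,X_n]/(f_1,\dots,f_r)$ with $a_i$ the image of $X_i$ and $\nu_1,\dots,\nu_\ell$ ideal generators of $N$, and let $e$ be the nilpotency index of $N$. The formula $\varphi_A(x_1,\dots,x_n)$ asserts: the relations $f_k(x)=0$ hold; the ideal $N'=(\nu_1(x),\dots,\nu_\ell(x))$ of $A'$ satisfies $(N')^e=0$; the interpreted ring $A'/N'$ satisfies the QFA formula of $A_{\operatorname{red}}$ at the images of the $x_i$; and, for each $i\geq 1$, the images in $(N')^{i}/(N')^{i+1}$ of fixed generators of the finitely generated $A_{\operatorname{red}}$-module $N^i/N^{i+1}$ generate $(N')^{i}/(N')^{i+1}$ over the ring $A'/N'$ and satisfy exactly the module relations that hold in $N^i/N^{i+1}$ (a first-order condition obtained by quantifying over tuples from $A'/N'$). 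The first three clauses force that $x_1,\dots,x_n$ generate $A'$ (since $N'$ lies in the subring they generate and $A'/N'\cong A_{\operatorname{red}}$ is generated by their images), that $N'=N(A')$, and that the induced surjection $\phi\colon A\to A'$, $a_i\mapsto x_i$, descends to an isomorphism $A/N\to A'/N'$; the last clause forces each induced map $N^i/N^{i+1}\to (N')^{i}/(N')^{i+1}$ to be bijective, whence $\phi|_N\colon N\to N'=\phi(N)$ is injective (the filtration being finite), so $\phi$ is an isomorphism carrying $a_i$ to $x_i$ by Lemma~\ref{lem:criterion for bijection}. The converse being clear, $\varphi_A$ is a QFA formula for $A$.

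The hard part is this last step: one must verify that the module-theoretic data needed to force $\phi|_N$ injective — the presentations of the graded pieces $N^i/N^{i+1}$ over $A_{\operatorname{red}}$, transported along the identification $A'/N'\cong A_{\operatorname{red}}$ supplied by the QFA formula for $A_{\operatorname{red}}$ — are genuinely captured by a single first-order formula over $A'$, and that the resulting conditions really do pin $A'$ down up to isomorphism (rather than being accidentally satisfied by some larger or non-standard $A'$). A more routine, but still delicate, point is the bookkeeping with the various structure maps in the disconnected reduced case.
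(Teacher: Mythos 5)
Your proposal is correct in outline, but it diverges from the paper's proof in an instructive way, and the one step you explicitly leave open is precisely the paper's key technical lemma. For the reduced case the paper is far more economical: if $P_1,\dots,P_e$ are the minimal primes of a reduced f.g.\ ring $B$, then $P_1\cdots P_e=P_1\cap\cdots\cap P_e=0$, and a single lemma (``if $I$, $J$ are ideals with $IJ=0$ and $B/I$, $B/J$ have QFA formulas, then so does $B$,'' iterated over a vanishing product of ideals) reduces everything to the integral domains $B/P_k$, which Corollary~\ref{cor:QFA int domains} handles. No graph-connectedness analysis, no case split between finite and infinite base of the fiber product, and no appeal to Theorem~\ref{thm:biNreduced} is needed; your route through bi-interpretability with $\ZZ$ in the connected case is valid but imports the whole fiber-product machinery (including Scott's theorem on $p^{2m}-q^{2n}=c$) into a statement that does not require it. Note also that your closure properties (i) and (ii) are special cases of the paper's lemma, since $IJ\subseteq I\cap J=0$ in the reduced setting.

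For the nilpotent part your idea agrees with the paper's: the morphism $\phi\colon A\to A'$ determined by the relations is shown bijective via Lemma~\ref{lem:criterion for bijection} by controlling $A/N\to A'/N'$ together with the graded pieces of the $N$-adic filtration. The point you flag as the hard part --- that the presentation of each $N^i/N^{i+1}$ as a f.g.\ module over $A_{\operatorname{red}}$ is captured by a single first-order formula, transported through the identification $A'/N'\cong A_{\operatorname{red}}$ --- is exactly Lemma~\ref{lem:module QFA}: noetherianity of $A_{\operatorname{red}}$ makes the syzygy module of a fixed generating system finitely generated, and ``every relation among the images is an $A'/N'$-linear combination of these finitely many syzygies'' is a $\forall\exists$ condition. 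The paper organizes this as an induction on the nilpotency index using the two-sorted structures $(B/J,I)$ with $IJ=0$, rather than as one monolithic formula, but the content is the same. So there is no error; your reduced case would be better replaced by the $P_1\cdots P_e=0$ argument, and your open step is closed by Lemma~\ref{lem:module QFA}.
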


\noindent
Throughout this section we let $A$, $B$ be f.g.~rings.

\begin{lemma}\label{lem:module QFA}
Suppose there is a QFA formula for $A$, and let $M$ be a f.g.~$A$-module. Then there is a QFA formula for the $A$-module $M$ viewed as a two-sorted structure.
\end{lemma}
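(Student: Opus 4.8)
The plan is to realize the two-sorted structure $(A, M)$ as interpretable in $A$, and then to transfer the QFA property from $A$ to $(A, M)$ using the bi-interpretability machinery. First I would fix a presentation of $M$: since $M$ is a finitely generated $A$-module, write $M = A^k / K$ where $K$ is a finitely generated submodule of $A^k$, say generated by $v_1, \dots, v_\ell \in A^k$. Because $A$ is Noetherian (it is finitely generated, hence Noetherian by Hilbert's basis theorem applied to a presentation $A = \ZZ[x_1,\dots,x_r]/\mathfrak a$), $K$ is in fact finitely generated, so it is definable in $A$ as a subset of $A^k$ (it is the set of $A$-linear combinations of the $v_i$, which is an existentially definable condition). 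Thus the natural surjection $\pi\colon A^k \to M$ is an interpretation of the $A$-module $M$ in $A$, and bundling this with the identity interpretation of the ring sort $A$, we obtain an interpretation of the two-sorted structure $(A,M)$ in $A$.

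Next I would check that this is in fact (the relevant half of) a bi-interpretation. The ring $A$ is trivially interpretable in $(A, M)$ (it is literally one of the sorts, a relativized reduct), and the composite self-interpretation of $A$ obtained by first interpreting $(A,M)$ in $A$ and then reading off the $A$-sort is just $\id_A$ up to homotopy. For the other composite — the self-interpretation of $(A,M)$ obtained by going into $A$ and back out via $\pi$ — I would verify it is homotopic to the identity: an element of the re-interpreted copy of $M$ is a $\ker\pi$-class of a tuple in $A^k$, and the map sending such a class to the corresponding element of $M$ is definable in $(A,M)$ precisely because $K = \ker\pi$ is definable and $\pi$ is a definable surjection onto the $M$-sort. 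Hence $(A, M)$ and $A$ are bi-interpretable. Since $A$ has a QFA formula by hypothesis, $A$ is in particular bi-interpretable with $\ZZ$ — wait, that is not given; the hypothesis is only that $A$ has a QFA formula, which a priori is weaker. So instead I would argue directly: I expect the cleanest route is to observe that bi-interpretability transports QFA formulas, i.e., if $\bA$ has a QFA formula and $\bB$ is bi-interpretable with $\bA$ via $\emptyset$-interpretations (or parametrically, absorbing parameters into the QFA formula's tuple of generators), then $\bB$ has a QFA formula. This should be provable by the same template as Proposition \ref{prop:QFA}: encode the bi-interpretation data and the orbit-of-generators trick, using that $(A,M)$ is finitely generated as a two-sorted structure (generators of $A$ as a ring together with module generators of $M$) and that $\cL$ is finite.

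The main obstacle I anticipate is the absence in the excerpt of a clean, stated lemma of the form "QFA is a bi-interpretation invariant for finitely generated structures in a finite language." Proposition \ref{prop:QFA} gives QFA from bi-interpretability \emph{with $\ZZ$}, not from bi-interpretability with an arbitrary QFA structure, so I cannot simply quote it. I would therefore either (a) prove the general invariance statement by imitating the proof of Proposition \ref{prop:QFA} — the argument there only used that $\ZZ$ is "self-aware" in the precise sense encoded by $\alpha(z)$, and the existence of a QFA formula for $A$ should supply the analogous $\alpha$ for $A$ in place of the arithmetic sentence — or (b) avoid it by noting that the hypothesis "$A$ has a QFA formula" will, in the intended application (Proposition \ref{prop:QFA fg ring}), come from $A$ actually being bi-interpretable with $\ZZ$ whenever $\Spec^\circ(A)$ is connected and $dN=0$, and handle the disconnected / characteristic-zero-with-torsion-free-nilpotents cases by the direct-product and fiber-product structure. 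Route (a) is more honest and self-contained, so that is what I would write out: reduce to showing that for a finitely generated $\cL$-structure $\bC$ possessing a QFA formula, any finitely generated structure bi-interpretable with $\bC$ also possesses one, by running the $\alpha$-and-orbit argument of Proposition \ref{prop:QFA} with $\bC$ in the role formerly played by $\ZZ$.
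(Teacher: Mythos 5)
There is a genuine gap, and it sits exactly where you suspected: the transfer step. Your route (a) asks for the general principle ``if $\bC$ is f.g.\ in a finite language and has a QFA formula, then so does every f.g.\ structure bi-interpretable with $\bC$,'' to be proved ``by running the $\alpha$-and-orbit argument of Proposition~\ref{prop:QFA} with $\bC$ in place of $\ZZ$.'' That argument does not transplant. Its engine is the formula $\alpha$ expressing that the interpreted ring $(A',\oplus',\otimes')$ is \emph{standard}, and the construction of $\alpha$ is arithmetic-specific (G\"odel coding of terms inside the interpreted model of a fragment of $\operatorname{Th}(\ZZ)$). For a general $\bC$, the natural substitute would be to apply the QFA formula of $\bC$ to the copy $\bC'$ of $\bC$ interpreted in a candidate structure $\bB'$; but a QFA formula only pins down $\bC$ among \emph{finitely generated} structures, and a structure interpreted in a f.g.\ structure need not be f.g.\ (e.g.\ $\QQ$ is interpretable in $\ZZ$). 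So you cannot invoke $\varphi_\bC$ on $\bC'$, and no replacement for $\alpha$ is available. Route (b) is also not viable for the intended applications: in Proposition~\ref{prop:QFA fg ring} the lemma is used for the module $(B/J, I)$ where $B/J$ merely has a QFA formula and may well fail to be bi-interpretable with $\ZZ$ (it can be finite, or reduced with disconnected $\Spec^\circ$).

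The paper sidesteps all of this by writing the QFA formula for $(A,M)$ directly, and your own setup already contains every ingredient needed to do the same. Fix generators $b_1,\dots,b_n$ of $M$; by noetherianity the syzygy module of $(b_1,\dots,b_n)$ is generated by finitely many tuples $\big(P_{jk}(a)\big)_j$ with $P_{jk}\in\ZZ[x]$ and $a$ the chosen generators of $A$. The QFA formula for $(A,M)$ is then $\varphi_A(x)\wedge\gamma(y)\wedge\zeta(x,y)$, where $\gamma(y)$ says that $y_1,\dots,y_n$ generate the module sort and $\zeta(x,y)$ says that $\sum_j z_j y_j=0$ holds exactly for the $z$ in the submodule generated by the tuples $\big(P_{jk}(x)\big)_j$. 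Given a f.g.\ two-sorted $(A',M')\models\varphi_A(a')\wedge\gamma(b')\wedge\zeta(a',b')$, the ring sort is f.g.\ (no operations map the module sort into the ring sort), so $\varphi_A$ yields the isomorphism $A\to A'$, and $\gamma\wedge\zeta$ identifies $M'$ with $A'^{\,n}$ modulo the corresponding syzygies, i.e.\ with $M$. Note that if you carefully unwind ``the bi-interpretation conditions hold'' for your interpretation $A^k/K\to M$ inside a candidate $(A',M')$, the content is precisely $\gamma\wedge\zeta$ --- so the detour through a transfer principle buys nothing and costs you an unproved (and, as stated, doubtful) lemma.
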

\begin{proof}
Let $\varphi_A(x)$ be a QFA formula for $A$, where $x=(x_1,\dots,x_m)$. So we can take generators $a_1,\dots,a_m$ of $A$
such that for each f.g.~ring $A'$ and $a_1',\dots,a_m'\in A'$, we have $A'\models\varphi_A(a_1',\dots,a_m')$ iff there is an isomorphism $A\to A'$ with $a_i\mapsto a_i'$ for $i=1,\dots,m$. (Note that there is at most one such isomorphism $A\to A'$.)
Let also $b_1,\dots,b_n$ be generators for $M$. Since $A$ is noetherian, the syzygies of these generators are f.g., that is, there are  $a_{jk}$ ($j=1,\dots,n$, $k=1,\dots,p$)  of $A$ such that for all $\alpha_1,\dots,\alpha_n\in A$ we have
\begin{multline*}
\sum_{j=1}^n \alpha_j b_j=0 \quad\Longleftrightarrow\quad\\
\text{there are $\beta_1,\dots,\beta_p\in A$ such that $\alpha_j=\sum_{k=1}^p \beta_k a_{jk}$ for all $j=1,\dots,n$.}
\end{multline*}
For   $j=1,\dots,n$, $k=1,\dots,p$
pick polynomials $P_{jk}\in\ZZ[x]$ such that $a_{jk}=P_{jk}(a)$, where $a=(a_1,\dots,a_m)$.
Let 
$y=(y_1,\dots,y_n)$ be a tuple of distinct variables of the module sort, $u$ be another variable of the module sort, and
$u_1,\dots,u_p,z_1,\dots,z_n$ be distinct new variables of the ring sort. Let
$\gamma(y)$ be the formula
$$\forall u\exists z_1\cdots\exists z_n \left(u=\sum_{i=1}^n z_iy_i\right)$$
and $\zeta(x,y)$  be the formula
$$\forall z_1\cdots\forall z_n \left(\sum_{j=1}^n z_ny_n=0 
\ \longleftrightarrow \   
\exists u_1\cdots\exists u_p \left( \bigwedge_{j=1}^n z_j = \sum_{k=1}^p u_k P_{jk}(x)\right)\right).
$$
Finally, let $\alpha$ be a sentence expressing that $A$ is a ring and $M$ is an $A$-module.
One verifies easily that $$\varphi_M(x,y):=\alpha\wedge\varphi_A(x)\wedge \gamma(y)\wedge  \zeta(x,y)$$
is a QFA formula for the two-sorted structure $(A,M)$. 
\end{proof}

\begin{lemma}\label{lem:exist morphism}
Let $a_1,\dots,a_n$ be generators for $A$. There is a formula $\mu(x_1,\dots,x_n)$ such that
for all rings $A'$ and $a_1',\dots,a_n'\in A'$, we have: $A'\models\mu(x_1,\dots,x_n)$ iff
there is a morphism $A\to A'$ with $a_i\mapsto a_i'$ for $i=1,\dots,n$.
\end{lemma}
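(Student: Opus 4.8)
The plan is to use the presentation of a finitely generated ring as a quotient of a polynomial ring over $\ZZ$. Since $a_1,\dots,a_n$ generate $A$, the unique ring morphism $\ZZ[X_1,\dots,X_n]\to A$ sending $X_i\mapsto a_i$ is surjective; let $I$ be its kernel, so $A\cong\ZZ[X_1,\dots,X_n]/I$. As $\ZZ[X_1,\dots,X_n]$ is noetherian (Hilbert Basis Theorem), the ideal $I$ is finitely generated, say $I=(f_1,\dots,f_r)$ with $f_j\in\ZZ[X_1,\dots,X_n]$.

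Next I would note that each $f_j$, viewed as a function of ring elements, is given by a term in the language $\{{+},{\times}\}$ of rings: the integer coefficients of $f_j$ are obtained from the constant $1$ by repeated addition (and negation), so there is a ring term $\tau_j(x_1,\dots,x_n)$ with $\tau_j^{A'}(a_1',\dots,a_n')=f_j(a_1',\dots,a_n')$ for every ring $A'$ and every $a'\in(A')^n$; here one uses that every ring has a multiplicative unit. Then one simply sets
$$\mu(x_1,\dots,x_n):=\textstyle\bigwedge_{j=1}^r \tau_j(x_1,\dots,x_n)=0.$$

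To verify that $\mu$ has the required property, fix a ring $A'$ and elements $a_1',\dots,a_n'\in A'$. By the universal property of the polynomial ring $\ZZ[X_1,\dots,X_n]$ (the free commutative ring on $n$ generators) there is a unique ring morphism $\psi\colon\ZZ[X_1,\dots,X_n]\to A'$ with $\psi(X_i)=a_i'$ for all $i$. A ring morphism $A\to A'$ with $a_i\mapsto a_i'$ exists if and only if $\psi$ factors through the quotient map $\ZZ[X_1,\dots,X_n]\to A$, i.e.\ if and only if $I\subseteq\ker\psi$, i.e.\ (since $f_1,\dots,f_r$ generate $I$) if and only if $f_j(a_1',\dots,a_n')=0$ for $j=1,\dots,r$, i.e.\ if and only if $A'\models\mu(a_1',\dots,a_n')$. (Such a morphism, when it exists, is automatically unique, since $a_1,\dots,a_n$ generate $A$ — though this is not needed for the statement.)

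There is no real obstacle here; the one point deserving a word of care is the routine observation that polynomials with integer coefficients correspond to terms in the ring language, which relies on the presence of the unit. This lemma is the existence half of a QFA criterion, and its proof is almost entirely formal.
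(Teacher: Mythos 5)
Your proof is correct and is essentially identical to the paper's: present $A$ as $\ZZ[X_1,\dots,X_n]/I$, take finitely many generators of $I$ (using noetherianity), and let $\mu$ assert the vanishing of those generators. The paper states this in two lines and leaves the verification implicit; your write-up just spells out the factorization argument.
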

\begin{proof}
Let $x=(x_1,\dots,x_n)$ and  $\pi\colon\ZZ[x]\to A$ be the (surjective) ring morphism satisfying $\pi(x_i)=a_i$ for $i=1,\dots,n$.
Let $P_1,\dots,P_m\in\ZZ[x]$ generate the kernel of~$\pi$ and let
$\mu(x)$ be the formula $P_1(x)=\cdots=P_m(x)=0$.
\end{proof}

\begin{lemma}
Let $I$, $J$ be ideals of $B$ such that $IJ=0$. If there are QFA formulas for $B/I$ and for $B/J$, then there is one for $B$.
\end{lemma}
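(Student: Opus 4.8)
Fix a generating tuple $a=(a_1,\dots,a_n)$ of $B$. Since $B$ is finitely generated it is noetherian, so we may choose $Q_1,\dots,Q_r,R_1,\dots,R_s\in\ZZ[x]$ (with $x=(x_1,\dots,x_n)$) such that $I=(Q_1(a),\dots,Q_r(a))$ and $J=(R_1(a),\dots,R_s(a))$; write $c_k:=Q_k(a)$. The key structural observation is that $IJ=0$ forces $I$ to be annihilated by $J$, hence $I$ is a finitely generated $B/J$-module, generated by $c_1,\dots,c_r$. Starting from the given QFA formula for $B/J$ and applying Lemma~\ref{lem:module QFA}, I obtain a QFA formula $\varphi_{(B/J,I)}$ for the two-sorted structure $(B/J,I)$, which, adjusting generators by the change-of-generators lemma above, I take with respect to the ring-generators $a_i+J$ and the module-generators $c_k$. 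I also fix a QFA formula $\varphi_{B/I}$ for $B/I$ with respect to the generators $a_i+I$, and the formula $\mu_B(x)$ from Lemma~\ref{lem:exist morphism}, so that $B'\models\mu_B(b)$ iff there is a ring morphism $B\to B'$ with $a_i\mapsto b_i$.

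Given a finitely generated ring $B'$ and $x\in(B')^n$, write $\mathfrak i:=(Q_1(x),\dots,Q_r(x))$ and $\mathfrak j:=(R_1(x),\dots,R_s(x))$, ideals of $B'$. Note that $\mu_B(x)$ already forces $R_l(x)Q_k(x)=0$ for all $k,l$ (apply the morphism from $\mu_B$ to $R_l(a)Q_k(a)\in JI=0$), so on models of $\mu_B$ we have $\mathfrak j\mathfrak i=0$, and then $\mathfrak i$ carries a natural $B'/\mathfrak j$-module structure. Both $B'/\mathfrak i$ and the two-sorted structure $(B'/\mathfrak j,\mathfrak i)$ are then interpretable in $B'$ uniformly in the parameters $x$ (quotients by definable ideals, and a definable module), so the following is expressible by a single formula $\varphi_B(x)$ in the language of rings: the conjunction of (i) $\mu_B(x)$; (ii) $\varphi_{B/I}$ holds in $B'/\mathfrak i$ at $(x_1+\mathfrak i,\dots,x_n+\mathfrak i)$; and (iii) $\varphi_{(B/J,I)}$ holds in $(B'/\mathfrak j,\mathfrak i)$ at the ring-tuple $(x_1+\mathfrak j,\dots,x_n+\mathfrak j)$ and the module-tuple $(Q_1(x),\dots,Q_r(x))$. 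I claim $\varphi_B$ is a QFA formula for $B$ with respect to $a_1,\dots,a_n$.

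For one direction, if $\psi\colon B\to B'$ is an isomorphism with $\psi(a_i)=x_i$, then $\psi(I)=\mathfrak i$ and $\psi(J)=\mathfrak j$, (i) holds via $\psi$, and $\psi$ induces isomorphisms $B/I\to B'/\mathfrak i$ and $(B/J,I)\to(B'/\mathfrak j,\mathfrak i)$ carrying the chosen generators to the respective tuples, so (ii) and (iii) hold. For the converse, suppose $B'\models\varphi_B(x)$ and let $\phi\colon B\to B'$ be a morphism with $\phi(a_i)=x_i$, provided by (i). Since $\phi(I)\subseteq\mathfrak i$ and $\phi(J)\subseteq\mathfrak j$, $\phi$ descends to a ring morphism $\tilde\phi\colon B/I\to B'/\mathfrak i$ sending $a_i+I\mapsto x_i+\mathfrak i$, and to a morphism of two-sorted structures $\bar\phi\colon(B/J,I)\to(B'/\mathfrak j,\mathfrak i)$ sending $a_i+J\mapsto x_i+\mathfrak j$ and $c_k\mapsto\phi(c_k)=Q_k(x)$. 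By (ii) there is an isomorphism $B/I\to B'/\mathfrak i$ with the same effect on the generators $a_i+I$; since a ring morphism out of $B/I$ is determined by its values on those generators, $\tilde\phi$ equals this isomorphism and so is itself an isomorphism. In the same way, (iii) shows $\bar\phi$ is an isomorphism, hence its module component $\phi|_I\colon I\to\mathfrak i$ is an isomorphism; in particular $\phi(I)=\mathfrak i$ and $\phi|_I$ is injective. Thus $\tilde\phi$ is the induced map $B/I\to B'/\phi(I)$ and is bijective, and $\phi|_I$ is injective, so Lemma~\ref{lem:criterion for bijection} (applied to the additive map $\phi$ with $N:=I$) shows $\phi$ is bijective. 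Hence $\phi$ is an isomorphism $B\to B'$ with $a_i\mapsto x_i$, as required.

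The main point to get right is the initial reduction: recognizing that $IJ=0$ turns $I$ into a $B/J$-module, which is precisely what lets Lemma~\ref{lem:module QFA} supply the two-sorted QFA formula $\varphi_{(B/J,I)}$, and then observing that the QFA data for the quotient $B/I$ and for $I$ viewed as a $B/J$-module combine, via the bijectivity criterion of Lemma~\ref{lem:criterion for bijection} with $N=I$, to pin a postulated morphism $\phi\colon B\to B'$ down to an isomorphism. The remaining verifications, namely the uniform interpretability of $B'/\mathfrak i$, $B'/\mathfrak j$ and the module $\mathfrak i$ in $B'$, and the bookkeeping with generating tuples, are routine.
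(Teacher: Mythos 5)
Your proposal is correct and follows essentially the same route as the paper: a QFA formula for the quotient $B/I$, a QFA formula for the two-sorted structure $(B/J,I)$ obtained from Lemma~\ref{lem:module QFA} (using that $IJ=0$ makes $I$ a f.g.\ $B/J$-module), a morphism supplied by Lemma~\ref{lem:exist morphism}, and the bijectivity criterion of Lemma~\ref{lem:criterion for bijection} with $N=I$ to upgrade that morphism to an isomorphism. The only differences are bookkeeping choices (a single generating tuple with the ideal generators written as polynomials in it, versus the paper's separate generator tuples), and your converse direction is, if anything, spelled out slightly more explicitly than the paper's.
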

\begin{proof} 
Let $\varphi_I(x)$ be a QFA formula for $A=B/I$, where $x=(x_1,\dots,x_m)$, and take 
a system  $a=(a_1,\dots,a_m)$ of generators of $A$
such that for each f.g.~ring $A'$ and $a'=(a_1',\dots,a_m')\in (A')^m$, 
we have $A'\models\varphi_I(a')$ iff there is an isomorphism~$A\to A'$ with $a\mapsto a'$. 
Take generators $b_1,\dots,b_m,f_1,\dots,f_p$ for the ring $B$ such that $a_i=b_i+I$ for $i=1,\dots,m$
and $I=(f_1,\dots,f_p)$. Put $b=(b_1,\dots,b_m)$, $f=(f_1,\dots,f_p)$.
From our QFA formula $\varphi_I(x)$ for $A$ we easily construct a formula  $\psi_I(x,u)$  (where $u=(u_1,\dots,u_p)$) such that for each f.g.~ring $B'$ and $b'=(b_1',\dots,b_m')\in (B')^m$, $f'=(f_1',\dots,f_p')\in (B')^p$, the following are equivalent, with $I':=(f_1',\dots,f_p')\subseteq B'$:
\begin{itemize}
\item[(1)] $B'\models\psi_I(b',f')$;
\item[(2)] there is an isomorphism $A\to B'/I'$ with $a\mapsto b'+(I')^m$. 
\end{itemize}
(See Example~\ref{ex:ring interpretations},~(2).)
By the preceding lemma, there is also a QFA formula for the two-sorted structure~$(B/J,I)$.
Hence as before, we can  take generators $c_1,\dots,c_n,g_1,\dots,g_q$ of~$B$ such that the cosets
$c_1+J,\dots,c_n+J$ generate the ring~$B/J$ and $g_1,\dots,g_q$ generate the ideal~$J$, as well as a formula
$\psi_J(y,u,v)$, where $y=(y_1,\dots,y_n)$, $v=(v_1,\dots,v_q)$, such that
 for each f.g.~ring $B'$ and tuples $c'=(c_1',\dots,c_n')$, $f'=(f_1',\dots,f_p')$, 
and $g'=(g_1',\dots,g_q')$ of elements of $B'$, the following statements are equivalent, with $J':=(g_1',\dots,g_q')\subseteq B'$:
\begin{itemize}
\item[(3)] $B'\models\psi_J(c',f',g')$;
\item[(4)]  there is an isomorphism $(B/J,I)\to (B'/J',I')$ with 
$c+J^n\mapsto  c'+(J')^n$  and  $f\mapsto f'$.
\end{itemize}
Now by Lemma~\ref{lem:exist morphism} let $\mu(x,y,u,v)$ be a formula such that for each f.g.~ring $B'$ and 
tuples  $b'=(b_1',\dots,b_m')$, $c'=(c_1',\dots,c_n')$, $f'=(f_1',\dots,f_p')$,
$g'=(g_1',\dots,g_q')$ of elements of $B'$, the following are equivalent:
\begin{itemize}
\item[(5)] $B'\models\mu(b',c',f',g')$;
\item[(6)] there is a morphism $B\to B'$ with $b\mapsto b'$, $c\mapsto c'$, $f\mapsto f'$,
and $g\mapsto g'$.
\end{itemize}
Then by Lemma~\ref{lem:criterion for bijection} and the equivalences of (1), (3), (5) with (2), (4), (6), respectively,  the formula
$\psi_I(x,u)\wedge \psi_J(y,u,v) \wedge \mu(x,y,u,v)$
is QFA for $B$ with respect to the system of generators $b$,~$c$,~$f$,~$g$ of $B$.
\end{proof}

\begin{cor}
Let $N_1,\dots,N_e$ \textup{(}$e\geq 1$\textup{)} be ideals of $B$ such that $N_1\cdots N_e=0$.
Suppose that for $k=1,\dots,e$, there is a QFA formula for the f.g.~ring $B/N_k$. Then there is
a QFA formula for $B$.
\end{cor}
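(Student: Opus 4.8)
The plan is to induct on $e$, using the lemma immediately preceding this corollary (the case of two ideals $I$, $J$ with $IJ=0$) as the inductive engine. The base case $e=1$ is trivial: then $N_1=0$, so $B=B/N_1$ already has a QFA formula by hypothesis.

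For the inductive step, suppose $e\geq 2$ and that the statement is known for any family of fewer than $e$ ideals with vanishing product. Put $I:=N_1N_2\cdots N_{e-1}$ and $J:=N_e$; then $IJ=N_1\cdots N_e=0$, so by the preceding lemma it suffices to exhibit QFA formulas for $B/I$ and for $B/J$. For $B/J=B/N_e$ this holds by assumption. For $B/I$, I would apply the inductive hypothesis to the f.g.~ring $B/I$ together with the $e-1$ ideals $\overline{N}_k:=(N_k+I)/I$ for $k=1,\dots,e-1$, whose product equals $(N_1\cdots N_{e-1}+I)/I=0$. This requires knowing that each quotient $(B/I)/\overline{N}_k$ has a QFA formula.

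Here is the one observation that makes everything fit: $I=N_1\cdots N_{e-1}$ is contained in each of its factors $N_k$ with $1\leq k\leq e-1$, so $N_k+I=N_k$ and hence $(B/I)/\overline{N}_k\cong B/(N_k+I)=B/N_k$, which is QFA by hypothesis. Thus the inductive hypothesis produces a QFA formula for $B/I$, and the preceding lemma then produces one for $B$, completing the induction. I do not anticipate a genuine obstacle here; the only point requiring care is to split off the \emph{last} factor $N_e$ (forming $I$ from the remaining ones), so that passing to $B/I$ and then killing the image of $N_k$ returns precisely the rings $B/N_1,\dots,B/N_{e-1}$ for which QFA formulas are already available --- no QFA formula for a genuinely new quotient of $B$ is ever needed.
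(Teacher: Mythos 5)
Your proposal is correct and follows essentially the same route as the paper: the same induction on $e$, the same splitting $I=N_1\cdots N_{e-1}$, $J=N_e$, and the same key observation that $I\subseteq N_k$ forces $(B/I)/\overline{N}_k\cong B/N_k$, so the inductive hypothesis applies to $B/I$ with no new quotients needed. Nothing to add.
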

\begin{proof}
We proceed by induction on $e$. The case $e=1$ being trivial, suppose that $e\geq 2$ and put $I:=N_1\cdots N_{e-1}$, $J:=N_e$, so $IJ=0$. By assumption, there is a QFA formula for $B/J=B/N_e$.
Consider the f.g.~ring $\overline{B}:=B/I$ and the ideals $\overline{N_k}:=N_k+I$ ($k=1,\dots,e-1$)  of $\overline{B}$. We have $\overline{N_1}\cdots\overline{N_{e-1}}=0$, and 
the residue map $B\to\overline{B}$ induces an isomorphism $B/N_k\to \overline{B}/\overline{N_k}$. Hence by the inductive hypothesis applied to $\overline{B}$ and $\overline{N_1},\dots,\overline{N_{e-1}}$, there is a QFA formula for $\overline{B}=B/J$. Now by the proposition above,
there is a QFA formula for $B$.
\end{proof}

\noindent
We can now prove Proposition~\ref{prop:QFA fg ring}. First,
applying the previous corollary to $N_1=\cdots=N_e=N(B)$ where $e=$~nil\-po\-tency index of $N(B)$ yields that if there is a QFA formula for $B_{\operatorname{red}}$, then there is a QFA formula for $B$.
Thus to show that~$B$ has a QFA formula we may assume that~$B$ is reduced. Let $P_1,\dots,P_e$ be the minimal prime ideals of $B$. Then $P_1\cdots P_e=P_1\cap\cdots\cap P_e=0$, and by Corollary~\ref{cor:QFA int domains}, for each $k=1,\dots,e$ there is a QFA formula for the f.g.~integral domain $B/P_k$. Hence again by the preceding corollary, there is a QFA formula for $B$. \qed

\end{document}